\def\young(#1){\ytableaushort{#1}}
\def\yng(#1){\tiny {\ydiagram{#1}}}
\numberwithin{equation}{section}
\newtheorem{Theorem}{Theorem}[section]
\newtheorem*{Theorem*}{Theorem}
\newtheorem{Corollary}[Theorem]{Corollary}
\newtheorem{Lemma}[Theorem]{Lemma}
\newtheorem{Proposition}[Theorem]{Proposition}
\theoremstyle{definition}
\newtheorem{Definition}[Theorem]{Definition}
\newtheorem{Example}[Theorem]{Example}
\newtheorem{Remark}[Theorem]{Remark} }
\newcommand{\bbC}{\mathbb{C}}
\newcommand{\bbZ}{\mathbb{Z}}
\newcommand{\beq}{\begin{equation}}
\newcommand{\eeq}{\end{equation}}
\newcommand{\beqa}{\begin{eqnarray}}
\newcommand{\eeqa}{\end{eqnarray}}
\newcommand{\floor}[1]{\lfloor#1\rfloor}
\newcommand{\Nk}{{\mathsf N}}
\newcommand{\mm}{\mathsf{m}}
\newcommand{\nn}{\mathsf{n}}
\newcommand{\qg}{U_{\mathsf v}}
\newcommand{\ignore}[1]{}
\def\cb#1#2#3{\begin{bmatrix}#1\\#2\end{bmatrix}_{#3}}
\def\ctwo#1{\left({#1 \atop 2}\right)}
\def\SS{{\mathcal{H}_{\mathrm S}}}%Shamil-Shiraishi-form
\def\Bor{\mathcal{B}}%{\Gamma}%{\widehat{\gamma}}}
\def\dfrac#1#2{{\displaystyle\frac{#1}{#2}}}
\begin{document}

\allowdisplaybreaks

\newcommand{\arXivNumber}{2309.15364}

\renewcommand{\PaperNumber}{077}
	
\FirstPageHeading

\ShortArticleName{Non-Stationary Difference Equation and Affine Laumon Space II}

\ArticleName{Non-Stationary Difference Equation \\ and Affine Laumon Space II: \\
 Quantum Knizhnik--Zamolodchikov Equation}

\Author{Hidetoshi~AWATA~$^{\rm a}$, Koji~HASEGAWA~$^{\rm b}$, Hiroaki~KANNO~$^{\rm ac}$,
Ryo~OHKAWA~$^{\rm de}$, \newline
Shamil~SHAKIROV~$^{\rm fg}$, Jun'ichi~SHIRAISHI~$^{\rm h}$ and Yasuhiko~YAMADA~$^{\rm i}$}

\AuthorNameForHeading{H.~Awata et~al.}

\Address{$^{\rm a)}$~Graduate School of Mathematics, Nagoya University, Nagoya 464-8602, Japan}
\EmailD{\href{mailto:awata@math.nagoya-u.ac.jp}{awata@math.nagoya-u.ac.jp}}

\Address{$^{\rm b)}$~Mathematical Institute, Tohoku University, Sendai 980-8578, Japan}
\EmailD{\href{mailto:kojihas2@gmail.com}{kojihas2@gmail.com}}

\Address{$^{\rm c)}$~Kobayashi-Maskawa Institute,
Nagoya University, Nagoya 464-8602, Japan}
\EmailD{\href{mailto:kanno@math.nagoya-u.ac.jp}{kanno@math.nagoya-u.ac.jp}}

\Address{$^{\rm d)}$~Osaka Central Advanced Mathematical Institute, Osaka Metropolitan University,\\
\hphantom{$^{\rm d)}$}~Osaka 558-8585, Japan}
\EmailD{\href{mailto:ohkawa.ryo@omu.ac.jp}{ohkawa.ryo@omu.ac.jp}}

\Address{$^{\rm e)}$~Research Institute for Mathematical Sciences, Kyoto University, Kyoto 606-8502, Japan}
\EmailD{\href{mailto:ohkawa@kurims.kyoto-u.ac.jp}{ohkawa@kurims.kyoto-u.ac.jp}}

\Address{$^{\rm f)}$~University of Geneva, Switzerland}
\EmailD{\href{mailto:shakirov.work@gmail.com}{shakirov.work@gmail.com}}

\Address{$^{\rm g)}$~Institute for Information Transmission Problems, Moscow, Russia}

\Address{$^{\rm h)}$~Graduate School of Mathematical Sciences, University of Tokyo,\\
\hphantom{$^{\rm h)}$}~Komaba, Tokyo 153-8914, Japan}
\EmailD{\href{mailto:shiraish@ms.u-tokyo.ac.jp}{shiraish@ms.u-tokyo.ac.jp}}

\Address{$^{\rm i)}$~Department of Mathematics, Kobe University, Rokko, Kobe 657-8501, Japan}
\EmailD{\href{mailto:yamaday@math.kobe-u.ac.jp}{yamaday@math.kobe-u.ac.jp}}

\ArticleDates{Received November 06, 2023, in final form August 07, 2024; Published online August 22, 2024}

\Abstract{We show that Shakirov's non-stationary difference equation, when it is truncated, implies the quantum Knizhnik--Zamolodchikov ($q$-KZ) equation for $U_{\mathsf v}\bigl(A_1^{(1)}\bigr)$ with generic spins. Namely, we can tune mass parameters so that the Hamiltonian acts on the space of finite Laurent polynomials. Then the representation matrix of the Hamiltonian agrees with the $R$-matrix, or the quantum $6j$ symbols. On the other hand, we prove that the $K$ theoretic Nekrasov partition function from the affine Laumon space is identified with the well-studied Jackson integral solution to the $q$-KZ equation. Combining these results, we establish that the affine Laumon partition function gives a solution to Shakirov's equation, which was a conjecture in our previous paper. We also work out the base-fiber duality and four-dimensional limit in relation with the $q$-KZ equation.}

\Keywords{affine Laumon space; quantum affine algebra; non-stationary difference equation; quantum Knizhnik--Zamolodchikov equation}

\Classification{14H70; 81R12; 81T40; 81T60}

\section{Introduction}

The quantum (or $q$-deformed) Knizhnik--Zamolodchikov ($q$-KZ) equation typically arises as a~linear difference equation
satisfied by matrix elements of the products of intertwining operators of highest weight representations
of quantum affine algebra \cite{EFK, FR-q-KZ}.
It roughly takes the following form:
\begin{align}
\Psi(u_1,\dots, pu_j,\dots, u_n)
&{}= R_{V_j V_{j-1}} \biggl(\frac{pu_j}{u_{j-1}}\biggr) \cdots R_{V_j V_{1}} \biggl(\frac{pu_j}{u_{1}}\biggr)
D_j \nonumber\\
&\quad{}\times
R_{V_n V_j}^{-1}\biggl(\frac{u_n}{u_{j}}\biggr) \cdots R_{V_{j+1} V_j}^{-1}\biggl(\frac{u_{j+1}}{u_{j}}\biggr) \Psi(u_1,\dots, u_n),
\end{align}
where $\Psi(u_1,\dots, u_n)$ is an appropriate matrix element of the intertwiners with the spectral parameters $u_i$
and $R_{V_k V_\ell}$ denotes the $R$ matrix on the tensor product $V_k \otimes V_\ell$ of highest weight representations.
$D_j$ % := q^{\pi_j(\lambda+\alpha) - (\lambda, \lambda_j)}$
is a diagonal operator acting on $V_j$. %with the highest weight $\lambda_j$
%and independent of the spectral parameters. $\alpha$ is a simple root and $\lambda$ is a parameter of the $q$-KZ equation.
In the case of \smash{$\qg\bigl(A^{(1)}\bigr)$} with level $k$, the shift parameter is fixed to be $p={\sf v}^{2(k+2)} = q^{k+2}$.\footnote{
The convention used in our papers \cite{Awata:2022idl,Shakirov:2021krl} is different from
the standard convention for the deformation parameter of the quantum affine algebra.
This is the reason why we denote $\qg\bigl(A_1^{(1)}\bigr)$ with ${\sf v}^2 = q$.
According to the AGT dictionary for the current algebra \cite{Alday:2010vg},
$k+2 = -\epsilon_2/\epsilon_1$, where
$(\epsilon_1, \epsilon_2)$ are the $\Omega$-background parameters on the gauge theory side.
Hence, we have $p=e^{(k+2)\epsilon_1} = e^{-\epsilon_2} = t$.}
The case $n=2$ is of our interest in this paper.
By the scaling of the spectral parameters, which fixes the origin and the infinity,
we can consider the two point function $\Phi(u) := \Psi(1,u)$, for which the $q$-KZ equation is
\begin{equation}\label{eq:qKZ-2pt}
%\Psi(pz_1, z_2)
%&= D_1 R_{V_2 V_1}^{-1}\left(\frac{z_2}{z_{1}}\right)\Psi(z_1, z_2), \nonumber \\
%
\Phi(pu) = R_{V_2 V_{1}} (pu) D_2 \Phi(u).
%\Phi({t \Lambda}) = R_{V_2 V_{1}} (\Lambda) D_2 \Phi(\Lambda).
\end{equation}

For the $q$-difference equations including \eqref{eq:qKZ-2pt},
various important results using the Jackson integrals have been developed based on the fundamental theory by K.~Aomoto.
In particular, the Jackson integral of symmetric Selberg type relevant for the equation \eqref{eq:qKZ-2pt}
was thoroughly studied by Aomoto, Kato, Matsuo, Mimachi and Ito
around early 1990's (see, e.g., \cite{AK,MIto} and references therein).
Around the same time, the relation to Bethe vectors was found by Reshetikhin \cite{Reshe-Bethe} and the method
was further developed by Tarasov, Varchenko, etc.
These works play a crucial role in our study.

In \cite{Awata:2022idl}, we show that the non-stationary difference equation proposed in \cite{Shakirov:2021krl}
is transformed into a quantization of the discrete Painlev\'e VI equation.
The original form of the non-stationary difference equation is
\begin{align}\label{Shamil-Eq}
\mathsf{U} (t \Lambda,x)=\mathcal{A}_1(\Lambda,x) \cdot \Bor \cdot
\mathcal{A}_2(\Lambda,x) \cdot \Bor \cdot \mathcal{A}_3 (\Lambda,x) \mathsf{U}\biggl(\Lambda,{x\over t q Q}\biggr),
\end{align}
where $\Bor$ is the $q$-Borel transformation on a formal Laurent series in $x$:
\begin{equation}\label{q-Borel}
\Bor\biggl(\sum_{n} c_n x^n\biggr)=\sum_n q^{n(n+1)/2} c_n x^n,
\end{equation}
and $\mathcal{A}_i(\Lambda,x)$ are products of infinite products $(x;q)_\infty$
and $(x;q,t)_\infty$ defined by \eqref{inf-prod} and \eqref{double-inf-prod}, respectively.
(See \cite{Shakirov:2021krl} and \cite{Awata:2022idl} for explicit forms of $\mathcal{A}_i(\Lambda,x)$).
In Appendix~\ref{App:framing}, we explain a~relation of the operator $\Bor$ and the refined Chern--Simons theory \cite{aganagic2011knot},
which gave the original motivation for introducing $\Bor$ in \cite{Shakirov:2021krl}.
The non-stationary difference equation \eqref{Shamil-Eq} was found by looking at the instanton partition function of
five-dimensional ${\rm SU}(2)$ gauge theory with a~surface defect.
The parameters $(q,t)$ are equivariant parameters of the torus action $(z_1, z_2) \longrightarrow \bigl(q z_1, t^{-1} z_2\bigr)$
on $\mathbb{C}^2$ and the defect is at $z_2=0$, which is the set of fixed points
of $t^{-1}$-action.\footnote{We can regard $q$ and $t$ as the $\Omega$-background along the defect and transverse to the defect, respectively.}
The function $\mathsf{U} (\Lambda,x)$ is supposed to be a formal double power series in $x$ and $\Lambda/x$:
\begin{equation}\label{intro:expand}
\mathsf{U} (\Lambda,x) = \sum_{k,\ell \geq 0} c_{k,\ell} x^k (\Lambda/x)^\ell,
\end{equation}
where the coefficients $c_{m,n}$ are functions of the ${\rm SU}(2)$ Coulomb parameter $Q$, mass parameters~$T_i$
and the equivariant parameters $(q,t)$.
Physically, $\Lambda$ is the instanton expansion parameter and~$x$ is identified with the position
of the degenerate field $\phi_{2,1}(x)$ that corresponds to the defect according
to the AGT correspondence \cite{Alday:2009fs,Alday:2009aq}.

In the case of the Virasoro algebra which corresponds to four-dimensional $\mathcal{N}=2$ supersymmetric gauge theories,
the degenerate field $\phi_{2,1}(x)$ has a null state at level two, which implies
a~differential equation of the second order in $x$ as the BPZ equation.
The non-stationary difference equation \eqref{Shamil-Eq} may be regarded as a $q$-deformed version of the BPZ equation.
However, if we try to derive it, we encounter the problem, since the natural coproduct is not known for the deformed Virasoro algebra~\cite{Awata:2009ur,Shiraishi:1995rp}.
Hence, we cannot define intertwiners and consequently cannot
use the methods of representation theory. We can overcome such a difficulty by noticing the following fact.
Namely, the instanton partition function with a surface defect allows another description
in terms of the affine Laumon space \cite{Alday:2010vg,Laumon1,Laumon2}.
In this case the instanton partition function gives a conformal block of the affine Kac--Moody algebra and
its $q$ deformation, the quantum affine algebra is a Hopf algebra!
In \cite{Awata:2022idl} (and implicitly in \cite{Shakirov:2021krl}),
it was conjectured that the instanton partition function coming from the equivariant character of
the affine Laumon space solves the gauge transformed version of \eqref{Shamil-Eq};
See \eqref{to-qKZ} below.
More concretely, in \cite{Awata:2022idl}
we have shown that the non-stationary difference equation \eqref{Shamil-Eq} can be factorized into a coupled system,
which is in accord with the formula of Hamiltonian of the discrete Painlev\'e VI equation
in terms of the extended affine Weyl group of \smash{$D_5^{(1)}$}.
Then the conjecture is that a pair of the affine Laumon partition functions whose parameters are related
by a simple transformation provides a solution to the coupled system, see \cite[Conjecture 6.4]{Awata:2022idl}.
In this paper, we prove this conjecture by using a connection through the $q$-KZ equation.

In order to see the relation to the $q$-KZ equation, we first make a gauge transformation
of the ``wave function'' $\mathsf{U} (\Lambda,x) \to \Psi(\Lambda,x)$ to recast \eqref{Shamil-Eq}
into the following form \cite{Awata:2022idl}:
\begin{equation}\label{to-qKZ}
%T_{t,\Lambda} \cdot
\Psi(\Lambda,x) = \SS T_{qtQ,x}^{-1} T_{t,\Lambda}^{-1} \cdot \Psi(\Lambda,x).
\end{equation}
An explicit form of the Hamiltonian $\SS$ is given by \eqref{Shakirov}.
$\SS$ involves ``renormalized'' mass parameters $d_i$, $1 \leq i \leq 4$ which are monomials in the original mass parameters $T_i$, $Q$ and
$(q,t)$. It is amusing that we can eliminate the double infinite product $(x;q,t)_\infty$ appeared in $\mathcal{A}_i(\Lambda,x)$ from $\SS$.
It is also remarkable that the parameters $Q$ and $t$ appear explicitly only through the shift parameters for $x$ and $\Lambda$.
By tuning two of mass parameters, say $d_2=q^{-m}$ and $d_3=q^{-n}$, the Hamiltonian acts on the $(n+m+1)$-dimensional space of
Laurent polynomials with a basis~$x^j$, $-n \leq j \leq m$.
Then, the representation matrix of $\SS$ agrees with the $R$-matrix of \smash{$\qg\bigl(A_1^{(1)}\bigr)$},
which allows us to identify \eqref{to-qKZ} as a $q$-KZ equation. Namely, for the matrix $r_{i,j}$ defined by
\begin{equation}
\SS x^{i}=\sum_{j=-n}^{m} r_{i,j}(\Lambda) x^j, \qquad
-n\leq i\leq m,
\end{equation}
we have the following statement.

\begin{Theorem}[Theorem \ref{r-matrix}]
Set $N=m+n$. The matrix $[r_{i,j}(\Lambda)]_{-n\leq i,j\leq m}$ coincides with
the $(N+1) \times (N+1)$ block
%of the $R$-matrix for $\qg(A_1^{(1)})$ with $N=m+n$ down spins,
in the weight decomposition of the infinite-dimensional $R$-matrix for \smash{\raisebox{-0.5pt}{$\qg\bigl(A_1^{(1)}\bigr)$}}
for evaluation representations of generic weights.
\end{Theorem}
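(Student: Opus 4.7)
The strategy is to compute both $(N+1)\times(N+1)$ matrices explicitly in a common basis and verify entry-wise equality. On the Hamiltonian side, I would expand $\SS x^{i}=\sum_{j} r_{i,j}(\Lambda)\, x^{j}$ using the explicit formula \eqref{Shakirov}: the $q$-shift $T_{qtQ,x}^{-1}$ acts diagonally on monomials, while each single-product factor $(\alpha x;q)_\infty$ contributes to the image via the $q$-binomial theorem. The truncations $d_2=q^{-m}$ and $d_3=q^{-n}$ force two of the infinite products to terminate as polynomials, confining the image of $x^{i}$ to the span of $\{x^{j}\mid -n\le j\le m\}$. After simplification, $r_{i,j}(\Lambda)$ is given by an explicit $q$-Pochhammer product in $d_1,d_4,q,\Lambda$ times at most a short residual summation.

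On the representation-theoretic side, I would invoke the standard $R$-matrix on $V(\lambda_1,u_1)\otimes V(\lambda_2,u_2)$, the tensor product of evaluation Verma modules over $\qg(A_1^{(1)})$ with \emph{generic} highest weights. Because this $R$-matrix commutes with the Cartan, it decomposes into finite-dimensional blocks indexed by total weight $\lambda_1+\lambda_2-2k$, of dimension $k+1$; the $k=N$ block carries the natural basis $\{F^{a} v_{\lambda_1}\otimes F^{N-a} v_{\lambda_2}\}_{a=0}^{N}$, with matrix elements of the classical $q$-Pochhammer/$q$-hypergeometric form (see, e.g., \cite{FR-q-KZ, MIto}). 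The three genuinely free parameters of $\SS$ remaining after the truncation, namely $d_1, d_4, \Lambda$, are matched to the three free parameters of the $R$-matrix block $(\lambda_1,\lambda_2,u_2/u_1)$ by a dictionary of the form $q^{-\lambda_i}\leftrightarrow$ monomial in the $d_j,q,t,Q$; the bijection of bases is $x^{i}\leftrightarrow F^{i+n} v_{\lambda_1}\otimes F^{m-i} v_{\lambda_2}$, sending $i\in\{-n,\ldots,m\}$ to $a=i+n\in\{0,\ldots,N\}$.

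With the parameter dictionary fixed, the theorem reduces to a collection of entry-wise identities of rational functions in $\Lambda$, each equating a $q$-Pochhammer product coming from the $\SS$-expansion to a $q$-hypergeometric coefficient coming from the $R$-matrix block. The main obstacle is that the two sides are packaged differently: the $\SS$ side emerges as a product expansion with a short hypergeometric tail, whereas the $R$-matrix entry is naturally a sum arising from the ``ladder'' $E^{k}\otimes F^{k}$ contributions in the universal $R$. Bridging the two presentations should require a $q$-Saalsch\"utz- or Heine-type transformation applied entry by entry. A clean way to organize the verification is to (i) fix the overall scalar normalization by comparing one extremal entry such as $r_{m,-n}(\Lambda)$, which has a pure product form on both sides; and then (ii) propagate to all remaining entries by combining the shared weight-preservation property with a first-order recursion obtained from the action of a single $F$ generator, so that the full proof reduces to a finite check of elementary $q$-series identities.
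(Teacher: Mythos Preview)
Your strategy of computing both matrices explicitly and matching entries is in principle workable, but it differs substantially from the paper's argument and, more importantly, it glosses over the one genuinely hard step.

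The paper does \emph{not} compute $r_{i,j}(\Lambda)$ and the $R$-matrix entries separately and then compare. Instead it shows that both objects are characterized by the \emph{same} connection-coefficient problem. Concretely, using the identity
\[
\Bor\cdot\frac{1}{\varphi(\alpha x)\varphi(\beta\Lambda/x)}\,x^{n}
=\frac{\varphi(-q^{n+1}\alpha x)\varphi(-q^{-n}\beta\Lambda/x)}{\varphi(\alpha\beta\Lambda)}\,q^{n(n+1)/2}x^{n}
\]
(the ``Borel lemma'', equivalent to the quantum dilogarithm/pentagon identity), both occurrences of $\Bor$ in $\SS$ are eliminated in one stroke. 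The defining relation $\SS x^{i}=\sum_{j}r_{i,j}x^{j}$ then collapses to a purely polynomial identity of the form
\[
q^{\frac{i(i+1)}{2}}x^{i}\bigl(-d_{1}q^{i-m}x\bigr)_{m-i}\bigl(-d_{4}q^{-i-n}\tfrac{\Lambda}{x}\bigr)_{i+n}
=\sum_{j=-n}^{m}r_{i,j}(\Lambda)\,q^{-\frac{j(j+1)}{2}}x^{j}\bigl(-q^{-m}x\bigr)_{m-j}\bigl(-q^{-n}\tfrac{\Lambda}{x}\bigr)_{j+n},
\]
which is exactly Rosengren's defining relation for the quantum $6j$-symbol (i.e., the $R$-matrix block). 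Equality of the matrices then follows without any entry-wise hypergeometric verification.

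The gap in your proposal is precisely at the phrase ``after simplification''. Expanding each $\varphi(\alpha x)^{\pm1}$ by the $q$-binomial theorem and letting $\Bor$ act termwise on monomials does not yield ``a $q$-Pochhammer product times a short residual summation'' for free: with two $\Bor$ operators sandwiched between several $\varphi$-factors you obtain a multi-fold sum whose collapse is a nontrivial identity. That identity \emph{is} the Borel lemma above, and you have not invoked it. Without it, your first step does not terminate in a closed form you can feed into Saalsch\"utz/Heine. Once you do use it, the remaining work is not an entrywise hypergeometric check but the single observation that the resulting relation is the known characterization of the $R$-matrix block; your proposed normalization-plus-recursion scheme then becomes unnecessary. (Minor point: $r_{i,j}$ is defined by $\SS x^{i}=\sum_{j}r_{i,j}x^{j}$ alone; the shift $T_{qtQ,x}^{-1}$ enters only in the $q$-KZ equation, not in the definition of the matrix.)
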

This is one of the fundamental results in this paper.

\begin{figure}[t]\centering
\vspace*{23mm}
\begin{picture}(50,40)
\setlength{\unitlength}{0.8mm}
\thicklines
\put(-25,0){\vector(1,0){60}}
\put(0,-5){\vector(0,1){50}}
\put(0,0){\line(-1,1){25}}
\put(-15,-5){{\color{red}\line(0,1){50}}}
\put(20,-5){{\color{red}\line(0,1){50}}}
\put(0,0){\circle{2}}
\put(0,5){\circle{2}}
\put(0,10){\circle{2}}
\put(0,15){\circle{2}}
\put(0,20){\circle{2}}
\put(0,25){\circle{2}}
\put(5,0){\circle{2}}
\put(5,5){\circle{2}}
\put(5,10){\circle{2}}
\put(5,15){\circle{2}}
\put(5,20){\circle{2}}
\put(5,25){\circle{2}}
\put(10,0){\circle{2}}
\put(10,5){\circle{2}}
\put(10,10){\circle{2}}
\put(10,15){\circle{2}}
\put(10,20){\circle{2}}
\put(10,25){\circle{2}}
\put(15,0){\circle{2}}
\put(12,4){$\cdots$}
\put(12,9){$\cdots$}
\put(12,14){$\cdots$}
\put(12,19){$\cdots$}
\put(12,24){$\cdots$}
\put(20,0){\circle{2}}
\put(20,5){\circle{2}}
\put(20,10){\circle{2}}
\put(20,15){\circle{2}}
\put(20,20){\circle{2}}
\put(20,25){\circle{2}}
\put(-5,5){\circle{2}}
\put(-5,10){\circle{2}}
\put(-5,15){\circle{2}}
\put(-5,20){\circle{2}}
\put(-5,25){\circle{2}}
\put(-10,10){\circle{2}}
\put(-12,14){$\cdots$}
\put(-12,19){$\cdots$}
\put(-12,24){$\cdots$}
\put(-15,15){\circle{2}}
\put(-15,20){\circle{2}}
\put(-15,25){\circle{2}}
\put(0,30){$\vdots$}
\put(4.5,30){$\vdots$}
\put(9.5,30){$\vdots$}
\put(14.5,30){$\vdots$}
\put(19.5,30){$\vdots$}
\put(-5.5,30){$\vdots$}
\put(-10.5,30){$\vdots$}
\put(-15.5,30){$\vdots$}
\put(0,35){$\vdots$}
\put(4.5,35){$\vdots$}
\put(9.5,35){$\vdots$}
\put(14.5,35){$\vdots$}
\put(19.5,35){$\vdots$}
\put(-5.5,35){$\vdots$}
\put(-10.5,35){$\vdots$}
\put(-15.5,35){$\vdots$}

%%%
\put(-5,42){$\Lambda$}
\put(30,-4){$x$}
\put(18,-10){$m$}
\put(-19,-10){$-n$}
%\put(-28,4){$\{0\} \times \mathbb{P}^1$}
%\put(-30,34){$\{\infty\} \times \mathbb{P}^1 $}
\end{picture}
\vspace{5mm}

\caption{After the mass truncation $d_2=q^{-m}$, $d_3=q^{-n}$, the double series $\Psi(\Lambda,x)$ becomes
a Laurent polynomial in $x$, while it is still a formal power series in~$\Lambda$.
The circles represent the positions of allowed terms in the $(x, \Lambda)$-lattice.}\label{Fig:mass-truncation}
\end{figure}

As we mentioned above, the Jackson integral of symmetric Selberg type solves the $q$-KZ equation.
Recall that the Jackson integral is a pairing of the integration cycle $\xi$ and the cocycle function $\phi(z)$
\begin{equation}\label{intro;Jackson}
\langle \phi(z), \xi \rangle
=(1-t)^N \sum_{\nu \in \bbZ^N} \biggl[\phi(z)\Phi(z) \prod_{1\leq i< j \leq N} (z_i-z_j) \biggr]_{z_i=\xi_i t^{\nu_i}}.
\end{equation}
In this paper, we take the weight function $\Phi(z)$ which depends on four parameters $(a_1,a_2, b_1, b_2)$;
See Section \ref{sec:duality} for details.
On the other hand, the instanton partition function $\mathcal{Z}_{\rm {AL}}$
from the equivariant character of the affine Laumon space is defined by \eqref{fixedpoint}.
Hence, if we can show the relation of the partition function $\mathcal{Z}_{\rm {AL}}$
to the Jackson integral \eqref {intro;Jackson}, it gives a proof of the conjecture in our previous paper.
Under the same tuning conditions for mass parameters as above, the partition function $\mathcal{Z}_{\rm {AL}}$
becomes a Laurent polynomial in $x$ (see Figure~\ref{Fig:mass-truncation} and Appendix~\ref{App:Tuning}).
By carefully examining combinatorial structure of the orbifolded Nekrasov factor in~\eqref{fixedpoint} (see Appendix~\ref{App:Laumon}),
we can identify the coefficients of $\mathcal{Z}_{\rm {AL}}$ in the expansion parameter $x$
as the Jackson integral \eqref{intro;Jackson} with an appropriate choice of the cycle $\xi$.
In this way, the affine Laumon partition function $\mathcal{Z}_{\rm {AL}}$
agrees with the Jackson integral for the $q$-KZ equation term by term,
if we choose suitable basis of (co-)cycles.
More concretely, we have the following.

\begin{Theorem}[Theorem \ref{AL=J}]
Under the identification of the parameters
\begin{gather}
d_1=\frac{1}{q^{m-1}a_1b_1}, \qquad
d_2=q^{-m}, \qquad
d_3=q^{-n}, \qquad
d_4=\frac{1}{q^{n-1}a_2b_2}, \qquad
Q=\frac{q^{m-n}a_1}{t a_2},\!\!
\end{gather}
the affine Laumon partition function $\mathcal{Z}_{\rm {AL}}$ coincides with the Jackson integral \eqref{intro;Jackson}
with an appropriate choice of $\xi$ and a suitable basis of $\phi(z)$.
\end{Theorem}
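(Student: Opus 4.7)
The plan is to match the two sides term by term, after rewriting each as a lattice sum indexed by $\mathbb{Z}^N$ with $N = m+n$.

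First, I would expand $\mathcal{Z}_{\rm AL}$ via its fixed-point expression \eqref{fixedpoint}, which is a sum over tuples of Young diagrams labelling the torus-fixed loci on the affine Laumon space, each term a product of orbifolded Nekrasov factors. Applying the analysis of Appendix \ref{App:Tuning}, under the mass truncation $d_2 = q^{-m}$ and $d_3 = q^{-n}$ all Young-diagram configurations outside the strip depicted in Figure \ref{Fig:mass-truncation} produce a vanishing Nekrasov factor. The surviving tuples are in natural bijection with $\mathbb{Z}^N$, obtained by recording the $N$ heights of the active columns singled out by the bi-coloring from the $\mathbb{Z}/2$-orbifold structure. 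In this way $\mathcal{Z}_{\rm AL}$ is already brought into the shape of a lattice sum over $\mathbb{Z}^N$, parallel to \eqref{intro;Jackson}.

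Second, I would exploit the combinatorial identities for the orbifolded Nekrasov factor from Appendix \ref{App:Laumon} to split each surviving summand, after the substitution $z_i = \xi_i t^{\nu_i}$, into three pieces: a weight-function part that reproduces $\Phi(z)$ of Section \ref{sec:duality} with the four parameters $(a_1, a_2, b_1, b_2)$ read off from the $d_i$ as in the theorem statement; a Vandermonde-type factor $\prod_{i<j}(z_i - z_j)$ coming from cross-interactions of boxes of the same orbifold color in the Nekrasov denominator; and a cocycle factor $\phi(z)$ whose choice specifies which coefficient $x^k$ of the truncated expansion is being extracted. The global prefactor $(1-t)^N$ should fall out of the Jacobian of this change of summation variable together with the normalization of the empty-partition contribution.

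The central obstacle is this second step: rewriting the orbifolded Nekrasov product, which is naturally a product over pairs of boxes in a tuple of partitions with arm/leg lengths sensitive to the $\mathbb{Z}/2$-grading, as a clean product over the pair variables $(z_i, z_j)$ of the Jackson integrand. The identification $Q = q^{m-n}a_1/(ta_2)$ is expected to be precisely the consistency condition making the box-variable to column-variable translation work, and the other identifications then fix the four Selberg parameters. Once the three-piece factorization is in hand, the base points $\xi = (\xi_1, \ldots, \xi_N)$ and the basis of cocycles $\phi(z)$ can be read off directly, and should be cross-checked against the Laurent-polynomial basis $x^j$, $-n \leq j \leq m$, used in the $R$-matrix identification of Theorem \ref{r-matrix}, so that the two theorems combine consistently into the promised solution of Shakirov's equation.
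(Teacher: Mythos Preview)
Your overall strategy---truncate via the mass conditions, parametrize the surviving Young-diagram data by $N=m+n$ integer variables, and then factor each summand into weight function times Vandermonde times cocycle---matches the paper's proof. But your description of the bijection with $\mathbb{Z}^N$ is too coarse, and this hides exactly the mechanism that resolves what you call the ``central obstacle.''

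The surviving fixed points are pairs of partitions with column-length sequences $(\ell_1,\ldots,\ell_m)$ and $(k_1,\ldots,k_n)$. The paper does \emph{not} identify these directly with lattice points; instead each column height is split into its domino count and its parity, writing $\ell_i = 2\nu_i + (\ell_i)$ and similarly for $k_j$. The domino counts $\nu_I$ furnish the Jackson-integral lattice variable via $z_I = \xi_I t^{\nu_I}$ (and the specific cycle $\xi=\xi_{m,n}$ of \eqref{lattice-truncate} is what forces the bilateral sum down to the positive cone matching the Young-diagram constraint---your proposal does not flag this step). The parities, on the other hand, determine a subset $J\subset[N]$ recording which columns are odd, and the power of $x$ is $|J|-n$ by \eqref{eq:2s}. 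So for each fixed exponent $x^{k-n}$ there is an \emph{additional} sum over subsets $J$ of size $k$. The heart of the identification is that after the parameter substitution \eqref{eq:dQ-ab} and the rescaling $z_I\to z_I/(q^{m-1}a_1)$, the summand factors as $W_N^{(0)}(z)\,P_J(z)$ with $W_N^{(0)}$ independent of $J$, and then
\[
\sum_{|J|=k} P_J(z) \;=\; c_k\,\hat e_k(a_2,b_1;z)\prod_{I=1}^N\frac{1}{1-z_I/a_2},
\]
which is precisely the factorized (Bethe-vector/shuffle) expression \eqref{eq:e-factor} for the Matsuo basis, proved via Proposition~\ref{Shuffle}. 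Without this parity-subset layer, you would be trying to match a single cocycle to a sum of $\binom{N}{k}$ terms, and the match would fail. Once you insert the domino/parity decomposition, the ``box-variable to column-variable translation'' becomes mechanical: the parity-independent part collapses to $\Phi(z)\Delta(1,z)$ via Lemma~3.1, and the parity-dependent residue is exactly $P_J$.
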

 A definition of the basis and the choice of $\xi$ are the subjects of Section~\ref{sec:duality}.
In particular, we show that the combinatorial structure of $\mathcal{Z}_{\rm {AL}}$ as a sum over the pair of Young diagrams
is in a~nice correspondence with the cohomology basis called Matsuo basis in the literature.

By combining above two theorems under the specialization $d_2=q^{-m}$, $d_3=q^{-n}$,
we can show that the affine Laumon partition function $\mathcal{Z}_{\rm {AL}}$
is a solution to the gauge transformed version~\eqref{to-qKZ} of the Shakirov's equation.
The result can be extended to generic mass parameters $d_i$ also by the following reasons.
We invoke the fact that the equation \eqref{Shamil-Eq} inductively determines the expansion coefficients $c_{i,j}$
in \eqref{intro:expand} uniquely up to an overall normalization, which we fix by~${c_{0,0}=1}$
(see \cite[Proposition 2.7]{Awata:2022idl}).
Moreover, from the recursion relations among $c_{i,j}$ we can see that for generic mass parameters $d_i$
they are polynomials in $d_i$ with coefficients in the field of rational functions of $q$, $t$ and $Q$.\footnote{
If we expand the Hamiltonian $\SS$ in $x$ and $\Lambda/x$, the leading term does not involve $d_i$.}
Hence, if $\mathcal{Z}_{\rm {AL}}$ is a solution under the specialization $d_2 = q^{-m}$, $d_3 = q^{-n}$
for any $m,n \in \mathbb{Z}_{\geq 0}$, it is valid for generic values of $d_2$ and $d_3$.

\begin{Theorem}\label{Main}
The affine Laumon partition function $\mathcal{Z}_{\rm {AL}}$ is a solution to the non-stationary difference equation
\eqref{to-qKZ}.
\end{Theorem}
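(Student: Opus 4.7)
The plan is to combine the two structural theorems stated above, Theorem \ref{r-matrix} and Theorem \ref{AL=J}, with the classical fact that Jackson integrals of symmetric Selberg type solve the two-point $q$-KZ equation, and then to promote the resulting identity from the truncated-mass locus to generic mass parameters $d_i$ by a density argument.

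First, I would fix non-negative integers $m,n$ and specialize $d_2=q^{-m}$, $d_3=q^{-n}$. Under this truncation the Hamiltonian $\SS$ preserves the $(m+n+1)$-dimensional space of Laurent polynomials spanned by $x^{j}$ with $-n\leq j\leq m$, and by Theorem \ref{r-matrix} its representation matrix in this basis coincides with a finite block of the $R$-matrix of $\qg\bigl(A_1^{(1)}\bigr)$. Hence on this finite-dimensional space, equation \eqref{to-qKZ} is precisely the two-point $q$-KZ equation \eqref{eq:qKZ-2pt}. Next, I would invoke Theorem \ref{AL=J}, which identifies $\mathcal{Z}_{\rm AL}$, coefficient by coefficient in $\Lambda$, with the Jackson integral \eqref{intro;Jackson} evaluated against a suitable cocycle basis on a suitable cycle $\xi$. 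Since such Jackson integrals are known to solve the $q$-KZ equation (by the Aomoto--Kato--Matsuo--Mimachi--Ito theory), this yields the conclusion of Theorem \ref{Main} whenever $d_2=q^{-m}$, $d_3=q^{-n}$ with $m,n\in\bbZ_{\geq 0}$.

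Second, I would extend the statement to generic $d_i$ by a polynomiality/Zariski-density argument. By \cite[Proposition~2.7]{Awata:2022idl} the non-stationary equation \eqref{Shamil-Eq}, equivalently \eqref{to-qKZ}, inductively determines the coefficients $c_{k,\ell}$ in \eqref{intro:expand} from the normalization $c_{0,0}=1$; inspecting the recursion, each $c_{k,\ell}$ is a polynomial in $d_1,d_2,d_3,d_4$ with coefficients in the field of rational functions in $q,t,Q$, the point being that the leading term of $\SS$ carries no $d_i$ so that the denominators appearing in the recursion are $d$-independent. The coefficients of $\mathcal{Z}_{\rm AL}$ in its expansion in $x$ and $\Lambda/x$ enjoy the same polynomial dependence on $d_i$, as can be read off from the orbifolded Nekrasov product in \eqref{fixedpoint}. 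Consequently, the statement that $\mathcal{Z}_{\rm AL}$ satisfies \eqref{to-qKZ} becomes, order by order in $x$ and $\Lambda$, a family of polynomial identities in $d_2$ and $d_3$. Since they hold on the Zariski-dense subset $\bigl\{\bigl(q^{-m},q^{-n}\bigr):m,n\in\bbZ_{\geq 0}\bigr\}$, they hold for arbitrary $d_2,d_3$, and the theorem follows.

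The main obstacle is really packaged inside the two inputs rather than in the assembly. The delicate step in Theorem \ref{AL=J} is the combinatorial matching between the sum over pairs of Young diagrams defining $\mathcal{Z}_{\rm AL}$ and the Matsuo basis expansion of the Jackson integral; within the present proof the only subtlety is to make sure that the rational dependence on $d_i$ of both sides of \eqref{to-qKZ} has denominators independent of $d_2, d_3$, so that the Zariski-closure step is legitimate. Once that polynomial dependence is in hand, the proof is a short chain: truncate, identify with the $R$-matrix, identify with the Jackson integral, specialize in the $q$-KZ theory, and finally take the Zariski closure in $d_2, d_3$.
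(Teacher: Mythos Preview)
Your proposal is correct and follows essentially the same approach as the paper: specialize $d_2=q^{-m}$, $d_3=q^{-n}$, combine Theorem~\ref{r-matrix} with Theorem~\ref{AL=J} and the classical $q$-KZ theory of Jackson integrals to verify \eqref{to-qKZ} on the truncated locus, and then extend to generic $d_i$ via the polynomiality of the recursion coefficients established in \cite[Proposition~2.7]{Awata:2022idl}. The paper phrases the density step through the uniqueness of the normalized solution $\Psi$ rather than as a direct Zariski-closure of polynomial identities, but this is the same argument, and your explicit flag about the $d_2,d_3$-independence of the denominators is exactly the point that needs checking.
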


In Appendix~\ref{App:coupled}, we recast \eqref{to-qKZ} to a coupled system, which is essentially the same as
the coupled system derived in \cite{Awata:2022idl} based on the structure of the Hamiltonian of the $qq$-Painlev\'e~VI equation.
Hence, our main Theorem~\ref{Main} also proves \cite[Conjecture~6.4]{Awata:2022idl}.
Recall that the original conjecture in \cite{Shakirov:2021krl} was for the degenerate five point $q$-Virasoro conformal block,
which is realized as the Nekrasov partition function of ${\rm U}(2) \times {\rm U}(2)$ linear quiver gauge
theory with the Higgsing condition on mass parameters.
One can express the partition function in question in terms of the topological vertex (intertwiners of the quantum
toroidal $\mathfrak{gl}_1$-algebra) and confirm that it is similarly related to the Jackson integral of the same type
as the present paper. This is regarded as a $q$-deformed analogue of
the correspondence between the current block of $\mathfrak{sl}_2$ and the degenerate conformal block.
Details will be reported elsewhere.

It is known that the Jackson integral relevant for $q$-KZ equation actually satisfies two kinds of difference equations
(see, e.g., \cite{MIto}).
For the case in our hand, the second difference equation involves the shift of a ``renormalized''
Coulomb modulus $(qtQ)^{-1}$ of ${\rm SU}(2)$ gauge theory.\footnote{Recall that $(qtQ)^{-1}$ is the shift parameter of $x$ in \eqref{Shamil-Eq}.}
Two difference equations are exactly the same under the exchange
\begin{equation}\label{base-fiber-duality}
\Lambda \longleftrightarrow \frac{q^2}{d_1d_2} (qtQ)^{-1}.
\end{equation}
In the geometric engineering of ${\rm SU}(2)$ gauge theory by local $\mathbb{F}_0 = \mathbb{P}^1 \times \mathbb{P}^1$, $\Lambda$ and $Q$ are identified with
K\"ahler parameters of $\mathbb{P}^1 \times \mathbb{P}^1$. Hence, this is a manifestation of the base-fiber duality in topological strings.

We will also work out the four-dimensional limit of the gauge transformed equation \eqref{to-qKZ}.
Then we can check that it agrees with the KZ equation, which has been derived from the regularity of the fractional fundamental
$qq$-character of type $A_1$ \cite{Nekrasov:2021tik}.

Since the KZ equation can be regarded as a quantization of the Schlesinger system \cite{Reshe},
the relation of the conformal block of the current algebra to the quantum monodromy preserving deformation
has been believed as a folklore.
The equation \eqref{to-qKZ} is quite interesting object as an explicit example realizing such correspondence.
In recent papers \cite{CV,TV}, the quantum cohomology and (quantum) monodromy preserving deformation
has also been discussed.

It is very likely that the $q$-Borel transformation $\Bor$ appears
in some representation of double affine Hecke algebras (DAHA) on polynomials
in $x_1, x_2, \dots$ \cite{Shakirov:2021krl}. From such a speculation, it is interesting
that in connection with the quantum $Q$ system \cite{DFK1,DFK2},
an operator very close to $\Bor$ is employed as Cherednik's Gaussian operator
representing the Dehn twist on polynomial representations of spherical DAHA.

The present paper is organized as follows.
In Section~\ref{sec:q-KZ}, we introduce a truncation of the non-stationary difference equation \eqref{Shamil-Eq} by tuning two of mass parameters among four.
We will call it mass truncation. After the mass truncation the Hamiltonian of \eqref{Shamil-Eq} acts on the space of Laurent polynomials in $x$.
We find that the matrix elements $r_{i,j}(\Lambda)$ satisfy
the same relation as the quantum $6j$-symbol derived by Rosengren \cite{Rosengren:2003nee}.
In Section~\ref{sec:duality}, following the work by Ito~\cite{MIto}, we first review the $q$-KZ equations satisfied by the Jackson integral \eqref{intro;Jackson},
which is a paring of an integration cycle~$\xi$ and a cocycle function $\varphi(z)$. We also study a special integration cycle~$\xi$
which reduces the bilateral sum over the lattice in the definition of the Jackson integral to a~sum over a positive cone of the lattice.
This truncation (which we call lattice truncation) has been considered in literature (see, e.g., \cite{ItoForrester,ItoNoumi} and references therein).
The relevant cycles are called the characteristic cycles or the $\alpha$-stable or $\alpha$-unstable cycles by Aomoto, and play crucial roles in the study of asymptotic behavior.
The lattice truncation is crucial, when we make an identification of the Jackson integral with the Nekrasov partition function
which is expressed as a sum over a pair of Young diagrams.
In Section~\ref{sec:Matsuo}, we consider the Nekrasov partition function with a surface defect, which can be obtained from the equivariant character
at fixed points of the toric action on the affine Laumon space. The partition function is expanded in $x$ and $\Lambda/x$ and we show that
the expansion in $x$ is in accord with a basis, which is called Matsuo basis, of the cocycle functions in the Jackson integral representation of
solutions to the $q$-KZ equation. In~Section~\ref{sec:4d}, we compute the four-dimensional limit of the $R$-matrix that appears in the truncation
of the non-stationary difference equation and show that the limit correctly reproduces the KZ equation for
the conformal block of the $\mathfrak{sl}_2$ current algebra.
There are many appendices in this paper, where % See the table of contents below.
some helpful clarifications and technical details are presented. % in these appendices.

We will use the following notations throughout the paper \cite{hypergeometric}:
\begin{gather}
\label{inf-prod}
\varphi(x):= (x;q)_\infty = \prod_{n=0}^\infty (1-x q^n)
=\exp\Biggl(-\sum_{n=1}^\infty{1\over n} {1\over 1-q^n} x^n \Biggr), \qquad |x| < 1, \quad |q|<1, \\
(x;q,t)_\infty = \prod_{n,m=0}^\infty (1-x q^nt^m)
=\exp\Biggl(-\sum_{n=1}^\infty{1\over n} {1\over (1-q^n)(1-t^n)} x^n \Biggr), \nonumber \\
 \hphantom{(x;q,t)_\infty =}{} |x| <1, \quad |q|, |t| < 1 .\label{double-inf-prod}
\end{gather}
The $q$-shifted factorial is defined by
\begin{equation}
(a;q)_n = \frac{(a;q)_\infty}{(aq^n ; q)_\infty} = \prod_{i=0}^{n-1} \bigl(1-a q^i\bigr).
\end{equation}
It is convenient to introduce the notation
\begin{equation}
(a_1, a_2, \dots, a_k ;q)_n := (a_1;q)_n (a_2;q)_n \cdots (a_k;q)_n.
\end{equation}
We frequently use the short-hand notation $(a)_n:=(a;q)_n$, when there is no risk of confusion.
The following formulas are useful:
\begin{equation}\label{a_n-inversion}
(a)_n=(-a)^n q^{n(n-1)/2} \biggl(\frac{1}{a q^{n-1}}\biggr)_n, \qquad
(a)_{k+\ell}=(a)_k\bigl(a q^k\bigr)_{\ell}.
\end{equation}
The $q$-exponential function is
\begin{equation}
e_q(x) := \sum_{n=0}^\infty \frac{x^n}{(q;q)_n} = \frac{1}{(x;q)_\infty}.
\end{equation}
Finally, the $q$-binomial coefficient is defined by
\begin{equation}\label{q-binom}
\cb{n}{k}{q}:= \frac{(q;q)_n}{(q;q)_{k} (q;q)_{n-k}}.
\end{equation}

\section[Shakirov's equation as a q-KZ equation with generic spins]{Shakirov's equation as a $\boldsymbol{q}$-KZ equation with generic spins}\label{sec:q-KZ}

In \cite{Awata:2022idl}, we show that by an appropriate gauge transformation we can recast the original equation~\eqref{Shamil-Eq}
to the form $\SS T_{qtQ,x}^{-1}T_{t,\Lambda}^{-1} \cdot \Psi(\Lambda,x)=\Psi(\Lambda,x)$ with the Hamiltonian
\begin{align}
\SS&{} = \frac{1}{\varphi(qx)\varphi(\Lambda/x)} \cdot \Bor \cdot
\frac{\varphi(\Lambda)\varphi\bigl(q^{-1} d_1d_2d_3d_4\Lambda\bigr)}{\varphi(-d_1x)\varphi(-d_2x)\varphi(-d_3 \Lambda/x)\varphi(-d_4 \Lambda/x)}
\nonumber \\
&\quad{}\times \Bor\cdot \frac{1}{\varphi\bigl(q^{-1}d_1d_2x\bigr)\varphi(d_3d_4 \Lambda/x)},\label{Shakirov}
\end{align}
where $\varphi(z) := (z;q)_\infty$.
We have made the following change of variables:\footnote{In \eqref{Shakirov}, $'$ on $x$ and $\Lambda$ are deleted for simplicity.}
\begin{equation}\label{d-variables1}
\Lambda' = t T_2T_3 \Lambda, \qquad x'= T_2 q^{-1/2} t^{1/2} x,
\end{equation}
and mass parameters:\footnote{We made an exchange $d_1\leftrightarrow d_2$
in \cite[equation~(5.5)]{Awata:2022idl}. We also exchanged $T_1$ and $T_2$, which corresponds to the action of a Weyl reflection
on the Painlev\'e side, namely \eqref{Shakirov} is derived from \cite[equation~(5.2)]{Awata:2022idl}
by the change of variables \eqref{d-variables1}
and \eqref{d-variables2}.}
\begin{alignat}{3}
&d_1 = T_1 q^{1/2} t^{-1/2}, \qquad&&
d_2 = T_2^{-1} q^{1/2} t^{-1/2} Q^{-1},& \nonumber\\
&d_3 = T_3^{-1} q^{1/2} t^{-1/2}, \qquad&&
d_4 = T_4 q^{1/2} t^{1/2} Q.&\label{d-variables2}
\end{alignat}
The Hamiltonian $\SS$ is manifestly symmetric under the exchange $d_1\leftrightarrow d_2$, $d_3 \leftrightarrow d_4$.
Let $\vartheta_x := x \partial_x$ be the Euler operator in $x$.
We have the following commutation relations among $x, p=q^{\vartheta_x}$ and $\Bor = q^{\vartheta_x(\vartheta_x+1)/2}$:
\begin{equation}
px = qxp, \qquad \Bor p = p \Bor, \qquad \Bor x = px \Bor.
\end{equation}

\begin{Lemma}\label{truncation}
For $m \in \bbZ_{\geq 0}$, we have
\begin{gather}
\SS \cdot x^m \bbC\biggl[\biggl[\frac{1}{x}\biggr]\biggr] \subset x^m \bbC\biggl[\biggl[\frac{1}{x}\biggr]\biggr] \qquad{\rm for} \ d_1=q^{-m}\ {\rm or} \ d_2=q^{-m},\notag \\
\SS \cdot x^{-m} \bbC[[x]] \subset x^{-m}\bbC[[x]] \qquad{\rm for} \ d_3=q^{-m}\ {\rm or} \ d_4=q^{-m}.\label{eq:HS-trunc}
\end{gather}
\end{Lemma}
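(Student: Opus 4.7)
By the manifest $d_1\leftrightarrow d_2$ symmetry of $\SS$, it suffices to establish the first inclusion for $d_2=q^{-m}$; the case $d_1=q^{-m}$ is identical. The second inclusion is proved by a parallel computation, using the $q$-Borel action on $1/\varphi(a/x)$ in place of $1/\varphi(ax)$. By linearity, the first claim reduces to showing that the coefficient of $x^j$ in $\SS(x^k)$ vanishes for every integer $k\leq m$ and every $j>m$, at each order in $\Lambda$.

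The plan is to evaluate the two $\Bor$'s in $\SS=A\,\Bor\,B\,\Bor\,C$ in closed form using the identity
\begin{equation*}
\Bor\bigl(x^s/\varphi(ax)\bigr)=q^{s(s+1)/2}\,x^s\,\varphi\bigl(-a q^{s+1} x\bigr), \qquad s\in\bbZ,
\end{equation*}
which follows directly from the definitions (with the analog $\Bor(x^s/\varphi(a/x))=q^{s(s+1)/2}x^s\varphi(-aq^{-s}/x)$ for the $\Lambda/x$-expansions). Expanding $1/\varphi(d_3d_4\Lambda/x)$ in powers of $\Lambda/x$ and applying the identity term by term collapses the innermost $\Bor$:
\begin{equation*}
\Bor\,C\cdot x^k=\sum_{b\ge 0}\frac{(d_3d_4\Lambda)^b}{(q;q)_b}\,q^{(k-b)(k-b+1)/2}\,x^{k-b}\,\varphi\bigl(-q^{k-b}d_1d_2\,x\bigr).
\end{equation*}
Multiplying by $B$, expanding $1/\varphi(-d_j\Lambda/x)$ ($j=3,4$) in $\Lambda/x$, using the $q$-binomial theorem on the ratio $\varphi(-q^{k-b}d_1d_2 x)/\bigl[\varphi(-d_1 x)\varphi(-d_2 x)\bigr]$, and then treating the outer $\Bor$ by the same identity, one reduces $\SS(x^k)$ to an explicit expansion whose coefficient of $x^j\,\Lambda^\ell$ is a single terminating basic hypergeometric sum.

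Specializing $d_2=q^{-m}$ in this sum introduces a Pochhammer factor of the form $(q^{-m};q)_c$ together with balancing factors depending on $j-k$. A Pfaff--Saalsch\"utz-type (or more generally Bailey-type) summation evaluates the sum to an explicit product containing the factor $(q^{m-j+1};q)_{j-m}$, which vanishes precisely when $j>m$. This matches the combinatorics of Rosengren's quantum $6j$-symbol formula \cite{Rosengren:2003nee} for \smash{$\qg\bigl(A_1^{(1)}\bigr)$}, which is identified with $r_{i,j}(\Lambda)$ in Theorem~\ref{r-matrix}.

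The main obstacle is this combinatorial consolidation: writing the coefficient of $x^j\Lambda^\ell$ as a \emph{single} terminating basic hypergeometric sum requires careful reindexing of the multi-fold summation, using in essential ways the relations $px=qxp$ and $\Bor x=px\Bor$ to push $\Bor$'s past the $\varphi$-factors. Once consolidated, the vanishing is a one-line consequence of a classical $q$-hypergeometric identity.
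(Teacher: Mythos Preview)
Your outline has the right ingredients at the start---the identity $\Bor\bigl(x^s/\varphi(ax)\bigr)=q^{s(s+1)/2}x^s\varphi(-aq^{s+1}x)$ is exactly the tool---but the argument breaks down at the consolidation step. After the two $\Bor$'s are unraveled you are left with a sum over at least four or five indices (from $1/\varphi(d_3d_4\Lambda/x)$, the two $1/\varphi(-d_j\Lambda/x)$, the $q$-binomial expansion of the ratio, and $1/\varphi(qx)$, $1/\varphi(\Lambda/x)$), and you never actually show that this collapses to a \emph{single} terminating ${}_{r+1}\phi_r$. ``Pfaff--Saalsch\"utz-type (or more generally Bailey-type)'' is not a proof; you would need to name the identity, verify the balancing conditions, and exhibit the factor $(q^{m-j+1};q)_{j-m}$ in the closed form. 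Your appeal to Theorem~\ref{r-matrix} for corroboration is also circular: that theorem presupposes the truncation of Lemma~\ref{truncation} in order even to define the finite matrix $r_{i,j}$.

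The paper's proof avoids all of this. First, the shift relation
\[
\SS(d_1,d_2,d_3,d_4,\Lambda,x)=q^2x\,\SS(qd_1,qd_2,d_3/q,d_4/q,\Lambda,x)\,p^2x^{-1}
\]
reduces the statement inductively to $m=0$. At $m=0$ (say $d_2=1$), one may discard the $\varphi(\ast\Lambda/x)$ factors since they only contribute nonpositive powers of $x$; what remains is the composition of two operators of the shape $\varphi(-aqx)^{-1}\,\Bor\,\varphi(ax)^{-1}$ (with $a=q^{-1}d_1$ and $a=-1$ respectively). The closed-form identity
\[
\frac{1}{\varphi(-aqx)}\,\Bor\,\frac{1}{\varphi(ax)}\cdot x^{-k}=x^{-k}\prod_{j=0}^{k-1}(q^j+ax),
\]
a special case of Lemma~\ref{Shamil-lemma}, shows each such operator sends $x^{-k}$ to a Laurent polynomial bounded above by $x^0$, hence preserves $\bbC[[1/x]]$. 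No hypergeometric summation is needed. The missing idea in your approach is precisely this factorization of $\SS$ (after specialization) into two ``polynomial-producing'' pieces; once you see it, the brute-force expansion becomes unnecessary.
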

\begin{proof}
Due to the relation %(equation~(4.24) in \cite{Awata:2022idl})
\[
\SS(d_1, d_2, d_3, d_4, \Lambda, x) = \ q^2 x \ \SS(q d_1, q d_2, d_3/q , d_4/q,\Lambda, x)\ p^2 x^{-1},
\]
the statements inductively reduce to the case $m=0$.
%Then, for $m=0$, the relations \eqref{eq:HS-trunc} follow from the identities ($k\geq 0$)
Hence, it is enough to show the lemma for $m=0$. Note that one can omit the factors of the form
$\varphi(\ast \Lambda/x)$ [$\varphi(\ast x)$ (resp.)] in the first [second (resp.)] relation in \eqref{eq:HS-trunc},
since they do not affect the consequence.
By omitting these factors, the relations \eqref{eq:HS-trunc} follow from the identities ($k\geq 0$)
\begin{gather*}
\frac{1}{\varphi(-aq x)}\Bor \frac{1}{\varphi(a x)} \cdot x^{-k}=x^{-k} \prod_{j=0}^{k-1}\bigl(q^j+ax\bigr),\\
\frac{1}{\varphi( -a/x)} \Bor \frac{1}{\varphi(a/x)} \cdot x^k=x^k \prod_{j=1}^k\biggl(q^j+\frac{a}{x}\biggr),
\end{gather*}
which are a special case of the formulas \eqref{lem-1} in Appendix \ref{App:Proofs}.
\end{proof}

Due to Lemma \ref{truncation}, we have the following action of $\SS$ with finite support:
\begin{equation}\label{eq:SSr}
\SS x^{i}=\sum_{j=-n}^{m} r_{i,j}(\Lambda) x^j, \qquad
-n\leq i\leq m,
\end{equation}
when the parameters are specialized as $d_2=q^{-m}$, $d_3=q^{-n}$, $m,n \in \bbZ_{\geq 0}$.\footnote
{Due to the symmetry $d_1\leftrightarrow d_2$, $d_3 \leftrightarrow d_4$, the cases $d_1=q^{-m}$, $d_4=q^{-n}$ etc.
have the similar truncation.}
%When the mass parameters are tuned such that $d_2=q^{-m}$, $d_3=q^{-n}$ ($m,n \in \bbZ_{\geq 0}$),
%we can assume that the wave function is a Laurent {\it polynomial} in $x$~(See Appendix B);
%\begin{equation}
%\Psi(\Lambda,x) = \sum_{k=0}^m \sum_{\ell=0}^n c_{k,\ell} x^k (\Lambda/x)^\ell.
%\end{equation}
%Let us expand the Hamiltonian $\SS$ in terms of a basis $x^i~(-n \leq i \leq m)$ of the space of such Laurent polynomials;
%\begin{equation}\label{eq:SSr}
%\SS x^{i}=\sum_{j=-n}^{m} r_{i,j}(\Lambda) x^j.
%\end{equation}
The coefficient~$r_{i,j}(\Lambda)$ is a rational function in the spectral parameter $\Lambda$.
It also depends on the remaining mass parameters $d_1$ and $d_4$, which are to be identified with
the highest weights of the in-state and the out-state, when we consider a matrix element (two point function)
of the intertwiners.

\begin{Example}
For $(m,n)=(1,0)$, $(2,0)$, we have
\begin{equation}
\begin{bmatrix}
r_{0,0}&r_{0,1}\\
r_{1,0}&r_{1,1}
\end{bmatrix}=
\begin{bmatrix}
\frac{(\frac{d_1 \Lambda }{q})_1 }{(\frac{\Lambda }{q})_1 } & -\frac{(d_1)_1 }{(\frac{\Lambda }{q})_1 } \vspace{1mm}\\
-\frac{\Lambda q (\frac{d_4}{q})_1 }{(\frac{\Lambda }{q})_1 } & \frac{q^2 (\frac{d_4 \Lambda }{q^2})_1 }{(\frac{\Lambda }{q})_1 }\\
\end{bmatrix}
\end{equation}
and
\begin{equation}
\begin{bmatrix}
r_{0,0}&r_{0,1}&r_{0,2}\\
r_{1,0}&r_{1,1}&r_{1,2}\\
r_{2,0}&r_{2,1}&r_{2,2}
\end{bmatrix}
=
\begin{bmatrix}
\frac{(\frac{d_1 \Lambda }{q^2})_1 (\frac{d_1 \Lambda }{q})_1 }{(\frac{\Lambda }{q^2})_1 (\frac{\Lambda }{q})_1}
& -\frac{(1+q) (d_1)_1 (\frac{d_1 \Lambda }{q})_1}{q (\frac{\Lambda }{q^2})_1 (\frac{\Lambda }{q})_1}
& \frac{(d_1)_1 (d_1 q)_1 }{q (\frac{\Lambda }{q^2})_1 (\frac{\Lambda }{q})_1 } \vspace{1mm}\\
-\frac{\Lambda q (\frac{d_4}{q})_1 (\frac{d_1 \Lambda }{q})_1 }{(\frac{\Lambda }{q^2})_1 (\frac{\Lambda }{q})_1}
& \frac{q^2 (\frac{d_1 \Lambda }{q})_1 (\frac{d_4 \Lambda}{q^2})_1 +\Lambda q (d_1 q)_1
(\frac{d_4}{q^2})_1}{(\frac{\Lambda}{q^2})_1 (\frac{\Lambda}{q})_1}
& -\frac{q^2 (d_1 q)_1 (\frac{d_4 \Lambda}{q^3})_1 }{(\frac{\Lambda }{q^2})_1 (\frac{\Lambda }{q})_1} \vspace{1mm}\\
 \frac{\Lambda^2 q^3 (\frac{d_4}{q^2})_1 (\frac{d_4}{q})_1 }{(\frac{\Lambda }{q^2})_1 (\frac{\Lambda }{q})_1 }
& -\frac{\Lambda q^4 (1+q) (\frac{d_4}{q^2})_1 (\frac{d_4 \Lambda }{q^3})_1 }{(\frac{\Lambda }{q^2})_1 (\frac{\Lambda }{q})_1}
& \frac{q^6 (\frac{d_4 \Lambda }{q^4})_1 (\frac{d_4 \Lambda }{q^3})_1 }{(\frac{\Lambda }{q^2})_1 (\frac{\Lambda }{q})_1}
\end{bmatrix},
\end{equation}
where $r_{i,j}=r_{i,j}(\Lambda)$, $(x)_1=(1-x)$.
\end{Example}

\begin{Theorem}\label{r-matrix}
Set $N=m+n$. The matrix $[r_{i,j}(\Lambda)]_{-n\leq i,j\leq m}$ coincides with
the $(N+1) \times (N+1)$ block
%of the $R$-matrix for $\qg(A_1^{(1)})$ with $N=m+n$ down spins,
in the weight decomposition of the infinite-dimensional $R$-matrix for $\qg\bigl(A_1^{(1)}\bigr)$
for evaluation representations of generic weights {\rm \cite{Bosnjak:2016oze,Mangazeev:2014gwa}}.\footnote{See Remark \ref{explicitR} below and computations in Section~\ref{subsec:r-matrix} for more precise identification.}
\end{Theorem}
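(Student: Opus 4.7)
The plan is to derive an explicit closed form for the matrix entries $r_{i,j}(\Lambda)$ using the factorized structure of $\SS$, and then identify them with the known block of the infinite-dimensional $R$-matrix, with Rosengren's quantum $6j$-symbols \cite{Rosengren:2003nee} as the bridge. First, I would exploit that, after truncation by $d_2=q^{-m}$, $d_3=q^{-n}$, the Hamiltonian in~\eqref{Shakirov} is a product of two ``half-steps'', each a conjugation of $\Bor$ by a pair of infinite products $\varphi$. The key identities already established in the proof of Lemma \ref{truncation} give
\[
\frac{1}{\varphi(-aqx)}\,\Bor\,\frac{1}{\varphi(ax)}\cdot x^{-k}=x^{-k}\prod_{j=0}^{k-1}\bigl(q^j+ax\bigr),
\]
together with the analogous reciprocal-variable statement. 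Iterating these on a monomial $x^i$ produces $\SS\cdot x^i$ as an explicit Laurent polynomial, so each $r_{i,j}(\Lambda)$ emerges as a terminating basic hypergeometric sum whose parameters depend on $d_1, d_4$ and the truncation integers $m, n$; the $(m,n)=(1,0)$ and $(2,0)$ matrices displayed in the example are the first instances of this computation.

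Next, I would compare this explicit expression with the weight block of the infinite-dimensional $R$-matrix of \cite{Bosnjak:2016oze,Mangazeev:2014gwa}. The cleanest route is to match a common system of contiguous relations: on the Shakirov side, three-term recursions in $i$ and $j$ are produced by the symmetries $d_1\leftrightarrow d_2$, $d_3\leftrightarrow d_4$ of $\SS$ and by the mass-shift reduction used in the proof of Lemma \ref{truncation}; on the $R$-matrix side, exactly the same three-term recursions are Rosengren's contiguous relations for the quantum $6j$-symbol. Together with boundary-row matching at $i=m$ and $i=-n$, where both sides collapse to simple products of $q$-Pochhammer symbols, these recursions force agreement throughout the $(N+1)\times(N+1)$ block.

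The main obstacle is the identification of bases and parameters. The $R$-matrix in the literature is phrased in a weight basis indexed by $0\le k\le N$ inside evaluation modules of prescribed spins, whereas ours lives in the monomial basis $\{x^j\}_{-n\le j\le m}$. The proof must therefore fix the explicit dictionary between $(d_1, d_4, \Lambda, q)$ and the evaluation parameters, spins, and spectral parameter, and absorb the diagonal gauge transformation (row- and column-dependent normalization factors) needed to align the two matrices. This bookkeeping — essentially the content of Section~\ref{subsec:r-matrix} alluded to in the footnote — is the most delicate part of the argument and is what makes the ``precise identification'' more than a cosmetic rescaling.
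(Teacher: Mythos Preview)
Your strategy---compute $r_{i,j}$ as an explicit hypergeometric sum by iterating the half-step identities, then match with the known $R$-matrix via contiguous relations and boundary values---could in principle be made to work, but the specific mechanism you propose for the three-term recursions is shaky. After truncation, $d_2=q^{-m}$ and $d_3=q^{-n}$ are \emph{fixed}, so the $d_1\leftrightarrow d_2$, $d_3\leftrightarrow d_4$ symmetry of $\SS$ is no longer available to generate recursions in $i,j$; and the mass-shift reduction in Lemma~\ref{truncation} shifts $m,n$ rather than $i,j$. You would need a different source for the contiguous relations, and you would also be left with a double sum from the two successive $\Bor$-steps that must be collapsed before anything can be compared.

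The paper bypasses all of this with a single manoeuvre. Instead of evaluating $\SS x^i$ directly, it moves the leftmost $\varphi$-block of $\SS$ to the right-hand side of the defining relation \eqref{eq:SSr}, so that each side becomes a \emph{single} application of $\Bor$ or $\Bor^{-1}$ sandwiched between $\varphi$-factors. The full identities of Lemma~\ref{Shamil-lemma} (equations \eqref{lem-1}--\eqref{lem-2}, not just the special cases quoted in the proof of Lemma~\ref{truncation}) then evaluate both sides in closed form. After specializing $d_2=q^{-m}$, $d_3=q^{-n}$ the infinite products truncate and one is left with the finite polynomial identity \eqref{eq:rdef}: a change-of-basis relation between two explicit bases $\{U_i(x)\}$ and $\{V_j(x)\}$ of degree-$N$ polynomials. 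This is \emph{literally} Rosengren's defining relation for the quantum $6j$-symbol \cite{Rosengren:2003nee}, so the identification with the $R$-matrix is immediate---no entrywise hypergeometric computation, no recursion matching, no boundary checks. The explicit parameter dictionary \eqref{r=R} is then worked out in Section~\ref{subsec:r-matrix} by factoring the change of basis through an intermediate basis $W_k$, but that is a refinement rather than part of the proof of the theorem itself.
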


\begin{proof}
The defining relation \eqref{eq:SSr} of $r_{i,j}(\Lambda)$ can be written as
\begin{gather}
\frac{\varphi(\Lambda)\varphi\bigl(q^{-1} d_1d_2d_3d_4\Lambda\bigr)}{\varphi(-d_1x)\varphi(-d_2x)\varphi(-d_3 \Lambda/x)\varphi(-d_4 \Lambda/x)}
\Bor \frac{1}{\varphi(q^{-1}d_1d_2x)\varphi(d_3d_4 \Lambda/x)} x^i \nonumber\\
\qquad{}=
\Bor^{-1} \varphi(qx)\varphi\biggl(\frac{\Lambda}{x}\biggr)
\sum_{j=-n}^{m} r_{i,j}(\Lambda) x^j .\label{eq:SSr2}
\end{gather}
Using the relations (see Appendix \ref{App:Proofs})
\begin{gather}
\Bor \frac{1}{\varphi\bigl(q^{-1}d_1d_2 x\bigr)\varphi(d_3d_4\frac{\Lambda}{x})}x^i=
\frac{\varphi\bigl(-q^i d_1d_2 x\bigr)\varphi\bigl(-q^{-i}d_3d_4\frac{\Lambda}{x}\bigr)}{\varphi\bigl(q^{-1}d_1d_2d_3d_4 \Lambda\bigr)}q^{i(i+1)/2}x^i, \nonumber\\
\Bor^{-1} \varphi(qx)\varphi\biggl(\frac{\Lambda}{x}\biggr) x^j= q^{-j(j+1)/2}x^j
\frac{\varphi(\Lambda)}{\varphi\bigl(-q^{-j}x\bigr)\varphi\bigl(-q^j \frac{\Lambda}{x}\bigr)},
\end{gather}
we can rewrite \eqref{eq:SSr2} as
\begin{gather}
\frac{\varphi\bigl(-q^i d_1d_2 x\bigr)\varphi\bigl(-q^{-i}d_3d_4\frac{\Lambda}{x}\bigr)}{\varphi(-d_3 \Lambda/x)\varphi(-d_4 \Lambda/x)}q^{i(i+1)/2}x^i \nonumber\\
\qquad{}=\sum_{j=-n}^m r_{i,j}(\Lambda) q^{-j(j+1)/2}x^j
\frac{\varphi(-d_1x)\varphi(-d_2x)}{\varphi\bigl(-q^{-j}x\bigr)\varphi\bigl(-q^j \frac{\Lambda}{x}\bigr)}.
\end{gather}
Since $d_2=q^{-m}$, $d_3=q^{-n}$, $m,n \in \bbZ_{\geq 0}$,
this equation can be simplified to
\begin{gather}
 q^{\frac{1}{2} i (i+1)} x^i \bigl(-d_1 q^{i-m}x\bigr)_{m-i} \biggl(-d_4 q^{-i-n}\frac{\Lambda}{x}\biggr)_{i+n} \nonumber \\
\qquad{}= \sum_{j=-n}^m r_{i,j}(\Lambda)
q^{-\frac{1}{2} j (j+1)} x^j (-q^{-m}x)_{m-j} \biggl(-q^{-n}\frac{\Lambda}{x}\biggr)_{j+n},\label{eq:rdef}
\end{gather}
where $(a)_n=(a;q)_n$.
This relation is essentially the same as the equation for the $q$-$6j$ symbols~\cite{Rosengren:2003nee}.
%K. Mimachi, Holonomic $q$-difference system of the first order associated with a Jackson integral of Selberg type. Duke Math. J. {\bf 73} (1994), 453-468.
%
%M. Ito, $q$-difference systems for the Jackson integral of symmetric Selberg type. SIGMA {\bf 16} (2020), No.113, 31pp.).
Hence, the coefficients $r_{i,j}(\Lambda)$ is given by the $q$-$6j$ (i.e., $R$-matrix) for \smash{\raisebox{-0.5pt}{$\qg\bigl(A_1^{(1)}\bigr)$}}.
\end{proof}

\begin{Remark}\label{explicitR}
Some explicit formulas for $\qg\bigl(A_1^{(1)}\bigr)$ $R$-matrix are known \cite{Bosnjak:2016oze,Mangazeev:2014gwa}.
One of such expressions is related to the hypergeometric series ${}_4\phi_3$
with the base $q$,\footnote{The parameter $q$ in \cite{Bosnjak:2016oze,Mangazeev:2014gwa}
is denoted by ${\sf v}=q^{\frac{1}{2}}$ in this paper.}
\begin{equation}\label{eq:RinHGS}
R^{\rm HG}_{i,j}=\beta^{-j}\frac{(q)_N \bigl(\frac{\alpha}{z}\bigr)_{N-i}\bigl(\frac{1}{\beta}\bigr)_{N-j}\bigl(\frac{\beta}{z}\bigr)_{j}}
{(q)_j \bigl(q\bigr)_{N-j}\bigl(\frac{1}{z}\bigr)_{N}\bigl(\frac{1}{\beta}\bigr)_{N-i}}
\sum_{k=0}^{j}
\frac{\bigl(q^{-j}\bigr)_k\bigl(q^{i-N}\bigr)_k\bigl(q^{1-N}z\bigr)_k\bigl(\frac{z}{\alpha\beta}\bigr)_k}{(q)_k\bigl(q^{-N}\bigr)_k\bigl(\frac{q^{1+i-N}z}{\alpha}\bigr)_k\bigl(\frac{q^{1-j}z}{\beta}\bigr)_k}q^k.
\end{equation}
%where $(a)_n = (a;q)_n$
For $d_2=q^{-m}$, $d_3=q^{-n}$ and $i, j \in [-n,m]$, one can show (see Section~\ref{subsec:r-matrix})
\begin{equation}
\label{r=R}
\begin{array}{l}
r_{i,j}(\Lambda)=d_1^{m-i} q^{(m+1)i}
R^{\rm HG}_{i+n,j+n}\big|_{\{N=m+n, z=\frac{\Lambda}{q}, \alpha=\frac{q^n}{d_1}, \beta=\frac{q^m}{d_4}\}},
\end{array}
\end{equation}
hence, the matrices $\{r_{i,j}(\Lambda)\}_{i,j \in [-n,m]}$ with fixed $N=m+n$ give essentially the same $(N+1) \times (N+1)$ $R$-matrix.
\end{Remark}

%%%%%%%%%%%%%%%%%%%%%%%%%%%%%%%%%%%%%%%%%%%%%%%%%%%%%%%%%%%%%%%%%%%%%%%%%%%%%%%%%%%%%%%%%%%%%%%%%%%%%%

The above result implies that the equation
\begin{equation}\label{Shamil-Eq2}
\Psi(\Lambda,x)=\SS \Psi\biggl(\frac{\Lambda}{t}, \frac{x}{qtQ}\biggr),
\end{equation}
for $d_2=q^{-m}$, $d_3=q^{-n}$, $N=m+n$, is written as the $t$-difference equation with $(N+1) \times (N+1)$ matrix coefficient:
\begin{equation}
\psi_j(\Lambda)=\sum_{i=-n}^{m} \psi_i\biggl(\frac{\Lambda}{t}\biggr)r_{i,j}(\Lambda)(qtQ)^{-i},
\end{equation}
where \smash{$\Psi(\Lambda,x)=\sum_{i=-n}^m x^i \psi_i(\Lambda)$}.
This is exactly the $q$-KZ equation for the four point function of $\qg\bigl(A_1^{(1)}\bigr)$
in $N$-down spin sector.\footnote{The parameter $Q$ specifies the intermediate state of the correlation
function $\langle j_{4} | \Phi_{j_3}(1)\Phi_{j_2}(\Lambda) |j_{1} \rangle$.
The variable~$x$ is merely a book-keeping parameter for the spin direction.}
The cases $(m,n)$ with $m+n=N$ correspond to an equivalent equation of size $N+1$.
In Section~\ref{sec:Matsuo}, we will show that the specializations of the affine Laumon partition function
corresponding to the condition $d_2=q^{-m}$, $d_3=q^{-n}$ give a set of $N+1$ solutions which form the fundamental solutions.
In this sense, the Laumon partition function can be viewed as the ``universal fundamental solution'' for $q$-KZ equation.

%%%%%%%%%%%%%%%%%%%%%%%%%%%%%%%%%%%%%%%%%%%%%%%%%%%%%%%%%%%%%%%%%%%%%%%%%%%%%%%%%%%%%%%%%%%%%%%%%%%%%%

\subsection[Computation of r\_\{i,j\}]{Computation of $\boldsymbol{r_{i,j}}$}
\label{subsec:r-matrix}

We will explain how to compute $(r_{i,j})_{i,j=-n}^{m}$ from \eqref{eq:rdef}.
%For simplicity we will use the abbreviation $(a)_n = (a;q)_n$ in the following.
By using the formula \eqref{a_n-inversion}, the relation~\eqref{eq:rdef} can be written as
\begin{equation}
%q^{-in}(\Lambda d_4)^{i+n}(-d_2 q^{i-m}x)_{m-i}(-\frac{qx}{\Lambda d_4})_{i+n}=
%\sum_{j=-n}^{m}r_{i,j}(\Lambda)q^{-j}\Lambda^{j+n} (-q^{-m}x)_{m-j}(-\frac{q^{1-j}x}{\Lambda})_{j+n}.
q^{-in}(\Lambda d_4)^{i+n}U_i(x)=
\sum_{j=-n}^{m}r_{i,j} q^{-j}\Lambda^{j+n} V_j(x),
\end{equation}
where
\begin{equation}
U_i(x)=\bigl(-d_1 q^{i-m}x\bigr)_{m-i}\biggl(-\frac{qx}{\Lambda d_4}\biggr)_{i+n}, \qquad
V_j(x)=(-q^{-m}x)_{m-j}\biggl(-\frac{q^{1-j}x}{\Lambda}\biggr)_{j+n}.
\end{equation}
Since both $\{U_i(x)\}_{i=-n}^{m}$ and $\{V_j(x)\}_{j=-n}^{m}$ are bases of polynomials of degree $m+n$,
our task is to compute the transition matrix between these two bases. %$\{U_i(x)\}_{i=-n}^{m}$ and $\{V_j(x)\}_{j=-n}^{m}$.
For this purpose it is convenient to introduce an intermediate basis $\{W_k(x)\}_{k=-n}^{m}$ defined by\footnote{Another choice
$\tilde{W}_k(x)=U_k(x){|}_{d_4\mapsto \frac{q^{m+1}}{\Lambda}}$ also works as well,
which gives different decomposition of the same matrix $(r_{i,j})$.}
\begin{equation}
W_k(x)=U_k(x)\Big|_{d_1 \mapsto \frac{q^{n+1}}{\Lambda}}=\biggl(-\frac{q^{n+1+k-m}}{\Lambda}x\biggr)_{m-k}\biggl(-\frac{qx}{\Lambda d_4}\biggr)_{k+n}.
\end{equation}
%or
%\begin{equation}
%q^{-(i-n)n}(\Lambda d_4)^{i}(-d_2 q^{i-m-n}x)_{m+n-i}(-\frac{qx}{\Lambda d_4})_{i}=
%\sum_{j=0}^{m+n}r_{i-n,j-n}(\Lambda)q^{-j+n}\Lambda^{j} (-q^{-m}x)_{m+n-j}(-\frac{q^{n+1-j}x}{\Lambda})_{j}.
%\end{equation}
%or
%\begin{equation}
%q^{-(i-n)n}(\Lambda d_4)^{i}(-d_2 q^{i-N}x)_{N-i}(-\frac{qx}{\Lambda d_4})_{i}=
%\sum_{j=0}^{N}r_{i-n,j-n}(\Lambda)q^{-j+n}\Lambda^{j} (-q^{-N+n}x)_{N-j}(-\frac{q^{n+1-j}x}{\Lambda})_{j}.
%\end{equation}
The polynomial $W_k(x)$ is defined so that the zeros of $U_i(x)$, $W_k(x)$, $V_j(x)$ overlap with each other as follows:
\begin{gather}
U_i(x)\colon \ -x=\underbrace{\frac{q}{d_1}, \dots, \frac{q}{d_1} q^{m-i-1}}_{m-i},\,
\underbrace{\frac{d_4 \Lambda}{q^{n+i}}, \dots, \frac{d_4 \Lambda}{q}}_{n+i}, \nonumber\\
W_k(x)\colon \ -x=\underbrace{\frac{\Lambda}{q^{n}}, \dots, \frac{\Lambda}{q^{n}}q^{m-k-1}}_{m-k}, \,
\underbrace{\frac{d_4 \Lambda}{q^{n+k}}, \dots, \frac{d_4 \Lambda}{q}}_{n+k},\nonumber\\
V_j(x)\colon \ -x=\underbrace{\frac{\Lambda}{q^{n}}, \dots, \frac{\Lambda}{q^{n}}q^{n+j-1}}_{n+j}, \,
\underbrace{q^{j+1}, \dots, q^m}_{m-j}.\label{commonzero}
\end{gather}

Let us compute the transition coefficients $r^{UW}_{i,k}$ and $r^{WV}_{k,j}$ defined by
\begin{equation}
\label{eq:UWV}
U_i(x) =\sum_{k=-n}^{m} r^{UW}_{i,k} W_k(x), \qquad
W_k(x) =\sum_{j=-n}^{m} r^{WV}_{k,j} V_j(x).
\end{equation}
Due to the cancellation of common factors arising from \eqref{commonzero}, the first relation of (\ref{eq:UWV}) can be written as
\begin{gather}\label{eq:rUWcoef}
\bigl(-d_1 q^{i-m}x\bigr)_{m-i}=\sum_{k=i}^{m} r^{UW}_{i,k} \biggl(-\frac{q^{n+1+k-m}}{\Lambda}x\biggr)_{m-k}\biggl(-\frac{q^{i+n+1}x}{\Lambda d_4}\biggr)_{k-i}.
\end{gather}
Note that we may delete the coefficient of $W_k(x)$ which does not have the common zeros with~$U_i(x)$.
Similarly, from the second equation of \eqref{eq:UWV}, we have
\begin{gather}\label{eq:rWVcoef}
\biggl(-\frac{qx}{\Lambda d_4}\biggr)_{k+n}=
\sum_{j=m-k-n}^{m} r^{WV}_{k,j} (-q^{-m}x)_{m-j}\biggl(-\frac{q^{1-j}x}{\Lambda}\biggr)_{j+n-m+k}.
\end{gather}

\begin{Proposition}
The nonzero coefficients of
$r^{UW}_{i,k}$, $i\leq k$, can be determined as
\begin{gather}
r^{UW}_{i,k}=q^{\frac{1}{2}(i-k)(2m+1+i-k)}(-d_4)^{k-i}\dfrac{(q)_{m-i}}{(q)_{k-i}(q)_{m-k}} \nonumber\\ \hphantom{r^{UW}_{i,k}=}{}
\times
\dfrac{\bigl(d_1 \Lambda q^{i-k-n}\bigr)_{k-i}\bigl(d_1d_4\Lambda q^{-m-n-1}\bigr)_{m-k}}{(d_4 q^{-m})_{m-i}}.\label{eq:rUWre}
\end{gather}
Hence, $\bigl(r^{UW}_{i,k}\bigr)$ is upper triangular matrix with factorized elements.
Similarly, the nonvanishing coefficients of $r^{WU}_{k,j}$, $N-(k+n) \leq j+n$, are
\begin{gather}
r^{WV}_{k,j}=q^{\frac{1}{2}(j-k-m-n-1)(j+k-m+n)}(-\Lambda)^{j+k-m+n} \dfrac{(q)_{k+n}}{(q)_{k+j-m+n}(q)_{m-j}} \nonumber \\ \hphantom{r^{WV}_{k,j}=}{}
\times{}\dfrac{\bigl(\frac{q^{m+1}}{d_4\Lambda}\bigr)_{k+j-m+n} \bigl(\frac{q^{j+1}}{d_4}\bigr)_{m-j}}{\bigl(q^{-k-n}\Lambda\bigr)_{k+n}}.\label{eq:rWVre}
\end{gather}
The matrix $\bigl(r^{WV}_{k,j}\bigr)$ is ``lower triangular along anti-diagonal'' with factorized elements.
\end{Proposition}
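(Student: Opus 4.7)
The plan is to treat the two expansions \eqref{eq:rUWcoef} and \eqref{eq:rWVcoef} by the same device: a linear change of variable in $x$ reduces each to a standard rational-function identity of the form
\[
\frac{(Az)_N}{(z)_N} = \sum_{\ell=0}^N c_\ell\,\frac{(Bz)_\ell}{(z)_\ell},
\]
whose coefficients are determined by a terminating $q$-Chu--Vandermonde sum. I focus on \eqref{eq:rUWcoef}; the argument for \eqref{eq:rWVcoef} is parallel.

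Set $N=m-i$ and $\ell=k-i$ and substitute $x=-\Lambda q^{N-n-1}z$. A direct check shows \eqref{eq:rUWcoef} becomes
\[
(Az)_N = \sum_{\ell=0}^N r^{UW}_{i,i+\ell}\,(q^\ell z)_{N-\ell}\,(Bz)_\ell,
\qquad A=\frac{d_1\Lambda}{q^{n+1}},\quad B=\frac{q^m}{d_4}.
\]
Using $(q^\ell z)_{N-\ell}=(z)_N/(z)_\ell$ and dividing by $(z)_N$ puts the identity in the displayed form above with $c_\ell:=r^{UW}_{i,i+\ell}$. Both sides are rational in $z$ with poles confined to $z=q^{-s}$, $0\le s\le N-1$, and both are regular at $z=\infty$, so their difference has the shape $P(z)/(z)_N$ with $\deg P\le N$; it is therefore enough to check the identity at $N+1$ values of $z$.

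To pin down the $c_\ell$, I would evaluate at $z=1/(Bq^j)$ for $j=0,1,\dots,N-1$ together with $z=0$. At $z=1/(Bq^j)$ the Pochhammer $(Bz)_\ell$ contains the factor $(1-Bq^jz)=0$ once $\ell\ge j+1$, so the sum on the right truncates to $\ell\le j$; this produces an upper-triangular linear system. The $j=0$ equation yields $c_0=(A/B)_N/(1/B)_N$, and substituting $A$, $B$ back in terms of $d_1,d_4,\Lambda$ matches \eqref{eq:rUWre} at $\ell=0$. In general the level-$j$ equation, once the already-computed $c_0,\dots,c_{j-1}$ are inserted, collapses to a terminating ${}_2\phi_1(q^{-j},\ast;\ast;q,q)$ sum evaluable by the $q$-Chu--Vandermonde theorem; applying the Pochhammer-inversion identity \eqref{a_n-inversion} to the result produces precisely the factorized form \eqref{eq:rUWre}.

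For \eqref{eq:rWVcoef} the same scheme applies after the substitution dictated by the coincidence of the right-hand zeros of $W_k$ and $V_j$ recorded in \eqref{commonzero}. The running index starts at $j=m-k-n$, which is what produces the \emph{lower-triangular along the anti-diagonal} support of $(r^{WV}_{k,j})$; otherwise the argument is identical and delivers \eqref{eq:rWVre}. The main obstacle is algebraic rather than conceptual: tracking the parameters through the substitutions and turning the $q$-Chu--Vandermonde answer into the advertised factorized product requires careful bookkeeping, as signs, half-integer $q$-powers, and repeated Pochhammer inversions accumulate.
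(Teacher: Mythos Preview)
Your approach is valid but differs from the paper's. You make essentially the same preliminary substitution to reduce \eqref{eq:rUWcoef} (and \eqref{eq:rWVcoef}) to a single polynomial identity with one free parameter pair $(A,B)$; from there you pass to the rational form $(Az)_N/(z)_N=\sum_\ell c_\ell (Bz)_\ell/(z)_\ell$, evaluate at the special points $z=1/(Bq^j)$ to obtain a triangular linear system, and then appeal to the terminating $q$-Chu--Vandermonde sum to close the induction. The paper instead follows Rosengren: after the same normalization it writes the problem as $(ax)_N=\sum_{r} C_{N,r}(bx)_{N-r}(cq^{-r}x)_r$, derives a Pascal-type recursion $C_{N+1,r}=A_rC_{N,r}+B_{r-1}C_{N,r-1}$ from the elementary three-term identity $1-q^Nax=A_r(1-q^{N-r}bx)+B_r(1-q^{-r-1}cx)$, and simply checks that the claimed factorized formula \eqref{eq:Csol} satisfies this recursion together with the obvious boundary conditions.

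The trade-offs: the paper's argument is entirely self-contained and symmetric in the two cases (the same $(a,b,c)$ template handles \eqref{eq:rUWcoef} and \eqref{eq:rWVcoef} without separate bookkeeping), and the verification reduces to a single polynomial identity in $q$-shifted factorials rather than an induction on~$j$. Your route is conceptually fine, but the step ``the level-$j$ equation collapses to a ${}_2\phi_1$'' hides exactly the nontrivial algebra: once the conjectured $c_0,\dots,c_{j-1}$ are substituted one must still recognize and sum the resulting series, and then repeatedly apply \eqref{a_n-inversion} to land on the stated form. If you prefer your method, a cleaner variant is to evaluate the \emph{polynomial} identity at $z=q^{-s}$ ($0\le s\le N-1$), where $(q^\ell z)_{N-\ell}$ vanishes for $\ell\le s$; this gives a lower-triangular system that isolates $c_N,c_{N-1},\dots$ one at a time and avoids introducing denominators.
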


\begin{proof}
We follow the method in \cite[Section~3]{Rosengren:2003nee}.
By the change of the parameters
\begin{equation}
N= m-i, \qquad r= m-k, \qquad (a,b,c) = \biggl( -d_1 q^{i-m}, -\frac{q^{i+n+1}}{d_4 \Lambda }, -\frac{q^{n+1}}{\Lambda}\biggr)
\end{equation}
for \eqref{eq:rUWcoef}, and
\begin{equation}
N= k+n, \qquad r= j+k-m+n, \qquad (a,b,c) = \biggl(-\frac{q}{d_4 \Lambda }, -q^{-m}, -\frac{q^{k-m+n+1}}{\Lambda }\biggr)
\end{equation}
for \eqref{eq:rWVcoef}, respectively, both equations are written as the binomial expansion
\begin{equation}
(a x)_N=\sum_{r=0}^N C_{N,r}(b x)_{N-r} (c q^{-r} x)_r.
\end{equation}
This implies that the coefficients $C_{N,r}$ are given by
\begin{equation}\label{eq:Csol}
C_{N,r}=q^{r(r+1)/2}\biggl(-\frac{b}{c}\biggr)^r \frac{(q)_N}{(q)_r(q)_{N-r}}
\frac{\bigl(\frac{a}{b}\bigr)_r \bigl(q^{r+1}\frac{a}{c}\bigr)_{N-r}}{\bigl(\frac{b q}{c}\bigr)_N}.
\end{equation}
To see this, note that
\begin{gather}
 1-q^N a x=A_r \bigl(1-q^{N-r} b x\bigr)+B_r\bigl(1-q^{-r-1} c x\bigr),\nonumber\\
A_r=\dfrac{c-a q^{N+1+r}}{c-b q^{N+1}}, \qquad
B_r=\dfrac{b-a q^r}{b-c q^{-N-1}}.
\end{gather}
Hence,
\begin{equation}
\frac{(a x)_{N+1}}{(a x)_N}=A_r\frac{(b x)_{N+1-r}}{(b x)_{N-r}}+B_r \frac{\bigl(c q^{-r-1}x\bigr)_{r+1}}{(c q^{-r}x)_r}.
\end{equation}
From this, the coefficients $C_{N,r}$ are uniquely determined by ``Pascal's triangle''
\begin{equation}\label{eq:pascal}
C_{N+1,r}=A_r C_{N,r}+B_{r-1}C_{N,r-1},
\end{equation}
with the boundary conditions
$C_{0,0}=1$, $C_{N,-1}=C_{N,N+1}=0$.
 For $C_{N,r}$ given by \eqref{eq:Csol}, the boundary conditions are obvious.
The relation \eqref{eq:pascal} follows from
\begin{equation}
\frac{C_{N+1,r}}{C_{N,r}}=\frac{\bigl(1-q^{N+1}\bigr) \bigl(1-\frac{a q^{N+1}}{c}\bigr)}{\bigl(1-q^{N+1-r}\bigr) \bigl(1-\frac{b q^{N+1}}{c}\bigr)},
\qquad
\frac{C_{N,r-1}}{C_{N,r}}= -\frac{c q^{-r} (1-q^r) \bigl(1-\frac{a q^r}{c}\bigr)}{b \bigl(1-q^{N+1-r}\bigr)
\bigl(1-\frac{a q^{r-1}}{b}\bigr)}
\end{equation}
and the identity
\begin{gather}
\bigl(1-q^{N+1}\bigr) \biggl(1-\frac{a}{c} q^{N+1}\biggr)
-q^{N+1-r}(1-q^r) \biggl(1-\frac{a}{c} q^r\biggr) \nonumber\\
\qquad{}-\bigl(1-q^{N+1-r}\bigr) \biggl(1-\frac{a}{c} q^{N+1+r}\biggr)=0.
\end{gather}
Hence, \eqref{eq:Csol} is proved.
Back to the original parameters, we obtain the desired results.
\end{proof}

 Combining \eqref{eq:rUWre} and \eqref{eq:rWVre}, we have
 \begin{gather}
 r_{i,j}=q^{-in}(\Lambda d_4)^{i+n}q^{j}\Lambda^{-j-n} r^{UV}_{i,j}=
 q^{-in}(\Lambda d_4)^{i+n}q^{j}\Lambda^{-j-n} \sum_{k=-n}^{m} r^{UW}_{i,k} r^{WV}_{k,j}\nonumber\\ \hphantom{r_{i,j}}{}
= \sum_{k=m-n-j}^{m} q^{-in}q^{j}
 q^{\frac{1}{2}(i-k)(2m+1+i-k)}q^{\frac{1}{2}(j-k-m-n-1)(j+k-m+n)}(-1)^{-m+n-i+j}\nonumber\\ \hphantom{r_{i,j}=}{}
\times d_4^{k+n} \Lambda^{i+n+k-m} \dfrac{(q)_{m-i}}{(q)_{k-i}(q)_{m-k}}
\dfrac{\bigl(d_1 \Lambda q^{i-k-n}\bigr)_{k-i}\bigl(d_1d_4\Lambda q^{-m-n-1}\bigr)_{m-k}}{(d_4 q^{-m})_{m-i}} \nonumber\\ \hphantom{r_{i,j}=}{}
\times \dfrac{(q)_{k+n}}{(q)_{k+j-m+n}(q)_{m-j}}
\dfrac{\bigl(\frac{q^{m+1}}{d_4\Lambda}\bigr)_{k+j-m+n} \bigl(\frac{q^{j+1}}{d_4}\bigr)_{m-j}}{(q^{-k-n}\Lambda)_{k+n}}.
\end{gather}
To show \eqref{r=R}, we make the change of variables
\begin{gather}
z=\frac{\Lambda}{q}, \qquad \alpha=\frac{q^n}{d_1}, \qquad \beta=\frac{q^m}{d_4}.
\end{gather}
Using $(a)_k=(-a)^k q^{k(k-1)/2} \bigl(\frac{1}{a q^{k-1}}\bigr)_k$ and $(a)_{k+l}=(a)_k\bigl(a q^k\bigr)_{l}$,
we compute
\begin{gather}
 r_{i,j} = d_1^{m-i} \beta^{-n-j} q^{(m+1)i} \dfrac{\bigl(\frac{\alpha}{z}\bigr)_{m-i} \bigl(\frac{1}{\beta}\bigr)_{m-j}
 \bigl(\frac{\beta}{z}\bigr)_{j+n}}{\bigl(\frac{1}{z}\bigr)_{m+n}\bigl(\frac{1}{\beta}\bigr)_{m-i}}
 \sum_{k=m-n-j}^{m} q^{(m-k)(i+n)}
\nonumber \\ \hphantom{r_{i,j} =}{}
\times \biggl(\frac{\beta}{z}\biggr)^{m-k}\!
 \dfrac{(q)_{m-i}}{(q)_{k-i}(q)_{m-k}} \dfrac{(q)_{k+n}}{(q)_{k+j-m+n}(q)_{m-j}}
\dfrac{\bigl(\frac{z}{\alpha\beta}\bigr)_{m-k}}{\bigl(\frac{z}{\alpha}q^{1+i-m}\bigr)_{m-k}}
\dfrac{\bigl(z q^{1-N}\bigr)_{m-k}}{\bigl(\frac{\beta}{z}q^{j+n+k-m}\bigr)_{m-k}}.\!\!\!\!
\end{gather}
One can check these coefficients $r_{i,j}$ coincide with the results in Remark \ref{explicitR}.

\section[Jackson integral representation of solutions to the q-KZ equation]{Jackson integral representation of solutions \\ to the $\boldsymbol{q}$-KZ equation}
\label{sec:duality}

\subsection{Jackson integral of symmetric Selberg type}

Solutions to the $q$-KZ equation allow a representation by the Jackson integral. In this subsection we
review the results of \cite{MIto} which are needed later.

In the construction of solutions to the $q$-KZ equation for $\qg\bigl(A_1^{(1)}\bigr)$ by M.~Ito \cite{MIto},
the $N$-tuple Jackson integral with the integration variables $z=(z_1, \dots, z_N)$
\begin{equation}\label{Jackson-pairing}
\langle \phi(z), \xi \rangle
=(1-t)^N \sum_{\nu \in \bbZ^N} [ \phi(z)\Phi(z) \Delta(1,z)]_{z_i=\xi_i t^{\nu_i}}
\end{equation}
%\[
%\int_0^{\xi \infty} {\color{red}\phi(z)}(z)\Phi(z) \Delta(z) \frac{d_q z_1}{z_1} \wedge \cdots \wedge \frac{d_q z_n}{z_n}
%\]
is considered.\footnote{In view of the application in the next section, we exchange $q$ and $t$ in the original paper \cite{MIto}.}
Here,
\begin{equation}\label{Delta-q}
\Delta(q,z) :=\prod_{1\leq i< j \leq N} \bigl(z_i-q^{-1} z_j\bigr),
\end{equation}
and
\begin{equation}\label{common}
\Phi(z)=\prod_{i=1}^N z_i^{\alpha} \dfrac{\bigl(t a_1^{-1} z_i,t a_2^{-1} z_i; t\bigr)_{\infty}}{(b_1 z_i, b_2 z_i; t)_{\infty}}
\cdot
\prod_{1\leq i<j\leq N} z_i^{2(\log_t q)-1}\frac{\bigl(t q^{-1} z_j/z_i;t\bigr)_{\infty}}{(q z_j/z_i;t)_{\infty}}
\end{equation}
is a common weight factor with four parameters $a_1$, $a_2$, $b_1$ and $b_2$.
Note that $\Delta(1,z)$ is nothing but the Vandermonde determinant.
In the pairing \eqref{Jackson-pairing}, $\xi=(\xi_1, \dots, \xi_N)$ has the meaning of parameters for an integration cycle.
On the other hand, $\phi(z)$ defines a cocycle function.
In order to give a basis of the space of cohomology classes, in \cite{MIto} the symmetric polynomial
\begin{equation}
\tilde{E}_{k,i}(a,b;z)=\dfrac{1}{\Delta(1,z)}{{\mathcal A}(E_{k,i}(a,b;z))}
={\mathcal S} \dfrac{E_{k,i}(a,b;z)}{\Delta(1,z)}, \qquad a,b \in \bbC^{\times},
\end{equation}
was employed by introducing
\begin{equation}
E_{k,i}(a,b;z)=z_1\cdots z_k~\Delta(q,z)\prod_{j=1}^{N-i}(1-b z_j) \prod_{j=N-i+1}^N (1-a^{-1}z_j).
\end{equation}
Here ${\mathcal S}$/${\mathcal A}$ denotes the symmetrization/anti-symmetrization of the variables $(z_1, \dots, z_N)$, respectively.
In particular, $e_i(a,b;z)=\tilde{E}_{0,i}(a,b;z)$, $0\leq i\leq N$, gives a basis which is called Matsuo basis \cite{AMatsuo,AMatsuo2} (see also \cite{Mimachi,VarchenkoCMP}).

The function \smash{$c(x)=x^{\log_t (\frac{a}{b})}\frac{\vartheta(ax;t)}{\vartheta(bx;t)}$} with
$\vartheta(x;t)=(x;t)_\infty(t/x;t)_\infty$ satisfies $c(tx)=c(x)$. Such a function is called pseudo constant.
The following lemma shows that the function $\Phi(z)\Delta(1,z)$ in \eqref{Jackson-pairing}
can be written in a symmetric form, up to some pseudo constant factor.
Due to this property, the integral is sometimes called Jackson integral of symmetric Selberg type.
\begin{Lemma}
Let $\tau=\log_t q$ and $i,j \in [N] :=\{1,2,\dots,N\}$. We have
\begin{equation}\label{eq:diff-prod}
\prod_{i \neq j} \frac{\bigl(t q^{-1} z_j/z_i;t\bigr)_{\infty}}{(t z_j/z_i;t)_{\infty}}=C(z)
\Delta(1,z) \prod_{i=1}^{N} z_i^{-\tau(N-1)}
 \prod_{i< j} z_i^{2 \tau-1}\frac{\bigl(t q^{-1} z_j/z_i;t\bigr)_{\infty}}{(q z_j/z_i;t)_{\infty}},
\end{equation}
where
$C(z)=\prod_{i< j} \bigl(\frac{z_j}{z_i}\bigr)^{\tau}\frac{\vartheta(t q^{-1} z_i/z_j;t)}{\vartheta(t z_i/z_j;t)}$
is a pseudo constant for each variable $z_i$.
\end{Lemma}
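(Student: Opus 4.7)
The plan is to reduce the identity to a single pairwise statement indexed by unordered pairs $\{i,j\}$, and then to verify that $C(z)$ is pseudo-constant by using only the quasi-periodicity $\vartheta(tx;t)=-x^{-1}\vartheta(x;t)$ of the theta function. All the bookkeeping becomes local in $\{i,j\}$ once the global factors are distributed correctly.

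First I would split $\prod_{i\neq j}$ on the left as $\prod_{i<j}\cdot\prod_{i>j}$ and reindex the latter to rewrite the LHS as $\prod_{i<j}\frac{(tq^{-1}z_j/z_i;t)_\infty(tq^{-1}z_i/z_j;t)_\infty}{(tz_j/z_i;t)_\infty(tz_i/z_j;t)_\infty}$. On the RHS I would distribute the global product $\prod_i z_i^{-\tau(N-1)}$ across pairs as $z_i^{-\tau}z_j^{-\tau}$ (each index sits in exactly $N-1$ pairs), and read off the remaining contributions $(z_i-z_j)$ from $\Delta(1,z)$, $z_i^{2\tau-1}(tq^{-1}z_j/z_i;t)_\infty/(qz_j/z_i;t)_\infty$ from the last product over $i<j$, and $(z_j/z_i)^\tau\vartheta(tq^{-1}z_i/z_j;t)/\vartheta(tz_i/z_j;t)$ from $C(z)$. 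Writing $w:=z_j/z_i$, the elementary exponent arithmetic $z_i^{-\tau}z_j^{-\tau}\cdot z_i^{2\tau-1}\cdot w^\tau=z_i^{-1}$ collapses all non-theta, non-infinite-product factors to a clean $1-w$, reducing the identity to the one-variable statement
\[
\frac{(tq^{-1}w;t)_\infty(tq^{-1}w^{-1};t)_\infty}{(tw;t)_\infty(tw^{-1};t)_\infty}=(1-w)\,\frac{(tq^{-1}w;t)_\infty}{(qw;t)_\infty}\,\frac{\vartheta(tq^{-1}w^{-1};t)}{\vartheta(tw^{-1};t)}.
\]
Expanding both theta functions via $\vartheta(x;t)=(x;t)_\infty(t/x;t)_\infty$ cancels the $(qw;t)_\infty$ factors, and the identity $(1-w)(tw;t)_\infty=(w;t)_\infty$ absorbs the mismatch between $(w;t)_\infty$ and $(tw;t)_\infty$.

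To confirm that $C(z)$ is pseudo-constant I would check invariance under $z_k\mapsto tz_k$ for each $k$. Pairs not containing $k$ are manifestly fixed, while for a pair $\{i,j\}$ containing $k$ the quasi-periodicity $\vartheta(tx;t)=-x^{-1}\vartheta(x;t)$ applied to numerator and denominator produces an overall factor of $q^{\pm 1}$, depending on whether the shifted index is the smaller or the larger one; this is cancelled exactly by the change $(z_j/z_i)^\tau\mapsto t^{\mp\tau}(z_j/z_i)^\tau=q^{\mp 1}(z_j/z_i)^\tau$ coming from $\tau=\log_t q$. I do not anticipate a real obstacle: the result is a routine manipulation of infinite products and theta functions. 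The one mildly delicate step is the exponent bookkeeping that forces $z_i^{\tau-1}z_j^{-\tau}(z_i-z_j)(z_j/z_i)^\tau$ to collapse to $1-w$; once this is set up, the rest follows from the basic algebra of $(x;t)_\infty$ and $\vartheta(\cdot;t)$.
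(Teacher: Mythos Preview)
Your proposal is correct and follows essentially the same route as the paper: split $\prod_{i\neq j}$ into ordered pairs $i<j$, reduce to a single pairwise identity in $w=z_j/z_i$, expand the thetas via $\vartheta(x;t)=(x;t)_\infty(t/x;t)_\infty$, use $(w;t)_\infty=(1-w)(tw;t)_\infty$, and finish with the monomial bookkeeping $\prod_{i<j}(z_j/z_i)^{-\tau}z_i^{-1}=\prod_i z_i^{-\tau(N-1)}\prod_{i<j}z_i^{2\tau-1}$. The only minor addition is that you also sketch the pseudo-constancy of $C(z)$ via $\vartheta(tx;t)=-x^{-1}\vartheta(x;t)$, which the paper asserts but does not spell out.
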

\begin{proof} The left-hand side is computed as follows:
\begin{align*}
\prod_{i \neq j} \frac{\bigl(t q^{-1} z_j/z_i;t\bigr)_{\infty}}{(t z_j/z_i;t)_{\infty}}
=&~\prod_{i< j} \frac{\bigl(t q^{-1} z_i/z_j;t\bigr)_{\infty}(t q^{-1} z_j/z_i;t)_{\infty}}
{(t z_i/z_j;t)_{\infty}(t z_j/z_i;t)_{\infty}}\\
=&~\prod_{i< j} \frac{\vartheta\bigl(t q^{-1} z_i/z_j;t\bigr)(z_j/z_i;t)_{\infty}\bigl(t q^{-1} z_j/z_i;t\bigr)_{\infty}}
{\vartheta(t z_i/z_j;t)(q z_j/z_i)_{\infty}(t z_j/z_i;t)_{\infty}}\\
=&~C(z) \prod_{i< j} \Bigl(\frac{z_j}{z_i}\Bigr)^{-\tau} (1-z_j/z_i)\frac{\bigl(t q^{-1} z_j/z_i;t\bigr)_{\infty}}{(q z_j/z_i;t)_{\infty}}\\
%=& C(z) \prod_{i< j} (\frac{z_j}{z_i})^{-\tau}z_i^{-1}(z_i-z_j)\frac{(t q^{-1} z_j/z_i;t)_{\infty}}{(q z_j/z_i;t)_{\infty}}\\
 =&~C(z) \Delta(1,z) \prod_{i=1}^{N} z_i^{-\tau(N-1)}
 \prod_{i< j} z_i^{2 \tau-1}\frac{\bigl(t q^{-1} z_j/z_i;t\bigr)_{\infty}}{(q z_j/z_i;t)_{\infty}}.
\end{align*}
The last line follows from
\[\prod_{i< j} \Bigl(\frac{z_j}{z_i}\Bigr)^{-\tau}z_i^{-1}
=\prod_{i< j} (z_jz_i)^{-\tau}z_i^{2\tau-1}
=\prod_{i=1}^{N} z_i^{-\tau(N-1)}\prod_{i<j}z_i^{2\tau-1}.
\tag*{\qed}
\]
\renewcommand{\qed}{}
\end{proof}

For the cocycle function $\phi(z)$ in the pairing \eqref{Jackson-pairing},
we will take the Matsuo basis $e_0(a_2,b_1), \dots,\allowbreak e_N(a_2,b_1)$.\footnote{Since the common factor \eqref{common} is
symmetric under the exchanges $a_1 \leftrightarrow a_2$ and $b_1 \leftrightarrow b_2$, there are four choices of the Matsuo basis.
Here we follow the choice in \cite{MIto}.}
Namely, we consider the following $\bbC^{N+1}$-valued function
\begin{equation}\label{take-Matsuo}
\Psi= [\Psi_{-n},\dots, \Psi_{m}] =
[ \langle e_N(a_2,b_1), \xi \rangle,\dots, \langle e_0(a_2,b_1), \xi \rangle ].
\end{equation}
In order to identify the instanton partition function to be introduced in the next section with the Jackson integral,
it is important to keep the factorized structure of the integrand as far as possible,
hence we use the following expression:
\begin{align}
\label{eq:e-factor}
{\hat e}_k(a,b;z)&{}=e_{N-k}(a,b;z) \\
&{}=[k]_{q^{-1}}![N-k]_{q^{-1}}!\sum_{I \sqcup J=[N], |J|=k} \prod_{i \in I}\biggl(1-\frac{z_i}{a}\biggr)\prod_{j \in J}(1-b z_j)
\prod_{i \in I,j \in J}\frac{z_j-q^{-1} z_i}{z_j-z_i}, \nonumber
\end{align}
where the sum is taken over the $\bigl( {N \atop k} \bigr)$ terms corresponding
to disjoint union $I \sqcup J= [N]=\{1,2,\dots, N\}$ with $|I|=N-k$, $|J|=k$.
The formula (\ref{eq:e-factor}) follows from Proposition \ref{Shuffle} in~Appendix \ref{App:symmetrization} for the case $s=2$
with $f_1(x)=1-b_1 x$ and \smash{$f_2(x)=1-\frac{x}{a_2}$}.\footnote{As noted in \cite{Reshe-Bethe},
these cocycle functions naturally arise as the Bethe vector.
See \cite{AOF} for the developments based on the geometric method.}
Thus the summation in Jackson integral is written as the sum over the cone (\ref{eq:cone}) and
additional $2^N=\sum_{k=0}^N \bigl( {N \atop k} \bigr)$ sums. As we will see in the next section,
in the relevant instanton partition function, the first sum corresponds to the sum over the two Young diagrams
with even lengths for all the columns and the second sum comes from their even/odd variants depending on the number of
columns with odd length.

The expression \eqref{eq:e-factor} looks like rational, but it is a polynomial in $z=(z_1, \dots, z_N)$.
In fact, the base $e_k(a,b;z)$ can be characterized as the linear combination of the elementary symmetric functions in
$z$ having the following specialization \cite{MIto,ItoNoumi}:
\begin{equation}\label{eq:matsuo-sp}
e_k\bigl(a,b; \bigl(x,xq \dots,xq^{N-1}\bigr)\bigr)=
[N]_{q^{-1}}! \prod_{i=1}^{N-k}\bigl(1-q^{i} b x\bigr) \prod_{i=1}^{k-1}\biggl(1-q^i \frac{x}{a}\biggr).
\end{equation}

In \cite{MIto}, the following fact is proved.
\begin{Theorem}[{\cite{MIto}}] \label{thm:Ito}
With respect to the shifts $T_{\alpha}\colon \alpha \to \alpha+1$ and $T_i=T_{t,b_i}^{-1}T_{t,a_i}$, $i=1,2$,
the Jackson integral $\Psi$ satisfies the following system of difference equations:
\begin{align}
T_\alpha \Psi= \Psi K_0, \qquad T_1 \Psi= \Psi K_1,
\qquad T_2 \Psi= \Psi K_2,
\end{align}
where
\begin{gather}
K_0=R^{-1}AR=D_2 A D_2^{-1}, \qquad
K_1=R^{-1} D_1,\qquad
K_2=D_2 (T_2 R), \nonumber\\
D_1=\bigl(\bigl(t^\alpha q^{N-1}\bigr)^{N-i}\delta_{ij}\bigr)_{0\leq i,j\leq N}, \qquad
D_2=\bigl(\bigl(t^\alpha q^{N-1}\bigr)^{i}\delta_{ij}\bigr)_{0\leq i,j\leq N},
\end{gather}
and $\delta_{ij}$ is the Kronecker symbol.
An explicit form of the matrix $R$ is given by\footnote{The matrix $R$ in this section is slightly different from the $R$ matrix in the previous section.
See Appendix~\ref{App:List-R} for a summary of various $R$-like matrices used in this paper.}
\begin{gather}
 R=~L_R D_R U_R, \nonumber\\
L_R = \bigl(l^R_{ij}\bigr)_{0\leq i,j\leq N}, \qquad D_R =\bigl(\delta_{ij}d^R_{j}\bigr)_{0\leq i,j\leq N}, \qquad
U_R = \bigl(u^R_{ij}\bigr)_{0\leq i,j\leq N},\label{eq:RGauss}
\end{gather}
where
\begin{gather}
l^R_{ij}= \cb{N - j}{N - i}{q^{-1}}
\dfrac{(-1)^{i - j} q^{-\ctwo{i - j}}(a_2 b_2 q^j; q)_{i - j}}{\bigl(a_1^{-1} a_2 q^{-(N - 2 j - 1)};q\bigr)_{i - j}}, \qquad N \ge i\geq j \geq 0, \nonumber\\
d^R_{j}=\dfrac{\bigl(a_1 a_2^{-1} q^{-j};q\bigr)_{ N - j}(a_2 b_1;q)_j}{(a_1 b_2;q)_{N - j}\bigl(a_1^{-1} a_2 q^{-(N - j)}; q\bigr)_ j}, \qquad 0\leq j \leq N, \nonumber \\
u^R_{ij}=\cb{j}{i}{q^{-1}}\dfrac{\bigl(a_1 b_1 q^{N - j};q\bigr)_{j - i}}{\bigl(a_1 a_2^{-1} q^{N - i - j};q\bigr)_{j - i} }, \qquad 0 \leq i\leq j \leq N,
\end{gather}
and other elements are zero. $\left[\begin{smallmatrix} n \\ k \end{smallmatrix}\right]_{q^{-1}}$ denotes $q^{-1}$-binomial coefficient $($see \eqref{q-binom}$)$.
The matrix~$A$ is also given in a Gauss decomposed form as
\begin{gather}
A=L_A D_A U_A, \nonumber\\
 L_A = \bigl(l^A_{ij}\bigr)_{0\leq i,j\leq N}, \qquad D_A =\bigl(\delta_{ij}d^A_{j}\bigr)_{0\leq i,j\leq N}, \qquad
U_A = \bigl(u^A_{ij}\bigr)_{0\leq i,j\leq N},\label{eq:AGauss}
\end{gather}
where the nonzero elements are
\begin{gather}
l^A_{ij}= (-1)^{i - j} q^{\ctwo{N - i} - \ctwo{N - j}}\cb{N - j}{N - i}{q}
\dfrac{\bigl(a_2 b_2 q^j; q\bigr)_{ i - j}}{\bigl(t^\alpha a_2 b_2 q^{2 j};q\bigr)_{i - j}}, \qquad N \ge i\geq j \geq 0, \nonumber\\
d^A_{j} =a_1^{N - j} a_2^j q^{\ctwo{j} + \ctwo{N - j}} \dfrac{(t^\alpha;q)_{j} \bigl(t^\alpha a_2 b_2 q^{2 j}; q\bigr)_{N - j}}
{\bigl(t^\alpha a_2 b_2 q^{j - 1}; q\bigr)_{j} \bigl(t^\alpha a_1 a_2 b_1 b_2 q^{n + j - 1};q\bigr)_{N - j}}, \qquad 0\leq j \leq N, \nonumber\\
u^A_{ij}= \bigl(-t^\alpha a_1^{-1} a_2 \bigr)^{j - i} q^{\ctwo{j} - \ctwo{i}} \cb{j}{i}{q}
\dfrac{\bigl(a_1 b_1 q^{N - j}; q\bigr)_{ j - i}}{\bigl(t^\alpha a_2 b_2 q^{2 i}; q\bigr)_{ j - i}},
\qquad 0 \leq i\leq j \leq N.
\end{gather}
\end{Theorem}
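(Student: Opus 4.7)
The plan is to use the standard Aomoto--Ito machinery for Jackson integrals of Selberg type. Each parameter shift $T \in \{T_\alpha, T_1, T_2\}$ acts on the weight $\Phi(z)$ by multiplication by an explicit rational factor $M_T(z)$, and the transition matrix is obtained by expanding $M_T(z)\,e_k(a_2,b_1;z)$ back in the Matsuo basis for the shifted parameters, modulo cocycles whose Jackson sums vanish.

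First I would extract the shift factors directly from the definition \eqref{common} of $\Phi$: one gets $T_\alpha \Phi/\Phi = \prod_i z_i$, while $T_1$ and $T_2$ produce factors of the form $\prod_i (1 - a_j^{-1} z_i)/(1 - t^{-1} b_j z_i)$ for $j=1,2$. Next I would use the key cohomological input: for any rational function $g$ of controlled growth, $\sum_\nu [(1 - T_{t,z_i}) (\Phi \,\Delta(1,z)\, g)]_{z = \xi t^\nu} = 0$. Combined with the $t$-quasi-periodicity of $\Phi$, this produces enough relations to identify the space of admissible cocycles of bounded degree with a finite-dimensional quotient of dimension $N+1$, on which $\{e_k(a_2, b_1; z)\}_{k=0}^N$ is a basis.

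The third step is the explicit expansion. To determine the entries $K_{ij}$ I would exploit the characterization \eqref{eq:matsuo-sp} of the Matsuo basis by specialization $z \mapsto (x, xq, \dots, xq^{N-1})$: applying this specialization to $M_T(z)\, e_k(a_2, b_1; z)$ after reducing to degree $\leq N$ converts a symmetric multivariable identity into a one-variable polynomial identity in $x$, whose Matsuo-basis coefficients can then be read off from \eqref{eq:matsuo-sp}. For $T_1$ and $T_2$ the shifts preserve degree, so almost no cohomological reduction is needed and the transition matrix is essentially immediate. For $T_\alpha$ the factor $\prod_i z_i$ raises degree by one, so a single application of a $q$-Vandermonde cohomology relation is needed; the resulting conjugation $K_0 = D_2 A D_2^{-1}$ is natural because $T_\alpha$ acts diagonally on the specialization parameter in \eqref{eq:matsuo-sp}.

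The main obstacle will be reproducing the stated \emph{Gauss-decomposed} shapes $R = L_R D_R U_R$ and $A = L_A D_A U_A$ with their precise factored entries, rather than merely exhibiting $K_0, K_1, K_2$ as some rational matrices. The Gauss decomposition reflects the existence of an intermediate ``half-symmetrized'' basis, suggested by the shuffle formula \eqref{eq:e-factor}, in which the factors $\prod_{i \in I}(1 - z_i/a)$ and $\prod_{j \in J}(1 - b z_j)$ separate. The strategy is therefore to route each expansion through this intermediate basis in two stages, obtaining the lower-triangular factor $L$ from one half of the expansion and the upper-triangular factor $U$ from the other, with the diagonal $D$ arising as the matching factor. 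Each of the two one-sided expansions reduces to a one-variable $q$-binomial identity of exactly the $q$-Vandermonde type solved in Section~\ref{subsec:r-matrix} (cf.\ the closed form \eqref{eq:Csol}); matching the resulting factored formulas with the explicit entries $l^R_{ij}, d^R_j, u^R_{ij}$ and $l^A_{ij}, d^A_j, u^A_{ij}$ requires careful bookkeeping of the $q^{\binom{k}{2}}$ phases and the shifted $q$-Pochhammer symbols, and this combinatorial verification is where the bulk of the effort lies.
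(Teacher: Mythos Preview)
The paper does not prove this theorem at all: it is stated with the attribution \cite{MIto} and no proof is given, so there is no ``paper's own proof'' to compare against. Your outline is a reasonable sketch of the Aomoto--Ito method and is broadly in the spirit of how the cited reference establishes the result, but be aware that the precise derivation of the Gauss-decomposed $A$ in \cite{MIto} does not go through the Matsuo basis directly (as the Remark immediately following the theorem notes, the auxiliary basis $E_{k,i}$ was introduced specifically for the $T_\alpha$ equation), so your plan to obtain $A$ purely via the specialization \eqref{eq:matsuo-sp} and a single $q$-Vandermonde step is likely too optimistic and would need to be replaced by the two-index basis argument actually used there.
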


\begin{Remark}
The difference equations for the shift $T_i = T_{t,b_i}^{-1} T_{t,a_i}$ are nothing but the traditional (original) $q$-KZ equation.
The components of the $R$ matrix appearing in this type of $q$-KZ equation are nothing but the connection coefficients
between two Matsuo bases $\{e_i(a,b)\}$ with different parameters $a_i$ and $b_i$.
Due to the specialization given in equation~\eqref{eq:matsuo-sp},
the computation can be reduced to the connection problem of single variable polynomials as in \cite{Rosengren:2003nee}.
On the other hand, the matrix $A$ for the shift of $T_{\alpha}\colon \alpha \to \alpha+1$
is obtained in a different manner and the basis $E_{k,i}$ was introduced for this purpose in \cite{MIto}.
It is the $q$-KZ equation for $T_{\alpha}$ that is related to the Shakirov's equation.
\end{Remark}

The following relation between the matrices $R$ and $A$ seems to be noticed long ago at least among specialists
 (e.g., \cite{MIto1997}).
 We give a proof since the relation plays an important role in Section~\ref{subsBF}.
\begin{Proposition}
We have
\begin{equation}
A=s T(R) D,
\end{equation}
where $s$ is a scalar and $D$ is a diagonal matrix given by
\begin{equation}
s=q^{N(N-1)/2}(a_1a_2 b_2)^N \dfrac{(t^\alpha;q)_N}{\bigl(t^\alpha q^{N-1}a_1a_2b_1b_2;q\bigr)_N}, \qquad
D=\bigl((a_1b_2)^{-i}\delta_{ij}\bigr)_{0\leq i,j\leq N}.
\end{equation}
$T$ is a shift operator acting only on the parameters $a_2$, $b_2$ as
\begin{equation}
T=\bigl\{a_2 \to a_2 \bigl(t^\alpha q^{N-1}a_1b_2\bigr), \, b_2 \to b_2\bigl(t^\alpha q^{N-1}a_1b_2\bigr)^{-1}\bigr\}.
\end{equation}
\end{Proposition}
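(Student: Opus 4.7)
The plan is to exploit the Gauss decompositions $R = L_R D_R U_R$ and $A = L_A D_A U_A$ already recorded in Theorem~\ref{thm:Ito}. Since the shift $T$ is a parameter substitution, $T(R) = T(L_R) \cdot T(D_R) \cdot T(U_R)$, and right multiplication by the diagonal $D$ preserves triangularity, so that
\[
T(R)\, D = T(L_R) \cdot \bigl[T(D_R)\, D\bigr] \cdot \bigl[D^{-1} T(U_R)\, D\bigr]
\]
is the LDU decomposition of the right-hand side. Because $L_R$, $U_R$, $L_A$, $U_A$ all have $1$'s along the diagonal, the uniqueness of LDU reduces the claimed identity $A = s\, T(R)\, D$ to the three separate block identities
\[
T(L_R) = L_A, \qquad D^{-1} T(U_R)\, D = U_A, \qquad s\, T(D_R)\, D = D_A,
\]
each a finite identity between products of $q$-Pochhammer symbols and $q$-binomial coefficients.

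I would first record how $T$ acts on the building blocks. The combinations $a_1 b_1$ and $a_2 b_2$ are invariant, while
\[
T\bigl(a_1 a_2^{-1}\bigr) = \bigl(t^\alpha q^{N-1} a_2 b_2\bigr)^{-1}, \qquad T(a_1 b_2) = \bigl(t^\alpha q^{N-1}\bigr)^{-1}.
\]
The inversion $(a;q)_k = (-a)^k q^{\ctwo{k}} (q^{1-k}/a;q)_k$ from \eqref{a_n-inversion} then converts the $T$-image of the denominators $\bigl(a_1^{-1} a_2 q^{-(N-2j-1)};q\bigr)_{i-j}$ and $\bigl(a_1 a_2^{-1} q^{N-i-j};q\bigr)_{j-i}$ appearing in $L_R$ and $U_R$ into the denominators $\bigl(t^\alpha a_2 b_2 q^{2j};q\bigr)_{i-j}$ and $\bigl(t^\alpha a_2 b_2 q^{2i};q\bigr)_{j-i}$ appearing in $L_A$ and $U_A$. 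Combining this with $\cb{n}{k}{q^{-1}} = q^{-k(n-k)} \cb{n}{k}{q}$ and the rearrangement $\ctwo{N-j} - \ctwo{N-i} = (N-i)(i-j) + \ctwo{i-j}$ gives the first identity; the parallel computation for $U_R$, where the conjugation by $D$ contributes an extra $(a_1 b_2)^{i-j}$, produces the sign $(-t^\alpha a_1^{-1} a_2)^{j-i}$ and the exponent $\ctwo{j} - \ctwo{i}$ appearing in $u^A_{ij}$.

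The main (though still routine) obstacle is the diagonal identification, since a scalar $s$ independent of $j$ must emerge. Setting $X := t^\alpha q^{N-1} a_1 a_2 b_1 b_2$, the key simplifications are the telescopings
\[
(X;q)_j\, \bigl(X q^j;q\bigr)_{N-j} = (X;q)_N, \qquad (t^\alpha;q)_j\, \bigl(t^\alpha q^j;q\bigr)_{N-j} = (t^\alpha;q)_N,
\]
which collapse the $j$- and $(N-j)$-indexed Pochhammer symbols produced on the two sides into a single $N$-indexed one. After cancelling the common factor $\bigl(t^\alpha a_2 b_2 q^{2j};q\bigr)_{N-j} \big/ \bigl(t^\alpha a_2 b_2 q^{j-1};q\bigr)_j$ between $T(D_R)\,D$ and $D_A$, the residual $j$-dependence consists of a pure monomial whose exponent in $q$ simplifies via
\[
j(N-j) + \ctwo{j} + \ctwo{N-j} = \ctwo{N},
\]
a constant in $j$, while the $a_i$, $b_i$ part collapses to $(a_1 a_2 b_2)^N$. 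The remaining Pochhammer ratio is $(t^\alpha;q)_N / (X;q)_N$, and together these assemble precisely into the stated scalar
\[
s = q^{N(N-1)/2}\,(a_1 a_2 b_2)^N\, \frac{(t^\alpha;q)_N}{\bigl(t^\alpha q^{N-1} a_1 a_2 b_1 b_2;q\bigr)_N},
\]
completing the proof.
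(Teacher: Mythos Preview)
Your proof is correct and follows exactly the same strategy as the paper: reduce $A = s\,T(R)\,D$ to the three block identities $L_A = T(L_R)$, $D_A = s\,T(D_R)\,D$, $U_A = D^{-1}T(U_R)\,D$ via the Gauss decompositions, then verify each using the inversion formula \eqref{a_n-inversion} for $q$-Pochhammer symbols. The paper's proof is terser (it simply says ``one can check'' the three identities), while you supply the intermediate details---the action of $T$ on $a_1a_2^{-1}$ and $a_1b_2$, the $\cb{n}{k}{q^{-1}} = q^{-k(n-k)}\cb{n}{k}{q}$ conversion, and the telescoping $(t^\alpha;q)_j(t^\alpha q^j;q)_{N-j} = (t^\alpha;q)_N$---but the argument is the same.
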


\begin{proof}
Using the relation $\bigl(x^{-1};q\bigr)_k=(-1)^k q^{k(k-1)/2}x^{-k}\bigl(x q^{1-k};q\bigr)_k$
and the explicit forms \eqref{eq:RGauss} and~\eqref{eq:AGauss}, one can check
\begin{equation}
L_A=T(L_R), \qquad
D_A=s T(D_R) D, \qquad
U_A=D^{-1} T(U_R) D.
\end{equation}
Then we obtain
\begin{equation}
A=L_A D_A U_A=T(L_R) s T(D_R) D D^{-1} T(U_R) D=s T(L_R D_R U_R)D=s T(R) D,
\end{equation}
as desired.
\end{proof}

The compatibility of the dual pair of difference equations in Theorem \ref{thm:Ito} implies $RD_2 A = AR D_2$.
In Appendix \ref{App:matrix-inversion}, we give a direct check of the compatibility based on the matrix inversion formula of Andrews and Bressoud.
It is amusing that the compatibility condition follows from Bailey's transformation formula for terminating very-well-poised balanced
series ${}_{10} W_9$.

%%%%%%%%%%%%%%%%%%%%%%%%%%%%%%%%%%%%%%%%%%%%%%%%%%%%%%%%%%%%%%%%%%%%%%%%%%%%%%%%%%%%%%%%%%%%%%%%%%

\subsection{Lattice truncation by a choice of the cycle}\label{sec3.2}

The sum in the Jackson integral \eqref{Jackson-pairing} considered in \cite{MIto} is bilateral, namely it is taken over
$\{\nu_i \}\in \bbZ^N$. On the other hand, the Nekrasov partition function involves a sum over Young diagrams.
For the function $\Psi$, the discrepancy is remedied by an appropriate choice of the cycle $\xi=(\xi_1, \dots, \xi_N)$.
The suitable cycle is already known (see \cite[equation~(3.19)]{ItoNoumi} and \mbox{\cite[equations~(4.4) and (4.5)]{ItoForrester}}).
In fact, we have the following.

\begin{Proposition}
For $\xi$ given by
\begin{equation}\label{lattice-truncate}
\xi=\xi_{m,n}=\bigl(\underbrace{a_2,a_2 q, \dots, a_2 q^{n-1}}_{n}, \underbrace{a_1, a_1 q, \dots, a_1 q^{m-1}}_{m}\bigr), \qquad m+n=N,
\end{equation}
the lattice summation over $\bigl\{z_i=\xi_i t^{\nu_i}\bigr\}$ is truncated to a cone
\begin{align}
&0\leq \nu_1\leq \nu_2 \leq \dots \leq \nu_{n-1} \leq \nu_{n}, \notag \\
&0 \leq \nu_{n+1} \leq \nu_{n+2}\leq \dots \leq \nu_{N-1} \leq \nu_{N},\label{eq:cone}
\end{align}
and we can normalize the $\bbC^{N+1}$-valued function \eqref{take-Matsuo} as
\begin{equation}\label{eq:psi-tri}
\Psi^{T}=
\begin{bmatrix}
\Psi_{m}\\
\vdots\\
\Psi_{0}\\
\vdots\\
\Psi_{-n}
\end{bmatrix}=
\begin{bmatrix}
*&*&*&*&\cdots\\
 \vdots&\vdots&\vdots&\vdots&\cdots \\
 1&*&*&*&\cdots\\
&\ddots&\ddots&\ddots&\cdots\\
O& &*&*&\cdots
 \end{bmatrix}
\begin{bmatrix}
1\\ \Lambda\\ \Lambda^2\\ \Lambda^3\\ \vdots
\end{bmatrix},
\end{equation}
where $\Lambda=t^\alpha$.
\end{Proposition}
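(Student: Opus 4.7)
Plan. The claim splits into (A) the bilateral lattice sum truncates to the cone \eqref{eq:cone}, and (B) with an overall scalar normalization, the resulting vector admits the triangular $\Lambda$-expansion displayed in \eqref{eq:psi-tri}.

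For (A), I would examine the numerator factors of $\Phi(z)\Delta(1,z)$ after setting $z_i = \xi_i t^{\nu_i}$. With $\xi_1 = a_2$, the factor $(ta_2^{-1}z_1; t)_\infty$ becomes $(t^{\nu_1+1};t)_\infty$, which vanishes for every $\nu_1 \le -1$; the analogous argument with $(ta_1^{-1}z_{n+1}; t)_\infty$ gives $\nu_{n+1} \ge 0$. For two consecutive indices $i, i+1$ inside the same block of $\xi_{m,n}$, the numerator piece $(tq^{-1}z_{i+1}/z_i;t)_\infty$ in the product part of $\Phi$ specializes to $(t^{\nu_{i+1}-\nu_i+1}; t)_\infty$ and vanishes for $\nu_{i+1}\le\nu_i-1$, forcing $\nu_{i+1}\ge\nu_i$ inside each block. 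For non-consecutive indices in the same block and for cross-block pairs, the analogous argument carries an extra $q^k$ or $a_1/a_2$ factor that prevents the specialization from hitting a zero of $(\cdot;t)_\infty$ generically, so no further constraints arise. These conditions together reproduce \eqref{eq:cone}.

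For (B), pulling $\prod_i\xi_i^\alpha$ out of $\prod_i z_i^\alpha = \prod_i\xi_i^\alpha\,\Lambda^{\nu_i}$ as a common normalization leaves each cone summand carrying a clean $\Lambda^{\sum\nu_i}$, which is $\ge\Lambda^0$ on the cone; hence each $\Psi_j$ becomes a formal power series in $\Lambda$. The triangular shape amounts to the statement that $\Psi_{-k}=\langle e_{m+k}(a_2,b_1),\xi_{m,n}\rangle$ vanishes in $\Lambda$-orders $<k$ for $0\le k\le n$, with $\Lambda^k$-coefficient normalizable to $1$. Using the shuffle expression \eqref{eq:e-factor} for $e_{m+k}=\hat e_{n-k}$ at a cone lattice point $\xi_{m,n}t^\nu$: the factor $\prod_{i\in I}(1-z_i/a_2)$ forces the leftmost index of block~1 into $J$ (because $z_1/a_2=t^{\nu_1}=1$ at the baseline $\nu_1=0$), while the factor $\prod_{i\in I,j\in J}(z_j-q^{-1}z_i)/(z_j-z_i)$ kills any ``descent'' $i\in I$, $i-1\in J$ within block~1. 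As long as the block-1 $\nu$'s sit at their baseline, these combine to push all of block~1 into $J$, leaving $k$ elements of block~2 that must sit in $J$; the analogous descent obstruction on block~2 then rules those $k$ elements out unless the block-2 $\nu$'s have been bumped sufficiently, contributing at least $\Lambda^k$. An induction on $k$ that iterates this analysis after each admissible bump makes the triangular shape precise and exhibits the leading $\Lambda^k$ coefficient, which is then absorbed by the scalar normalization.

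The main obstacle is the combinatorial bookkeeping in~(B): one must simultaneously track the vanishing at $z_1 = a_2$ (for block~1) and at $z_{n+1} = a_1$ (for block~2), the ``no-descent'' constraints inside each block coming from the shuffle quotient, and the minimum number of lattice bumps needed to escape them. A more conceptual route would be to identify $\{e_k(a_2,b_1)\}$ as dual, under the Aomoto pairing, to the family of characteristic cycles obtained from $\xi_{m,n}$ and its $t$-shifted neighbors; the triangular shape then becomes a one-line dual-basis statement.
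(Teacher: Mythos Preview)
Your argument for part~(A) is correct and is exactly what the paper has in mind when it says the cone ``easily follows from the explicit form of $\Phi(z)$'': the zeros of $(ta_r^{-1}z_i;t)_\infty$ force the two $\nu_i\ge 0$ inequalities, and the zeros of $(tq^{-1}z_{i+1}/z_i;t)_\infty$ inside each $q$-block force the monotonicity.

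For part~(B), your strategy is the same as the paper's: use the shuffle formula \eqref{eq:e-factor} and locate, for each $e_{m+k}$, the unique $(\nu,J)$ that contributes to the lowest $\Lambda$-order. The paper simply writes down that leading $(\nu,J)$ together with its coefficient $h_l$; your argument is meant to explain why it is the only one. However, your combinatorics is off in the decisive place. For $e_{m+k}=\hat e_{n-k}$ one has $|J|=n-k$. At the baseline $\nu=0^N$ your constraints correctly force block~1 $\subset J$; but $|\text{block 1}|=n>n-k$, so \emph{every} shuffle term vanishes there --- there are no ``$k$ elements of block~2 that must sit in $J$''. The bumps must therefore occur in \emph{block~1}, not block~2: one needs to break the descent chain inside block~1 so that the forced initial segment $\{1,\dots,r\}\subset J$ shrinks to length $\le n-k$. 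By the cone this costs at least $\sum_{i>n-k}\nu_i\ge k$, with equality exactly at $\nu=(0^{n-k},1^k,0^m)$; and at that $\nu$ the only admissible choice is $J=\{1,\dots,n-k\}$, $I=\{n-k+1,\dots,N\}$. This is precisely the single leading contribution the paper records. Bumps in block~2 never help, since block~1 at baseline still forces $|J|\ge n$.

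So the approach is right and matches the paper; the gap is the misidentification of which block must be bumped. Once you flip that, your ``induction on $k$'' becomes the clean statement that the minimal $\sum\nu_i$ compatible with $|J|=n-k$ is $k$, attained uniquely, and the paper's explicit formulas $g_k$, $h_l$ are just the evaluation of the integrand at that point. (A small side remark: the normalization in \eqref{eq:psi-tri} only fixes the $\Lambda^0$ coefficient of $\Psi_0$ to be $1$; the other diagonal entries $h_l/h_0$ are not claimed to be $1$.)
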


When we impose the mass truncation condition $d_2=q^{-m}$, $d_3=q^{-n}$ the partition function becomes a Laurent
polynomial in $x$. With the normalization \eqref{eq:psi-tri} the component $\Psi_{i}$, ${-n \leq i \leq m}$,
is to be identified with the coefficient of $x^i$-term in the Laurent polynomial (see also Figure~\ref{Fig:mass-truncation}
for the structure of the partition function).

\begin{proof}
The condition \eqref{eq:cone} easily follows from the explicit form \eqref{common} of the function $\Phi(z)$.
We will show \eqref{eq:psi-tri}. From the expression \eqref{eq:e-factor} for $e_k=e_k(a_2,b_1,z)$ with
\[
z=\bigl(\underbrace{a_2 t^{\nu_1}, a_2 q t^{\nu_2},\dots, a_2 q^{n-1} t^{\nu_n}}_{n},
\underbrace{a_1 t^{\nu_{n+1}}, a_1 q t^{\nu_{n+2}},\dots, a_1 q^{m-1} t^{\nu_{n+m}}}_{m}\bigr),
\]
we see that the leading term in $\Lambda$-expansion of each component of $\Psi$ has a single contribution
arising from a specific $\{\nu_i\}$ and $J\subset [N]$, $|J|=k$.
Explicitly, for $0\leq k\leq m$, we have
\[
e_k=g_k+O\bigl(\Lambda^1\bigr), \qquad
g_k=\frac{\bigl(q^{-1};q^{-1}\bigr)_m(b_1a_2;q)_n\bigl(q^{-N+k};q\bigr)_n\bigl(q^kb_1a_1;q\bigr)_{m-k}\bigl(q^{-n}\frac{a_1}{a_2},q\bigr)_k}{\bigl(1-q^{-1}\bigr)^N},
\]
arising from $J=\{n+1, n+2, \dots, n+k\}$, $(\nu_i)=\bigl(0^N\bigr)$,
and for $0\leq l \leq n$, we have
\begin{gather*}
e_{m+l}=h_{l} \Lambda^{l}+O\bigl(\Lambda^{l+1}\bigr),\\
h_{l}=\frac{\bigl(q^{-1};q^{-1}\bigr)_{n-l}\bigl(q^{-1};q^{-1}\bigr)_{m+l}(t;q)_l(b_1a_2;q)_{n-l}\bigl(q^{l-n}\frac{a_1}{a_2};q\bigr)_{m}}{\bigl(1-q^{-1}\bigr)^N},
\end{gather*}
arising from $J=\{n-l+1, n-l+2, \dots, N\}$, $(\nu_i)=\bigl(0^l,1^{n-l}, 0^m\bigr)$.
Dividing by the scalar factor
\[
h_{0}=\frac{\bigl(q^{-1};q^{-1}\bigr)_{n}\bigl(q^{-1};q^{-1}\bigr)_{m}(b_1a_2;q)_{n}\bigl(q^{-n}\frac{a_1}{a_2};q\bigr)_{m}}{\bigl(1-q^{-1}\bigr)^N},
\]
we have the expression \eqref{eq:psi-tri}.
\end{proof}
%

%%%%%%%%%%%%%%%%%%%%%%%%%%%%%%%%
\newcommand{\tiD}{{\tilde D}}
\newcommand{\tiR}{R}
\newcommand{\tiA}{{\tilde A}}
%%%%%%%%%%%%%%%%%%%%%%%%%%%%%%%%

For the function $\Psi$ normalized as above, the two types of $q$-KZ equations in Theorem \ref{thm:Ito} can be written as
%\begin{equation}
%T^{-1}_{t,a_2}T_{t,b_2}\Psi \tiD_1 \tiR=\Psi, \qquad
%T_{t,\Lambda} \Psi=\Psi \tiA \tiD_2,
%\end{equation}
\begin{equation}
\bigl(T^{-1}_{t,a_2}T_{t,b_2}\Psi\bigr) \tiD_1 \tiR=\Psi, \qquad
T_{t,\Lambda} \Psi=\Psi \tiA \tiD_2,
\end{equation}
where the matrices $\tiA$ and $\tiR$ are given as the following connection matrices\footnote{Contrary to the matrix $\tiR$,
the fact that the matrix $\tiA$ is obtained as the connection matrix of Matsuo base is not obvious from the definition.}
\begin{align*}
&[e_0(a_2,b_1), \dots, e_N(a_2,b_1)]=
[e_N(a_1,b_2), \dots, e_0(a_1,b_2)] \tiR,\\
&[e_N(a, b), \dots, e_0(a,b)]_{\substack{a=a_1 b_1\Lambda \\ b=q^{N-1} a_2 b_2}}=
[e_0(c,d), \dots, e_N(c,d)]_{\substack{c=q^{1-N} \\ d=\Lambda ^{-1}}} \tiA,
\end{align*}
and $\tiD_1$, $\tiD_2$ are diagonal matrices given by
\begin{gather*}
\tiD_1=(-1)^N q^\frac{(N-1)(n-m)}{2} \biggl(\frac{a_2}{a_1}\biggr)^N \prod_{i=0}^{n-1} \frac{1-q^{m-n+1+i}\frac{a_1}{a_2}}{1-q^i a_2
b_1}\\ \hphantom{\tiD_1=}{}
\times\prod_{i=0}^{m-1}\frac{1-q^i a_1 b_2}{1-q^{n-m+1+i}\frac{a_2}{a_1}}
\cdot {\rm diag} \bigl(\bigl\{\bigl(\Lambda q^{N-1}\bigr)^{m-i}\bigr\}_{i=0}^N\bigr), \\
\tiD_2=\biggl(\frac{q^m a_1}{a_2}\biggr)^n \prod_{i=0}^{N-1}\frac{1-q^i \Lambda }{1-q^{N-1+i} a_1a_2b_1b_2 \Lambda } \cdot {\rm diag}\bigl(\bigl\{\bigl(a_2 b_1\bigr)^{N-i}\bigr\}_{i=0}^{N}\bigr).
\end{gather*}

%%%%%%%%%%%%%%%%%%%%%%%%%%%%%%%%%%%%%%%%%%%%%%%%%%%%%%%%%%%%%%%%%%%%%%%%%%%%%%%%%%%%%%%%%%%%%%%%%%%

\subsection[Base-fibre duality of the q-KZ equation]{Base-fibre duality of the $\boldsymbol{q}$-KZ equation}\label{subsBF}

Let us look at the two types of the $q$-KZ equation explicitly in the case at our hand.
It turns out that these $q$-KZ equations are related by the duality which exchanges
the instanton expansion parameter $\Lambda$ and the Coulomb parameter $Q$.
The $q$-KZ equation arising from Shakirov's equation was given by
%\begin{equation}\label{eq:qKZ-Sh}
%\psi_j(t \Lambda)=\sum_{i=-n}^{m} \psi_i(\Lambda) \bigl(Q^{\vee}\bigr)^i \ r_{i,j}(\Lambda),
%\quad (j=-n,\dots, m)
%\end{equation}
\begin{equation}\label{eq:qKZ-Sh}
\psi_j(\Lambda)=\sum_{i=-n}^{m} \psi_i\biggl(\frac{\Lambda}{t}\biggr) \bigl(Q^{\vee}\bigr)^i \ r_{i,j}(\Lambda),
\qquad j=-n,\dots, m,
\end{equation}
where $Q^{\vee}:=(q t Q)^{-1}$ is the shift parameter for $x$ and
$[r_{i,j}(\Lambda)]_{i,j =-n}^{m}$
is the $R$-matrix obtained by the specialization $d_2=q^{-m}$, $d_3=q^{-n}$.
Since the leading term $r_{i,j}(0)$ of the expansion $r_{i,j}(\Lambda)=r_{i,j}(0)+O(\Lambda)$ is upper triangular
with $r_{i,i}(0)=q^{i(i+1)}$,
the equation \eqref{eq:qKZ-Sh} has a set of fundamental solutions of the form
\begin{equation}
\psi^{(i)}_{j}(\Lambda)=\Lambda^{\rho_i} Y_{i,j}(\Lambda), \qquad Y_{i,j}(\Lambda)=Y_{i,j}(0)+O(\Lambda),
\qquad i,j=-n, \dots, m,
\end{equation}
where $t^{\rho_i}=\bigl(Q^{\vee}\bigr)^i r_{i,i}(0)=\bigl(Q^{\vee}\bigr)^i q^{i(i+1)}$ and the leading coefficients $Y_{i,j}(0)$ are upper triangular.
One can normalize them as $Y_{i,i}(0)=1$.
Then the equation for $Y_{i,j}(\Lambda)$ is written as
%\begin{equation}\label{eq:Lambda-KZ}
%Y_{k,j}(t \Lambda)=\sum_{i=-n}^{m} \bigl(Q^{\vee}\bigr)^{-k} q^{-k(k+1)} Y_{k,i}(\Lambda) \bigl(Q^{\vee}\bigr)^i \ r_{i,j}(\Lambda).
%\quad (k,j=-n,\dots, m)
%\end{equation}
\begin{equation}\label{eq:Lambda-KZ}
Y_{k,j}(\Lambda)=\sum_{i=-n}^{m} \bigl(Q^{\vee}\bigr)^{-k} q^{-k(k+1)} Y_{k,i}\biggl(\frac{\Lambda}{t}\biggr) \bigl(Q^{\vee}\bigr)^i r_{i,j}(\Lambda),
\qquad k,j=-n,\dots, m.
\end{equation}

Since $r_{i,j}(\Lambda)$ is independent of $Q^{\vee}$, this equation depends on the parameter $Q^{\vee}$ only through simple power factors.
The following proposition\footnote{Proposition \ref{another-qKZ} is a consequence of Theorems \ref{thm:Ito} and~\ref{AL=J}.
The proof is omitted, since it is not used in other parts of the paper.} shows that the fundamental solution $Y(\Lambda)=Y\bigl(\Lambda, Q^{\vee}\bigr)$ satisfies another $q$-KZ equation
with respect to the parameter $Q^{\vee}$.
\begin{Proposition}\label{another-qKZ}
The fundamental solution $Y\bigl(\Lambda,Q^{\vee}\bigr)$ satisfies the following equation:
\begin{equation}\label{eq:a-KZ}
\sum_{j=-n}^m Y_{i,j}\biggl(\Lambda, \frac{Q^{\vee}}{t}\biggr) \biggl(\frac{\Lambda d_1}{q^{m+2}}\biggr)^{j} \tilde{r}_{j,k}\bigl(Q^{\vee}\bigr)
=v_{i}\bigl(\Lambda,Q^{\vee}\bigr) Y_{i,k}\bigl(\Lambda,Q^{\vee}\bigr), \qquad -n \leq i,k \leq m,
\end{equation}
where
\begin{equation}
%c(a)=q^{-n(n+1)}
%\frac{(d_1^{-1}a q^{2-n};q)_{m+n}}{(a q^{2-2 n};q)_{m+n}}, \quad
\tilde{r}_{i,j}\bigl(Q^{\vee}\bigr)=r_{i,j}(\Lambda){|}_{\Lambda \mapsto \frac{q^{m+2}}{d_1} Q^{\vee}}
\quad
\end{equation}
and
\begin{equation}
v_i\bigl(\Lambda,Q^{\vee}\bigr)=q^{i(i+1)}\biggl(\frac{\Lambda d_1}{q^{m+2}}\biggr)^i
\frac{\bigl(Q^{\vee} q^{2+2i};q\bigr)_{m-i} \bigl(d_4 Q^{\vee} q^{1-n};q\bigr)_{n+i}}{\bigl(d_1^{-1}Q^{\vee} q^{2+i};q\bigr)_{m-i}\bigl(Q^{\vee} q^{1-n+i};q\bigr)_{n+i}}.
\end{equation}
\end{Proposition}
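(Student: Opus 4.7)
The plan is to derive \eqref{eq:a-KZ} from the second $q$-KZ equation of Theorem \ref{thm:Ito}, after applying the parameter dictionary of Theorem \ref{AL=J} and the same normalization that produces $Y(\Lambda,Q^\vee)$ from the Jackson integral $\Psi$. Concretely, Theorem \ref{AL=J} identifies $\mathcal{Z}_{\rm{AL}}$ with $\Psi$, so the $x$-expansion coefficients $\psi_i(\Lambda)$ entering \eqref{eq:qKZ-Sh} can be extracted from the Matsuo-basis components of $\Psi$; the function $Y(\Lambda,Q^\vee)$ is then obtained from $\Psi$ by stripping the leading $\Lambda^{\rho_i}$ behaviour and normalizing so that $Y_{i,i}(0)=1$. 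Theorem \ref{thm:Ito} equips $\Psi$ with two compatible shift equations: $T_\alpha\colon \Lambda\mapsto t\Lambda$ and $T_2=T^{-1}_{t,a_2}T_{t,b_2}\colon(a_2,b_2)\mapsto (a_2/t,\,tb_2)$. Under the dictionary of Theorem \ref{AL=J}, one checks directly that $d_1,d_4,\Lambda$ are invariant under $T_2$ while $Q^\vee=a_2/(q^{m-n+1}a_1)$ is multiplied by $1/t$, so the second equation of Theorem \ref{thm:Ito} is precisely a shift equation in the variable $Q^\vee$.

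Next I would identify the matrix on the left-hand side of \eqref{eq:a-KZ} with the matrix appearing in this second $q$-KZ equation. The key tool is Proposition 3.3, $A=sT(R)D$, which is the matrix incarnation of the base--fibre duality: it asserts that the connection matrix $R$ controlling the $(a_i,b_i)$-shift equation coincides, after a shift $T$ of $(a_2,b_2)$ and diagonal/scalar conjugation, with the connection matrix $A$ controlling the $\Lambda$-shift equation. Translating the shift $T$ into the physical variables $d_i,\Lambda,Q$ and comparing the substitutions that appear in the $r_{i,j}$ expressions of Section \ref{subsec:r-matrix}, I would check that this is exactly the substitution $\Lambda\mapsto q^{m+2}Q^\vee/d_1$ of \eqref{base-fiber-duality}. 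Consequently the entries of $\tilde{R}$ reduce to $\tilde{r}_{j,k}(Q^\vee)=r_{j,k}(\Lambda)|_{\Lambda\mapsto q^{m+2}Q^\vee/d_1}$, which is precisely the matrix appearing in \eqref{eq:a-KZ}.

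Finally I would identify the diagonal prefactor $v_i(\Lambda,Q^\vee)$ by combining the diagonal matrix $\tilde D_1$ of Theorem \ref{thm:Ito} (whose entries are a product of four $q$-Pochhammers in $a_i,b_i$ and a power of $\Lambda q^{N-1}$) with the change of normalization that carries $\Psi$ to $Y$. The leading-power factor $(\Lambda d_1/q^{m+2})^i$ in \eqref{eq:a-KZ} reflects the $\Lambda^{\rho_i}$ stripped off in the definition of $Y$, while the ratio $(Q^\vee q^{2+2i};q)_{m-i}(d_4 Q^\vee q^{1-n};q)_{n+i}/(d_1^{-1}Q^\vee q^{2+i};q)_{m-i}(Q^\vee q^{1-n+i};q)_{n+i}$ arises by rewriting the entries of $\tilde D_1$ under the dictionary and simplifying using $d_2=q^{-m}$, $d_3=q^{-n}$.

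The main obstacle is purely bookkeeping: one must track every scalar and diagonal factor that arises in passing from $\Psi$ to $Y$ (both the $\Lambda^{\rho_i}$ stripping and the $Y_{i,i}(0)=1$ rescaling) and verify that, after the substitution $\Lambda\mapsto q^{m+2}Q^\vee/d_1$ and the shift $Q^\vee\mapsto Q^\vee/t$, the prefactors conspire to produce exactly the four $q$-Pochhammer ratio defining $v_i(\Lambda,Q^\vee)$. The match of the off-diagonal structure is automatic from Proposition 3.3, but confirming the scalar $q^{i(i+1)}$ and the precise $q$-shifts inside each Pochhammer in $v_i$ requires a careful comparison of leading behaviours at $\Lambda=0$ on both sides of \eqref{eq:a-KZ}.
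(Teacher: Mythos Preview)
Your proposal is correct and follows exactly the route the paper indicates: the paper states in a footnote that Proposition~\ref{another-qKZ} is a consequence of Theorems~\ref{thm:Ito} and~\ref{AL=J} and explicitly omits the proof, so your sketch is in fact more detailed than what the paper provides. Your identification of the $T_2$-shift as a pure $Q^\vee\mapsto Q^\vee/t$ shift under the dictionary~\eqref{eq:dQ-ab}, together with the use of Proposition~3.3 to recognize the resulting connection matrix as $r_{j,k}$ with the substituted argument, is precisely the intended mechanism behind the base--fibre duality discussed in Section~\ref{subsBF}.
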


The equations \eqref{eq:Lambda-KZ} and \eqref{eq:a-KZ} look very similar.
To make this similarity more explicit we make the gauge transformation
$Y_{i,j}\bigl(\Lambda,Q^{\vee}\bigr)=G_i\bigl(Q^{\vee}\bigr) \tilde{Y}_{i,j}\bigl(\Lambda,Q^{\vee}\bigr)$ with
\begin{equation}
%Y_{i,j}\bigl(\Lambda,Q^{\vee}\bigr)=G_i\bigl(Q^{\vee}\bigr) \tilde{Y}_{i,j}\bigl(\Lambda,Q^{\vee}\bigr), \quad
G_i\bigl(Q^{\vee}\bigr)=\prod_{k=1}^{\infty}\frac{\bigl(Q^{\vee} q^{2+2i}t^k;q\bigr)_{m-i} \bigl(d_4 Q^{\vee} q^{1-n}t^k;q\bigr)_{n+i}}
{\bigl(d_1^{-1}Q^{\vee} q^{2+i}t^k;q\bigr)_{m-i}\bigl(Q^{\vee} q^{1-n+i}t^k;q\bigr)_{n+i}}.
\end{equation}
Since
\[
G_i^{-1}\biggl(\frac{Q^{\vee}}{t}\biggr) G_i\bigl(Q^{\vee}\bigr) v_i\bigl(\Lambda,Q^{\vee}\bigr)=q^{i(i+1)}\biggl(\frac{\Lambda d_1}{q^{m+2}}\biggr)^i,
\]
one can rewrite the equation \eqref{eq:a-KZ} for ${-n \leq i,k \leq m}$ as
\begin{gather}
\sum_{j=-n}^m {\tilde Y}_{i,j}\biggl(\Lambda, \frac{Q^{\vee}}{t}\biggr) \biggl(\frac{\Lambda d_1}{q^{m+2}}\biggr)^{j}
\tilde{r}_{j,k}\bigl(Q^{\vee}\bigr)=q^{i(i+1)}\biggl(\frac{\Lambda d_1}{q^{m+2}}\biggr)^i {\tilde Y}_{i,k}\bigl(\Lambda,Q^{\vee}\bigr).
\end{gather}
This equation is exactly the same form as \eqref{eq:Lambda-KZ} under the replacement
\begin{equation}
\Lambda \mapsto \frac{q^{m+2}}{d_1} Q^{\vee}, \qquad
Q^{\vee} \mapsto \frac{d_1}{q^{m+2}} \Lambda.
\end{equation}

%%%%%%%%%%%%%%%%%%%%%%%%%%%%%%%%%%%%%%%%%%%%%%%%%%%%%%%%%%%%%%%%%%%%%%%%%%%%%%%%%%%%%%%%%%%%%
\begin{figure}[t]
\vspace{30mm}
\begin{center}
\begin{picture}(50,40)
\setlength{\unitlength}{0.9mm}
\thicklines
\put(0,15){\line(1,0){20}}
\put(0,35){\line(1,0){20}}
\put(0,15){\line(0,1){20}}
\put(20,15){\line(0,1){20}}
\put(0,15){\line(-1,-1){10}}
\put(0,35){\line(-1,1){10}}
\put(20,35){\line(1,1){10}}
\put(20,15){\line(1,-1){10}}

\put(8,10){$\Lambda$}
\put(23,24){$Q^{\vee}$}
%\put(-28,4){$\{0\} \times \mathbb{P}^1$}
%\put(-30,34){$\{\infty\} \times \mathbb{P}^1 $}
\end{picture}
\caption{Geometric engineering of ${\rm U}(2)$ gauge theory by local $\mathbb{P}^1 \times \mathbb{P}^1$.
$\Lambda$ is the K\"ahler parameter of the base $\mathbb{P}^1$ and $Q^{\vee}$ is that of the fibre.
We can introduce four matter hypermultiplets by blow-ups.}
\label{toric-diagram}
\end{center}
\end{figure}

\begin{Remark}
For the five-dimensional quantum Seiberg--Witten curve
with coefficient matrix~${\mathcal D}$ or~${\mathcal D}_3$ in the proof of Proposition 5.1 in~\cite{Awata:2022idl},
the positions of the external lines (tentacles of the corresponding amoeba) are given by
\[
(x,p)=(0,d_3), (0, d_4), \biggl(\infty,\frac{1}{d_1}\biggr), \biggl(\infty, \frac{1}{d_2}\biggr),
(1,0), (d_3d_4 \Lambda \mu,0), \biggl(\frac{\mu q}{d_1d_2}, \infty\biggr), \biggl(\frac{\Lambda}{q}, \infty\biggr)
\]
or
\[
(x,p)=(0,d_3), \biggl(0, \frac{1}{\mu d_4}\biggr), \biggl(\infty,\frac{1}{d_1}\biggr), \biggl(\infty, \frac{d_2}{q^2 \mu}\biggr),
\biggl(\frac{q}{d_2},0\biggr), (d_3 \Lambda,0), \biggl(\frac{1}{d_1}, \infty\biggr), \biggl(\frac{d_4 \Lambda}{q}, \infty\biggr),
\]
where $\mu=Q^{\vee} $.
Then, the exchange $\Lambda \leftrightarrow Q^{\vee}$ can be seen as the exchange of two external lines at $p=\infty$ (for ${\mathcal D}$)
or as the exchange of $x$ and $p$ (for ${\mathcal D_3}$) respectively.
\end{Remark}

\begin{Example}
For $d_2=q^{-m}$, $d_3=q^{-n}$ with $(m,n)=(1,0)$, the solution $Z=Z(\Lambda,x)$ of Shakirov's equation
\begin{align}\label{eq:Sh-01}
&T_{t,\Lambda}Z(\Lambda,x)=\SS T_{qtQ,x}^{-1} Z(\Lambda,x)
\end{align}
is explicitly given by the Heine's series ${}_{2}\phi_{1} \bigl[{a,b \atop c};t,z \bigr]$ with base $t$ as
\begin{align}
Z&={}_2\phi_{1}\left[ {\frac{1}{d_1},\frac{Q t}{d_4} \atop \frac{Q t}{q}};t,\frac{d_1 d_4 \Lambda }{q^2} \right]
-\frac{1-d_1}{1-\frac{q}{Q t}} \ {}_2\phi_{1} \left[ {\frac{t}{d_1},\frac{Q t}{d_4} \atop \frac{Q t^2}{q}};
t,\frac{d_1 d_4 \Lambda }{q^2} \right]x \nonumber \\
&={}_2\phi_1 \left[ {a,z_2 \atop b z_2};t, z_1 \right]+
\frac{b z_2\bigl(1-\frac{1}{a}\bigr)}{1-b z_2} \ {}_2\phi_1 \left[{ta,z_2 \atop t b z_2};t, z_1 \right]x,
\end{align}
where we put $\Lambda = \frac{a q z_1}{b}$,
$Q = \frac{b q z_2}{t}$,
$d_1 =\frac{1}{a}$,
$d_4 = b q$
for simplicity.
Then the coefficients $y_0$, $y_1$ of $Z=y_0+y_1 x$ satisfy
\begin{align}
&\biggl(1-\frac{a z_1}{b}\biggr)T_{t,z_1} Y=Y M(z_1), \label{eq:Ydual1}\\
&\biggl(1-\frac{t}{b z_2}\biggr)T_{t,z_2}^{-1} Y=Y M\biggl(\frac{t}{z_2}\biggr), \label{eq:Ydual2}
\end{align}
where
\begin{equation}
Y=[y_0, y_1], \qquad
M(u)=\begin{bmatrix}1&\\&\frac{a z_1}{b z_2}\end{bmatrix}
\begin{bmatrix}1-\frac{u}{b}&1-\frac{1}{a}\\[1.5mm]1-\frac{1}{b}&1-\frac{1}{au}\end{bmatrix}
\begin{bmatrix} 1&\\&-1\end{bmatrix}.
\end{equation}
The equation \eqref{eq:Ydual1} is the truncated form of \eqref{eq:Sh-01}, while
the equation \eqref{eq:Ydual2} follows from
\begin{align}\label{eq:Ydual3}
&b^{-1}\biggl(1-\frac{t}{z_2}\biggr)T_{t,z_2}^{-1} {\tilde Y}={\tilde Y} M\biggl(\frac{t}{z_2}\biggr),
\end{align}
where ${\tilde Y}=[{\tilde y}_0, {\tilde y}_1]$, ${\tilde Z}={\tilde y}_0,+{\tilde y}_1 x$,
and
\begin{align}\label{eq:Z-Heine}
\tilde{Z}:= {}_2\phi_1 \biggl[{b,z_1 \atop a z_1};t, z_2 \biggr]+
\frac{b z_2(1-\frac{1}{a})}{1-a z_1} {}_2\phi_1 \biggl[{tb,z_1 \atop t a z_1};t, z_2\biggr] x \dfrac{(b z_2,z_1;t)_{\infty}}{(a z_1,z_2;t)_{\infty}} Z.
\end{align}
For $x=0$, the last relation in \eqref{eq:Z-Heine} is Heine's transformation. Note that the ratio ${\tilde Z}/Z$ is independent of $x$,
which explains how a single function can satisfy the dual pair of equations~\eqref{eq:Ydual1} and \eqref{eq:Ydual2}.
\end{Example}

\section{Nekrasov partition function as Jackson integral}
\label{sec:Matsuo}

Let us show that the $K$-theoretic Nekrasov partition function from the affine Laumon space agrees with the Matsuo bases.
When $n=2$ the general formula \eqref{t-formula} derived in Appendix \ref{App:Laumon} for the orbifolded Nekrasov factor
reduces to
\begin{gather}\label{N-factor-even}
\mathsf{N}_{\lambda, \mu}^{(0\vert 2)} (u \vert q,t) =
\prod_{i,j=1}^\infty
\frac{\Bigl[uq^{j-i} t^{1+ \floor{\frac{\mu_{i}^\vee -\lambda_j^\vee}{2}}} ; t\Bigr]_\infty}
{\Bigl[uq^{j-i-1} t^{1+ \floor{\frac{\mu_{i}^\vee -\lambda_j^\vee}{2}}} ; t\Bigr]_\infty}
\frac{\bigl[uq^{j-i-1} t; t\bigr]_\infty} {\bigl[uq^{j-i} t; t\bigr]_\infty},
\\
\label{N-factor-odd}
\mathsf{N}_{\lambda, \mu}^{(1\vert 2)} (u \vert q,t) =
\prod_{i,j=1}^\infty
\frac{\Bigl[uq^{j-i} t^{\frac{1}{2} + \floor{\frac{\mu_{i}^\vee -\lambda_j^\vee +1}{2}}} ; t\Bigr]_\infty}
{\Bigl[uq^{j-i-1} t^{\frac{1}{2} + \floor{\frac{\mu_{i}^\vee -\lambda_j^\vee +1}{2}}} ; t\Bigr]_\infty}
\frac{\bigl[uq^{j-i-1} t^{\frac{1}{2}}; t\bigr]_\infty} {\bigl[uq^{j-i} t^{\frac{1}{2}}; t\bigr]_\infty},
\end{gather}
where $\mu_i^\vee$ and $\lambda_j^\vee$ denote the transpose of the Young diagrams.
For the definition of $[u;t]_\infty$, we refer to \eqref{sinh-reg}.
When one of the partitions is empty, the formula simplifies to
\begin{gather}
\mathsf{N}_{\lambda, \varnothing}^{(0\vert 2)} (u \vert q, \kappa)
 =
\prod_{i \geq 1} \bigl[uq^{i-1}; \kappa^2\bigr]
_{\floor{\frac{\lambda_{i}^\vee + 1}{2}}},
\qquad
\mathsf{N}_{\lambda, \varnothing}^{(1\vert 2)} (u \vert q, \kappa)
=
\prod_{i \geq 1} \bigl[uq^{i-1}\kappa; \kappa^2\bigr]
_{\floor{\frac{\lambda_{i}^\vee}{2}}},
\\
\mathsf{N}_{\varnothing, \mu}^{(0\vert 2)} (u \vert q, \kappa)
 =
\prod_{i \geq 1}\Bigl[uq^{-i}\kappa^{-2\floor{\frac{\mu_i^\vee}{2}}} ; \kappa^2\Bigr]
_{\floor{\frac{\mu_i^\vee}{2}}},
\nonumber\\
\mathsf{N}_{\varnothing, \mu}^{(1\vert 2)} (u \vert q, \kappa)
=
\prod_{i \geq 1}\Bigl[uq^{-i}\kappa^{1 -2\floor{\frac{\mu_i^\vee + 1 }{2}}} ; \kappa^2\Bigr]
_{\floor{\frac{\mu_i^\vee + 1 }{2}}},
\end{gather}
where $t=\kappa^{-2}$.
Note that $\floor{\frac{m+1}{2}} + \floor{\frac{m}{2}} = m$. When $\lambda_{i}^\vee$, $\mu_i^\vee$ are even,
$\mathsf{N}^{(0\vert 2)}$ and $\mathsf{N}^{(1\vert 2)}$ have the same number of factors.
But when they are odd, $\lambda_{i}^\vee$ contributes more to $\mathsf{N}^{(0\vert 2)}$, while $\mu_i^\vee$ does to
$\mathsf{N}^{(1\vert 2)}$. This is due to a difference of the coloring of $\lambda$ and $\mu$ (see Figure~\ref{Fig:coloring}).

%%%%%%%%%%%%%%%%%%%%%%%%%%%%%%%%%%%%%%%%%%%%%%%%%%%%%%%%%%%%%%%%%%%%%%%%%%%%%%%%%%%%%%%%%%%%%%%%%%%%%%%%%%%%%%%%%%%%%%%%%%%%%%%%%%%%%

\def\dtb#1#2#3#4{{\ytableaushort{}*{#1,#2,#3,#4}*[*(white)]{#1} *[*(orange)]{#1,#2}*[*(white)]{#1,#2,#3}*[*(orange)]{#1,#2,#3,#4}}}
\def\ddtb#1#2#3{{\ytableaushort{}*{#1,#2,#3}*[*(orange)]{#1} *[*(white)]{#1,#2}*[*(orange)]{#1,#2,#3}}}

\begin{figure}[t]\centering
\ytableausetup{boxsize=1.5em}
%\ytableausetup{aligntableaux=top}
$$
\bigl(\lambda^{(1)},\lambda^{(2)}\bigr)=\biggl(~~\dtb{4}{2}{1}{1},~~\ddtb{2}{2}{1}~~\biggr), \qquad
\bigl(\lambda^{(1)}\bigr)^\vee=(4,2,1,1), \qquad \bigl(\lambda^{(2)}\bigr)^\vee=(3,2).
$$
\caption{$\mathbb{Z}_2$-coloring of a pair of Young diagrams.}\label{Fig:coloring}
\end{figure}

Recall that by the localization formula the Nekrasov partition function with a surface defect is given by
\begin{align}\label{fixedpoint}
\mathcal{Z}_{\mathrm{AL}}&{}
= %f(u_1,u_2;v_1,v_2;w_1,w_2|x_1,x_2|q,\kappa)
\mathcal{Z}_{\mathrm{AL}} \left( \left.\left.\begin{array}{c}u_1,u_2 \\v_1,v_2\\w_1,w_2\end{array}
\right| x_1, x_2 \right|q,t\right)\\
&{}= \sum_{\bigl(\lambda^{(1)}, \lambda^{(2)}\bigr)}
\prod_{i,j=1}^2 \frac{\Nk^{(j-i\vert 2)}_{\varnothing,\lambda^{(j)}}(u_i/v_j|q,t)
\Nk^{(j-i\vert 2)}_{\lambda^{(i)},\varnothing}(v_i/w_j|q,t)}
{\Nk^{(j-i\vert 2)}_{\lambda^{(i)},\lambda^{(j)}}(v_i/v_j|q,t)}
\cdot x_1^{|\lambda^{(1)}|_o+|\lambda^{(2)}|_e} x_2^{|\lambda^{(1)}|_e+|\lambda^{(2)}|_o},\nonumber
\end{align}
where $\bigl(\lambda^{(1)}, \lambda^{(2)}\bigr)$ is a fixed point of the toric action on the affine Laumon space.
%Let $\ell_i = \bigl(\lambda^{(1)}\bigr)^\vee_{i}$ and $k_i = \bigl(\lambda^{(2)}\bigr)^\vee_{i}$ denote
%the vertical lengths of the partitions.
The weight in the summation over the fixed points is
\smash{\raisebox{-0.5pt}{$x_1^{|\lambda^{(1)}|_o+|\lambda^{(2)}|_e} x_2^{|\lambda^{(1)}|_e+|\lambda^{(2)}|_o}$}},
where $|\lambda|_o = \sum_{k\geq 1} \lambda_{2k-1}$ and $|\lambda|_e = \sum_{k\geq 1} \lambda_{2k}$.
The expansion parameters $(x_1, x_2)$ are related to the physical parameters~$(\Lambda,x)$~by
\begin{gather}\label{expansion-special}
x_1 = -\frac{\sqrt{Qd_1d_2}}{\kappa}x, \qquad x_2 = - \sqrt{\frac{d_3d_4}{q^2Q}}\frac{\Lambda}{x}.
\end{gather}
By the relations \eqref{d-variables1} and \eqref{d-variables2}, this specialization gives
$x_1 = - t^{1/2} T_1^{1/2} T_2^{1/2}x$ and $x_2= - t^{1/2} T_3^{1/2} T_4^{1/2} \frac{\Lambda}{x}$.
We also employ the following specialization of the spectral parameters
with $\kappa= t^{-\frac{1}{2}}$:
\beq\label{Yamada}
u_1 = \frac{qQ}{d_3}, \quad u_2 = \frac{\kappa q}{d_1}, \qquad
v_1 = 1, \quad v_2 = \frac{Q}{\kappa}, \qquad
w_1 = \frac{1}{d_2}, \quad w_2 = \frac{Q}{d_4 \kappa}.
\eeq
One can check that these specializations agree those of $\mathcal{F}^{(1)}$ defined in \cite[Section~6]{Awata:2022idl}.
The overall scaling of parameters by $Q^{1/2}$ is necessary for the matching of the parameters $v_i$
of $\mathcal{Z}_{\mathrm{AL}}$ and $\mathcal{F}^{(1)}$.
After the same scaling the specialization for the function $\mathcal{F}^{(1)}$ is
\begin{equation}
u_1 = q^{1/2} \kappa^{-1} Q T_3, \quad u_2 = q^{1/2} T_1^{-1}, \qquad
w_1 = q^{-1/2} \kappa^{-1} Q T_2, \quad w_2 = q^{-1/2} T_4^{-1}.
\end{equation}
By substituting \eqref{d-variables2}, we can see the agreement with \eqref{Yamada}.

There are four possibilities of a specialization of the spectral parameters $u_i$ and $w_i$
due to the symmetry $d_1\leftrightarrow d_2$, $d_3 \leftrightarrow d_4$ of the Hamiltonian $\SS$.
But the symmetry is broken by choosing the specialization \eqref{Yamada}.
The
 are paired into two ${\rm SU}(2)$ doublets; $(d_1,d_3)$ and $(d_2,d_4)$
in the case of the affine Laumon space.
Such a rearrangement of ${\rm SU}(2)$ doublets of mass parameters between the five point conformal block with a degenerate field insertion and
the four point current block was already observed in the four-dimensional theory (see \cite[equation~(5.33)]{AFKMY}).
From the viewpoint of the orbifold coloring of four mass parameters, the decomposition into the pairs~$(d_1,d_2)$ and $(d_3, d_4)$
is by the parity of $\mathbb{Z}_2$ coloring. On the other hand, the decomposition into~$(d_1,d_3)$ and $(d_2, d_4)$ corresponds to
the fundamental and the anti-fundamental representations of ${\rm SU}(2)$ gauge symmetry.

Let us introduce a notation for the length of the columns of the Young diagrams,
\begin{gather}
\bigl(\lambda^{(1)}\bigr)^\vee = (\ell_1, \ell_2, \dots),
\qquad
\bigl(\lambda^{(2)}\bigr)^\vee = (k_1, k_2, \dots).
\end{gather}
Omitting the normalization factors, which are the last factors in \eqref{N-factor-even} and \eqref{N-factor-odd},
we find the following contributions to the partition function \eqref{fixedpoint}:
\begin{enumerate}\itemsep=0pt
\item
Fundamental and anti-fundamental matter contribution
\begin{gather}
 \prod_{i,j=1}^2
\Nk^{(j-i\vert 2)}_{\varnothing,\lambda^{(j)}}(u_i/v_j|q,t)
\Nk^{(j-i\vert 2)}_{\lambda^{(i)},\varnothing}(v_i/w_j|q,t)
\propto \prod_{i=1}^\infty \frac{\bigl[d_2 q^{i-1} t^{1-\floor{\frac{\ell_i + 1}{2}}} ; t\bigr]_\infty}
{\bigl[d_4^{-1}Q q^{1-i}t^{1+\floor{\frac{\ell_i}{2}}} ; t\bigr]_\infty } \nonumber\\
 \qquad{}\times \prod_{i=1}^\infty
\frac{ \bigl[d_3 q^{i-1} t^{-\floor{\frac{k_i-1}{2}}}; t\bigr]_\infty
\bigl[Q^{-1}d_3 q^{i-1} t^{-\floor{\frac{\ell_i}{2}}} ; t\bigr]_\infty
\bigl[Q d_2 q^{i-1}t^{1-\floor{\frac{k_i}{2}}}; t\bigr]_\infty}
{\bigl[d_1^{-1} Q^{-1} q^{1-i}t^{\floor{\frac{k_i}{2}}} ; t\bigr]_\infty
\bigl[d_1^{-1} q^{1-i}t^{\floor{\frac{\ell_i + 1 }{2}}} ; t\bigr]_\infty
\bigl[d_4^{-1} q^{1-i}t^{1+\floor{\frac{k_i-1}{2}}} ; t\bigr]_\infty}.
\end{gather}
\item
Vector multiplet contribution
\begin{gather}
\Biggl( \prod_{i,j=1}^2 \Nk^{(j-i\vert 2)}_{\lambda^{(i)},\lambda^{(j)}}(v_i/v_j|q,t) \Biggr)^{-1}
\!\!\!\propto
\!\prod_{i,j=1}^\infty
\frac
{\bigl[q^{j-i-1} t^{1+ \floor{\frac{\ell_i - \ell_j}{2}}} ; t\bigr]_\infty}
{\bigl[q^{j-i} t^{1+ \floor{\frac{\ell_i - \ell_j}{2}}} ; t\bigr]_\infty}
\!\prod_{i,j=1}^\infty
\frac
{\bigl[q^{j-i-1} t^{1+ \floor{\frac{k_i - k_j}{2}}} ; t\bigr]_\infty}
{\bigl[q^{j-i} t^{1+ \floor{\frac{k_i -k_j}{2}}} ; t\bigr]_\infty}
\nonumber \\
\qquad{}\times
\prod_{i,j=1}^\infty
\frac
{\bigl[Q^{-1}q^{j-i-1} t^{1+\floor{\frac{k_i -\ell_j -1 }{2}}} ; t\bigr]_\infty}
{\bigl[Q^{-1}q^{j-i} t^{1+\floor{\frac{k_i -\ell_j -1}{2}}} ; t\bigr]_\infty}
\prod_{i,j=1}^\infty
\frac
{\bigl[Qq^{j-i-1} t^{1+ \floor{\frac{\ell_i - k_j +1}{2}}} ; t\bigr]_\infty}
{\bigl[Qq^{j-i} t^{1+ \floor{\frac{\ell_i - k_j +1}{2}}} ; t\bigr]_\infty}.
\end{gather}
\end{enumerate}
We note that for a given pair of Young diagrams $\bigl(\lambda^{(1)}, \lambda^{(2)}\bigr)$, the normalization factor reduces
the formal infinite products of ratio of $[u;t]_\infty$ in the above formulas to finite products due to the cancellations
between the numerator and the denominator for sufficiently large $i$ and $j$. To see it,
note that we have $\ell_i=0$ for \smash{$\mm := \bigl(\lambda^{(1)}\bigr)_1 <i$} and $k_j=0$ for \smash{$\nn := \bigl(\lambda^{(2)}\bigr)_1 <j$}.
Later we will make a tuning of mass parameters so that we have $\mm=m$ and $\nn=n$ uniformly for all $\bigl(\lambda^{(1)}, \lambda^{(2)}\bigr)$
that contribute to the partition function.
%\beq
%\bigl(\lambda^{(1)}\bigr)^\vee = (\ell_1, \ell_2,\dots, \ell_m),
%\qquad
%\bigl(\lambda^{(2)}\bigr)^\vee = (k_1, k_2,\dots, k_n).
%\eeq
In terms of the variables\footnote{Note that
$\floor{\frac{\ell_i}{2}}$ and $\floor{\frac{k_j}{2}}$ count the vertical dominos of length 2
in the Young diagrams $\lambda$ and $\mu$, respectively.}
\begin{equation}
z_i = q^{1-i} t^{\floor{\frac{\ell_i}{2}}}, \qquad w_j = q^{1-j} t^{\floor{\frac{k_j}{2}}},
\end{equation}
the finite product form of the matter contribution is
\beq
\prod_{i=1}^\mm \frac{\bigl[d_2 z_i^{-1} t^{1-(\ell_i)} ; t\bigr]_\infty
\bigl[Q^{-1}d_3 z_i^{-1}; t\bigr]_\infty}
{\bigl[d_4^{-1}Q z_i t; t\bigr]_\infty
\bigl[d_1^{-1} z_i t^{(\ell_i)} ; t\bigr]_\infty}
\prod_{j=1}^\nn
\frac{\bigl[d_3 w_j^{-1}t^{1-(k_j)}; t\bigr]_\infty
\bigl[Q d_2 w_j^{-1} t; t\bigr]_\infty}
{\bigl[d_1^{-1} Q^{-1} w_j ; t\bigr]_\infty
\bigl[d_4^{-1} w_j t^{(k_j)}; t\bigr]_\infty},
\eeq
where we have used the identity
\beq\label{floor-diff}
\biggl\lfloor \frac{K-L}{2}\biggr\rfloor = \biggl\lfloor\frac{K}{2}\biggr\rfloor- \biggl\lfloor\frac{L}{2}\biggr\rfloor + \{ (K)-1 \}\cdot (L), \qquad K,L \in \mathbb{Z}
%\floor{\frac{(X)-(Y)+1}{2}} &=& (X)(1-(Y)).
\eeq
by the specialization $L=\pm1$. Here $(K)$ denotes the parity of an integer $K$; $(K)=1$ for an odd integer and $(K)=0$, otherwise.
%\beq\label{floor-diff-2}
%\floor{\frac{x+1}{2}} = \floor{\frac{x}{2}}+ (x), \qquad \floor{\frac{x-1}{2}} = \floor{\frac{x}{2}}+ (x) -1.
%\eeq

For the vector multiplet contribution, we have to make
an appropriate decomposition of $(i,j) \in \mathbb{N} \times \mathbb{N}$ into four regions $R_{\mathrm{I}},\dots, R_{\mathrm{IV}}$
according to the lengths of the first row of the Young diagrams; $\mm = \bigl(\lambda^{(1)}\bigr)_1$ and $\nn = \bigl(\lambda^{(2)}\bigr)_1$.
For example, for a pair $(\ell_i, k_j)$, we define
\begin{alignat}{3}
& R_{\mathrm{I}}= \{ 1 \leq i \leq \mm,\, 1 \leq j \leq \nn \}, &&\qquad R_{\mathrm{II}} = \{ \mm+1 \leq i < \infty ,\, 1 \leq j \leq \nn \},& \nonumber \\
 & R_{\mathrm{III}} = \{ 1 \leq i \leq \mm ,\, \nn+1 \leq j < \infty \},&& \qquad R_{\mathrm{IV}} = \{ \mm+1 \leq i < \infty ,\, \nn+1 \leq j < \infty \},\hspace*{-10mm}& \label{four-regions}
\end{alignat}
and for a pair $(\ell_i, \ell_j)$ we set $\mm=\nn$ in the decomposition \eqref{four-regions}.
The vector multiplet contribution becomes trivial only for $R_{\mathrm{IV}}$.
We find the vector multiplet contributions are
\begin{enumerate}\itemsep=0pt
\item[(I)]
From the region $R_{\mathrm{I}}$,
\begin{gather}
\prod_{i \neq j=1}^\mm
\frac{\bigl[(tz_i/qz_j)~t^{\{(\ell_i)-1\}\cdot(\ell_j)} ; t\bigr]_\infty}
{\bigl[(tz_i/z_j)~t^{\{(\ell_i)-1\}\cdot(\ell_j)} ; t\bigr]_\infty}
\prod_{i \neq j=1}^\nn
\frac{\bigl[(tw_i/qw_j)~t^{\{(k_i)-1\}\cdot(k_j)} ; t\bigr]_\infty}
{\bigl[(tw_i/w_j)~t^{\{(k_i)-1\}\cdot(k_j)} ; t\bigr]_\infty}
\nonumber \\
\qquad{}\times
\prod_{i=1}^\mm \prod_{j=1}^\nn
\frac{\bigl[\bigl(Q^{-1} w_j/qz_i\bigr)~t^{(k_j)\cdot\{1-(\ell_i)\}}; t\bigr]_\infty}
{\bigl[\bigl(Q^{-1} w_j/z_i\bigr)~t^{(k_j)\cdot\{1-(\ell_i)\}} ; t\bigr]_\infty}
\frac{\bigl[(Q tz_i/qw_j)~t^{(\ell_i)\cdot\{1-(k_j)\}} ; t\bigr]_\infty}
{\bigl[(Q tz_i/w_j)~t^{(\ell_i)\cdot\{1-(k_j)\}} ; t\bigr]_\infty}.
\end{gather}
Here and henceforth, we use a shorthand notation $\prod_{i \neq j=1}^\mm$
for the sum over $1 \leq i,j \leq \mm$ without the diagonal part $i=j$.
\item[(II)]
For two semi-infinite regions $R_{\mathrm{II}}$ and $R_{\mathrm{III}}$, only the ``boundary'' contributions remain,
\begin{gather}
\prod_{i=1}^\mm
\frac{\bigl[q^{\mm-i} t^{1+ \floor{\frac{\ell_i}{2}}} ; t\bigr]_\infty}{\bigl[q^{i-\mm-1} t^{1+ \floor{-\frac{\ell_i}{2}}}; t\bigr]_\infty}
\frac{\bigl[Q q^{\nn-i} t^{1+ \floor{\frac{\ell_i +1}{2}}} ; t\bigr]_\infty}{\bigl[Q^{-1} q^{i-\nn-1} t^{\floor{\frac{1-\ell_i}{2}}} ; t\bigr]_\infty} \nonumber \\
\qquad\quad{}\times\prod_{j=1}^\nn
\frac{\bigl[q^{\nn-j} t^{1+ \floor{\frac{k_j}{2}}} ; t\bigr]_\infty}{\bigl[q^{j-\nn-1} t^{1+\floor{\frac{-k_j}{2}}}; t\bigr]_\infty}
\frac{\bigl[Q^{-1} q^{\mm-j} t^{1+\floor{\frac{k_j-1}{2}}} ; t\bigr]_\infty}{\bigl[Q q^{j-\mm-1} t^{1+\floor{\frac{1-k_j}{2}}} ; t\bigr]_\infty} \nonumber \\
\qquad{}=
\prod_{i=1}^\mm
\frac{\bigl[z_i q^{\mm-1} t ; t\bigr]_\infty}{\bigl[z_i^{-1} q^{-\mm} t^{1-(\ell_i)}; t\bigr]_\infty}
\frac{\bigl[Q z_i q^{\nn-1} t^{1+ (\ell_i)} ; t\bigr]_\infty}{\bigl[Q^{-1} z_i^{-1} q^{-\nn}; t\bigr]_\infty} \nonumber \\
\qquad\quad{}\times\prod_{j=1}^\nn
\frac{\bigl[w_j q^{\nn-1} t; t\bigr]_\infty}{\bigl[w_j^{-1} q^{-\nn}t^{1-(k_j)}; t\bigr]_\infty}
\frac{\bigl[Q^{-1} w_j q^{\mm-1} t^{(k_j)}; t\bigr]_\infty}{\bigl[Q w_j^{-1} q^{-\mm}t; t\bigr]_\infty},
\end{gather}
where we have used \eqref{floor-diff}.
\end{enumerate}

In summary, up to a $z_i$, $w_j$ independent normalization factor, we can rewrite the Nekrasov partition function
as a sum over the positive cone of a lattice with the following weight function:%
\begin{gather}
 W_{\mm+\nn}(z) \nonumber \\
=
\prod_{i=1}^\mm
\frac{\bigl[d_2 z_i^{-1} t^{1-(\ell_i)} ; t\bigr]_\infty
\bigl[Q^{-1}d_3 z_i^{-1}; t\bigr]_\infty}
{\bigl[d_4^{-1}Q z_i t; t\bigr]_\infty \bigl[d_1^{-1} z_i t^{(\ell_i)} ; t\bigr]_\infty}
\frac{\bigl[z_i q^{\mm-1} t ; t\bigr]_\infty}{\bigl[z_i^{-1} q^{-\mm} t^{1-(\ell_i)}; t\bigr]_\infty}
\frac{\bigl[Q z_i q^{\nn-1} t^{1+ (\ell_i)} ; t\bigr]_\infty}{\bigl[Q^{-1} z_i^{-1} q^{-\nn}; t\bigr]_\infty} \nonumber \\
\quad{}\times\prod_{j=1}^\nn
\frac{\bigl[Q^{-1} d_3 z_{\mm+j}^{-1}; t\bigr]_\infty
\bigl[d_2 z_{\mm+j}^{-1} t^{(k_i)}; t\bigr]_\infty}
{\bigl[d_1^{-1} z_{\mm+j} t^{1-(k_j)} ; t\bigr]_\infty
\bigl[d_4^{-1} Q z_{\mm+j} t; t\bigr]_\infty}
\frac{\bigl[Q z_{\mm+j} q^{\nn-1} t^{2-(k_j)}; t\bigr]_\infty}{\bigl[Q^{-1} z_{\mm+j}^{-1} q^{-\nn}; t\bigr]_\infty}
\frac{\bigl[z_{\mm+j} q^{\mm-1} t; t\bigr]_\infty}{\bigl[z_{\mm+j}^{-1} q^{-\mm} t^{(k_j)}; t\bigr]_\infty} \nonumber \\
\quad{}\times\prod_{i \neq j=1}^\mm
\frac{\bigl[(tz_i/qz_j)~t^{\{(\ell_i)-1\}\cdot(\ell_j)} ; t\bigr]_\infty}
{\bigl[(tz_i/z_j)~t^{\{(\ell_i)-1\}\cdot(\ell_j)} ; t\bigr]_\infty}
\prod_{i \neq j=1}^\nn
\frac{\bigl[(t z_{\mm+i}/q z_{\mm+j})~t^{(k_i)\cdot\{(k_j)-1\}} ; t\bigr]_\infty}
{\bigl[(t z_{\mm+i}/z_{\mm+j})~t^{(k_i)\cdot\{(k_j)-1\}} ; t\bigr]_\infty}
\nonumber \\
\quad{}\times\prod_{i=1}^\mm \prod_{j=1}^\nn
\frac{\bigl[(tz_{\mm+j}/qz_i)~t^{-(k_j)\cdot(\ell_i)}; t\bigr]_\infty}
{\bigl[(tz_{\mm+j}/z_i)~t^{-(k_j)\cdot(\ell_i)} ; t\bigr]_\infty}
\frac{\bigl[(tz_i/q z_{\mm+j})~t^{\{(\ell_i)-1\}\cdot\{1-(k_j)\}} ; t\bigr]_\infty}
{\bigl[(tz_i/z_{\mm+j})~t^{\{(\ell_i)-1\}\cdot\{1-(k_j)\}} ; t\bigr]_\infty},\label{Jackson-weight}
\end{gather}
where we have defined\footnote{See \eqref{floor-diff}.} $z_{\mm+j} = Q^{-1} w_j t^{(k_j)-1}= Q^{-1} q^{1-j} t^{\floor{\frac{k_j-1}{2}}}$.

In the above computations, we have used the fact that
that $\ell_i=0$ for $\mm<i$ and $k_j=0$ for~${\nn<j}$, where $\mm$ and $\nn$ depend on $\bigl(\lambda^{(1)}, \lambda^{(2)}\bigr)$.
But as is shown in Appendix~\ref{App:Tuning}, by tuning some of mass parameters $d_2=q^{-m}$ and $d_3=q^{-n}$,
we can assume $\mm=m$ and $\nn=n$ for any pair of Young diagrams $\bigl(\lambda^{(1)}, \lambda^{(2)}\bigr)$, because
a pair $\bigl(\lambda^{(1)}, \lambda^{(2)}\bigr)$ with $\mm >m$ or $\nn >n$ does not contribute to the partition function.
Furthermore, the substitution of the mass tuning condition $d_2=q^{-m}$ and $d_3=q^{-n}$ leads nice cancellation in
the weight function \eqref{Jackson-weight}.
By defining $(\ell_{m+j})= 1 - (k_j)$, we can make $W_{m,n}(z,w)$ completely symmetric in $z$ and $w$,
\begin{gather}
W_{m+n}(z)
=\prod_{I=1}^{m+n}
\frac{\bigl[z_I q^{m-1} t ; t\bigr]_\infty \bigl[Q z_I q^{n-1} t^{1+ (\ell_I)} ; t\bigr]_\infty}
{\bigl[d_4^{-1}Q z_I t; t\bigr]_\infty \bigl[d_1^{-1} z_I t^{(\ell_I)} ; t\bigr]_\infty} \nonumber\\ \hphantom{W_{m+n}(z)=}{}
\times{}\prod_{I \neq J=1}^{m+n}
\frac{\bigl[(tz_I/qz_J)~t^{\{(\ell_I)-1\}\cdot (\ell_J)} ; t\bigr]_\infty}
{\bigl[(tz_I/z_J)~t^{\{(\ell_I)-1\}\cdot (\ell_J)} ; t\bigr]_\infty}.\label{symmetric-weight}
\end{gather}
In the expansion of Nekrasov partition function,
the contribution from a fixed point $\bigl(\lambda^{(1)}, \lambda^{(2)}\bigr)$ has
the weight \smash{\raisebox{-1pt}{$x_1^{|\lambda^{(1)}|_o +|\lambda^{(2)}|_e} x_2^{|\lambda^{(1)}|_e +|\lambda^{(2)}|_o}$}},
where
\beq
|\lambda|_o = \sum_{k\geq 1} \lambda_{2k-1} = \sum_{k\geq 1} \biggl\lfloor\frac{\lambda_k^\vee +1}{2}\biggr\rfloor,
\qquad
|\lambda|_e = \sum_{k\geq 1} \lambda_{2k} = \sum_{k\geq 1} \biggl\lfloor\frac{\lambda_k^\vee}{2}\biggr\rfloor.
\eeq
Recall that we have specialized $x_1$ and $x_2$ as \eqref{expansion-special}, which implies that
the dependence of the weight on $\Lambda$ and $x$ is
\beq
\Lambda^{|\lambda^{(1)}|_e+|\lambda^{(2)}|_o} x^{|\lambda^{(1)}|_o- |\lambda^{(1)}|_e
+|\lambda^{(2)}|_e - |\lambda^{(2)}|_o}.
\eeq
Since $|\lambda|_o - |\lambda|_e = \sum_{k\geq 1} \bigl(\lambda_k^\vee\bigr)$,
the power of $x$, which is identified with the ${\rm SU}(2)$ spin variable,~is%
\beq\label{eq:2s}
2s = \sum_{i=1}^m (\ell_i) - \sum_{j=1}^n (k_j) = \sum_{I=1}^{m+n} (\ell_I) -n.
\eeq
Hence, only columns with odd length can produce a non-vanishing power of $x$. We see that
the range of the power $p$ of $x$ is $-n \leq p \leq m$ (see also Figure \ref{Fig:mass-truncation}).
Similarly, the power of $\Lambda$, which we identify as the instanton number, is
\beq
\sum_{i=1}^m \biggl\lfloor\frac{\ell_i}{2}\biggr\rfloor + \sum_{j=1}^n \biggl\lfloor\frac{k_j +1}{2}\biggr\rfloor
= \sum_{i=1}^m \biggl\lfloor\frac{\ell_i}{2}\biggr\rfloor + \sum_{j=1}^n \biggl\lfloor\frac{k_j }{2}\biggr\rfloor + \sum_{j=1}^n (k_j).
\eeq

Now we are ready to prove the following theorem.

\begin{Theorem}\label{AL=J}
%The Jackson integral and the truncated Laumon partiton function
Under the identification of the parameters
\begin{gather}
d_1=\frac{1}{q^{m-1}a_1b_1}, \qquad
d_2=q^{-m}, \qquad
d_3=q^{-n}, \qquad
d_4=\frac{1}{q^{n-1}a_2b_2}, \nonumber\\
Q=\frac{q^{m-n}a_1}{t a_2},\label{eq:dQ-ab}
\end{gather}
the affine Laumon partition function $\mathcal{Z}_{\rm AL}$ \eqref{fixedpoint} coincides with
the $\bbC^{N+1}$-valued function \eqref{take-Matsuo} with the integration cycle $\xi$ of \eqref{lattice-truncate}.
More precisely, the $N+1$ components of $\mathcal{Z}_{\rm AL}$ with respect to the expansion parameter $x$
are identified as the Jackson integrals $\langle e_k(a_2, b_1), \xi \rangle$ with respect to the Matsuo basis $e_k$, $0 \leq k \leq N=m+n$.
%(See also \eqref{eq:psi-tri} for the normalization in accord with the affine Laumon partition function).
\end{Theorem}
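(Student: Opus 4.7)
The plan is to compare the Nekrasov partition function $\mathcal{Z}_{\rm AL}$, as already rewritten in \eqref{symmetric-weight} into a sum over a positive cone of lattice points with a symmetric weight $W_{m+n}(z)$, against the Jackson integral pairing $\langle e_{k}(a_2,b_1), \xi_{m,n} \rangle$ of \eqref{take-Matsuo} evaluated at the cycle \eqref{lattice-truncate}. The key dictionary is that each column length $\ell_I$ decomposes as $\ell_I = 2\lfloor \ell_I/2\rfloor + (\ell_I)$: the floor parts $\nu_I := \lfloor \ell_I/2\rfloor \geq 0$ become the nonnegative lattice variables parametrizing the cone \eqref{eq:cone}, while the parities $(\ell_I) \in \{0, 1\}$ supply the $2^N$ binary data required for the subset-sum expression \eqref{eq:e-factor} of the Matsuo basis. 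With the relabelling $(\ell_{m+j}) := 1 - (k_j)$ introduced above \eqref{symmetric-weight}, the power of $x$ equals $p = \sum_{I=1}^{N}(\ell_I) - n$, so the coefficient of $x^p$ in $\mathcal{Z}_{\rm AL}$ should match the component $\Psi_p = \langle e_{p+n}(a_2,b_1), \xi_{m,n}\rangle$.

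First I would verify the parameter dictionary. Substituting \eqref{eq:dQ-ab} into the four on-site factors of $W_{m+n}(z)$ (one each involving $d_1$, $d_2=q^{-m}$, $d_3=q^{-n}$, and $d_4^{-1}Q$) produces the matter factors $(ta_1^{-1}z_i, ta_2^{-1}z_i; t)_\infty / (b_1 z_i, b_2 z_i; t)_\infty$ of $\Phi(z)$ in \eqref{common}; the boundary factors $[z_I q^{m-1}t;t]_\infty$ and $[Q z_I q^{n-1};t]_\infty$ yield the overall power $z_i^\alpha$ once one identifies $\Lambda = t^\alpha$ and absorbs the remaining $q$- and $t$-shifts into pseudo-constants. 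The cycle $\xi_{m,n}$ then matches the Nekrasov evaluations $z_i = q^{1-i} t^{\nu_i}$ (for $i \leq m$) and $z_{m+j} = Q^{-1} q^{1-j} t^{\nu_{m+j}}$ (for $j \leq n$) up to an overall rescaling and an ordering within each block, both of which are absorbed by shifting $\alpha$ and using the $S_N$-symmetry of $\Phi(z)$.

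The central step is to identify the parity-dependent factors with the Matsuo basis. I would split $W_{m+n}(z) = W^{(0)}(z) \cdot W^{(\ell)}(z)$, where $W^{(0)}$ is the ``all even'' contribution (with $(\ell_I) \equiv 0$) that reproduces the Selberg weight $\Phi(z)\Delta(1,z)$ via Lemma \eqref{eq:diff-prod}. The ratio $W^{(\ell)}(z)$ then reduces, after stripping powers of $t$ from the $[\,\cdot\,;t]_\infty$ symbols by the floor-function identity \eqref{floor-diff}, to single-site linear factors of the form $(1 - d_1^{-1}z_I)$ and $(1 - d_4 Q^{-1} z_I^{-1} q^{1-n})$ at sites with $(\ell_I) = 1$, together with pair factors $(z_J - q^{-1}z_I)/(z_J - z_I)$ at pairs $(I,J)$ with $(\ell_I) = 1, (\ell_J) = 0$, arising from the asymmetric exponent $\{(\ell_I)-1\}(\ell_J)$ in \eqref{symmetric-weight}. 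Under \eqref{eq:dQ-ab} the linear factors become precisely $(1 - b_1 z_I)$ and $(1 - z_I/a_2)$, and the sum over subsets $J = \{I : (\ell_I) = 1\}$ with fixed cardinality $|J| = p+n$ reproduces exactly the symmetrized expression \eqref{eq:e-factor} for $\hat{e}_{m-p}(a_2, b_1; z) = e_{p+n}(a_2, b_1; z)$.

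The main obstacle will be the careful bookkeeping required for two auxiliary tasks: the ``boundary'' contributions from regions II and III of \eqref{four-regions}, together with all pseudo-constant and $(1-t)^N$-type prefactors, must combine to give the Vandermonde $\Delta(1, z)$ on the Jackson side without residual $z$-dependence; and the floor-function identity \eqref{floor-diff} must be applied uniformly across all four regions of \eqref{four-regions} to absorb the $t^{\lfloor\cdot\rfloor}$ shifts inside $[\,\cdot\,;t]_\infty$ symbols into either the even background $W^{(0)}$ or the parity-dependent corrections. Once these are handled, the interplay between the $\mathbb{Z}_2$-coloring of Figure~\ref{Fig:coloring} and the asymmetric block structure $(a_2,a_2q,\dots,a_2q^{n-1} \,\vert\, a_1,a_1q,\dots,a_1q^{m-1})$ of $\xi_{m,n}$ is straightforward, and term-by-term agreement of each $(\nu,(\ell))$ summand with the corresponding term in $\langle e_{p+n}(a_2,b_1), \xi_{m,n}\rangle$ yields the theorem.
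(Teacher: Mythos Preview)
Your approach is essentially identical to the paper's: decompose $W_{m+n}(z)=W^{(0)}(z)\cdot P_J(z)$, identify $W^{(0)}$ with $\Phi(z)\Delta(1,z)$ via \eqref{eq:diff-prod}, and recognise the sum over parity subsets $J$ as the factorised form \eqref{eq:e-factor} of the Matsuo basis. One small slip: with $|J|=p+n$ the sum produces $\hat e_{p+n}=e_{N-(p+n)}=e_{m-p}$ (not $\hat e_{m-p}=e_{p+n}$), which is consistent with \eqref{take-Matsuo} where $\Psi_p=\langle e_{m-p},\xi\rangle$; your two swapped labels happen to cancel, but it is worth correcting.
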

\begin{proof}
Rewriting the dummy indices in \eqref{symmetric-weight}, we consider
\beq
W_{N}(z)
= \prod_{i=1}^N
\frac{\bigl[z_i q^{m-1} t ; t\bigr]_\infty \bigl[Q z_i q^{n-1} t^{1+ (\ell_i)} ; t\bigr]_\infty}
{\bigl[d_4^{-1}Q t z_i ; t\bigr]_\infty \bigl[d_1^{-1} z_i t^{(\ell_i)} ; t\bigr]_\infty}
\prod_{i \neq j=1}^N
\frac{\bigl[(tz_i/qz_j)~t^{\{(\ell_i)-1\}\cdot(\ell_j)} ; t\bigr]_\infty}
{\bigl[(tz_i/z_j)~t^{\{(\ell_i)-1\}\cdot(\ell_j)} ; t\bigr]_\infty},
\eeq
where $N=m+n$. By the parameter relation \eqref{eq:dQ-ab}, it can be written as
\begin{gather}
W_{N}(z)
= \prod_{i=1}^N
\frac{\bigl[z_i q^{m-1} t ; t\bigr]_\infty \bigl[q^{m-1} \frac{a_1}{a_2} z_i t^{(\ell_i)} ; t\bigr]_\infty}
{\bigl[q^{m-1}a_1b_2 z_i ; t\bigr]_\infty \bigl[q^{m-1}a_1 b_1z_i t^{(\ell_i)} ; t\bigr]_\infty} \nonumber\\ \hphantom{W_{N}(z)=}{}
\times\prod_{i \neq j=1}^N
\frac{\bigl[(tz_i/qz_j) t^{\{(\ell_i)-1\}\cdot(\ell_j)} ; t\bigr]_\infty}
{\bigl[(tz_i/z_j) t^{\{(\ell_i)-1\}\cdot(\ell_j)} ; t\bigr]_\infty}.
\end{gather}
Rescaling the integration variables as $z_i \to z_i /\bigl(q^{m-1}a_1\bigr)$, we have
\beq
W_{N}(z)
= \prod_{i=1}^N
\frac{\bigl[\frac{z_i}{a_1} t ; t\bigr]_\infty \bigl[\frac{z_i}{a_2} t^{(\ell_i)} ; t\bigr]_\infty}
{[b_2 z_i ; t]_\infty \bigl[b_1z_i t^{(\ell_i)} ; t\bigr]_\infty}
\prod_{i \neq j=1}^N
\frac{\bigl[(tz_i/qz_j)~t^{\{(\ell_i)-1\}\cdot(\ell_j)} ; t\bigr]_\infty}
{\bigl[(tz_i/z_j)~t^{\{(\ell_i)-1\}\cdot(\ell_j)} ; t\bigr]_\infty}.
\eeq
We decompose the index set as
\beq
I \cup J = \{ 1, 2,\dots, N \}, \qquad
I= \{ i \vert (\ell_i)=0 \}, \quad J= \{ j \vert (\ell_j)=1 \}.
\eeq
From \eqref{eq:2s}, we see that $2s +n = |J|$. Then
\beq
W_{N}(z) = W_{N}^{(0)}(z) P_J(z),
\eeq
where
\begin{align}
&W^{(0)}_{N}(z)
= \prod_{i=1}^N
\frac{\bigl[\frac{z_i}{a_1} t ; t\bigr]_\infty \bigl[\frac{z_i}{a_2}t ; t\bigr]_\infty}
{[b_2 z_i ; t]_\infty [b_1z_i ; t]_\infty}
\prod_{i \neq j=1}^N
\frac{[tz_i/qz_j ; t]_\infty}
{[tz_i/z_j; t]_\infty},\\
&P_J(z)=\prod_{j \in J} \frac{1 -b_1 z_j}{1- \frac{z_j}{a_2}} \times
\prod_{i \in I} \prod_{j \in J} \frac{z_j - q^{-1} z_i}{z_j -z_i}.
\end{align}
Putting $z_i=\xi_i t^{\nu_i}$, $i=1, \dots, N$,
we can rewrite the sum of $W_N(z)$ over the partitions~$\mu$ and~$\lambda$
as the sum of $W_{N}(z) = W_{N}^{(0)}(z) P_J(z)$ over $\nu_i \in \bbZ_{\geq 0}^{N}$ and $J \subset \{1,2,\dots, N\}$,
where the correspondence is
\begin{align}
\mu_i^{\vee}&=\begin{cases}
2 \nu_i, & i \in J,\\
2 \nu_i-1, &i \notin J,
\end{cases} \qquad i=1, \dots, n, \\
\lambda_{i}^{\vee}&=\begin{cases}
2 \nu_{i+n}+1, & i+n \in J,\\
2 \nu_{i+n}, &i+n \notin J,
\end{cases} \qquad i=1, \dots, m.
\end{align}
Then the latter sum can be identified with the Jackson integral studied in Section \ref{sec:duality}.
In fact, from \eqref{common} and \eqref{eq:diff-prod}, we see that $W^{(0)}(z)= \Delta(z,1)\Phi(z)$.
The sum of $P_J(z)$ for $|J|=k$ and the $k$-th cocycle factor ${\hat e}_k$ \eqref{eq:e-factor} is related by
\begin{equation}
\sum_{\substack{J \subset \{1,2,\dots, N\} \\ |J|=k}} P_J(z)=c_k {\hat e}_k(z) \prod_{i=1}^N \frac{1}{1-\frac{z_i}{a_2}},
\end{equation}
where $c_k=q^{(k-m)(k-n)/2}\bigl[{N \atop k}\bigr]_{q^{-1}}/\bigl[{N \atop m}\bigr]_{q^{-1}}$.
\end{proof}

\section{Four-dimensional limit and KZ equation}
\label{sec:4d}

In \cite{Awata:2022idl}, we computed the four-dimensional limit of the Hamiltonian $\SS$ and
confirmed that Shakirov's non-stationary difference equation has the correct four-dimensional limit.
In Section~\ref{sec:q-KZ}, we have shown the representation matrix of $\SS$ is nothing but the $R$-matrix of
\smash{$\qg\bigl(A_1^{(1)}\bigr)$} with generic spins. Let us work out the four-dimensional limit of the $R$-matrix.

\begin{Proposition}
Under the limit $q=e^h$, $d_i=q^{m_i}$, $h\to 0$, we have
\begin{equation}
R=1+ h R^{(1)}+O\bigl(h^2\bigr),
\end{equation}
where $R^{(1)}$ is a tridiagonal matrix, which is generically infinite-dimensional.
Furthermore, it can be truncated to a finite matrix of size $m+n+1$,
if $m_1$ $(\text{or}\ m_2)=-m$ and $m_3$ $(\text{or}\ m_4)=-n$ for $m,n \in \bbZ_{\geq 0}$.
\end{Proposition}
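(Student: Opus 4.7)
The plan is to combine the identification of $R$ with the affine $R$-matrix from Theorem~\ref{r-matrix} with the well-known classical limit of the $U_q\bigl(\widehat{\mathfrak{sl}_2}\bigr)$ $R$-matrix, and then deduce the finite-dimensional truncation directly from Lemma~\ref{truncation}.

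First, I would verify $R=1+O(h)$ by specializing \eqref{eq:rdef} at $h=0$. Under $q=e^h$, $d_k=e^{hm_k}$, every $q$-Pochhammer of the form $(-d_\star q^\star x)_c$ on the left and $(-q^\star x)_c$ on the right degenerates to $(1+x)^c$ (or to $(1+\Lambda/x)^c$, depending on type), so \eqref{eq:rdef} reduces to
\[
 x^i(1+x)^{m-i}(1+\Lambda/x)^{i+n} = \sum_{j=-n}^{m} r_{i,j}(\Lambda)\big|_{h=0}\,x^j(1+x)^{m-j}(1+\Lambda/x)^{j+n}.
\]
The polynomials $p_j := x^j(1+x)^{m-j}(1+\Lambda/x)^{j+n}$, $-n \leq j\leq m$, are linearly independent over $\bbC(\Lambda)$, so this forces $r_{i,j}(\Lambda)|_{h=0}=\delta_{i,j}$, establishing $R=1+hR^{(1)}+O(h^2)$.

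Second, for the tridiagonality of $R^{(1)}$ I would invoke the semiclassical expansion of the quantum $R$-matrix. By Theorem~\ref{r-matrix}, $R$ is the $\qg\bigl(A_1^{(1)}\bigr)$ $R$-matrix on a weight sector of $V(\lambda_1)\otimes V(\lambda_2)$ with generic weights $\lambda_1,\lambda_2$ parameterized by $d_1, d_4$. The classical limit of this $R$-matrix is $R=1+h\,r^{\rm cl}(\Lambda)+O(h^2)$, where $r^{\rm cl}$ is the Yang-type rational $r$-matrix built from the Casimir $\Omega=E\otimes F+F\otimes E+\tfrac12 H\otimes H$. In the weight basis labeled by $i$, the operators $E\otimes F$, $F\otimes E$, $H\otimes H$ shift $i$ by $+1$, $-1$, and $0$ respectively, so $R^{(1)}$ is tridiagonal. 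Without the mass tuning the weight labels range over $\bbZ$, so $R^{(1)}$ is generically an infinite tridiagonal matrix. The finite-dimensional truncation then follows from Lemma~\ref{truncation}: when $d_1=q^{-m}$ (or $d_2$) and $d_3=q^{-n}$ (or $d_4$), the Hamiltonian $\SS$ preserves $\bigoplus_{k=-n}^{m}\bbC\,x^k$, and this invariance passes through every order in $h$, so $R^{(1)}$ restricts to a tridiagonal $(m+n+1)\times(m+n+1)$ matrix, as claimed.

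The main obstacle I anticipate is verifying tridiagonality directly from the explicit formulas of Section~\ref{subsec:r-matrix}, rather than appealing to the general classical-limit theorem. This would require differentiating \eqref{eq:rdef} in $h$ at $h=0$ and using the three-term identity $x\,p_j=p_{j+1}-\Lambda(1+x)^{m-j-1}(1+\Lambda/x)^{j+n}$ together with its $\Lambda/x$-counterpart to show that the resulting $h$-derivative lies in the three-dimensional span of $p_{i-1},p_i,p_{i+1}$. The computation is elementary but moderately involved, and I expect it to close via the same partial-fraction identities employed in the proof of Theorem~\ref{r-matrix}.
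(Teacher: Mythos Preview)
Your leading-order argument is essentially the paper's, but note that you work with the truncated relation \eqref{eq:rdef}, whereas the Proposition is first stated for \emph{generic} $d_i$ (the ``generically infinite-dimensional'' clause). The paper therefore uses the untruncated infinite-product form of the defining relation---the display immediately preceding \eqref{eq:rdef} in the proof of Theorem~\ref{r-matrix}---which makes sense of $r_{i,j}$ for all $i,j\in\bbZ$ without specializing $d_2,d_3$.

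For the $O(h)$ term your route diverges from the paper's. The paper does not appeal to the general semiclassical theory of $\qg$ $R$-matrices at all; it expands the ratios $\frac{(-q^\alpha x;q)_\infty}{(-q^\beta x;q)_\infty}=(1+x)^{\beta-\alpha}\bigl(1+O(h)\bigr)$ directly, reduces the defining relation to the form $A_i=\sum_j\bigl(\frac{x+\Lambda}{x+1}\bigr)^{j-i}B_j\,r_{i,j}$ with $A_i,B_j$ explicit to $O(h)$, and reads off $r^{(1)}_{i,j}$ term by term. This immediately gives the closed tridiagonal formulas for $r^{(1)}_{i,i\pm1}$ and $r^{(1)}_{i,i}$, from which the truncation is visible by inspection and which are then used in Section~\ref{sec:4d} to match the KZ equation.

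Your structural argument is plausible and would yield tridiagonality, but two points need care. First, the classical limit of the trigonometric $R$-matrix is the trigonometric classical $r$-matrix (poles at $\Lambda=1$), not the Yang-type $\Omega/u$; this does not affect tridiagonality but your description is imprecise. Second, Theorem~\ref{r-matrix} is stated only under the specialization $d_2=q^{-m}$, $d_3=q^{-n}$, so invoking it to cover the generic-$d_i$ case requires an additional argument (e.g.\ analyticity in $d_2,d_3$ or passing through the untruncated defining relation anyway). Your approach is more conceptual but relies on results external to the paper and does not produce the explicit entries of $R^{(1)}$ needed downstream; the paper's direct computation is self-contained and provides those formulas.
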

For example, in case of $m_1=-2$, $m_3=-1$, we have
\[
R^{(1)}=\begin{bmatrix}
 -\frac{3 (\Lambda m_2-\Lambda )}{\Lambda -1} & \frac{3 (m_2-1 )}{\Lambda -1} & 0 & 0 \vspace{1mm}\\
 \frac{\Lambda m_4}{\Lambda -1} & -\frac{2 \Lambda m_2+\Lambda m_4}{\Lambda -1} & \frac{2 m_2}{\Lambda -1} & 0 \vspace{1mm}\\
 0 & \frac{2 \Lambda (m_4-1 )}{\Lambda -1} & -\frac{\Lambda +\Lambda m_2+2 \Lambda m_4-2}{\Lambda -1} & -\frac{-m_2-1}{\Lambda -1} \vspace{1mm}\\
 0 & 0 & \frac{3 (\Lambda m_4-2 \Lambda )}{\Lambda -1} & -\frac{3 (\Lambda m_4-2 )}{\Lambda -1}
\end{bmatrix}.
\]

\begin{proof}
Under the limit $q=e^h$, $h\to 0$, we have
\begin{align}
%\frac{(q^\alpha x;q)_\infty}{(x;q)_\infty}
%&=(1-x)^{-\alpha}\Big\{1-\frac{h}{2}\alpha(\alpha-1)\frac{x}{x-1}+O(h^2)\Big\}, \\
\frac{(-q^\alpha x;q)_\infty}{(-q^\beta x;q)_\infty}
&=(1+x)^{\beta-\alpha}\biggl\{1-\frac{h}{2}(\alpha-\beta)(\alpha+\beta-1)\frac{x}{x-1}+O\bigl(h^2\bigr)\biggr\}.
\end{align}
Then the defining relation of the matrix $R=(r_{i,j})$
\begin{gather}
q^{i(i+1)/2} x^i\frac{\bigl(-q^i d_1d_2 x;q\bigr)_\infty}{(-d_2 x;q)_\infty}
\frac{\bigl(-q^{-i} d_3d_4 \frac{\Lambda}{x};q\bigr)_\infty}{\bigl(-d_4 \frac{\Lambda}{x};q\bigr)_\infty}\nonumber\\
\qquad{}=\sum_{j} r_{i,j}
q^{-j(j+1)/2}x^j
\frac{(-d_1 x;q)_\infty}{\bigl(-q^{-j} x;q\bigr)_\infty}
\frac{\bigl(-d_3 \frac{\Lambda}{x};q\bigr)_\infty}{\bigl(-q^{j} \frac{\Lambda}{x};q\bigr)_\infty},
\end{gather}
can be written as
\begin{equation}
A_i=\sum_{j}\biggl(\frac{x+\Lambda}{x+1}\biggr)^{j-i} B_jr_{i,j},
\end{equation}
where up to $O\bigl(h^2\bigr)$,
\begin{gather}
A_i=1+\frac{1}{2} h \biggl\{-\frac{\Lambda (i-m_3) (i-m_3-2 m_4+1)}{x+\Lambda}\nonumber\\ \hphantom{A_i=+\frac{1}{2} h \biggl\{-}{}
-\frac{x (i+m_1)(i+m_1+2m_2-1)}{x+1}+i (i+1)\biggr\},
\notag \\
B_j=1+\frac{1}{2} h \biggl\{\frac{\Lambda (j-m_3) (j+m_3-1)}{x+\Lambda}+\frac{x (j-m_1+1)(j+m_1)}{x+1}-j (j+1)\biggr\}.
\end{gather}
We put $r_{i,j}=r^{(0)}_{i,j}+h r^{(1)}_{i,j}+O\bigl(h^2\bigr)$.
From the leading term, we have
\begin{equation}
\sum_{j} \biggl(\frac{x+\Lambda}{x+1}\biggr)^{j-i} r^{(0)}_{i,j}
=1,
\end{equation}
hence
\begin{equation}
r^{(0)}_{i,j}=\delta_{i,j}.
\end{equation}
Then, from $O(h)$ terms, we obtain
\begin{equation}
\sum_{j} \biggl(\frac{x+\Lambda}{x+1}\biggr)^{j-i} r^{(1)}_{i,j}
=i(i+1)+\frac{x(i+m_1)(i+m_2)}{1+x}+\frac{\Lambda(i-m_3)(i-m_4)}{x+\Lambda},
\end{equation}
and the solution is given by
\begin{gather}
r^{(1)}_{i,i-1}=\frac{\Lambda (i-m_3) (i-m_4)}{\Lambda -1},\notag \\
r^{(1)}_{i,i}=-\frac{\Lambda \{(i+m_1) (i+m_2)+(i-m_3) (i-m_4)\}}{\Lambda -1}+i (i+1),\notag \\
r^{(1)}_{i,i+1}=\frac{(i+m_1) (i+m_2)}{\Lambda -1},\notag \\
r^{(1)}_{i,j}=0 \qquad \text{otherwise}.
\end{gather}
The truncation can also be seen from this expression.
\end{proof}

%%%%%%%%%%%%%%%%%%%%%%%%%%%%%%%%%%%%%%%%%%%%%%%%%%%%%%%%%%%%%%%%%%%%%%%%%%%%%%%%%%%%%%%%%%%%%%%%%%%%%%%

\subsection{Identification with the KZ equation}

The result given above is consistent with the four-dimensional limit of the operator $\SS$ (see \mbox{\cite[Theorem 5.6]{Awata:2022idl}}):
\begin{align}
&\SS=1+h H_{4d}+O\bigl(h^2\bigr),\\
\label{4DSS}
&H_{4d}=\vartheta_x(\vartheta_x+1)+\frac{\Lambda-x}{1-\Lambda}(\vartheta_x+m_1)(\vartheta_x+m_2)+
\frac{\Lambda}{x}\frac{x-1}{1-\Lambda}
(\vartheta_x-m_3)(\vartheta_x-m_4),
\end{align}
where $\vartheta_x=x \frac{\partial}{\partial x}$.
Using \eqref{4DSS} and defining $\varkappa$ and $a$ by
$t=q^{\varkappa}$, $Q=q^{a}$,\footnote{We have scaled the Coulomb parameter $a$ by $\epsilon_1$.}
we can see that the four-dimensional limit of the Shakirov's equation
\begin{equation}
\Psi(\Lambda,x)=\SS \Psi\biggl(\frac{\Lambda}{t},\frac{x}{qt Q}\biggr),
\end{equation}
takes the form that looks like the Knizhnik--Zamolodchikov equation
\begin{align}\label{eq:KZlike}
&\varkappa \frac{\partial}{\partial \Lambda}\Psi(\Lambda,x)
=\frac{H_{4d}-(\varkappa +1+ a)\vartheta_x}{\Lambda}\Psi(\Lambda,x)
=\biggl\{\frac{A_0}{\Lambda}+\frac{A_1}{\Lambda-1} \biggr\}\Psi(\Lambda,x),
\end{align}
where $A_0$, $A_1$ are operators acting on variable $x$:
\begin{align}
&A_0=\vartheta_x (\vartheta_x - \varkappa -a)
-x (\vartheta_x +m_1)(\vartheta_x +m_2),\\
&A_1=(x-1)(\vartheta_x +m_1) (\vartheta_x+m_2)
+\biggl(\frac{1}{x}-1\biggr)(\vartheta_x -m_3)(\vartheta_x -m_4).
\end{align}
Below, we will show the relation with the standard KZ equation more explicitly.

Following \cite{FZ}, we write the $\widehat{\mathfrak{sl}_2}$ KZ equation
for generic complex spins $j_a$ and level $k$, $\varkappa =k+2 =\log t/\log q$, in the form
\begin{gather}
\varkappa \frac{\partial}{\partial z_a} \psi=\sum_{b(\neq a)=1}^n \frac{\Omega_{a,b}}{z_a-z_b} \psi,
\qquad a=1, \dots, n , \label{eq:KZa}
\\
\Omega_{a,b}=-(x_a-x_b)^2 \frac{\partial^2}{\partial x_a \partial x_b}+2(x_a-x_b) \biggl(j_a \frac{\partial}{\partial x_b}-
j_b \frac{\partial}{\partial x_a} \biggr)+2 j_a j_b.
\end{gather}
The number of variables $z_a$, $x_a$ can be reduced to $2(n-3)$, thanks to the
${\rm SL}(2, \bbC) \times {\rm SL}(2, \bbC)$ symmetry for $x_a$ and $z_a$,
\begin{align}\label{eq:SL2cons}
&\sum_{a=1}^n x_a^i \biggl\{x_a \frac{\partial}{\partial x_a}-j_a(i+1)\biggr\} \psi=0, \qquad
\sum_{a=1}^n z_a^i \biggl\{z_a \frac{\partial}{\partial z_a}+h_a(i+1)\biggr\} \psi=0,
\end{align}
where $i=0,\pm 1$ and $h_a=j_a(j_a+1)/\varkappa$.

We will consider the case $n=4$, where the conditions \eqref{eq:SL2cons} can be solved as
\begin{align}
&\psi=u v f_1(z,x),
\qquad z=\frac{z_{12}z_{34}}{z_{13}z_{24}}, \qquad
x=\frac{x_{12}x_{34}}{x_{13}x_{24}}, \\
&u=z_{12}^{-2 h_1}z_{23}^{h_1-h_2-h_3+h_4}z_{24}^{-h_1-h_2+h_3-h_4}z_{34}^{h_1+h_2-h_3-h_4},\notag \\
&v=x_{12}^{2 j_1}x_{23}^{-j_1+j_2+j_3-j_4}x_{24}^{j_1+j_2-j_3+j_4}x_{34}^{-j_1-j_2+j_3+j_4},\notag
\end{align}
where $z_{ab} = z_a - z_b$, $x_{ab} = x_a - x_b$.
Then the equations \eqref{eq:KZa} for $a=1, \dots, 4$ are all equivalent and we obtain a single equation for $f_1(z,x)$.
%which is 1st order in $z$ and 2nd order in $x$.
Through the gauge transformation
\begin{align}
f(z,x)=g f_1(z,x), \qquad g=x^{j_1+j_2}z^{-h_1-h_2}(z-1)^\frac{2 j_1j_4}{\varkappa},
\end{align}
the KZ equation for $f(z,x)$ can be written concisely as
\begin{gather}
\biggl\{ \varkappa \vartheta_z-\vartheta_x(\vartheta_x-1)+\frac{x-z}{1-z}(\vartheta_x-j_1+j_2)(\vartheta_x+j_3-j_4) \notag \\
\qquad{}+\frac{z(1-x)}{x(1-z)}(\vartheta_x+j_1+j_2)(\vartheta_x+j_3+j_4)\biggr\} f(z,x)=0,
\label{eq:KZf}
\end{gather}
where $\vartheta_z=z \frac{\partial}{\partial z}$.

In order to identify \eqref{eq:KZf} with the four-dimensional limit of Shakirov's equation,
we change the parameters as\footnote{Here $\tilde{a}=a+ \varkappa$.}
\begin{equation}\label{spin-mass}
(j_1,j_2, j_3, j_4)= \biggl(\frac{-m_1-m_3}{2},
 \frac{\tilde{a}-1+m_1-m_3}{2},
 \frac{\tilde{a}-1+m_2-m_4}{2},
\frac{-m_2-m_4}{2} \biggr),
\end{equation}
and make a gauge transformation
\begin{equation}
\Psi=x^{-m_3} (z-1)^{\frac{(m_1+m_3) (m_2+m_4)}{2 \varkappa}}
 z^{\frac{(m_3-m_1) \tilde{a}-(m_1-1) m_1-(m_3-1) m_3}{2 \varkappa}} f.
\end{equation}
Then the equation \eqref{eq:KZf} exactly agrees with \eqref{eq:KZlike}
\begin{gather}
\biggl\{ \varkappa \vartheta_z-\vartheta_x(\vartheta_x-\tilde{a})
+\frac{x-z}{1-z}(\vartheta_x+m_1)(\vartheta_x+m_2)\nonumber\\
\qquad{}+\frac{z(1-x)}{x(1-z)}(\vartheta_x-m_3)(\vartheta_x-m_4)\biggr\} \Psi=0\label{eq:4dSW}
\end{gather}
by the identification $z=\Lambda$.

There are $2^4$ such identifications. This is because the equation \eqref{eq:4dSW} (= the quantum $P_{\rm VI}$ equation),
has \smash{$(\bbZ_2)^4 \subset W\bigl(D_4^{(1)}\bigr)$} symmetry
\begin{align}
&\{m_1\rightarrow m_2, m_2 \rightarrow m_1 \} \times \{m_3 \rightarrow m_4, m_4 \rightarrow m_3 \} \times
\{m_1\to 1-\tilde{a}-m_2, m_2\to 1-\tilde{a}-m_1\} \nonumber \\
&\qquad{}\times \{m_3\to 1+ a-m_4, m_4\to 1+ a-m_3\},\label{D_4Weyl}
\end{align}
up to a gauge transformation of the form $g=x^{k_1}(x-1)^{k_2}z^{k_3}(z-1)^{k_4}(x-z)^{k_5}$.
Each $\bbZ_2$ transform in \eqref{D_4Weyl} corresponds to the Weyl reflection with respect to the outer nodes
of $D_4^{(1)}$ Dynkin diagram.
%Note that that \eqref{5d-spin-mass} and \eqref{spin-mass} agrees up to $(\bbZ_2)^4$ transformation.

\begin{Remark}
The relation between the KZ equation and the quantum $P_{\rm VI}$ was studied by \cite{Nagoya}.
The derivation of the KZ equation from the gauge theory is given by \cite{Nekrasov:2021tik}.
The equation \eqref{eq:KZlike} was previously obtained from Virasoro five-point functions.
%Here, we derive it from the standard KZ equation through a gauge transformation.
%This gauge transformation should be the ``twist by a line bundle'' in \cite{Nekrasov:2021tik}.
\end{Remark}

%%%%%%%%%%%%%%%%%%%%%%%%%%%%%%%%%%%%%%%%%%%%%%%%%%%%%%%%%%%%%%%%%%%%%%%%%%%%%%%%%%%%%%%%%%%%%%%%%%%%%%%%%%%%

\appendix

\section[q-Borel transformation and refined Chern--Simons theory]{$\boldsymbol{q}$-Borel transformation and refined Chern--Simons theory}
\label{App:framing}

In this appendix, we would like to explain a part of motivation for the original non-stationary difference equation of \cite{Shakirov:2021krl}.
Namely, we would like to elucidate the reason for introduction of
the operator $\Bor$ in the form that it was introduced.

A part of motivation to study non-stationary difference equations comes from the most well-studied example,
which is the non-stationary Ruijsenaars--Schneider equation suggested by \cite{langmann2020basic}.
This non-stationary equation is satisfied by the $K$-theoretic character of the affine Laumon space
of type $\widehat{\mathfrak{gl}}_N$. The character in question is a special function generalizing the Macdonald polynomial.

For Macdonald polynomials, a relation to Chern--Simons theory and knot invariants is well-known \cite{aganagic2011knot}.
The relation is obtained through so-called refined Chern--Simons theory.
This theory provides some of the most prominent equations satisfied by Macdonald polynomials,
among them a key role is played by the integral identity,
\begin{gather}
\oint_{|x_1|=1}\dfrac{{\rm d}x_1}{x_1} \dots \oint_{|x_N|=1}\dfrac{{\rm d}x_N}{x_N}
 \exp \Biggl(- \displaystyle{\sum_{i=1}^{N}} \dfrac{x_i^2}{2g}\Biggr)
 P_{\lambda}(x_1, \dots, x_N) \prod\limits_{i \neq j =1}^{N} \prod\limits_{m = 0}^{\infty}
 \dfrac{(1 - q^m x_i/x_j)}{(1 - t q^m x_i/x_j)} \nonumber \\
\qquad{}= \mbox{const}\, T_{\lambda}~P_{\lambda}(t^{\rho_1}, \dots, t^{\rho_N}).
\end{gather}
Here $\rho_i = (N+1)/2 - i$ is the Weyl vector of $\mathfrak{gl}_N$, the partition $\lambda$ is arbitrary, and $T_\lambda$
is a certain product of $q$, $t$ to the power of quadratic form of~$\lambda$.

In \cite[Appendix B]{aganagic2011knot}, it is explained that changing the variables via $u_i = \log(x_i)$,
it is possible to trade the Gaussian exponents ${\rm e}^{-x_i^2/2g}$ under the integration sign
for theta functions $\sum_n q^{n^2/2} u_i^n$.
This can be understood simply as the result of action on the rest of the integrand by a product of operators
\begin{align}
\Bor^{\prime} u^n = q^{n^2/2} u^n,
\end{align}
or equivalently (see \eqref{q-Borel}),
\begin{align}
\Bor u^n = q^{n^2/2+n/2} u^n
\end{align}
in each variable $u_i$, and setting all $u_1=\dots=u_N=1$ afterwards.
The linear difference between operators $\Bor^{\prime}$ and $\Bor$ is inessential and amounts to a shift of variable.

In our view, equations for more complicated functions generalizing the Macdonald polynomials
 (such as the $K$-theoretic character of the affine Laumon space) are most likely far-going generalizations
 of the elementary equations for Macdonald polynomials. For this reason, when considering the non-stationary difference
 equation for degenerate five-point conformal blocks of $q$-Virasoro algebra in~\cite{Shakirov:2021krl},
 we paid attention to the following interesting fact. In~\cite{langmann2020basic}, the main part of the non-stationary
 difference equation is the operator $q^{\Delta/2}$, where $\Delta$ is a certain second order differential operator.
 For the case of rank~1, i.e., $\mathfrak{gl}_2$, the action of $q^{\Delta/2}$ on a monomial of the form $(x_1/x_2)^{n}$
 is proportional (up to factors of the form $q$, $t$ to the linear power in $n$) to $q^{n^2}$.

From the perspective described above, it is natural to expect that this operator should be
understood as a product of two operators, each of which contributes $q^{n^2/2}$.
This is natural both from the perspective of Macdonald theory, where this operator ($\Bor^{\prime}$ or $\Bor$, equivalently)
arises in the integral identities, as well as from the perspective of Ruijsenaars--Schneider system,
where~$\Delta$ is a sum of $N$ contributions, i.e., 2 contributions in the $\mathfrak{gl}_2$ case.
This was the principal reason why in \cite{Shakirov:2021krl} we chose to look for the Hamiltonian operator
on the right-hand side of the non-stationary equation in the form (see \eqref{Shamil-Eq})
\[
{\mathcal A}_1 \Bor {\mathcal A}_2 \Bor {\mathcal A}_3,
\]
where the two operators $\Bor$ are not joined together but separate.
It eventually turned out to be the correct ansatz.

%%%%%%%%%%%%%%%%%%%%%%%%%%%%%%%%%%%%%%%%%%%%%%%%%%%%%%%%%%%%%%%%%%%%%%%%%%%%%%%%%%%%%%%%%%%%%%%%%%%%%%%%%%%%%%%%%%%%%%%%%%%%%%%%

\section[Lemma on the q-Borel transformation]{Lemma on the $\boldsymbol{q}$-Borel transformation}
\label{App:Proofs}

A proof of the following lemma is given in \cite[Appendix B]{Awata:2022idl}.
Here we provide another more direct proof. In the following we use a shorthand notation
for the infinite product $\varphi(z) := (z;q)_\infty$.

\begin{Lemma}[\cite{Shakirov:2021krl}]\label{Shamil-lemma}
For $n\in \mathbb{Z}$,
we have
\begin{align}
&\mathcal{B}\cdot {1\over \varphi(\alpha x)\varphi(\beta \Lambda/x)}x^n
={ \varphi\bigl(-q^{1+n}\alpha x\bigr)\varphi(-q^{-n}\beta \Lambda/x)\over \varphi(\alpha \beta \Lambda)}
q^{n(n+1)/2}x^n,\label{lem-1}\\
&\mathcal{B}^{-1}\cdot {\varphi(\alpha x)\varphi(\beta \Lambda/x)}x^n
={ \varphi\bigl(q^{-1}\alpha \beta \Lambda\bigr)\over
\varphi\bigl(-q^{-1-n}\alpha x\bigr)\varphi(-q^{n}\beta \Lambda/x)}
q^{-n(n+1)/2}x^n.\label{lem-2}
\end{align}
\end{Lemma}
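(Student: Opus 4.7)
The plan is to prove (B.1) first and then deduce (B.2) from it by a simple reparametrization of $(\alpha,\beta)$. My strategy for (B.1) is to reduce to the case $n=0$ via an operator identity, and then to verify the $n=0$ identity by a formal first-order $q$-difference equation argument in the parameter $\alpha$.

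For the reduction step, the key observation is that $\mathcal{B}x = qxp\mathcal{B}$ (which follows from $\vartheta_x x = x(\vartheta_x+1)$), yielding by induction the operator identity $\mathcal{B}x^n = q^{n(n+1)/2} x^n p^n \mathcal{B}$, where $p = q^{\vartheta_x}$. Since $p$ commutes with $\mathcal{B}$ and $p^n$ acts on functions of $x$ by $x \mapsto q^n x$, one immediately obtains
\[
\mathcal{B} \cdot \frac{x^n}{\varphi(\alpha x)\varphi(\beta\Lambda/x)}
= q^{n(n+1)/2} x^n \, \mathcal{B} \cdot \frac{1}{\varphi(q^n\alpha x)\varphi(q^{-n}\beta\Lambda/x)},
\]
so (B.1) reduces to its $n=0$ case at shifted parameters $(q^n\alpha, q^{-n}\beta)$.

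For the $n=0$ identity, I view both sides as formal power series in $\alpha$ with coefficients that are formal Laurent series in $x$ over $\bbC[[\beta,\Lambda]]$. Denote the LHS by $G(\alpha)$ and the claimed RHS by $H(\alpha)$. I will show that both satisfy the same first-order $q$-difference equation
\[
F(q\alpha) = \frac{1 - \alpha\beta\Lambda}{1 + q\alpha x}\, F(\alpha)
\]
with the same initial value $F(0) = \varphi(-\beta\Lambda/x)$. For $G$, I use $\varphi(q\alpha x)^{-1} = (1-\alpha x)\varphi(\alpha x)^{-1}$ together with the conjugation rule $\mathcal{B}(1-\alpha x) = (1 - q\alpha x p)\mathcal{B}$ to derive $G(q\alpha) = (1 - q\alpha x p)\,G(\alpha)$, and a short telescoping computation using $\varphi(-q^2\alpha x) = \varphi(-q\alpha x)/(1+q\alpha x)$ and $\varphi(-q^{-1}\beta\Lambda/x) = (1+q^{-1}\beta\Lambda/x)\varphi(-\beta\Lambda/x)$ shows that $H$ satisfies the same recursion. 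The base case $\alpha = 0$ is immediate from Euler's identity $\varphi(-z) = \sum_m z^m q^{m(m-1)/2}/(q;q)_m$: both sides reduce to $\varphi(-\beta\Lambda/x)$. Uniqueness of formal power series solutions to such a $q$-difference equation (with leading coefficient $1$ and $q^k \neq 1$ for $k \geq 1$) then forces $G = H$.

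Finally, (B.2) is obtained from (B.1) by applying $\mathcal{B}^{-1}$ to both sides and substituting $(\alpha,\beta) \mapsto (-q^{-n-1}\alpha,-q^n\beta)$, which transforms the RHS of (B.1) into $x^n \varphi(\alpha x)\varphi(\beta\Lambda/x)$ up to the scalar $q^{n(n+1)/2}/\varphi(q^{-1}\alpha\beta\Lambda)$, matching the LHS of (B.2). The only mildly delicate point of the whole argument is justifying that all manipulations are valid in the appropriate formal coefficient ring; this is a routine check rather than a substantive obstacle, since the degrees in $\alpha,\beta,\Lambda$ at each order in the $x$-expansion are finite.
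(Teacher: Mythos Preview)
Your proof is correct, and takes a route genuinely different from the paper's. The reduction to $n=0$ via $\mathcal{B}x^n = q^{n(n+1)/2}x^n p^n\mathcal{B}$ and the deduction of (B.2) from (B.1) by the substitution $(\alpha,\beta)\mapsto(-q^{-n-1}\alpha,-q^n\beta)$ are the same as in the paper. Where you diverge is in the $n=0$ verification: the paper offers two arguments, one deriving \emph{two} difference equations in the variables $(\Lambda,x)$ (showing first that $f(\Lambda,x)$ is $x$-periodic, then that it satisfies a simple $\Lambda$-recursion), and a second relating the identity to the pentagon relation $e_q(b)e_q(a)=e_q(a)e_q(-ab)e_q(b)$ for $q$-exponentials. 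Your route --- a single first-order $q$-difference equation in the \emph{parameter} $\alpha$ plus an easy initial condition at $\alpha=0$ --- is more economical, needing only one recursion and Euler's formula for $\varphi(-z)$.

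One presentational wrinkle: the recursion you actually match for $G$ and $H$ is the operator equation $F(q\alpha)=(1-q\alpha x p)F(\alpha)$, not the scalar equation $F(q\alpha)=\frac{1-\alpha\beta\Lambda}{1+q\alpha x}F(\alpha)$ you announce. For $H$ these coincide (that is precisely your ``telescoping computation'' using $\varphi(-q^2\alpha x)=\varphi(-q\alpha x)/(1+q\alpha x)$ and $\varphi(-q^{-1}\beta\Lambda/x)=(1+q^{-1}\beta\Lambda/x)\varphi(-\beta\Lambda/x)$), but for $G$ only the operator form is derived directly. This does not affect correctness, since the operator recursion has the same uniqueness property: writing $F=\sum_k F_k\alpha^k$ one gets $F_k=\frac{-qx}{q^k-1}\,pF_{k-1}$, determined by $F_0$. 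I would just state the recursion in its operator form from the outset.
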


\begin{proof}
We will show the first relation (the second relation follows from the first one).
By a~rescaling of $\Lambda$, $x$, it is enough to show
\begin{equation}
f(\Lambda,x):=\frac{1}{\varphi(-q x)\varphi\bigl(-\frac{\Lambda}{x}\bigr)}\Bor\frac{1}{\varphi(x)\varphi\bigl(\frac{\Lambda}{x}\bigr)}
=\frac{1}{\varphi(\Lambda)}.
\end{equation}
Recall that $\vartheta_x= x\frac{\partial}{\partial x},~\Bor = q^{\vartheta_x(\vartheta_x+1)/2}$ and $p=q^{\vartheta_x}$.
First we note that, for any function $F(x)$,
\begin{align}
\nonumber
\Bor \cdot x^n F(x)&{}= (px)^n F(px)
=q^{n(n+1)/2}x^n p^n F(p x)\\
&{}=q^{n(n+1)/2}x^n F(p q^n x)
=q^{n(n+1)/2}x^n \Bor \cdot F(q^n x).
\end{align}
Using this identity, we obtain the following difference equation:
\begin{align}
\nonumber
f(q \Lambda,x)&{}=
\frac{1}{\varphi(-q x)\varphi\bigl(-\frac{q\Lambda}{x}\bigr)}\Bor\frac{1-\frac{\Lambda}{x}}{\varphi(x)\varphi\bigl(\frac{\Lambda}{x}\bigr)}\\ \nonumber
&{}=\frac{1}{\varphi(-q x)\varphi\bigl(-\frac{q\Lambda}{x}\bigr)}\Bor\frac{1}{\varphi(x)\varphi\bigl(\frac{\Lambda}{x}\bigr)}
-\frac{1}{\varphi(-q x)\varphi\bigl(-\frac{q\Lambda}{x}\bigr)}\Bor\frac{\frac{\Lambda}{x}}{\varphi(x)\varphi\bigl(\frac{\Lambda}{x}\bigr)}\\ \nonumber
&{}=\biggl(1+\frac{\Lambda}{x}\biggr)f(\Lambda,x)-\frac{1+x}{\varphi(-x)\varphi\bigl(-\frac{q\Lambda}{x}\bigr)}\frac{\Lambda}{x}\Bor\frac{1}{\varphi\bigl(q^{-1}x\bigr)\varphi\bigl(\frac{q\Lambda}{x}\bigr)}\\
&{}=\biggl(1+\frac{\Lambda}{x}\biggr)f(\Lambda,x)-(1+x)\frac{\Lambda}{x} f\bigl(\Lambda,q^{-1}x\bigr).\label{eq:fdiff1}
\end{align}
Similarly, we have
\begin{align}
\nonumber
f(q \Lambda, q x)&{}=
\frac{1}{\varphi\bigl(-q^2 x\bigr)\varphi\bigl(-\frac{\Lambda}{x}\bigr)}\Bor\frac{1-x}{\varphi(x)\varphi\bigl(\frac{\Lambda}{x}\bigr)}\\ \nonumber
&{}=\frac{1}{\varphi\bigl(-q^2 x\bigr)\varphi\bigl(-\frac{\Lambda}{x}\bigr)}\Bor\frac{1}{\varphi(x)\varphi\bigl(\frac{\Lambda}{x}\bigr)}
-\frac{1}{\varphi\bigl(-q^2 x\bigr)\varphi\bigl(-\frac{\Lambda}{x}\bigr)}\Bor\frac{x}{\varphi(x)\varphi\bigl(\frac{\Lambda}{x}\bigr)}\\ \nonumber
&{}=(1+qx)f(\Lambda,x)-\frac{1+\frac{\Lambda}{qx}}{\varphi\bigl(-q^2x\bigr)\varphi\bigl(-\frac{\Lambda}{qx}\bigr)}qx
\Bor\frac{1}{\varphi(qx)\varphi\bigl(\frac{\Lambda}{qx}\bigr)}\\
&{}=(1+qx)f(\Lambda,x)-(qx+\Lambda)f(\Lambda,qx).\label{eq:fdiff2}
\end{align}
Comparing the $f(q \Lambda, q x)$ given by \eqref{eq:fdiff1} $|_{x\to qx}$ and \eqref{eq:fdiff2}, we have
\begin{equation}
f(\Lambda,q x)=f(\Lambda,x).
\end{equation}
Hence, from \eqref{eq:fdiff1} we have
\begin{equation}
f(q\Lambda,x)=(1-\Lambda) f(\Lambda,x).
\end{equation}
Since $f(0,x)=1$, we obtain
\begin{equation}
f(\Lambda, x)=\varphi(\Lambda)^{-1},
\end{equation}
as desired.
\end{proof}

\subsection{Relation to the pentagon identity}

There is yet another proof of Lemma \ref{Shamil-lemma},
\begin{equation}\label{eq:lemmaA1}
\Bor \frac{1}{\varphi(\alpha x)\varphi\bigl(\beta \frac{\Lambda}{x}\bigr)} x^n=
\frac{\varphi\bigl(-q^{n+1}\alpha x\bigr)\varphi\bigl(-q^{-n} \beta \frac{\Lambda}{x}\bigr)}{\varphi(\alpha\beta\Lambda)}q^{n(n+1)/2}x^n,
\end{equation}
and show its relation to the pentagon (quantum dilogarithm) identity.

(i) First we note that \eqref{eq:lemmaA1} follows from its special case $n=0$:
\begin{equation}\label{eq:lemmaA10}
\Bor \frac{1}{\varphi(\alpha x)\varphi\bigl(\beta \frac{\Lambda}{x}\bigr)} =
\frac{\varphi(-q \alpha x)\varphi\bigl(-\beta \frac{\Lambda}{x}\bigr)}{\varphi(\alpha\beta\Lambda)}.
\end{equation}
In fact, we have
\[
\Bor f(x) x^n=(px)^n \Bor f(x)=q^{n(n+1)/2}x^n p^n \Bor f(x)=q^{n(n+1)/2}x^n \Bor f(q^n x).
\]
Apply this to $f(x)=\frac{1}{\varphi(\alpha x)\varphi(\beta \frac{\Lambda}{x})}$ and
using (\ref{eq:lemmaA10}) we obtain (\ref{eq:lemmaA1}),
\begin{align*}
\Bor \frac{1}{\varphi(\alpha x)\varphi\bigl(\beta \frac{\Lambda}{x}\bigr)} x^n
&{}=q^{n(n+1)/2}x^n \Bor \frac{1}{\varphi(\alpha q^n x)\varphi\bigl(\beta \frac{q^{-n}\Lambda}{x}\bigr)} \\
&{}=q^{n(n+1)/2}x^n \frac{\varphi\bigl(-q^{n+1}\alpha x\bigr)\varphi\bigl(-q^{-n} \beta \frac{\Lambda}{x}\bigr)}{\varphi(\alpha\beta\Lambda)}.
\end{align*}

(ii) The equation \eqref{eq:lemmaA10} can be written as
\begin{align}
 \frac{\varphi(\alpha\beta\Lambda)}{\varphi(\alpha x)\varphi\bigl(\beta \frac{\Lambda}{x}\bigr)} &{} =
\Bor^{-1} \varphi(-q \alpha x)\varphi\biggl(-\beta \frac{\Lambda}{x}\biggr) \notag \\
&{}=\Bor^{-1} \sum_{k,l=0}^{\infty} \frac{q^{k(k+1)/2}(\alpha x)^k}{(q)_k}
\frac{q^{l(l-1)/2}\bigl(\beta \frac{\Lambda}{x}\bigr)^l}{(q)_l}
= \sum_{k,l=0}^{\infty} \frac{(\alpha x)^k}{(q)_k}
\frac{\bigl(\beta \frac{\Lambda}{x}\bigr)^l}{(q)_l} q^{k l},
\end{align}
hence it is enough to show
\begin{equation}\label{eq:lemmaA14}
 \frac{\varphi(\alpha\beta\Lambda)}{\varphi(\alpha x)\varphi\bigl(\beta \frac{\Lambda}{x}\bigr)}
= \sum_{k,l=0}^{\infty} \frac{(\alpha x)^k}{(q)_k}
\frac{\bigl(\beta \frac{\Lambda}{x}\bigr)^l}{(q)_l} q^{k l}.
\end{equation}
In terms of $q$-commutative variables $ba=q ab$
and $q$-exponential function $e_q(x)=\varphi(x)^{-1}$,
the equation~(\ref{eq:lemmaA14}) is equivalent to the quantum dilogarithm (pentagon) identity
\begin{equation}\label{eq:dilog}
e_q(b)e_q(a)=e_q(a)e_q(-ab)e_q(b).
\end{equation}
The identity \eqref{eq:dilog} can be easily derived as follows:
\begin{gather}
e_q(a)e_q(b)=e_q(a+b), \\
e_q(b)e_q(a)e_q(b)^{-1}
=e_q\bigl(e_q(b)a e_q(b)^{-1}\bigr)
=e_q\bigl(a e_q(q b)e_q(b)^{-1}\bigr)\notag \\ \hphantom{e_q(b)e_q(a)e_q(b)^{-1}}{}
=e_q(a (1-b))=e_q(a-a b)=e_q(a)e_q(-a b).
\end{gather}

%%%%%%%%%%%%%%%%%%%%%%%%%%%%%%%%%%%%%%%%%%%%%%%%%%%%%%%%%%%%%%%%%%%%%%%%%%%%%%%%%%%%%%%%%%%%%%%

\section[List of various R matrices and their characterization]{List of various $\boldsymbol{R}$ matrices and their characterization}
\label{App:List-R}
Since various $R$-matrices are used in the main text, we summarize them along with its characterization.

(1) The matrix $R^{\rm Sh}=(r_{i,j}(\Lambda))_{i,j=-n}^{m}$ arising from Shakirov's operator in \eqref{eq:SSr}:
\begin{align}\label{C1}
&q^{\frac{1}{2} i (i+1)} x^i \bigl(-d_1 q^{i-m}x\bigr)_{m-i} \biggl(-d_4 q^{-i-n}\frac{\Lambda}{x}\biggr)_{i+n} \nonumber \\
&\qquad{}=\sum_{j=-n}^m
q^{-\frac{1}{2} j (j+1)} x^j (-q^{-m}x)_{m-j} \biggl(-q^{-n}\frac{\Lambda}{x}\biggr)_{j+n}~r_{i,j}(\Lambda) .
\end{align}

(2) The matrix $R^{\rm HG}=\bigl(R^{\rm HG}_{ i,j}\bigr)_{i,j=0}^{\ell}$ defined by ${}_4\phi_{3}$ series in \eqref{eq:RinHGS}:
\begin{align}\label{C2}
\biggl(q^{\ell-1}\frac{x}{\beta};q^{-1}\biggr)_i \biggl(\frac{\alpha x}{z};q\biggr)_{\ell-i}
=\sum_{j=0}^\ell \biggl(q^{\ell-1}\frac{x}{z};q^{-1}\biggr)_{\ell-j}(x;q)_j~R^{\rm HG}_{i,j}.
%\quad (i=0,\dots,n)
\end{align}

(3) The matrix $R^{\rm Ito}=\bigl(R^{\rm Ito}_{i,j}\bigr)_{i,j=0}^{n}$ in Theorem \ref{thm:Ito}:
\begin{equation}
\bigl(q^{n-1}b_1 x; q^{-1}\bigr)_j\biggl(\frac{x}{a_2}; q\biggr)_{n-j}=
\sum_{i=0}^n \bigl(q^{n-1}b_2 x; q^{-1}\bigr)_{n-i}\biggl(\frac{x}{a_1}; q\biggr)_i R^{\rm Ito}_{i,j}.
%\quad (i=0,\dots,n)
\end{equation}

Actually, these $R$ matrices are not unrelated. When $\ell = n+m$,
the relation of $R^{\rm Sh}$ and $R^{\rm HG}$ is already proved in Section~\ref{subsec:r-matrix}, namely
we have \eqref{r=R}. One may confirm it by making a change of the ``dummy'' variable $x$ in~\eqref{C2}
to $- q^{-n} \Lambda/x$, which implies~\eqref{C1} up to a scaling constant.
On the other hand, with the identification
\begin{equation}\label{HG=Ito}
\ell = n; \qquad a_1 =1, \quad a_2 = \frac{z}{\alpha}, \quad b_1 = \frac{1}{\beta}, \quad b_2 = \frac{1}{z},
\end{equation}
we have $R^{\rm HG}_{i,j} = R^{\rm Ito}_{j,i}$.

(4) The matrix $A=A^{\rm Ito}=\bigl(A^{\rm Ito}_{ i,j}\bigr)_{i,j=0}^{n}$ in Theorem \ref{thm:Ito}:
\begin{align}
&\biggl[{n \atop i}\biggr]_q q^{\frac{i(i-1)}{2}}\biggl(-\frac{q^{1-n}}{z}\biggr)^i \biggl(\frac{x}{z}; q\biggr)_{n-i}\bigl(q^{n-i}x; q\bigr)_{i}\nonumber \\
&\qquad{}=\sum_{j=0}^n
\biggl[{n \atop j}\biggr]_q q^{\frac{(n-j)(n-j-1)}{2}}b_1^n\biggl(-\frac{q^{1-n}}{a_2b_1z}\biggr)^j
\bigl(q^{n-1}a_2b_2 x; q\bigr)_j\biggl(\frac{q^{j-n+1}x}{a_1b_1z}; q\biggr)_{n-j} A^{I}_{i,j}.
\end{align}

(5) The matrix ${\tilde A}=\bigl({\tilde A}_{i,j}\bigr)_{i,j=0}^{N}$ for the normalized solution in Section~\ref{sec3.2}:
\begin{align}
&\bigl(q^{N-j}a_2b_2x;q\bigr)_{j}\biggl(\frac{q^{1-N}}{a_1b_1}\frac{x}{\Lambda};q\biggr)_{N-j}
=\sum_{i=0}^N (x;q)_{i}\biggl(q^{i+1-N}\frac{x}{\Lambda};q\biggr)_{N-i}{\tilde A}_{i,j}.
\end{align}

(6) The matrix ${\bf R_{\mu,\nu}}=\bigl(\delta_{i+j,k+l}{\bf R_{\mu,\nu}}_{ i,j}^{k,l}\bigr)_{i,j,k,l\geq 0}$
for $\qg\bigl(A_1^{(1)}\bigr)$:
\begin{align}\label{eq:bRdef}
\biggl(\frac{b_{\nu}x}{z_{\nu}};q\biggr)_i \biggl(\frac{q^i x}{z_{\mu}};q\biggr)_{j}
=\sum_{k=0}^{n}\biggl(\frac{b_{\mu}x}{z_{\mu}};q\biggr)_{k} \biggl(\frac{q^{k} x}{z_{\nu}};q\biggr)_{n-k}{\bf R_{\mu,\nu}}_{i,j}^{k,n-k}.
%\quad (i,j \geq 0)
\end{align}
This matrix ${\bf R_{\mu,\nu}}$ is related to the matrix $R^{\rm HG}$ by
\[
{{\bf R}_{\mu,\nu}}_{i,n-i}^{j,n-j}=\bigl[R^{\rm HG}\bigr]_{\ell \to n, z\to b_\mu
\frac{z_\nu}{z\mu}, \alpha\to b_\nu, \beta\to b_\mu} J, \qquad
J=(\delta_{i+j,n})_{i,j=0}^n.
\]
A significance of the matrix ${\bf R}$ is that it satisfies the Yang--Baxter relation.
The following direct proof using \eqref{eq:bRdef} may be instructive.
\begin{Proposition}
The matrix ${\bf R}_{\mu,\nu}$ satisfies the Yang--Baxter equation, namely for any $(i,j,k)$, $(i'',j'',k'') \in \bbZ^3$ such that $i+j+k=i''+j''+k''$,
we have
\begin{equation}\label{eq:YB-bR}
\sum_{i',j',k'}{{\bf R}_{1,2}}_{i',j'}^{i'',j''}{{\bf R}_{1,3}}_{i,k'}^{i',k''}{{\bf R}_{2,3}}_{j,k}^{j',k'}=
\sum_{i',j',k'}{{\bf R}_{2,3}}_{j',k''}^{j'',k''}{{\bf R}_{1,3}}_{i',k}^{i'',k'}{{\bf R}_{1,2}}_{i,j}^{i',j'}.
\end{equation}
\end{Proposition}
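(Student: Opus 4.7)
The plan is to prove the Yang--Baxter equation for ${\bf R}_{\mu,\nu}$ by a ``three-particle scattering'' argument: apply the defining relation \eqref{eq:bRdef} along two distinct sequences of adjacent transpositions to a single auxiliary polynomial in $x$, and equate the resulting expansions in a common basis.

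Concretely, I will consider the polynomial in $x$ of total degree $n=i+j+k$
\[
\Omega(x) \seteq \biggl(\frac{b_3 x}{z_3};q\biggr)_i \biggl(\frac{q^{i} x}{z_2};q\biggr)_j \biggl(\frac{q^{i+j} x}{z_1};q\biggr)_k,
\]
together with the target ``reverse-ordered'' basis
\[
B_{a,b,c}(x) \seteq \biggl(\frac{b_1 x}{z_1};q\biggr)_a \biggl(\frac{q^a x}{z_2};q\biggr)_b \biggl(\frac{q^{a+b} x}{z_3};q\biggr)_c, \qquad a+b+c=n,
\]
which for generic $(b_\mu, z_\mu)$ is linearly independent in the space of polynomials of degree $\le n$. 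The longest permutation $(3,2,1) \to (1,2,3)$ admits exactly two reduced expressions in terms of adjacent transpositions,
\[
(3,2,1) \to (2,3,1) \to (2,1,3) \to (1,2,3) \quad\text{and}\quad (3,2,1) \to (3,1,2) \to (1,3,2) \to (1,2,3),
\]
corresponding to the hexagon of $\mathfrak{S}_3$. Applying \eqref{eq:bRdef} three times along each path produces a product of three ${\bf R}$-matrix entries, and a direct check of the labels being swapped at each step shows that the first path yields the factor ordering ${\bf R}_{2,3}\,{\bf R}_{1,3}\,{\bf R}_{1,2}$ of the LHS of \eqref{eq:YB-bR}, while the second yields ${\bf R}_{1,2}\,{\bf R}_{1,3}\,{\bf R}_{2,3}$ of the RHS. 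Since both expansions represent the coefficients of $\Omega$ in the same basis $\{B_{a,b,c}\}$, linear independence of this basis forces the two products to be equal, which is precisely \eqref{eq:YB-bR}.

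The main obstacle lies in the step exchanging the middle and rightmost factors (step 2 of the first path, and steps 1 and 3 of the second), where the ``left'' factor of the pair being swapped carries only a $q^{\alpha}$-shift rather than the $b_{\nu}$-prefactor required by \eqref{eq:bRdef}. The plan is to absorb the accumulated shift into a rescaled variable $y = q^{\alpha} x$, bringing the pair into the canonical form $(\tfrac{y}{z_{\nu}};q)_I (\tfrac{q^{I} y}{z_{\mu}};q)_J$, and then to match the transition matrix against ${\bf R}_{\mu,\nu}$ via the parameter identification discussed in the appendix, all the while tracking the shift $q^{\alpha}$ through the remaining outer factor. This bookkeeping is delicate but routine once set up; the key point is that both paths land in the same basis $\{B_{a,b,c}\}$, after which the YBE follows from uniqueness of the expansion. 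As an independent consistency check, one can invoke the identification of ${\bf R}_{\mu,\nu}$ with $R^{\rm HG}$ recalled earlier in the appendix and appeal to the known Yang--Baxter property of the latter as the standard $U_{{\mathsf v}}(A_1^{(1)})$ evaluation $R$-matrix.
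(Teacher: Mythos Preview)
Your overall strategy---expand a three-factor polynomial in two ways and equate coefficients---is the right intuition, but as stated the single-variable argument has a genuine gap, precisely at the point you flag as ``the main obstacle.''  After the first swap (say positions $1,2$ carrying labels $3,2$), the new middle factor is $\bigl(\tfrac{q^{a}x}{z_3};q\bigr)_{i+j-a}$, with \emph{no} $b_3$ in front.  Setting $y=q^{a}x$ brings the pair in positions $2,3$ into the form $\bigl(\tfrac{y}{z_3};q\bigr)_{\bullet}\bigl(\tfrac{q^{\bullet}y}{z_1};q\bigr)_{\bullet}$, which matches \eqref{eq:bRdef} only with $b_3$ replaced by $1$.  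Hence the transition matrix appearing in step~2 is ${\bf R}_{1,3}\big|_{b_3=1}$, \emph{not} the ${\bf R}_{1,3}$ that occurs in \eqref{eq:YB-bR}.  The same mismatch occurs along the second path.  The resulting identity is therefore not \eqref{eq:YB-bR} but a specialization of it; the ``parameter identification'' you invoke cannot recover the missing $b$-dependence, because the polynomial $\Omega(x)$ you start from simply does not contain $b_2$ (and the target basis $B_{a,b,c}$ does not contain $b_3$).  Your fallback to the known YBE for $R^{\rm HG}$ is of course valid, but that is citing an external result rather than giving the direct proof the Proposition advertises.

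The paper circumvents exactly this issue by working not with single-variable polynomials but with symmetrized rational functions $B_{k_1,\dots,k_s}(t_1,\dots,t_n)$ built from the \emph{nested} one-body factors $f_a(t)=\prod_{i<a}\tfrac{1-\beta_i t/z_i}{1-t/z_i}\cdot\tfrac{1}{1-t/z_a}$.  Because $f_a$ already records all $(\beta_i,z_i)$ with $i<a$, the action $s_{(a,a+1)}$ on parameters leaves $f_b$ fixed for $b\neq a,a{+}1$; this locality ensures that every adjacent transposition produces the genuine ${\bf R}_{a,a+1}$ with its full $(b_a,b_{a+1},z_a,z_{a+1})$-dependence.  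The YBE then follows from the braid relation in $\mathfrak{S}_3$, and only afterwards does one specialize $t_i\to xq^{i-1}$ to identify the connection matrix with \eqref{eq:bRdef}.  If you want to salvage your approach, you would need to enrich $\Omega(x)$ and the target basis into rational functions carrying all $b_\mu$ simultaneously---but that is essentially re-deriving the paper's multivariable construction.
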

\begin{proof}
For $k_1, k_2, \dots, k_s \in \bbZ_{\geq 0}$ with fixed $n=k_1+k_2+\cdots+k_s$, define a rational function
in $t_1, \dots, t_n$ as
\begin{equation}
B_{k_1, k_2, \dots, k_s}={\mathcal S}\Biggl(\prod_{1 \leq i<j \leq n} \frac{q t_i-t_j}{t_i-t_j} \cdot
\prod_{a=1}^s \prod_{i_a=k_1+\cdots+k_{a-1}+1}^{k_1+\cdots+k_a}f_a(t_{i_a})
\Biggr),
\end{equation}
where
\[
f_a(t)=\prod_{i=1}^{a-1} \frac{1-\frac{\beta_i}{z_i}t}{1-\frac{t}{z_i}} \frac{1}{1-\frac{t}{z_a}},
\]
and ${\mathcal S}$ is the symmetrization on variables $t_1, \dots, t_n$.
The denominator of $B_{k_1, k_2, \dots, k_s}$ is
\[
\prod_{i=1}^n \prod_{\mu=1}^s\biggl(1-\frac{t_i}{z_\mu}\biggr)
\]
and the numerator is a symmetric polynomial in $t_1, \dots, t_n$ whose degree in each $t_i$ is $s-1$.
The functions $B_{k_1, \dots, k_s}$ form a basis of the linear space of such rational functions
of dimension $\bigl({n+s-1 \atop s-1}\bigr)$.
Define the action of a permutation $\sigma$ of $\{1,\dots, s\}$ on $\{z_a, \beta_a\}$ as
\[
s_\sigma=\bigl\{z_a \to z_{\sigma(a)},\, \beta_a \to \beta_{\sigma(a)}\bigr\},
\]
then we have the connection relation
\[
s_\sigma(B_{k_1, \dots, k_s})=\sum_{\{l\}} B_{l_1, \dots, l_s} {C_\sigma}_{\{k\}}^{\{l\}}.
\]
%\begin{align}
%s_{12}s_{23}s_{12}B_{\{k\}}
%&=\sum_{\{k'\}}s_{12}s_{23} B_{\{k'\}}{C_{12}}_{\{k\}}^{\{k'\}} \nonumber \\
%&=\sum_{\{k'\},\{k''\}}s_{12}B_{\{k''\}}{C_{23}}_{\{k'\}}^{\{k''\}}s_{23}({C_{12}}_{\{k\}}^{\{k'\}}) \nonumber \\
%&=\sum_{\{k'\},\{k''\},\{k'''\}}B_{\{k'''\}}{C_{12}}_{\{k''\}}^{\{k'''\}}s_{12}({C_{23}}_{\{k'\}}^{\{k''\}})s_{12}s_{23}({C_{12}}_{\{k\}}^{\{k'\}})
%\end{align}
%is equal to $s_{23}s_{12}s_{23}B_{\{k\}}$
Note that the matrix $C_{(a,a+1)}$ is local in index $a$, i.e., it depends only on $k_a$, $k_{a+1}$, $l_a$, $l_{a+1}$ and~$z_a$,~$z_{a+1}$,~$\beta_a$,~$\beta_{a+1}$,
since $s_{(a,a+1)} (f_b(t))=f_b(t)$ for $a \neq b$, $b+1$. Then, due to the relation
$s_{(2,3)}s_{(1,2)}s_{(1,2)}B_{\{k\}}=s_{(1,2)}s_{(2,3)}s_{(1,2)}B_{\{k\}}$,
the matrices $C_{\sigma}$ satisfy the Yang--Baxter relation
\begin{equation}\label{eq:YB-Cmat}
C_{(2,3)}\bigl(s_{(2,3)}C_{(1,2)}\bigr)\bigl(s_{(2,3)}s_{(1,2)}C_{(2,3)}\bigr)
=C_{(1,2)}\bigl(s_{(1,2)}C_{(2,3)}\bigr)\bigl(s_{(1,2)}s_{(2,3)}C_{(1,2)}\bigr).
\end{equation}
Furthermore, by the locality, the matrix $C_{(a,a+1)}$ reduces to $C_{(1,2)}$, and we have
\begin{equation}\label{eq:C12}
C_{(a,a+1)}%=s_{\tiny (\begin{array}{cc}1&2\\a&a+1\end{array})}C_{(1,2)}
=C_{(1,2)}|_{z_1\to z_a, z_2\to z_{a+1}, \beta_1\to \beta_a, \beta_2 \to \beta_{a+1}}.
\end{equation}
Thanks to the specialization
\[
B_{k_1, k_2}\big|_{t_i\to x q^{i-1}}=\frac{(q;q)_n}{(1-q)^{-n}} \frac{ \bigl( \frac{b_{1} x}{z_{1}};q\bigr)_{k_2}}{\bigl(\frac{x}{z_{1}};q\bigr)_{k_1+k_2}\bigl(\frac{x}{z_{2}};q\bigr)_{k_2}},
\]
the coefficient ${C_{(1,2)}}_{k_1,k_2}^{l_1,l_2}$ ($k_1+k_2=l_1+l_2$) is obtained from
\[
\frac{ \bigl( \frac{b_{2} x}{z_{2}};q\bigr)_{k_2}}{\bigl(\frac{x}{z_{2}};q\bigr)_{k_1+k_2}\bigl(\frac{x}{z_{1}};q\bigr)_{k_2}}=
\sum_{l_1, l_2}\frac{\bigl(\frac{b_{1} x}{z_{1}};q\bigr)_{l_2}}{\bigl(\frac{x}{z_{1}};q\bigr)_{l_1+l_2}\bigl(\frac{x}{z_{2}};q\bigr)_{l_2}}(C_{1,2})_{k_1,k_2}^{l_1,l_2},
\]
i.e.,
\begin{equation}\label{eq:C-con}
\biggl(\frac{b_{2} x}{z_{2}};q\biggr)_{k_2}\biggl(q^{k_2}\frac{x}{z_{1}};q\biggr)_{k_1}=
\sum_{l_1, l_2}\biggl(\frac{b_{1} x}{z_{1}};q\biggr)_{l_2}\biggl(q^{l_2}\frac{x}{z_{2}};q\biggr)_{l_1}(C_{1,2})_{k_1,k_2}^{l_1,l_2}.
\end{equation}
Equations \eqref{eq:YB-Cmat}--\eqref{eq:C-con} give the desired Yang--Baxter relation and defining equation for
the matrix $C$ (= ${\bf R}$).
%The relation between the coefficients $C_{a,a+1}$ and the matrix ${\bf R}_{a,a+1}$ is obtained by the following specialization:
%%\[
%%B_{k_1, \dots, k_s}\Big|_{t_i\to x q^{i-1}}=\frac{(q;q)_n}{(1-q)^{-n}} \frac{\prod_{a=2}^s ( \frac{b_{a-1} x}{z_{a-1}};q)_{k_a+\cdots+k_s}}{\prod_{a=1}^s(\frac{x}{z_{a}};q)_{k_a+\cdots+k_s}}
%%\]
%\[
%B_{k_1, \dots, k_s}\Big|_{t_i\to x q^{i-1}}=\frac{(q;q)_n}{(1-q)^{-n}} \frac{\prod_{a=2}^s ( \frac{b_{a-1} x}{z_{a-1}};q)_{k_a+\cdots+k_s}}{\prod_{a=1}^s(\frac{x}{z_{a}};q)_{k_a+\cdots+k_s}}
%\]
%which is confirmed by noting that only one permutation $\sigma(i)=n+1-i$ contributes to the symmetrization under the specialization
%$t_i\to x q^{i-1}$.
\end{proof}

\section[Proof of RD\_2A=ARD\_2]{Proof of $\boldsymbol{R D_2 A=A R D_2}$}
\label{App:matrix-inversion}

\subsection[Recollection of Ito's R and A matrices]{Recollection of Ito's $\boldsymbol{R}$ and $\boldsymbol{A}$ matrices}
Let $n\in \mathbb{Z}_{\geq 0}$,\footnote{The non-negative integer $n$ is denoted by $N$ in the main text.}
and $a_1,a_2,b_1,b_2,q,\Lambda\in \mathbb{C}$ be
generic parameters.
Ito has introduced the matrices~$R$ and~$A$ (in \cite[Theorems 1.4 and 1.7]{MIto}) given as follows.
Remark that to have a consistent notation with the main body of our paper,
we change the notation as $t_{\rm Ito}\rightarrow q$, $(q^\alpha)_{\rm Ito}\rightarrow \Lambda$.
\begin{Definition}
Let $R$ be the $(n+1)$-dimensional square matrix given in the Gauss decomposed forms
\begin{gather*}
 R=L_R D_R U_R=U_R'D_R'L_R', \\
 L_R=\bigl(l^R_{ij}\bigr)_{0\leq i,j\leq n}, \qquad D_R=\bigl(\delta_{ij}d^R_{j}\bigr)_{0\leq i,j\leq n}, \qquad U_R=\bigl(u^R_{ij}\bigr)_{0\leq i,j\leq n}, \\
 U_R'=\bigl({u'}^R_{ij}\bigr)_{0\leq i,j\leq n}, \qquad D_R'=\bigl(\delta_{ij}{d'}^R_{j}\bigr)_{0\leq i,j\leq n}, \qquad L_R'=\bigl({l'}^R_{ij}\bigr)_{0\leq i,j\leq n},
\end{gather*}
where
\begin{align}
&
l^R_{ij}=\left[ n-j\atop n-i\right]_{q^{-1}}
{(-1)^{i-j} q^{-\left(i-j\atop 2\right)} \bigl(a_2b_2 q^j;q\bigr)_{i-j} \over
\bigl(a_1^{-1}a_2 q^{-(n-2j-1)};q\bigr)_{i-j}},\nonumber\\
&
d^R_j={\bigl(a_1a_2^{-1} q^{-j};q\bigr)_{n-j} (a_2 b_1;q)_j \over
(a_1b_2;q)_{n-j} \bigl(a_1^{-1}a_2 q^{-(n-j)};q\bigr)_j },\nonumber\\
&
u^R_{ij}=
\left[ j\atop i\right]_{q^{-1}}
{\bigl(a_1b_1 q^{n-j};q\bigr)_{j-i} \over
\bigl(a_1a_2^{-1} q^{n-i-j};q\bigr)_{j-i}},\nonumber\\
&
{u'}^R_{ij}=
\left[ j\atop i\right]_{q}
{(-1)^{j-i} q^{\left(j-i\atop 2\right)} \bigl(a_1^{-1}b_1^{-1} q^{-(n-i-1)};q\bigr)_{j-i} \over
\bigl(b_1^{-1}b_2 q^{i+j-n};q\bigr)_{j-i}},\nonumber\\
&
{d'}^R_j={\bigl(b_1b_2^{-1} q^{n-2j+1};q\bigr)_{j} \bigl(a_2^{-1} b_1^{-1}q^{-(n-j-1)};q\bigr)_{n-j} \over
\bigl(a_1^{-1}b_2^{-1}q^{-(j-1)};q\bigr)_{j} \bigl(b_1^{-1}b_2 q^{-(n-2j-1)};q\bigr)_{n-j} },\nonumber\\
&
{l'}^R_{ij}=\left[ n-j\atop n-i\right]_{q}
{\bigl(a_2^{-1}b_2^{-1} q^{-(i-1\bigr)};q)_{i-j} \over
\bigl(b_1b_2^{-1} q^{n-2i+1};q\bigr)_{i-j}}.\label{Ito-R}
\end{align}
\end{Definition}

\begin{Definition}
Let $A$ be the $(n+1)$-dimensional square matrix given in the Gauss decomposed forms
\begin{gather*}
A=L_A D_A U_A=U_A'D_A'L_A', \\
L_A=\bigl(l^A_{ij}\bigr)_{0\leq i,j\leq n}, \qquad D_A=\bigl(\delta_{ij}d^A_{j}\bigr)_{0\leq i,j\leq n}, \qquad U_A=\bigl(u^A_{ij}\bigr)_{0\leq i,j\leq n},\\
U_A'=\bigl({u'}^A_{ij}\bigr)_{0\leq i,j\leq n}, \qquad D_A'=\bigl(\delta_{ij}{d'}^A_{j}\bigr)_{0\leq i,j\leq n}, \qquad L_A'=\bigl({l'}^A_{ij}\bigr)_{0\leq i,j\leq n},
\end{gather*}
where
\begin{align}
&
l^A_{ij}=
(-1)^{i-j}q^{\left(n-i\atop 2\right)-\left(n-j\atop 2\right)}
\left[ n-j\atop n-i\right]_{q}
{ \bigl(a_2b_2 q^j;q\bigr)_{i-j} \over
\bigl(\Lambda a_2b_2 q^{2j};q\bigr)_{i-j}},\nonumber\\
&
d^A_j=
a_1^{n-j} a_2^j q^{\left(j\atop 2\right)+\left(n-j\atop 2\right)}
{(\Lambda;q)_{j} \bigl(\Lambda a_2 b_2 q^{2j};q\bigr)_{n-j} \over
\bigl(\Lambda a_2b_2q^{j-1};q\bigr)_{j} \bigl(\Lambda a_1a_2 b_1b_2 q^{n+j-1};q\bigr)_{n-j} },\nonumber\\
&
u^A_{ij}=
\bigl(-\Lambda a_1^{-1}a_2\bigr)^{j-i}
q^{\left(j\atop 2\right)-\left(i\atop 2\right)}
\left[ j\atop i\right]_{q}
{\bigl(a_1b_1 q^{n-j};q\bigr)_{j-i} \over
\bigl(\Lambda a_2b_2 q^{2i};q\bigr)_{j-i}},\nonumber\\
&
{u'}^A_{ij}=
(-\Lambda)^{j-i} q^{\left(n-i\atop 2\right)-\left(n-j\atop 2\right)}
\left[ j\atop i\right]_{q}
{ \bigl(a_1b_1 q^{n-j};q\bigr)_{j-i} \over
\bigl(\Lambda a_1 b_1 q^{2(n-j)};q\bigr)_{j-i}},\nonumber\\
&
{d'}^A_j=
a_1^{n-j} a_2^j
q^{\left(j\atop 2\right)+\left(n-j\atop 2\right)}
{\bigl(\Lambda a_1b_1 q^{2(n-j)};q\bigr)_{j} (\Lambda ;q)_{n-j} \over
\bigl(\Lambda a_1a_2b_1b_2 q^{2n-j-1};q\bigr)_{j} \bigl(\Lambda a_1b_1 q^{n-j-1};q\bigr)_{n-j} },\nonumber\\
&
{l'}^A_{ij}=
(-a_1a_2^{-1})^{i-j}
q^{\left(j\atop 2\right)-\left(i\atop 2\right)}
\left[ n-j\atop n-i\right]_{q}
{\bigl(a_2b_2 q^{j};q\bigr)_{i-j} \over
\bigl(\Lambda a_1b_1 q^{2(n-i)};q\bigr)_{i-j}}.\label{Ito-A}
\end{align}
\end{Definition}

It was shown that a certain system of Jackson integrals satisfies two different types of $q$-KZ equations
described by two matrices $R$ and $A$
(see \cite[Proposition~1.2 (Matsuo) and Theorem~1.7]{MIto}),
which one may regard as a consequence of
the so-called base-fiber duality of the gauge theory.
Hence, the compatibility of the two dual systems
guarantees the commutativity of the pertinent matrices.
\begin{Theorem}\label{thm-TBP}
Let $R$ and $A$ be Ito's matrices given as above.
Set
$D_2=\bigl(\bigl(\Lambda q^{n-1}\bigr)^i \delta_{ij}\bigr)_{i,j=0}^n$.
Then we have $R D_2 A=A R D_2$.
\end{Theorem}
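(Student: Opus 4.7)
My plan is to verify the identity $RD_2A = ARD_2$ entry by entry, exploiting the explicit Gauss decompositions \eqref{Ito-R} and \eqref{Ito-A}. First I would rewrite the identity as $R^{-1}AR = D_2 A D_2^{-1}$, which is the natural shape of a compatibility condition between two commuting difference operators (and in fact matches the definition of $K_0$ in Theorem~\ref{thm:Ito}). To compute $R^{-1}$ without gross combinatorial mess, I would invoke the two different Gauss decompositions $R = L_R D_R U_R = U_R' D_R' L_R'$: the LDU form gives $R^{-1} = U_R^{-1} D_R^{-1} L_R^{-1}$, and the explicit inversion of each of the unit-triangular factors $L_R$, $U_R$ is provided by the Andrews--Bressoud matrix inversion formula (which states that the pairs $\cb{n-j}{n-i}{q^{-1}} (-1)^{i-j}q^{-\binom{i-j}{2}}(x q^{j};q)_{i-j}/(yq^{-(n-2j-1)};q)_{i-j}$ and their dual are mutually inverse). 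Applied to $L_R$ this yields $(L_R^{-1})_{ij}$ as an explicit product of $q$-shifted factorials, and similarly for $U_R^{-1}$.

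Once $R^{-1}$ is so expressed, computing a generic entry of $R^{-1}AR$ becomes a triple sum over $L_R^{-1}$, $D_R^{-1}$, $L_R$, $D_R$, $U_R$ interleaved with the LDU factors of $A$. I would collapse two of the inner sums using the Andrews--Bressoud orthogonality itself (which kills a $\delta$-function) so that the surviving double sum is a standard terminating very-well-poised balanced ${}_{10}W_9$ series. Comparing with the $(i,j)$ entry of $D_2 A D_2^{-1}$, which is simply $(\Lambda q^{n-1})^{i-j} A_{ij}$ times a hypergeometric sum coming from the LDU expansion of $A$, the compatibility reduces to a single nontrivial identity between two ${}_{10}W_9$'s. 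This is precisely Bailey's transformation formula
\[
{}_{10}W_9(a;b,c,d,e,f,g,q^{-N}) = (\text{prefactor}) \cdot {}_{10}W_9(a';b',c',d',e',f',g',q^{-N}),
\]
with the five free parameters in Ito's matrices matched against the seven Bailey parameters (two being used up by the well-poised/balanced constraints).

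The main obstacle, then, is not conceptual but bookkeeping: one has to identify the correct specialization of $(a,b,c,d,e,f,g,N)$ in Bailey's transformation so that both the well-poised condition $b c d e f g q = a^4$ and the balanced condition $a q^{N+1} = b c d e f g$ match Ito's parametrization $(a_1, a_2, b_1, b_2, \Lambda, q, n)$, and to verify termination from the factor $q^{-N}$ arising from an integer shift in $\binom{n-j}{n-i}_q$-type coefficients. I expect the cleanest route is to first establish the $L$--$L$ sector (the purely lower-triangular part of $RD_2A - ARD_2$), which forces a balanced ${}_8W_7$ summation (Jackson's sum), and then upgrade to the full statement, where the off-diagonal mixing promotes $ {}_8W_7$ to ${}_{10}W_9$ through Bailey's transformation. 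Once the specialization is pinned down, termination and balance are straightforward to check, and the identity follows on the nose.
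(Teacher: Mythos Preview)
Your overall strategy coincides with the paper's: both reduce the commutativity to Bailey's transformation for terminating very-well-poised balanced ${}_{10}W_9$ series, with Andrews--Bressoud matrix inversion as the engine for handling the triangular factors. So at the level of ``what identity ultimately does the work'', you have identified the right one.

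Where the paper differs is in the \emph{organization} of the computation, and this matters. You propose to form $R^{-1}AR$ directly from $R^{-1}=U_R^{-1}D_R^{-1}L_R^{-1}$ and the LDU form of $A$; this produces a product with mixed upper--lower factors (e.g., $U_A L_R$) that does not collapse cleanly, and your claim that ``two inner sums collapse by Andrews--Bressoud orthogonality, leaving a ${}_{10}W_9$'' is too optimistic (a ${}_{10}W_9$ is a single sum, and the residual product you would be left with is not of the Bressoud-times-diagonal-times-Bressoud shape required). The paper instead uses \emph{both} Gauss forms $R=L_RD_RU_R=U'_RD'_RL'_R$ and $A=L_AD_AU_A=U'_AD'_AL'_A$ and rearranges $RD_2A=ARD_2$ as
\[
(L_A^{-1}L_R)\,D_R\,(U_RD_2U'_A)
= D_A\,(U_AU'_R)\,D'_R\,(L'_RD_2{L'_A}^{-1})\,{D'_A}^{-1}.
\]
Now every parenthesized factor is a product of two triangular matrices of the \emph{same} triangularity, and each such product is exactly a Bressoud matrix $\mathcal{B}(a,b)$ via (\ref{A-to-B}). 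Consequently each side is, entrywise, already a single ${}_{10}W_9$ (Proposition~\ref{prp-LDU-UDL-1}), with no intermediate multi-sum collapse needed. A second point: in this organization the two resulting ${}_{10}W_9$'s are related by Bailey's transformation applied \emph{twice} (Proposition~\ref{10W9}), not once; your plan of a single Bailey step would not match the required prefactor in~(\ref{final-3}). Finally, your stated ``well-poised condition $bcdefgq=a^4$'' is incorrect; the balancing condition for ${}_{10}W_9(a;b,c,d,e,f,g,h;q,q)$ is $q^2a^3=bcdefgh$ (cf.~(\ref{cond})), and you should verify it holds in Ito's parametrization before invoking Bailey.
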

In this appendix, we give an alternative direct proof of this commutativity.

\subsection{Notations}
We use the standard notations for the basic hypergeometric ${}_{r+1}\phi_r$ series
\begin{align*}
&{}_{r+1}\phi_r\biggl[{a_1,a_2,\dots,a_{r+1}\atop b_1,\dots,b_r};q,z \biggr]=
\sum_{n=0}^\infty
{(a_1;q)_n (a_2;q)_n \cdots (a_{r+1};q)_n \over
(q;q)_n (b_1;q)_n \cdots (b_{r};q)_n }z^n.
\end{align*}
A ${}_{r+1}\phi_r$ series is called balanced (or Saalsch\"utzian) if $b_1b_2\cdots b_r=q a_1a_2\cdots a_{r+1}$ and $z=q$.
We use the shorthand notation ${}_{r+1}W_r$ for the very-well-poised ${}_{r+1}\phi_r$ series
\begin{align*}
{}_{r+1}W_r(a_1;a_4,a_5,\dots,a_{r+1};q,z)
&{}={}_{r+1}\phi_r\left[{a_1,q a_1^{1/2},-q a_1^{1/2},a_4,\dots,a_{r+1}\atop
a_1^{1/2},-a_1^{1/2},qa_1/a_4,\dots,qa_1/a_{r+1}};q,z \right] \\
&{} = \sum_{n=0}^\infty
{(a_1;q)_n (a_4;q)_n \cdots (a_{r+1};q)_n \over
(q;q)_n (q a_1/a_4;q)_n \cdots (q a_1/a_{r+1};q)_n }{1-a_1 q^{2n}\over 1-a_1}z^n.
\end{align*}
As for the detail, see \cite{hypergeometric}.

\subsection{Andrews' and Bressoud's matrix inversion formulas}

Let ${\mathcal A}(a)=({\mathcal A}_{ij}(a))_{i,j=0}^\infty$ be Andrews' infinite-dimensional lower triangular matrix \cite{A},
\begin{align}
{\mathcal A}_{ij}(a)={1\over (q;q)_{i-j}(aq;q)_{i+j}}.\label{Andrews-1}
\end{align}
Then the inverse matrix ${\mathcal A}^{-1}(a)=\bigl({\mathcal A}^{-1}_{ij}(a)\bigr)_{i,j=0}^\infty$ is given by
\begin{align}
{\mathcal A}^{-1}_{ij}(a)=
{ \bigl(1-a q^{2i} \bigr)(a;q)_{i+j} (-1)^{i-j} q^{({i-j\atop 2})}
\over(1-a)(q;q)_{i-j}}.\label{Andrews-2}
\end{align}

Let ${\mathcal D}(a,b)=({\mathcal D}_{ij}(a,b))_{i,j=0}^\infty$ be Bressoud's infinite-dimensional lower triangular matrix \cite{B}
in the original form,
\begin{align}
{\mathcal D}_{ij}(a,b)={\bigl(1-a q^{2j}\bigr) (b;q)_{i+j} \bigl(ba^{-1};q\bigr)_{i-j} \bigl(ba^{-1}\bigr)^j
\over (1-a) (a q;q)_{i+j} (q;q)_{i-j}}.
\end{align}
Then the inverse matrix is given by ${\mathcal D}^{-1}(a,b)=({\mathcal D}_{ij}(b,a))_{i,j=0}^\infty$.

Closely following the line in \cite{AAB},
one can state the interrelations between Andrews' and Bressoud's matrices
in the following manner, aiming at (\ref{Cor1}) and (\ref{Cor2}) in Corollary~\ref{Cor:Bressoud} below.
Consider the gauge transformation ${\mathcal B}(a,b)=({\mathcal B}_{ij}(a,b))_{i,j=0}^\infty$ of Bressoud's matrix
\begin{align}
{\mathcal B}_{ij}(a,b)&=
{(a q;q)_{2i} \over (a ;q)_{2i} }b^i \cdot {\mathcal D}_{ij}(b,a)\cdot {(b;q)_{2j} \over (bq ;q)_{2j} }a^{-j}=
{\bigl(1-a q^{2i}\bigr) (a;q)_{i+j} (a/b;q)_{i-j}
\over (1-a) (b q;q)_{i+j} (q;q)_{i-j}} b^{i-j}. \label{Bressoud:dfn}
\end{align}

\begin{Remark}
Our choice of the gauge transformation is different from the one in \cite{AAB}.
\end{Remark}

\begin{Proposition}[\cite{AAB}]
We have the initial condition ${\mathcal B}(a,a)=(\delta_{ij})_{i,j=0}^\infty$ and the transition property
\begin{align}
{\mathcal B}(a,b){\mathcal B}(b,c)={\mathcal B}(a,c),\label{Bressoud}
\end{align}
which in the particular case $a=c$ means Bressoud's matrix inversion ${\mathcal B}^{-1}(a,b)={\mathcal B}(b,a)$,
or ${\mathcal D}^{-1}(a,b)={\mathcal D}(b,a)$.
\end{Proposition}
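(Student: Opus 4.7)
The plan is to verify the two claims of the proposition separately.

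For the initial condition $\mathcal{B}(a,a) = (\delta_{ij})_{i,j=0}^\infty$, I would simply specialize $b = a$ in \eqref{Bressoud:dfn}. Then $(a/b;q)_{i-j} = (1;q)_{i-j}$ vanishes for $i > j$ while $(q;q)_{i-j}$ vanishes for $i < j$, so only diagonal entries survive. On the diagonal, using $(a;q)_{2i} = (1-a)(aq;q)_{2i-1}$ and $(aq;q)_{2i} = (aq;q)_{2i-1}(1-aq^{2i})$, the ratio $(a;q)_{2i}/(aq;q)_{2i} = (1-a)/(1-aq^{2i})$ cancels precisely the prefactor $(1-aq^{2i})/(1-a)$, yielding $\mathcal{B}_{ii}(a,a) = 1$.

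For the transition property $\mathcal{B}(a,b)\mathcal{B}(b,c) = \mathcal{B}(a,c)$, which in particular implies the matrix inversion $\mathcal{B}^{-1}(a,b) = \mathcal{B}(b,a)$, I would expand the $(i,k)$-entry as the finite sum $\sum_{j=k}^{i}\mathcal{B}_{ij}(a,b)\mathcal{B}_{jk}(b,c)$, change the summation index to $m = j-k$, and apply the splittings $(x;q)_{n+m} = (x;q)_n(xq^n;q)_m$ together with the reflection $(x;q)_n = (-x)^n q^{\binom{n}{2}}(x^{-1}q^{1-n};q)_n$ to factor every $m$-independent quantity out of the sum. After this reorganization, the remaining $m$-sum carries a very-well-poised structure with base $bq^{2k}$, arising from the combination of $(1 - bq^{2k+2m})$ with the factor $(bq^{2k};q)_m$ extracted from $(b;q)_{2k+m}$, together with a terminator $(q^{-(i-k)};q)_m$ obtained by inverting $1/(q;q)_{i-k-m}$. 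This is then a terminating, very-well-poised, balanced ${}_6\phi_5$ series, summable in closed form by Jackson's identity, and the closed-form value reproduces the prefactor needed to equal $\mathcal{B}_{ik}(a,c)$.

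The hardest step is the bookkeeping: one must carefully track the powers of $b$, $c$, and $q$ that arise from the reflection identities applied to $(a/b;q)_{i-k-m}$ and $(q;q)_{i-k-m}$, and verify that the free parameters appearing beside the very-well-poised base $bq^{2k}$ and the terminator $q^{-(i-k)}$ — naturally identified with $aq^{i+k}$, $b/c$, and a combination involving $a/b$ and $q^{k-i}$ — satisfy Jackson's balancing condition. Once the correspondence is set, the closed-form evaluation of the ${}_6\phi_5$ is a direct computation matching $\mathcal{B}_{ik}(a,c)$ up to an overall scalar which cancels against the extracted $m$-independent prefactors, completing the proof.
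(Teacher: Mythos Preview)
Your proposal is correct and follows essentially the same route as the paper: both reduce the matrix product $\sum_{j}\mathcal{B}_{ij}(a,b)\mathcal{B}_{jk}(b,c)$ to a terminating very-well-poised ${}_6W_5$ with base $bq^{2k}$ and parameters $aq^{i+k}$, $b/c$, $q^{k-i}$, then evaluate it in closed form. The paper achieves the identification more economically by dividing through by $\mathcal{B}_{ik}(a,b)\mathcal{B}_{kk}(b,c)$ rather than by your reflection manipulations, but the substance is identical; one small correction is that the summation you invoke is the ${}_6W_5$ sum \cite[formula~(2.4.2)]{hypergeometric}, not Jackson's identity (which refers to the balanced ${}_8W_7$).
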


For the readers' convenience, we reproduce the proof in \cite{AAB}.
\begin{proof} Let $i\geq j$. We need to calculate
$\sum_{k=j}^i {\mathcal B}_{ik}(a,b){\mathcal B}_{kj}(b,c)$.
In the following summation, by using the definition
 (\ref{Bressoud:dfn}), one finds the
summable very-well-poised ${}_6W_5$ series \cite[formula~(2.4.2)]{hypergeometric} as
\begin{align*}
\sum_{l=0}^{i-j}
{ {\mathcal B}_{i,j+l}(a,b){\mathcal B}_{j+l,j}(b,c) \over
{\mathcal B}_{ij}(a,b){\mathcal B}_{jj}(b,c)
}&{}=
{}_6W_5\bigl(b q^{2j}; a q^{i+j}, b/c,q^{-i+j};q,cq/a \bigr)\\
&{}=
{\bigl(a/c,b q^{2j+1};q\bigr)_{i-j} \over \bigl(a/b,c q^{2j+1};q\bigr)_{i-j} } (c/b)^{i-j}.
\end{align*}
Then simplification of the factors shows that
\begin{align*}
\mathcal{B}_{ij}(a,b){\mathcal B}_{jj}(b,c)
{\bigl(a/c,b q^{2j+1};q\bigr)_{i-j} \over \bigl(a/b,c q^{2j+1};q\bigr)_{i-j} } (c/b)^{i-j}=\mathcal{B}_{ij}(a,c).
\end{align*}
Hence, we have (\ref{Bressoud}).
\end{proof}

\begin{Corollary}\label{Cor:Bressoud}
By taking the limits $b\rightarrow 0$ or $\infty$ in \eqref{Bressoud}, we have
\begin{align}
&\sum_{k} a^i
{\mathcal A}^{-1}_{ik}(a) a^{-k} c^k {\mathcal A}_{kj}(c) c^{-j}=
{\mathcal B}_{ij}(a,c), \label{Cor1}\\
&\sum_{k} q^{-i^2} {\mathcal A}^{-1}_{ik}(a) {\mathcal A}_{kj}(c) q^{j^2} =
{\mathcal B}_{ij}(a,c), \label{Cor2}
\end{align}
which reproduce Andrews' matrix inversion by setting $a=c$.
\end{Corollary}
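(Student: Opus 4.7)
The plan is to derive both \eqref{Cor1} and \eqref{Cor2} as entrywise limits of the transition law $\mathcal{B}(a,b)\mathcal{B}(b,c)=\mathcal{B}(a,c)$ already established in \eqref{Bressoud}, upon sending $b\to 0$ or $b\to\infty$ respectively. Because the matrix $\mathcal{B}(a,b)$ is lower triangular, the $ij$-entry of $\mathcal{B}(a,b)\mathcal{B}(b,c)$ is the \emph{finite} sum $\sum_{k=j}^{i}\mathcal{B}_{ik}(a,b)\mathcal{B}_{kj}(b,c)$, so either limit may be taken term by term without any convergence issue.

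For \eqref{Cor1} I send $b\to 0$. On the first factor $(a/b;q)_{i-k}\sim(-a/b)^{i-k}q^{\binom{i-k}{2}}$ and $(bq;q)_{i+k}\to 1$, so the explicit $b^{i-k}$ in \eqref{Bressoud:dfn} is cancelled and the surviving expression is recognised via \eqref{Andrews-2} as $a^{i}\mathcal{A}^{-1}_{ik}(a)\,a^{-k}$. On the second factor every $b$-dependent Pochhammer, together with the ratio $(1-bq^{2k})/(1-b)$, tends to $1$, leaving $\mathcal{B}_{kj}(b,c)\to c^{k}\mathcal{A}_{kj}(c)\,c^{-j}$; summing over $k$ then gives \eqref{Cor1}.

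For \eqref{Cor2} I send $b\to\infty$. Now $(a/b;q)_{i-k}\to 1$ while $(bq;q)_{i+k}\sim(-b)^{i+k}q^{(i+k)(i+k+1)/2}$, so the first factor scales like $b^{-2k}$ with coefficient proportional to $\mathcal{A}^{-1}_{ik}(a)$; on the other side $(b;q)_{k+j}\sim(-b)^{k+j}q^{(k+j)(k+j-1)/2}$, $(b/c;q)_{k-j}\sim(-b/c)^{k-j}q^{(k-j)(k-j-1)/2}$, and $(1-bq^{2k})/(1-b)\to q^{2k}$, producing a compensating $b^{+2k}$ with coefficient proportional to $\mathcal{A}_{kj}(c)$. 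The powers of $b$ cancel, the signs collapse via $(-1)^{i+k}=(-1)^{i-k}$, and the residual exponent of $q$ is $k^{2}+j^{2}+k-\tfrac{1}{2}(i+k)(i+k+1)$, which equals $\binom{i-k}{2}-i^{2}+j^{2}$ by direct expansion. Matching with \eqref{Andrews-1}--\eqref{Andrews-2} identifies the limit of $\mathcal{B}_{ik}(a,b)\mathcal{B}_{kj}(b,c)$ with $q^{-i^{2}}\mathcal{A}^{-1}_{ik}(a)\mathcal{A}_{kj}(c)q^{j^{2}}$, and summing over $k$ yields \eqref{Cor2}. The specialisation $a=c$ in either identity then recovers Andrews' inversion, since $\mathcal{B}(a,a)=I$ is immediate from \eqref{Bressoud:dfn}. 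The whole calculation is entirely mechanical; the only point requiring care is the bookkeeping of signs and of the quadratic $q$-exponent identity above, so there is no conceptual obstacle beyond what is already handled in the proof of \eqref{Bressoud}.
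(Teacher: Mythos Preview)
Your argument is correct and follows exactly the paper's approach: take the transition identity $\mathcal{B}(a,b)\mathcal{B}(b,c)=\mathcal{B}(a,c)$ and let $b\to 0$ (resp.\ $b\to\infty$) termwise in the finite sum, using the standard asymptotics of $q$-Pochhammer symbols to identify each factor with the corresponding Andrews matrix entry. The paper's proof records the same four limits $\lim_{b\to 0}\mathcal{B}_{ik}(a,b)=a^{i-k}\mathcal{A}^{-1}_{ik}(a)$, $\lim_{b\to 0}\mathcal{B}_{kj}(b,c)=c^{k-j}\mathcal{A}_{kj}(c)$, $\lim_{b\to\infty}b^{2k}\mathcal{B}_{ik}(a,b)=q^{-i^2-k(k+1)}\mathcal{A}^{-1}_{ik}(a)$, $\lim_{b\to\infty}b^{-2k}\mathcal{B}_{kj}(b,c)=q^{k(k+1)+j^2}\mathcal{A}_{kj}(c)$, which is precisely what your detailed bookkeeping reproduces.
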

\begin{proof}
It follows from
\begin{alignat*}{3}
&\lim_{b\rightarrow 0} {\mathcal B}_{ik}(a,b)=a^{i-k}{\mathcal A}^{-1}_{ik}(a) ,
&&
\lim_{b\rightarrow 0} {\mathcal B}_{kj}(b,c)=c^{k-j}{\mathcal A}_{kj}(c) ,&\\
&\lim_{b\rightarrow \infty} b^{2k}{\mathcal B}_{ik}(a,b)=q^{-i^2-k(k+1)}{\mathcal A}^{-1}_{ik}(a) , \qquad
&&
\lim_{b\rightarrow \infty} b^{-2k} {\mathcal B}_{kj}(b,c)=q^{k(k+1)+j^2}{\mathcal A}_{kj}(c) .&
\tag*{\qed}
\end{alignat*}
\renewcommand{\qed}{}
\end{proof}

\subsection[Expressions for L\_X, U\_X, U'\_X, L'\_X (X=R,A) in terms of Andrews' triangular matrices A\^{}\{pm 1\}(a)]{Expressions for $\boldsymbol{L_X}$, $\boldsymbol{U_X}$, $\boldsymbol{U'_X}$, $\boldsymbol{L'_X}$ ($\boldsymbol{X=R,A}$) in terms \\ of Andrews' triangular matrices $\boldsymbol{\mathcal{A}^{\pm 1}(a)}$}

\begin{Definition}
Set
\begin{align}
\alpha=1/a_2b_2,\qquad \beta=1/a_1b_1,\qquad z=1/a_1b_2,\qquad w=\Lambda q^{n-1}.\label{parameters}
\end{align}
\end{Definition}

\begin{Definition}
Define the gauge factors $g^L_i$, $g^U_i$, $g'^L_i$, $g'^U_i$
and $h^R_i$, $h^A_i$, $h'^R_i$, $h'^A_i$ by
\begin{alignat}{3}
&g^L_i=q^i (1/\alpha;q)_i(q^{-n};q)_i,\qquad && g^U_i=(z/\alpha\beta)^i\bigl(q^{-n+1}\beta ;q\bigr)_i(q;q)_i, &\nonumber\\
&g'^L_i= q^i (z/\alpha\beta)^{-i} (1/\alpha;q)_i(q^{-n};q)_i,\qquad&& g'^U_i=\bigl(q^{-n+1}\beta ;q\bigr)_i(q;q)_i,&\nonumber\\
&h^R_i=\bigl(q^{-n+1} z/\alpha;q\bigr)_{2i},\qquad&& h^A_i=\bigl(q^{-n+1} w/\alpha;q\bigr)_{2i},&\nonumber\\
&h'^R_i=\bigl(q^{-n+1} \beta/z;q;q\bigr)_{2i},\qquad&& h'^A_i= \bigl(q^{-n+1} \beta/w;q\bigr)_{2i}.&\label{gauge}
\end{alignat}
\end{Definition}

\begin{Proposition}
We have
\begin{alignat}{3}
&
l^R_{ij}=g^L_i {\mathcal A}_{ij}(q^{-n} z/\alpha)
{h^R_j \over g^L_j},
\qquad &&
l^A_{ij}= g^L_i {\mathcal A}_{ij}(q^{-n} w/\alpha)
{ h^A_j\over
 g^L_j},&\label{lr-prp-1} \\
&
u^R_{ij}=
{h^R_i\over g^U_i}
{\mathcal A}_{ji}(q^{-n} z/\alpha)
g^U_j,
\qquad&&
u^A_{ij}=
{h^A_i\over
w^{i}g^U_i}
{\mathcal A}_{ji}(q^{-n} w/\alpha)
w^{j}g^U_j,&\label{lr-prp-2}\\
&{u'}^R_{ij}=
{1\over g'^U_i}
 {\mathcal A}^{-1}_{ji}(q^{-n}\beta/z)
 {g'^U_j\over
 h'^R_j },
 \qquad &&
 {u'}^A_{ij}=
 {1\over g'^U_i}
 {\mathcal A}^{-1}_{ji}(q^{-n}\beta/w)
 {g'^U_j\over
h'^A_j },&\label{lr-prp-3}\\
 &{l'}^R_{ij}=
 {g'^L_i\over
h'^R_i }
 {\mathcal A}^{-1}_{ij}(q^{-n}\beta/z)
 {1\over g'^L_j },
\qquad&&
 {l'}^A_{ij}=
 { g'^L_i \over
w^{i} h'^A_i }
 {\mathcal A}^{-1}_{ij}(q^{-n}\beta/w)
 {w^{j}\over g'^L_j}.&\label{lr-prp-4}
\end{alignat}
\end{Proposition}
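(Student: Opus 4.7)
The plan is to verify each of the eight identities \eqref{lr-prp-1}--\eqref{lr-prp-4} by direct manipulation of the $q$-Pochhammer symbols appearing in the explicit formulas \eqref{Ito-R}, \eqref{Ito-A} for the Gauss factors of $R$ and $A$, matching them against the right-hand sides built from Andrews' matrix entries \eqref{Andrews-1}, \eqref{Andrews-2} and the gauge factors \eqref{gauge}. Since both sides are explicit rational products of $q$-Pochhammer symbols, each identity reduces to an equality of such rational products, which can be checked mechanically.

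First I would rewrite all $q^{-1}$-binomial coefficients appearing in $l^R_{ij}$, $u^R_{ij}$, ${u'}^R_{ij}$, ${l'}^R_{ij}$ via $\cb{n}{k}{q^{-1}} = q^{-k(n-k)}\cb{n}{k}{q}$, and expand each $q$-binomial coefficient as $(q;q)_n/((q;q)_k(q;q)_{n-k})$. Next, applying the reflection formula $(a;q)_n = (-a)^n q^{n(n-1)/2}(a^{-1}q^{1-n};q)_n$ from \eqref{a_n-inversion} where appropriate, I would bring every $q$-Pochhammer symbol into a canonical base-$q$ form. Then, using the concatenation $(a;q)_{i+j} = (a;q)_j(aq^j;q)_i$ together with the parameter substitutions $\alpha=1/a_2b_2$, $\beta=1/a_1b_1$, $z=1/a_1b_2$, $w=\Lambda q^{n-1}$ from \eqref{parameters} (which in particular yield $a_1^{-1}a_2 = z/\alpha$ and $b_1^{-1}b_2 = \beta/z$), the right-hand side of each identity splits into three factors depending only on $i$, on $i-j$, and on $j$, respectively, which match the similarly factored left-hand side term by term.

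The $A$-identities differ from the $R$-ones by two systematic modifications. First, the bases $a_1^{-1}a_2 q^{-(n-2j-1)}$ and $b_1^{-1}b_2 q^{i+j-n}$ appearing in $R$ are replaced in $A$ by $\Lambda a_2 b_2 q^{2j}$ and $\Lambda a_1 b_1 q^{2(n-j)}$, which in the variables of \eqref{parameters} amounts to the substitution $z \mapsto w$ in the base of the Andrews matrix, explaining the shift $\mathcal{A}_{ij}(q^{-n}z/\alpha) \mapsto \mathcal{A}_{ij}(q^{-n}w/\alpha)$ in going from $l^R_{ij}$ to $l^A_{ij}$, and likewise for the three other pairs. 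Second, the entries $u^A_{ij}$, $d^A_j$, ${u'}^A_{ij}$, ${l'}^A_{ij}$ carry additional powers of $-\Lambda a_1^{-1}a_2$ or $-a_1 a_2^{-1}$, which are precisely absorbed by the extra factors $w^i$, $w^j$ on the right-hand sides of \eqref{lr-prp-2} and \eqref{lr-prp-4}. The primed identities \eqref{lr-prp-3}, \eqref{lr-prp-4} follow the same pattern but use the inverse Andrews entry \eqref{Andrews-2}; its characteristic prefactor $(1-aq^{2i})(a;q)_{i+j}(-1)^{i-j}q^{(i-j)(i-j-1)/2}/((1-a)(q;q)_{i-j})$ matches the shape of ${u'}^X_{ij}$ and ${l'}^X_{ij}$ after the reflection formula has been applied.

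The whole argument involves no conceptual obstacle; the main difficulty is purely the combinatorial bookkeeping of powers of $q$, signs, and the distribution of the parameters $a_1, a_2, b_1, b_2, \Lambda$ across the eight gauge factors $g^L_i$, $g^U_i$, $g'^L_i$, $g'^U_i$, $h^R_i$, $h^A_i$, $h'^R_i$, $h'^A_i$. The fact that such a clean factorization exists is itself a shadow of the Andrews--Bressoud matrix-inversion structure recalled in Sections C.2 and C.3 above: the Gauss factors of $R$ and $A$ are essentially Bressoud-type gauges of Andrews' triangular matrix, which is why the formulas of the Proposition take the form displayed.
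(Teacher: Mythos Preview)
Your proposal is correct and takes essentially the same approach as the paper: the paper's own proof reads in its entirety ``Straightforward calculation by using the definitions in (\ref{Ito-R}) and (\ref{Ito-A}), the parametrization~(\ref{parameters}), the gauge factors (\ref{gauge}), and Andrews' matrices (\ref{Andrews-1}) and (\ref{Andrews-2}),'' which is exactly the direct $q$-Pochhammer verification you outline. Your write-up simply makes explicit the bookkeeping steps (reflection, concatenation, the identifications $a_1^{-1}a_2=z/\alpha$, $b_1^{-1}b_2=\beta/z$, and the $z\mapsto w$ substitution for the $A$-entries) that the paper leaves to the reader.
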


\begin{proof}
Straightforward calculation by using the definitions in (\ref{Ito-R}) and (\ref{Ito-A}),
the parametrization~(\ref{parameters}),
the gauge factors (\ref{gauge}), and
Andrews' matrices (\ref{Andrews-1}) and (\ref{Andrews-2}).
\end{proof}

\subsection[Matrices R and A and Watson's transformation formula for terminating very-well-poised \_8phi\_7 series]{Matrices $\boldsymbol{R}$ and $\boldsymbol{A}$ and Watson's transformation formula \\ for terminating very-well-poised $\boldsymbol{{}_8\phi_7}$ series}
\begin{Proposition}
We have
\begin{gather}
 d^R_j =
{( \alpha/z ;q)_n\over (1/z ;q)_n}\cdot
q^{j^2} (q^{n}\alpha )^{-j}
{(z/\alpha\beta;q)_j (q z/\alpha ;q)_j (q^{-n} z/\alpha ;q)_j \bigl(q^{-n+1} z;q\bigr)_j \over
(q^{-n} z/\alpha ;q)_{2j} \bigl(q^{-n+1} z/\alpha ;q\bigr)_{2j}},\label{prp-d-1}\\
 d'^R_{j} =
\alpha^{n} {(z/\alpha\beta ;q)_n\over (z/\beta ;q)_n}\cdot
q^{-j^2} (q^{n}/\beta )^{j}
{
(q^{-n}\beta/z;q)_{2j}\bigl(q^{-n+1}\beta/z ;q\bigr)_{2j} \over
(1/z;q)_j (q\beta/z ;q)_j (q^{-n}\beta/z;q)_j \bigl(q^{-n+1}\alpha \beta/z ;q\bigr)_j },\label{prp-d-2}\\
 d^A_j =
(-a_1w/\alpha)^n {(1/w;q)_n\over (w/\alpha\beta;q)_n}
 \cdot
{( \alpha/w ;q)_n\over (1/w ;q)_n}\cdot
z^j q^{j^2} (q^{n}\alpha )^{-j} \nonumber\\ \hphantom{d^A_j =}{}
\times {(w/\alpha\beta;q)_j (q w/\alpha ;q)_j (q^{-n} w/\alpha ;q)_j \bigl(q^{-n+1} w;q\bigr)_j \over
(q^{-n} w/\alpha ;q)_{2j} \bigl(q^{-n+1} w/\alpha ;q\bigr)_{2j}}, \label{prp-d-3}\\
 d'^A_{j} =
(-a_1w/\alpha)^n {(1/w;q)_n\over (w/\alpha\beta;q)_n}
 \cdot
\alpha^{n} {(w/\alpha\beta ;q)_n\over (w/\beta ;q)_n}\cdot
z^j q^{-j^2} (q^{n}/\beta )^{j}\nonumber\\ \hphantom{d'^A_{j} =}{}
\times {(q^{-n}\beta/w;q)_{2j}\bigl(q^{-n+1}\beta/w ;q\bigr)_{2j} \over
(1/w;q)_j (q\beta/w ;q)_j (q^{-n}\beta/w;q)_j \bigl(q^{-n+1}\alpha \beta/w ;q\bigr)_j }.\label{prp-d-4}
\end{gather}
\end{Proposition}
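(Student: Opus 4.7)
My plan is to prove the four identities (\ref{prp-d-1})--(\ref{prp-d-4}) by direct computation, treating each as a rewriting exercise on the original definitions of $d^R_j, d'^R_j, d^A_j, d'^A_j$ in (\ref{Ito-R}), (\ref{Ito-A}) under the parameter dictionary (\ref{parameters}). First I would record, once and for all, the translations of the basic monomials:
\[
a_1 a_2^{-1} = \alpha/z, \qquad a_2 b_1 = z/(\alpha\beta), \qquad a_1 b_2 = 1/z, \qquad a_1^{-1} a_2 = z/\alpha,
\]
and similar expressions involving $w=\Lambda q^{n-1}$. This converts the original $d^R_j$ into
\[
d^R_j = \frac{(\alpha q^{-j}/z;q)_{n-j}\,(z/\alpha\beta;q)_j}{(1/z;q)_{n-j}\,(zq^{j-n}/\alpha;q)_j},
\]
and similarly transcribes the other three expressions.

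The main technical step is to pull the $j$-dependent shift out of the $q$-shifted factorials of the form $(Aq^{-j};q)_{n-j}$. For this I would use the splitting $(a;q)_{k+\ell}=(a;q)_k(aq^k;q)_\ell$ twice, to get the pair
\[
(Aq^{-j};q)_n = (Aq^{-j};q)_{n-j}\,(Aq^{n-2j};q)_j \qquad\text{and}\qquad (Aq^{-j};q)_n = (Aq^{-j};q)_j\,(A;q)_{n-j},
\]
which together give
\[
(Aq^{-j};q)_{n-j} = \frac{(A;q)_n \,(Aq^{-j};q)_j}{(Aq^{n-2j};q)_{2j}}.
\]
Combined with the inversion formula (\ref{a_n-inversion}) applied to $(Aq^{n-2j};q)_{2j}$ to flip it into $(q^{1-n}/A;q)_{2j}$ times an explicit monomial in $A$ and $q$, this exposes precisely the factor $(q^{-n+1}z/\alpha;q)_{2j}$ appearing in (\ref{prp-d-1}), and the $j$-independent ratio $(\alpha/z;q)_n/(1/z;q)_n$ drops out as the prefactor. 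A parallel rearrangement of $(z q^{j-n}/\alpha;q)_j$ from the denominator, again via (\ref{a_n-inversion}), produces $(q^{-n}z/\alpha;q)_j$, $(qz/\alpha;q)_j$ and $(q^{-n}z/\alpha;q)_{2j}$ after collecting. Tracking the resulting powers of $q$ and $\alpha$ yields the scalar $q^{j^2}(q^n\alpha)^{-j}$.

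For $d'^R_j$, I would carry out the mirror manipulation with the roles of $\alpha/z$ and $\beta/z$ swapped; the only additional input is the prefactor $\alpha^n(z/\alpha\beta;q)_n/(z/\beta;q)_n$, which appears once one applies (\ref{a_n-inversion}) to both $(a_2^{-1}b_1^{-1}q^{-(n-j-1)};q)_{n-j}$ and $(b_1^{-1}b_2 q^{-(n-2j-1)};q)_{n-j}$ to align the base with $q^{-n+1}\beta/z$. For $d^A_j$ and $d'^A_j$, the same procedure goes through with the substitution $z \mapsto w$ in the arguments of the factorials, but one must separately rewrite the prefactor $a_1^{n-j}a_2^j q^{\binom{j}{2}+\binom{n-j}{2}}$. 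Using $a_1^{n-j}a_2^j=a_1^n (a_2/a_1)^j = a_1^n (z/\alpha)^j$ and applying (\ref{a_n-inversion}) to the $(\Lambda;q)_j$, $(\Lambda a_2b_2 q^{j-1};q)_j$, $(\Lambda a_2b_2 q^{2j};q)_{n-j}$ etc.\ to convert them into the parameters $1/w$ and $w/\alpha\beta$ produces the $j$-independent scalar $(-a_1w/\alpha)^n (1/w;q)_n/(w/\alpha\beta;q)_n$ displayed in (\ref{prp-d-3}) and (\ref{prp-d-4}).

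No deep identity is required; the only obstacle is bookkeeping. In particular, keeping the signs, the powers of $q$ from each use of (\ref{a_n-inversion}), and the $a_1^n$ versus $(-a_1w/\alpha)^n$ normalizations consistent is the most error-prone step. I would reduce that risk by introducing intermediate ``normal form'' expressions (all bases of the type $q^{-n+*}\cdot(\alpha/z)^{\pm 1}$ or $q^{-n+*}\cdot (\beta/z)^{\pm 1}$) before matching against the right-hand sides.
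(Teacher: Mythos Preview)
Your approach is correct and matches the paper's: the proof there simply states ``Straightforward calculation by using the definitions in (\ref{Ito-R}) and (\ref{Ito-A}), and the parametrization (\ref{parameters}),'' and what you describe is exactly that calculation, with the splitting and inversion formulas (\ref{a_n-inversion}) being the natural tools to pull out the $j$-independent prefactors and assemble the $(q^{-n+*}z/\alpha;q)_{2j}$-type factors.
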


\begin{proof}
Straightforward calculation by using the definitions in (\ref{Ito-R}) and (\ref{Ito-A}),
and the para\-metri\-za\-tion~(\ref{parameters}).
\end{proof}

\begin{Definition}
Set
\begin{align}
\mathsf{R}^n_{ij}(\alpha,\beta|z)&{}=
\beta^{-i}
{(q;q)_n (\alpha/z;q)_{n-j} (1/\beta ;q)_{n-i} (\beta/z;q)_i \over
(q;q)_i(q;q)_{n-i} (1/z;q)_n(1/\beta;q)_{n-j}}\nonumber\\
&\quad{}\times
{}_4 \phi_3 \biggl[{ q^{-i}, q^{-n+j},z/\alpha\beta, q^{-n+1}z \atop
q^{-n},q^{-i+1}z/\beta ,q^{-n+j+1}z/\alpha};q,q \biggr].
\end{align}
\end{Definition}

\begin{Proposition}\label{prp-R-A}
We have
\begin{align}
&R_{ij}=\mathsf{R}^n_{ij}(\alpha,\beta|z),\label{R-4phi3}\\
&A_{ij}=
(-a_1w/\alpha)^n {(1/w;q)_n\over (w/\alpha\beta;q)_n}\cdot \mathsf{R}^n_{ij}(\alpha,\beta|w) \cdot z^j.\label{A-4phi3}
\end{align}
\end{Proposition}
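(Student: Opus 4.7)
The plan is to establish \eqref{R-4phi3} by a direct computation from the Gauss decomposition, and then to deduce \eqref{A-4phi3} from it via elementary substitution identities.

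For \eqref{R-4phi3}, substituting \eqref{lr-prp-1}, \eqref{lr-prp-2} and \eqref{prp-d-1} into $R_{ij} = \sum_k l^R_{ik} d^R_k u^R_{kj}$ yields
\[
R_{ij} \;=\; g^L_i\,g^U_j \sum_{k=0}^{\min(i,j)} \frac{(h^R_k)^2\,d^R_k}{g^L_k\,g^U_k}\, \mathcal{A}_{ik}(q^{-n}z/\alpha)\, \mathcal{A}_{jk}(q^{-n}z/\alpha).
\]
The $k$-dependence is isolated via $(a;q)_{r+k} = (a;q)_r (aq^r;q)_k$ (for $r\in\{i,j\}$ and $a = q^{-n+1}z/\alpha$) and $(q;q)_{r-k} = (q;q)_r\,(-1)^k q^{-k(r-k+1)-\binom{k}{2}}/(q^{-r};q)_k$. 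One factor of $(q^{-n+1}z/\alpha;q)_{2k}$ in $(h^R_k)^2$ cancels the same factor sitting in the denominator of $d^R_k$, while the surviving copy splits as $(q^{-n+1}z/\alpha;q)_k(q^{-n+k+1}z/\alpha;q)_k$; combining the latter with $(aq^i;q)_k(aq^j;q)_k$ from the Andrews matrices produces the denominator parameter $(q^{-n+j+1}z/\alpha;q)_k$. Collecting the remaining $k$-dependent factors, the summation becomes the ${}_4\phi_3$ series with parameters $\bigl(q^{-i},q^{-n+j},z/\alpha\beta,q^{-n+1}z;\,q^{-n},q^{-i+1}z/\beta,q^{-n+j+1}z/\alpha\bigr)$ at $q$, which is exactly the one in the definition of $\mathsf{R}^n_{ij}(\alpha,\beta|z)$. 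The $k$-independent outer factor $g^L_i g^U_j\cdot(\alpha/z;q)_n/(1/z;q)_n$ is then reorganized via $(a;q)_n = (a;q)_{n-j}(aq^{n-j};q)_j$ and the inversion $(a;q)_m = (-a)^m q^{\binom{m}{2}}(q^{1-m}/a;q)_m$ to recover the stated prefactor.

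For \eqref{A-4phi3}, direct inspection of \eqref{Ito-R} and \eqref{Ito-A} under the parametrization \eqref{parameters} gives the three substitution identities
\begin{align*}
l^A_{ij} \;&=\; l^R_{ij}\big|_{z\to w},\\
u^A_{kj} \;&=\; z^{j-k}\,u^R_{kj}\big|_{z\to w},\\
d^A_k \;&=\; (-a_1 w/\alpha)^n\,\frac{(1/w;q)_n}{(w/\alpha\beta;q)_n}\,z^k\,d^R_k\big|_{z\to w},
\end{align*}
where $|_{z\to w}$ denotes literal substitution in the direct formulas. The middle identity is the least immediate: after rewriting the denominator of $u^R_{kj}$ via the inversion $(q^{n-k-j}\alpha/z;q)_{j-k} = (-q^{n-k-j}\alpha/z)^{j-k}q^{\binom{j-k}{2}}(q^{2k+1-n}z/\alpha;q)_{j-k}$ and using $\cb{j}{k}{q}/\cb{j}{k}{q^{-1}} = q^{k(j-k)}$, the resulting exponent of $q$ collapses to $0$ by elementary algebra. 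Multiplying these three identities inside $A_{ij} = \sum_k l^A_{ik} d^A_k u^A_{kj}$ and factoring $z^j$ out of the $k$-sum yields
\[
A_{ij} \;=\; (-a_1 w/\alpha)^n\,\frac{(1/w;q)_n}{(w/\alpha\beta;q)_n}\,z^j\, R_{ij}\big|_{z\to w} \;=\; (-a_1 w/\alpha)^n\,\frac{(1/w;q)_n}{(w/\alpha\beta;q)_n}\,\mathsf{R}^n_{ij}(\alpha,\beta|w)\,z^j,
\]
where the last equality uses \eqref{R-4phi3}.

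The main obstacle is the first step: verifying in detail that the multitude of $q$-Pochhammer factors collected from $g^L_k$, $g^U_k$, $(h^R_k)^2$, $d^R_k$, and the two Andrews matrices combine precisely into the four numerator and three denominator parameters of the ${}_4\phi_3$ in $\mathsf{R}^n_{ij}(\alpha,\beta|z)$. This calculation is mechanical but lengthy and error-prone, and deserves its own careful write-up.
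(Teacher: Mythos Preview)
Your treatment of \eqref{A-4phi3} is correct and matches the paper's approach: once \eqref{R-4phi3} is known, the three substitution identities for $l^A$, $d^A$, $u^A$ in terms of their $R$-counterparts under $z\to w$ (which follow immediately from \eqref{lr-prp-1}, \eqref{lr-prp-2}, \eqref{prp-d-1}, \eqref{prp-d-3}) give \eqref{A-4phi3} at once.

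The first part, however, has a genuine gap. The summand you assemble from $g^L_k$, $g^U_k$, $(h^R_k)^2$, $d^R_k$ and the two Andrews matrices does \emph{not} collapse to a ${}_4\phi_3$. You overlook the factor $(q^{-n}z/\alpha;q)_{2k}$ in the denominator of $d^R_k$: after one copy of $(q^{-n+1}z/\alpha;q)_{2k}$ from $(h^R_k)^2$ cancels against the like factor in $d^R_k$, the surviving ratio
\[
\frac{(q^{-n+1}z/\alpha;q)_{2k}}{(q^{-n}z/\alpha;q)_{2k}}=\frac{1-q^{-n}z/\alpha\cdot q^{2k}}{1-q^{-n}z/\alpha}
\]
is precisely the very-well-poised factor $(1-aq^{2k})/(1-a)$ with $a=q^{-n}z/\alpha$. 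Counting the remaining Pochhammer factors one finds six numerator and six denominator parameters, so the sum is a terminating very-well-poised ${}_8W_7$, namely
\[
{}_8W_7\bigl(q^{-n}z/\alpha;\,q^{-i},q^{-j},z/\alpha\beta,qz/\alpha,q^{-n+1}z;\,q,\,q^{-n+i+j}\beta/z\bigr),
\]
not a ${}_4\phi_3$. The missing ingredient is Watson's transformation formula for a terminating very-well-poised ${}_8\phi_7$ into a balanced ${}_4\phi_3$, which is exactly what the paper invokes to pass from this ${}_8W_7$ to the ${}_4\phi_3$ appearing in $\mathsf{R}^n_{ij}(\alpha,\beta|z)$. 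This is a nontrivial hypergeometric identity, not a mechanical bookkeeping step, so your outline as written does not go through without it.
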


Recalling that $D_2=\bigl(w^i \delta_{ij}\bigr)_{i,j=0}^n$, the commutativity $RD_2A=ARD_2$
(in Theorem \ref{thm-TBP})
is recast as follows.
\begin{Theorem}\label{TBP-2}
We have
\begin{align}
\sum_{k=0}^n \mathsf{R}^n_{ik}(\alpha,\beta|z) w^k \mathsf{R}^n_{kj}(\alpha,\beta|w) z^j
=
\sum_{k=0}^n \mathsf{R}^n_{ik}(\alpha,\beta|w) z^k \mathsf{R}^n_{kj}(\alpha,\beta|z) w^j .
\end{align}
\end{Theorem}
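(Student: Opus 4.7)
The plan is to derive Theorem \ref{TBP-2} as the balanced ${}_4\phi_3$ reformulation of the matrix identity $RD_2A = ARD_2$ (Theorem \ref{thm-TBP}), and then to prove the latter by a Gauss-decomposition argument that ultimately hinges on Bailey's transformation for terminating very-well-poised balanced ${}_{10}W_9$ series. First, Proposition \ref{prp-R-A} gives $R_{ij} = \mathsf{R}^n_{ij}(\alpha,\beta\vert z)$ and $A_{ij} = c\,\mathsf{R}^n_{ij}(\alpha,\beta\vert w)\,z^j$ for an overall scalar $c = (-a_1w/\alpha)^n(1/w;q)_n/(w/\alpha\beta;q)_n$ that is independent of $i,j$. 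Since $D_2 = \mathrm{diag}(w^i)$, the $(i,j)$-entries of $RD_2A$ and $ARD_2$ are precisely $c$ times the two sides of the claimed identity, so Theorem \ref{TBP-2} is equivalent to Theorem \ref{thm-TBP}, and it suffices to prove the latter.

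Next, I would exploit the two available Gauss decompositions $R = L_R D_R U_R = U'_R D'_R L'_R$ and $A = L_A D_A U_A = U'_A D'_A L'_A$ together with the identifications \eqref{lr-prp-1}--\eqref{lr-prp-4}, which express each triangular factor as an Andrews matrix $\mathcal{A}^{\pm 1}(a)$ conjugated by the explicit gauge factors in \eqref{gauge}. Rewriting $RD_2A = L_R D_R U_R \cdot D_2 \cdot U'_A D'_A L'_A$ and $ARD_2 = L_A D_A U_A \cdot U'_R D'_R L'_R \cdot D_2$, the critical step is to collapse the middle products $U_R D_2 U'_A$ and $U_A U'_R D_2$ by means of Bressoud's matrix identity (Corollary \ref{Cor:Bressoud}, specifically \eqref{Cor2}): each such product reduces, after absorbing gauge factors, to a single Bressoud matrix $\mathcal{B}(a,c)$ whose arguments are drawn from the list \eqref{parameters}. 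The outer factors $L_R D_R$ and $D'_A L'_A$ on one side, and $L_A D_A$ and $D'_R L'_R D_2$ on the other, further reduce to Andrews-type triangular matrices twisted by the diagonals in \eqref{prp-d-1}--\eqref{prp-d-4}, and both outer gauges are seen to agree after this reduction.

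The final step is to recognize the surviving sum on each side as a terminating very-well-poised balanced ${}_{10}W_9$ series in which $z$ and $w$ enter symmetrically as two of the free parameters. Bailey's ${}_{10}\phi_9$ transformation \cite{hypergeometric} then swaps the roles of $z$ and $w$ and yields the desired equality, completing the proof of Theorem \ref{thm-TBP} and hence of Theorem \ref{TBP-2}. The main obstacle I anticipate is purely computational bookkeeping: I must verify that the long list of gauge factors $g^{L,U}_i$, $g'^{L,U}_i$, $h^{R,A}_i$, $h'^{R,A}_i$ cancels in exactly the pattern needed so that the Bressoud-collapse step outputs ${}_{10}W_9$ series whose seven free arguments on the two sides satisfy both the very-well-poised and balancing conditions required by Bailey's formula, with the required $z \leftrightarrow w$ swap realized by the transformation itself.
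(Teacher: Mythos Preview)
Your proposal follows essentially the same route as the paper: reduce Theorem~\ref{TBP-2} to Theorem~\ref{thm-TBP} via Proposition~\ref{prp-R-A}, rewrite the commutativity using the Gauss decompositions, collapse triangular products via the Andrews/Bressoud matrices of Corollary~\ref{Cor:Bressoud}, and conclude with Bailey's ${}_{10}W_9$ transformation. A few refinements to your bookkeeping are needed to match what actually works. First, the paper forms \emph{four} Bressoud-collapsed products, not two: after moving $L_A$ and $D'_A L'_A$ across, the equation becomes $\mathsf{L}^1 D_R \mathsf{U}^1 = D_A \mathsf{U}^2 D'_R \mathsf{L}^2 {D'_A}^{-1}$ with $\mathsf{L}^1=L_A^{-1}L_R$, $\mathsf{U}^1=U_RD_2U'_A$, $\mathsf{U}^2=U_AU'_R$, $\mathsf{L}^2=L'_RD_2{L'_A}^{-1}$, and each of these four collapses to a single Bressoud matrix $\mathcal{B}$ (your grouping ``$U_AU'_RD_2$'' and ``outer factors $L_RD_R$'' does not lead to clean collapses). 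Second, the ${}_{10}W_9$ series on the two sides are not related by a direct $z\leftrightarrow w$ swap: they have bases $q^{-n}z/\alpha$ and $q^{-n}z/\beta$ respectively, and Bailey's transformation must be applied \emph{twice} (Proposition~\ref{10W9}) to pass from one to the other, producing exactly the prefactor \eqref{final-3} that matches the ratio $d^A_i/d'^A_j$ of the flanking diagonals.
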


\begin{proof}[Proof of Proposition \ref{prp-R-A}]
Starting from the Gauss decomposition $R=L_R D_R U_R$,
one can proceed as follows. We have
\begin{align*}
R_{ij}=\sum_k l^R_{ik} d^R_k u^R_{kj}=
l^R_{i0} d^R_0 u^R_{0j} \sum_{k=0}^{\min(i,j)} {l^R_{ik}\over l^R_{i0}} {d^R_k\over d^R_0} {u^R_{kj}\over u^R_{0j}}
=l^R_{i0} d^R_0 u^R_{0j}
\cdot X,
\end{align*}
where the summation $X$ can be calculated by using (\ref{lr-prp-1}), (\ref{lr-prp-2}), and (\ref{lemma-A-2}), (\ref{gauge-R-1}), (\ref{lem-d-2}) below
as
\begin{align*}
X&{}=\sum_{k=0}^{\min(i,j)}
{\mathcal{A}_{ik}(q^{-n}z/\alpha)\over \mathcal{A}_{i0}(q^{-n}z/\alpha)}
{h^R_k\over g^L_k} {d^R_k\over d^R_0}
{h^R_k\over g^U_k}
{\mathcal{A}_{jk}(q^{-n}z/\alpha)\over \mathcal{A}_{j0}(q^{-n}z/\alpha)}
\\
&{}={}_8 W_7 \bigl( q^{-n}z/\alpha;q^{-i},q^{-j},z/\alpha\beta, qz/\alpha, q^{-n+1}z;q, q^{-n+i+j}\beta/z\bigr)\\
&{}=
{\bigl(q^{-n+1}z/\alpha;q\bigr)_i (\beta/z;q)_i
\over
\bigl(q^{-n+1}\beta ;q\bigr)_i (1/\alpha;q)_i }
{}_4 \phi_3 \biggl[{ q^{-i}, q^{-n+j},z/\alpha\beta, q^{-n+1}z \atop
q^{-n},q^{-i+1}z/\beta,q^{-n+j+1}z/\alpha};q,q \biggr].
\end{align*}
Here, we have used
Watson's transformation formula for
terminating very-well-poised ${}_8\phi_7$ series~\cite[formula~(2.5.1)]{hypergeometric}.

From (\ref{gauge}), (\ref{lr-prp-1}), (\ref{lr-prp-2}), (\ref{lemma-A-1}), and (\ref{lem-d-1}) below, we have
\begin{align*}
l^R_{i0} d^R_0 u^R_{0j}&{}=g^L_i \mathcal{A}_{i0}(q^{-n}z/\alpha)d^R_0 \mathcal{A}_{j0}(q^{-n}z/\alpha)g^U_j\\
&{}=q^i (z/\alpha\beta)^j
{(1/\alpha;q)_i (q^{-n};q)_i\over \bigl(q^{-n+1}z/\alpha;q\bigr)_i (q;q)_i}
{
\bigl(q^{-n+1}\beta;q\bigr)_j \over \bigl(q^{-n+1}z/\alpha;q\bigr)_j}
{
(\alpha/z;q)_n \over (1/z;q)_n
}.
\end{align*}
Hence, it holds that
\begin{align*}
R_{ij}&{}=q^i (z/\alpha\beta)^j
{(\beta/z;q)_i (q^{-n};q)_i\over \bigl(q^{-n+1}\beta;q\bigr)_i (q;q)_i}
{
\bigl(q^{-n+1}\beta;q\bigr)_j \over \bigl(q^{-n+1}z/\alpha;q\bigr)_j}
{
(\alpha/z;q)_n \over (1/z;q)_n
}\\
&\quad\times {}_4 \phi_3 \biggl[{ q^{-i}, q^{-n+j},z/\alpha\beta, q^{-n+1}z \atop
q^{-n},q^{-i+1}z/\beta,q^{-n+j+1}z/\alpha};q,q \biggr].
\end{align*}
Simplifying the prefactor, we have (\ref{R-4phi3}).

The formula (\ref{A-4phi3}) for the matrix $A$ immediately follows from
the one (\ref{R-4phi3}) for $R$,
by making a comparison of the matrix elements given
in (\ref{lr-prp-1})--(\ref{lr-prp-4}),
and (\ref{prp-d-1})--(\ref{prp-d-4}).
\end{proof}

\begin{Remark}
If we use the opposite Gauss decomposition $R=U'_R D'_R L'_R$,
we have
\begin{align*}
R_{ij}=\sum_k u'^R_{ik} d'^R_k l'^R_{kj}=
u'^R_{in} d'^R_n l'^R_{nj}\sum_{k=\max(i,j)}^n
{u'^R_{ik}\over u'^R_{in}} {d'^R_k\over d'^R_n} { l'^R_{kj}\over l'^R_{nj}}
=u'^R_{in} d'^R_n l'^R_{nj}\cdot X',
\end{align*}
where $X$ can be calculated by using (\ref{lr-prp-3}), (\ref{lr-prp-4}), and (\ref{lemma-A-4}), (\ref{gauge-R-2}) and (\ref{lem-d-4}) below,
\begin{align*}
X'&=\sum_{l=0}^{\min(i,j)}
{\mathcal{A}^{-1}_{n-l,i}(q^{-n}\beta/z)\over \mathcal{A}^{-1}_{n,i}(q^{-n}\beta/z)}
{g'^U_{n-l} h'^R_n\over g'^U_{n}h'^R_{n-l}}
{d'^R_{n-l} \over d'^R_{n} }
{g'^L_{n-l} h'^R_n\over g'^L_{n}h'^R_{n-l}}
{\mathcal{A}^{-1}_{n-l,j}(q^{-n}\beta/z)\over \mathcal{A}^{-1}_{n,j}(q^{-n}\beta/z)}
\\
&={}_8 W_7 \bigl( q^{-n}z/\beta;q^{-n+i},q^{-n+j},z/\alpha\beta, qz/\beta, q^{-n+1}z;q, q^{-n+i+j}\alpha/z\bigr)\\
&=
{\bigl(q^{-n+1}z/\beta;q\bigr)_{n-j}(\alpha/z;q)_{n-j}
\over
\bigl(q^{-n+1}\alpha ;q\bigr)_{n-j} (1/\beta;q)_{n-j} }
{}_4 \phi_3 \biggl[{ q^{-i}, q^{-n+j},z/\alpha\beta, q^{-n+1}z \atop
q^{-n},q^{-i+1}z/\beta,q^{-n+j+1}z/\alpha};q,q \biggr].
\end{align*}
Note that from (\ref{gauge}), (\ref{lr-prp-3}), (\ref{lr-prp-4}), and (\ref{lemma-A-3}), (\ref{lem-d-3}) below,
\begin{align*}
u'^R_{in} d'^R_n l'^R_{nj}&{}=
{1\over g'^U_i} \mathcal{A}^{-1}_{n,i}(q^{-n}\beta/z)
{g'^U_n\over h'^R_n} d'^R_n {g'^L_n\over h'^R_n}
 \mathcal{A}^{-1}_{n,j}(q^{-n}\beta/z){1\over g'^L_j} \\
 &{}=q^i (z/\alpha\beta)^{j-n}
 {(\beta/z;q)_i (q^{-n};q)_i
 \over \bigl(q^{-n+1}\beta;q\bigr)_i (q;q)_i
 }
 {(\beta/z;q)_j\over (1/\alpha;q)_j}
 {(1/\beta;q)_n \over (1/z;q)_n}.
\end{align*}
Simplifying the prefactor, we also have (\ref{R-4phi3}).
\end{Remark}

\begin{Lemma}
We have
\begin{align}
&\mathcal{A}_{i0}(a)=
{1\over (aq;q)_i(q;q)_i},\label{lemma-A-1}\\
&
{\mathcal{A}_{ik}(a) \over \mathcal{A}_{i0}(a) }
=q^{-\left(k\atop 2 \right) }\bigl(-q^i\bigr)^k
{ \bigl(q^{-i};q\bigr)_k\over (a q^{i+1};q)_k },\label{lemma-A-2}\\
&
\mathcal{A}^{-1}_{n,i}(a) =
(-1)^n q^{\left(n\atop 2 \right)}
{(aq;q)_{2n}\over (a;q)_{2n}}
{(a;q)_{n}\over (q;q)_{n}}\cdot q^i
(a q^n;q)_i (q^{-n};q)_i,\label{lemma-A-3}\\
&
{\mathcal{A}^{-1}_{n-l,i}(a) \over \mathcal{A}^{-1}_{n,i}(a) }
=
q^{\left(l\atop 2 \right)}
\bigl(-q^{i+n} a\bigr)^{-l}
{1-a^{-1} q^{-2n+2l}\over 1-a^{-1} q^{-2n}}
{\bigl(q^{-n+i};q\bigr)_l\over \bigl(a^{-1} q^{1-n-i};q\bigr)_l }.\label{lemma-A-4}
\end{align}
\end{Lemma}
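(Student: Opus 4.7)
All four identities are direct computations from the explicit formulas \eqref{Andrews-1} and \eqref{Andrews-2} defining $\mathcal{A}(a)$ and $\mathcal{A}^{-1}(a)$, together with the elementary $q$-Pochhammer manipulations recorded in \eqref{a_n-inversion}, namely $(a)_{k+\ell}=(a)_k(aq^k)_\ell$ and the inversion $(a)_n = (-a)^n q^{\binom{n}{2}}(q^{1-n}/a;q)_n$. There is no deep content; the plan is simply to record the bookkeeping carefully.

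First, \eqref{lemma-A-1} is immediate: substituting $j=0$ into \eqref{Andrews-1} gives $\mathcal{A}_{i0}(a)=1/\{(q;q)_i(aq;q)_i\}$. For \eqref{lemma-A-2}, I will form the ratio
\[
\frac{\mathcal{A}_{ik}(a)}{\mathcal{A}_{i0}(a)}=\frac{(q;q)_i\,(aq;q)_i}{(q;q)_{i-k}\,(aq;q)_{i+k}}=\frac{(q^{i-k+1};q)_k}{(aq^{i+1};q)_k},
\]
and then invert the numerator via $(q^{i-k+1};q)_k=(-1)^k q^{k(i-k+1)+\binom{k}{2}}(q^{-i};q)_k=(-q^i)^k q^{-\binom{k}{2}}(q^{-i};q)_k$, which yields exactly the right-hand side of \eqref{lemma-A-2}.

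For \eqref{lemma-A-3}, I will start from \eqref{Andrews-2} with $j=i$, rewrite $(a;q)_{n+i}=(a;q)_n(aq^n;q)_i$, and convert $1/(q;q)_{n-i}$ using $(q;q)_n/(q;q)_{n-i}=(q^{n-i+1};q)_i=(-1)^i q^{in-\binom{i}{2}}(q^{-n};q)_i$. Collecting the signs and powers of $q$ requires checking the arithmetic identity $\binom{n-i}{2}+in-\binom{i}{2}=\binom{n}{2}+i$, after which the prefactor $(1-aq^{2n})/(1-a)$ is recognized as $(aq;q)_{2n}/(a;q)_{2n}$, matching the stated form. Finally, \eqref{lemma-A-4} is the analogous ratio computation: I will take the quotient of \eqref{Andrews-2} at the two row indices, reduce the ratio of $(a;q)$ symbols to $1/(aq^{n-l+i};q)_l$ and the ratio of $(q;q)$ symbols to $(q^{n-l-i+1};q)_l$, then apply the inversion formula to each of these two factors to exchange $(aq^{n-l+i};q)_l\leftrightarrow(a^{-1}q^{1-n-i};q)_l$ and $(q^{n-l-i+1};q)_l\leftrightarrow(q^{-n+i};q)_l$. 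The prefactor $(1-aq^{2n-2l})/(1-aq^{2n})$ is converted to $(1-a^{-1}q^{-2n+2l})/(1-a^{-1}q^{-2n})$ at the cost of a factor $q^{-2l}$, and the sign and $q$-power bookkeeping reduces to the identity $l^2-l-\binom{l}{2}=\binom{l}{2}$.

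The only non-mechanical step is tracking the exponents of $q$ and the signs through the inversion formula; there is no genuine obstacle. These identities are recorded here only because they are invoked repeatedly in the preceding computations of $R_{ij}$ via Watson's ${}_8\phi_7$ transformation.
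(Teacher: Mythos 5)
Your computations are correct: all four identities follow by the routine Pochhammer manipulations you describe, and I have checked that the exponent identities you isolate (e.g.\ $\binom{n-i}{2}+in-\binom{i}{2}=\binom{n}{2}+i$ and the $q^{-2l}$ bookkeeping in the last ratio) do close up as claimed. The paper states this lemma without proof, treating it as exactly the kind of direct verification from \eqref{Andrews-1}, \eqref{Andrews-2} and \eqref{a_n-inversion} that you carry out, so your proposal simply supplies the omitted elementary argument.
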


\begin{Lemma}
We have
\begin{align}
&
{h^R_k\over g^L_k}{h^R_k\over g^U_k}=
(qz/\alpha\beta)^{-k}
{\bigl(q^{-n+1}z/\alpha;q\bigr)_{2k} \bigl(q^{-n+1}z/\alpha;q\bigr)_{2k} \over
(1/\alpha;q)_k (q^{-n};q)_k\bigl(q^{-n+1}\beta;q\bigr)_k(q;q)_k}, \label{gauge-R-1}\\
&{g'^U_{n-l} h'^R_n\over g'^U_{n}h'^R_{n-l}}
{g'^L_{n-l} h'^R_n\over g'^L_{n}h'^R_{n-l}}=
q^{-2l^2}\bigl(q^{2n+1}\beta^2/z^3\bigr)^l
{(q^{-n}z/\beta;q)_{2l}(q^{-n}z/\beta;q)_{2l} \over
(1/\beta;q)_l(q^{-n};q)_l\bigl(q^{-n+1}\alpha;q\bigr)_l(q;q)_l}.\label{gauge-R-2}
\end{align}

\end{Lemma}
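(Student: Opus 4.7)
The plan is to prove both identities by direct substitution of the definitions of the gauge factors in \eqref{gauge}, followed by elementary $q$-Pochhammer manipulations. The only tools needed are the splitting rule $(a;q)_{n}/(a;q)_{n-l} = (aq^{n-l};q)_l$ and the reflection identity
\[
(a;q)_l = (-a)^l q^{l(l-1)/2}\bigl(q^{1-l}/a;q\bigr)_l.
\]

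The first identity \eqref{gauge-R-1} is essentially immediate. Substituting $h^R_k = (q^{-n+1}z/\alpha;q)_{2k}$, $g^L_k = q^k(1/\alpha;q)_k(q^{-n};q)_k$, and $g^U_k = (z/\alpha\beta)^k(q^{-n+1}\beta;q)_k(q;q)_k$, the ratio $(h^R_k)^2/(g^L_k g^U_k)$ produces the numerator $(q^{-n+1}z/\alpha;q)_{2k}^2$ and the denominator $q^k(z/\alpha\beta)^k\cdot(1/\alpha;q)_k(q^{-n};q)_k(q^{-n+1}\beta;q)_k(q;q)_k$. The identity $(qz/\alpha\beta)^{-k}=q^{-k}(z/\alpha\beta)^{-k}$ then matches the right-hand side exactly.

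The second identity \eqref{gauge-R-2} requires more bookkeeping. I write the left-hand side as $\bigl(g'^U_{n-l}/g'^U_n\bigr)\bigl(g'^L_{n-l}/g'^L_n\bigr)\bigl(h'^R_n/h'^R_{n-l}\bigr)^2$ and compute each ratio. First, $g'^U_{n-l}/g'^U_n = 1/[(q^{1-l}\beta;q)_l(q^{n-l+1};q)_l]$, and reflection sends $(q^{1-l}\beta;q)_l\mapsto(-\beta)^lq^{-l(l-1)/2}(1/\beta;q)_l$. Second, $g'^L_{n-l}/g'^L_n$ contributes $q^{-l}(z/\alpha\beta)^l$ together with the two reflections $(q^{n-l}/\alpha;q)_l\mapsto(q^{1-n}\alpha;q)_l$ and $(q^{-l};q)_l\mapsto(q;q)_l$, each with their $q$-power and sign prefactors. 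Third, $(h'^R_n/h'^R_{n-l})^2 = (q^{n-2l+1}\beta/z;q)_{2l}^2$, and reflection yields $(\beta/z)^{4l}q^{4ln-4l^2+2l}(q^{-n}z/\beta;q)_{2l}^2$. Finally, one more reflection $(q^{n-l+1};q)_l = (-1)^lq^{nl-l(l-1)/2}(q^{-n};q)_l$ is needed to convert the factor from the $g'^U$ ratio into the $(q^{-n};q)_l$ that appears on the right-hand side.

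Collecting powers, the $\beta$ and $z$ factors combine as $(-\beta)^{-l}(z/\beta)^l(\beta/z)^{4l}=(-1)^l\beta^{2l}z^{-3l}$, and this remaining $(-1)^l$ cancels against the $(-1)^l$ from the final reflection. The $q$-exponent is the sum of contributions from all reflections and the explicit prefactors in the $g'^L$ and $h'^R$ ratios, which condenses to $-2l^2+(2n+1)l$. This yields precisely $q^{-2l^2}(q^{2n+1}\beta^2/z^3)^l$ multiplying the target Pochhammer quotient. The main obstacle is entirely bookkeeping of the many sign and $q$-power factors produced by the reflections; as a safeguard I will verify the identity on a small instance such as $l=1$, $n=2$, where direct computation confirms both sides equal $-q^6\beta^3/[\alpha z^3(1-\beta)(1-q^2)(1-q/\alpha)(1-q)]\cdot(q^{-2}z/\beta;q)_2^2$.
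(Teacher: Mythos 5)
Your proposal is correct and follows exactly the route the paper intends: the paper treats this Lemma as a direct "straightforward calculation" from the definitions \eqref{gauge}, and your substitution of $g^L_k$, $g^U_k$, $h^R_k$ for \eqref{gauge-R-1} and the splitting/reflection manipulations for \eqref{gauge-R-2} reproduce both identities (I verified that the collected $q$-exponent $2ln-2l^2+l$ and the factor $\beta^{2l}z^{-3l}$ with vanishing $\alpha$-power come out as claimed). The only nitpick is that your sign summary in the second computation is slightly loosely phrased — there are four reflection signs contributing $(-1)^{4l}=1$ in total rather than a single cancelling pair — but this does not affect the result.
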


\begin{Lemma}
We have
\begin{align}
&d^R_0={( \alpha/z ;q)_n\over (1/z ;q)_n},\label{lem-d-1}\\
&{d^R_k \over d^R_0}=
q^{k^2} (q^{n}\alpha )^{-k}
{(q z/\alpha ;q)_k \bigl(q^{-n+1} z;q\bigr)_k (z/\alpha\beta;q)_k (q^{-n} z/\alpha ;q)_k\over
\bigl(q^{-n+1} z/\alpha ;q\bigr)_{2k}(q^{-n} z/\alpha ;q)_{2k}},\label{lem-d-2}\\
&d'^R_n=\beta^{-n} {(\beta/z ;q)_n\over (1/z ;q)_n},\label{lem-d-3}\\
&{d'^R_{n-l} \over d'^R_n}=
q^{l^2} (q^{-n}\alpha )^{l}
{
(q z/\beta;q)_l \bigl(q^{-n+1}z ;q\bigr)_l (z/\alpha\beta;q)_l (q^{-n}z/\beta ;q)_l \over
\bigl(q^{-n+1}z/\beta;q\bigr)_{2l}(q^{-n}z/\beta ;q)_{2l}
}.\label{lem-d-4}
\end{align}
\end{Lemma}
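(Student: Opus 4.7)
The plan is a direct calculation. Substituting the parametrization \eqref{parameters} into \eqref{Ito-R} via the conversions $a_1/a_2=\alpha/z$, $a_1b_2=1/z$, $a_2b_1=z/(\alpha\beta)$, $a_1^{-1}a_2=z/\alpha$ (for $d^R_j$), and additionally $b_1/b_2=z/\beta$, $a_1^{-1}b_2^{-1}=z$, $a_2^{-1}b_1^{-1}=\alpha\beta/z$ (for ${d'}^R_j$), one rewrites everything in terms of $\alpha,\beta,z,q$. Setting $j=0$ in the first expression yields \eqref{lem-d-1} at once; setting $j=n$ in the second produces Pochhammers of the form $(*\,q^{1-n};q)_n$ in both numerator and denominator, and a single application of the inversion identity \eqref{a_n-inversion} to each flips them into $(\beta/z;q)_n$ and $(1/z;q)_n$, with the ratio of the resulting monomial prefactors producing the overall $\beta^{-n}$ required in \eqref{lem-d-3}.

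For the ratios \eqref{lem-d-2} and \eqref{lem-d-4}, the approach proceeds in three steps: (i) telescope the long Pochhammers using $(a;q)_{r+s}=(a;q)_r(aq^r;q)_s$, so that $(\alpha q^{-k}/z;q)_{n-k}/(\alpha/z;q)_n$ reduces to the length-$k$ ratio $(\alpha q^{-k}/z;q)_k/(\alpha q^{n-k}/z;q)_k$ and similarly $(1/z;q)_n/(1/z;q)_{n-k}$ collapses to $(q^{n-k}/z;q)_k$; (ii) invert each remaining negatively shifted length-$k$ Pochhammer via \eqref{a_n-inversion}; (iii) recombine pairs of length-$k$ symbols into length-$2k$ ones via the same splitting identity, which is what produces the characteristic doubled Pochhammers $(q^{-n+1}z/\alpha;q)_{2k}$ and $(q^{-n}z/\alpha;q)_{2k}$ in the denominator of \eqref{lem-d-2}. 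The computation of ${d'}^R_{n-l}/{d'}^R_n$ is structurally identical, with the index $n-j$ playing the role of $j$ and the pair $(\alpha,z)$ replaced by its ``dual'' counterpart $(\beta,z)$ under the same procedure.

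The only real obstacle is bookkeeping: tracking the signs $(-1)^{\ast}$, the monomial exponents in $\alpha$ and $z$, and the accumulated powers of $q$ produced by the several inversions and splittings. A systematic tabulation of these contributions, followed by elementary simplification of the final $q$-exponent (where terms of the form $-k(n-k)+\binom{k}{2}-\binom{k+1}{2}$ combine), delivers the clean prefactors $q^{k^2}(q^n\alpha)^{-k}$ and $q^{l^2}(q^{-n}\alpha)^l$ stated in the lemma. An alternative and perhaps cleaner route would be to first derive a two-term recursion for $d^R_{j+1}/d^R_j$ (resp.\ ${d'}^R_{j+1}/{d'}^R_j$) directly from \eqref{Ito-R} and then telescope; this avoids the most delicate cancellations but requires a preliminary verification of the recursion itself.
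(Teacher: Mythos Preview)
Your proposal is correct and follows essentially the same approach as the paper: a direct computation from the definitions \eqref{Ito-R} under the parametrization \eqref{parameters}. The paper is even more terse---it first records the closed formulas for $d^R_j$ and ${d'}^R_j$ themselves in \eqref{prp-d-1}--\eqref{prp-d-4} with the one-line justification ``straightforward calculation by using the definitions in \eqref{Ito-R} and \eqref{Ito-A}, and the parametrization \eqref{parameters}'', after which \eqref{lem-d-1}--\eqref{lem-d-4} follow by reading off the $j=0$ (resp.\ $j=n$) value and dividing; your telescoping/inversion outline is exactly the content of that ``straightforward calculation''.
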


\subsection[Products of triangular matrices L\_A\^{}\{-1\}L\_R, U\_RD\_2U'\_A, U\_AU'\_R, and L'\_RD\_2L'\_A\^{}\{-1\} in terms of Bressoud's triangular matrix B(a,b)]{Products of triangular matrices $\boldsymbol{L_{A}^{-1}L_R}$, $\boldsymbol{U_{R}D_2 U'_A}$, $\boldsymbol{U_{A}U'_R}$, and $\boldsymbol{L'_RD_2{L'_{A}}^{\!\!-1}}$ \\ in terms of Bressoud's triangular matrix $\boldsymbol{\mathcal{B}(a,b)}$}
For the convenience of our study on the commutativity $R D_2 A=A R D_2$,
we write the matrices~$R$ and $A$ in Gauss decomposed forms and
investigate
\begin{align}
\bigl(L_RD_RU_R\bigr)D_2 \bigl(U'_AD'_AL'_A\bigr)=
\bigl(L_AD_AU_A\bigr) \bigl(U'_RD'_RL'_R\bigr)D_2,
\end{align}
which is equivalent to
\begin{align}
\mathsf{L}^1\cdot D_R\cdot \mathsf{U}^1 =
D_A\cdot \mathsf{U}^2 \cdot D'_R \cdot \mathsf{L}^2\cdot {D'_A}^{\!\!-1}, \label{TBP}
\end{align}
where
\begin{align}
&\mathsf{L}^1=L_{A}^{-1}L_R,\qquad \mathsf{U}^1=U_RD_2 U'_A,\qquad
\mathsf{U}^2=U_AU'_R,\qquad\,\, \mathsf{L}^2=L'_RD_2{L'_{A}}^{\!\!-1}.\label{L-U}
\end{align}

Note from (\ref{lr-prp-1}) and (\ref{lr-prp-4}), we have that
\begin{align}
&\bigl(L_{A}^{-1}\bigr)_{ij}=
{ g^L_i\over h^A_i} {\mathcal A}^{-1}_{ij}(q^{-n} w/\alpha )
 {1\over g^L_j} ,\label{lr-5}\\
 &\bigl({L'_{A}}^{\!\!-1}\bigr)_{ij}=
{g'^L_i \over w^{i} }
 {\mathcal A}_{ij}(q^{-n} \beta/w )
 {
 w^{j} h'^A_j\over
g'^L_j }.\label{lr-6}
\end{align}
One finds that all the products $\mathsf{L}^1$, $\mathsf{U}^1$, $\mathsf{U}^2$ and
$\mathsf{L}^2$ in (\ref{L-U}) can be treated by
the summation formulas (\ref{Cor1}) and (\ref{Cor2}) in Corollary
\ref{Cor:Bressoud}, which are recast as
\begin{align}
\sum_{k} {\mathcal A}^{-1}_{ik}(a) (b/a)^k {\mathcal A}_{kj}(b) =
a^{-i} {\mathcal B}_{ij}(a,b) b^j, \qquad
\sum_{k} {\mathcal A}^{-1}_{ik}(a) {\mathcal A}_{kj}(b)=
q^{i^2}{\mathcal B}_{ij}(a,b) q^{-j^2} . \label{A-to-B}
\end{align}

\begin{Proposition}
We have
\begin{align}
&\mathsf{L}^1_{ij}
={q^{i^2}g^L_i \over
h^A_i}
{\mathcal B}_{ij}(q^{-n} w/\alpha ,q^{-n} z/\alpha )
{ h^R_j\over q^{j^2}g^L_j},\label{prp-LU-1}\\
&\mathsf{U}^1_{ij}
={(q^{-n} z/\alpha)^{i} h^R_i \over
 g^U_i}
{\mathcal B}_{ji}( q^{-n} \beta/w,q^{-n} z/\alpha )
{g'^U_j\over
(q^{-n} \beta/w)^{j} h'^A_j },\label{prp-LU-2}\\
&\mathsf{U}^2_{ij}
=
{(q^{-n} w/\alpha)^{i} h^A_i \over
w^i g^U_i}
{\mathcal B}_{ji}(q^{-n}\beta/z ,q^{-n} w/\alpha )
{g'^U_j\over
(q^{-n}\beta/z)^{j} h'^R_j },\label{prp-LU-3}\\
&\mathsf{L}^2_{ij}
=
{q^{i^2} g'^L_i\over
h'^R_i}
{\mathcal B}_{ij}(q^{-n}\beta/z ,q^{-n} \beta/w )
{w^{j} h'^A_j \over
 q^{j^2} g'^L_j}.\label{prp-LU-4}
\end{align}

\end{Proposition}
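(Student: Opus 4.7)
The plan is to prove each of the four formulas \eqref{prp-LU-1}--\eqref{prp-LU-4} by direct substitution of the Andrews-type expressions for the triangular factors, followed by a single application of one of the two master identities recorded in \eqref{A-to-B}. Writing each product out using \eqref{lr-prp-1}--\eqref{lr-prp-4} together with \eqref{lr-5} and \eqref{lr-6} for $L_A^{-1}$ and ${L'_A}^{-1}$, one finds in every case a sandwich
\[
(\text{prefactor in }i)\cdot \Bigl(\sum_k \mathcal{A}^{-1}_{\bullet k}(a)\,\rho^k\,\mathcal{A}_{k\bullet}(b)\Bigr)\cdot(\text{prefactor in }j),
\]
in which $\rho^k$ collects all $k$-dependent contributions from the gauge factors and from the diagonal $D_2$. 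The two possibilities $\rho=1$ and $\rho=b/a$ are then handled respectively by the second and first identities of \eqref{A-to-B}, producing either the $q^{i^2}\mathcal{B}_{ij}(a,b)q^{-j^2}$ structure or the $a^{-i}\mathcal{B}_{ij}(a,b)b^j$ structure.

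Concretely, for $\mathsf{L}^1=L_A^{-1}L_R$ and $\mathsf{L}^2=L'_R D_2 {L'_A}^{-1}$, the $L$-type gauge factors $g^L_k$ and ${g'}^L_k$ cancel exactly on the summation index. In $\mathsf{L}^2$ the $w^k$ from $D_2$ cancels against the $w^{-k}$ built into ${L'_A}^{-1}$ through \eqref{lr-6}. Hence $\rho=1$ in both cases, and the second identity of \eqref{A-to-B} with $(a,b)=(q^{-n}w/\alpha,q^{-n}z/\alpha)$ and $(a,b)=(q^{-n}\beta/z,q^{-n}\beta/w)$ respectively delivers \eqref{prp-LU-1} and \eqref{prp-LU-4}, the surviving prefactors assembling into exactly the $h$- and $g$-ratios displayed there.

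For $\mathsf{U}^1=U_R D_2 U'_A$ and $\mathsf{U}^2=U_A U'_R$, the ratio of $U$-type gauge factors in \eqref{gauge} produces $g^U_k/{g'}^U_k=(z/\alpha\beta)^k$. In $\mathsf{U}^1$ this is multiplied by $w^k$ from $D_2$; in $\mathsf{U}^2$ it is multiplied by the $w^k$ already built into $u^A_{ik}$ via \eqref{lr-prp-2}. Either way the net summand weight is $(wz/\alpha\beta)^k$. The key observation
\[
\frac{wz}{\alpha\beta}=\frac{q^{-n}z/\alpha}{q^{-n}\beta/w}=\frac{q^{-n}w/\alpha}{q^{-n}\beta/z}
\]
identifies this weight with $b/a$ for the choices $(a,b)=(q^{-n}\beta/w,q^{-n}z/\alpha)$ and $(a,b)=(q^{-n}\beta/z,q^{-n}w/\alpha)$ respectively. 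Because the $U$-type matrices in \eqref{lr-prp-2}--\eqref{lr-prp-3} place the summation index $k$ as the \emph{row} of $\mathcal{A}$ and the \emph{column} of $\mathcal{A}^{-1}$ (the transpose of the $L$-case), applying the first identity of \eqref{A-to-B} after relabelling yields \eqref{prp-LU-2} and \eqref{prp-LU-3}, with $\mathcal{B}_{ji}$ in place of $\mathcal{B}_{ij}$.

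The main obstacle is purely combinatorial: keeping the eight gauge factors $g^L_\bullet$, $g^U_\bullet$, ${g'}^L_\bullet$, ${g'}^U_\bullet$, $h^R_\bullet$, $h^A_\bullet$, ${h'}^R_\bullet$, ${h'}^A_\bullet$ and the assorted $w$-powers straight, and checking that after the Andrews-to-Bressoud conversion the residual $i$- and $j$-dependent factors recombine into the compact right-hand sides of \eqref{prp-LU-1}--\eqref{prp-LU-4}. No identity deeper than Bressoud's matrix inversion enters the argument; the entire $q$-hypergeometric content of the proposition is encapsulated in the two summation formulas \eqref{A-to-B}.
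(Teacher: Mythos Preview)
Your proposal is correct and follows precisely the route the paper indicates: substitute the Andrews-type expressions \eqref{lr-prp-1}--\eqref{lr-prp-4}, \eqref{lr-5}, \eqref{lr-6} and $D_2=(w^i\delta_{ij})$, observe that the $k$-dependent gauge factors collapse to $\rho=1$ in the $\mathsf{L}$-cases and $\rho=(wz/\alpha\beta)=b/a$ in the $\mathsf{U}$-cases, and apply the appropriate identity from \eqref{A-to-B}. You have in fact unpacked more detail than the paper's one-line ``straightforward calculation''; the key observation $wz/\alpha\beta=(q^{-n}z/\alpha)/(q^{-n}\beta/w)=(q^{-n}w/\alpha)/(q^{-n}\beta/z)$ is exactly what makes the $\mathsf{U}$-cases go through.
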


\begin{proof}
Straightforward calculation by using $D_2=\bigl(w^i \delta_{ij}\bigr)_{i,j=0}^n$, (\ref{gauge}), (\ref{lr-prp-1})--(\ref{lr-prp-4}), (\ref{lr-5}), (\ref{lr-6}),
and (\ref{A-to-B}).
\end{proof}

\subsection[Products L\^{}1 cdot D\_R cdot U\^{}1 and U\^{}2 cdot D'\_R cdot L\^{}2 in terms of terminating very-well-poised balanced \_\{10\}phi\_9 series]{Products $\boldsymbol{\mathsf{L}^1\cdot D_R\cdot \mathsf{U}^1}$ and $\boldsymbol{\mathsf{U}^2 \cdot D'_R \cdot \mathsf{L}^2}$ in terms of terminating \\ very-well-poised balanced $\boldsymbol{{}_{10}\phi_9}$ series}

\begin{Proposition}\label{prp-LDU-UDL-1}
We have
\begin{gather}
{1\over \mathsf{L}^1_{i0}\cdot d^R_{0}\cdot \mathsf{U}^1_{0j}}
\bigl(\mathsf{L}^1\cdot D_R\cdot \mathsf{U}^1\bigr)_{ij}\nonumber\\
\qquad{}={}_{10}W_9
\bigl(q^{-n}z/\alpha;q^{-i},q^{-j},q^{-n+i}w/\alpha,q^{-n+j}\beta/w,z/\alpha\beta,qz/\alpha,q^{-n+1}z;q,q \bigr),\label{final-1}\\
{1\over \mathsf{U}^2_{in} \cdot d'^R_n \cdot \mathsf{L}^2_{nj}}
\bigl(\mathsf{U}^2 \cdot D'_R \cdot \mathsf{L}^2\bigr)_{ij}\nonumber\\
\qquad{}={}_{10}W_9
\bigl(q^{-n}z/\beta;q^{-n+i},q^{-n+j},q^{-i}\alpha/w,q^{-j}w/\beta,z/\alpha\beta,qz/\beta,q^{-n+1}z;q,q \bigr),\label{final-2}
\end{gather}
and
\begin{align}
{ \mathsf{U}^2_{in} \cdot d'^R_n \cdot \mathsf{L}^2_{nj} \over
\mathsf{L}^1_{i0}\cdot d^R_{0}\cdot \mathsf{U}^1_{0j}}
\cdot { d^A_i \over d'^A_j}
&{}=
{
(1/\alpha,1/\beta,w/z,\alpha\beta/zw;q)_n
\over
(\alpha/z,\beta/z,1/w,w/\alpha\beta;q)_n}
{
\bigl(\beta/z,w/\alpha\beta,q^{-n+1}z/\alpha,q^{-n+1}w;q\bigr)_i
\over
\bigl(1/\alpha,w/z,q^{-n+1}\beta,q^{-n+1}zw/\alpha\beta;q\bigr)_i
}\nonumber\\
&\quad{}\times
{
\bigl(\beta/z,1/w,q^{-n+1}z/\alpha,q^{-n+1}\alpha\beta/w;q\bigr)_j
\over
\bigl(1/\alpha,\alpha \beta/zw,q^{-n+1}\beta,q^{-n+1}z/w;q\bigr)_j
}. \label{final-3}
\end{align}
\end{Proposition}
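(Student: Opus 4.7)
The plan is to prove the three identities (\ref{final-1})--(\ref{final-3}) by direct substitution of the explicit expressions for $\mathsf{L}^1$, $\mathsf{U}^1$, $\mathsf{U}^2$, $\mathsf{L}^2$ obtained in (\ref{prp-LU-1})--(\ref{prp-LU-4}), together with the diagonal data (\ref{prp-d-1})--(\ref{prp-d-4}). The key structural observation is that Bressoud's matrix $\mathcal{B}_{ij}(a,b)$ carries precisely the factor $(1-aq^{2i})/(1-a)$ characteristic of a very-well-poised series, so that a sum coupling two Bressoud matrices through a suitable diagonal weight will naturally produce a ${}_{10}W_9$.

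For (\ref{final-1}), I would first substitute into
\[
(\mathsf{L}^1\cdot D_R\cdot \mathsf{U}^1)_{ij}=\sum_{k}\mathsf{L}^1_{ik}\,d^R_k\,\mathsf{U}^1_{kj},
\]
and pull all $k$-independent gauge factors in front. Using the explicit form of $\mathcal{B}$, the normalization by $\mathsf{L}^1_{i0}\cdot d^R_0\cdot \mathsf{U}^1_{0j}$ reduces the sum to ratios $\mathcal{B}_{ik}/\mathcal{B}_{i0}$ and $\mathcal{B}_{jk}/\mathcal{B}_{j0}$, which I would simplify via the standard identities $(a;q)_{i+k}/(a;q)_i=(aq^i;q)_k$ and the inversion $(c;q)_{i-k}/(c;q)_i = (-cq^{i-k})^{-k}q^{-k(k-1)/2}/(q^{1-i}/c;q)_k$. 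After merging the very-well-poised prefactor, the summand takes the expected form
\[
\frac{1-aq^{2k}}{1-a}\,\frac{\prod_{\ell=1}^{7}(x_\ell;q)_k}{(q;q)_k\,\prod_{\ell=1}^{7}(qa/x_\ell;q)_k}\,q^k,
\]
with $a=q^{-n}z/\alpha$ and the seven parameters $(x_\ell)=(q^{-i},q^{-j},q^{-n+i}w/\alpha,q^{-n+j}\beta/w,z/\alpha\beta,qz/\alpha,q^{-n+1}z)$. One must then verify the balancing condition $\prod_\ell x_\ell = a^{3}q^{2}$, so that the sum is precisely the ${}_{10}W_9$ appearing on the right of (\ref{final-1}).

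The identity (\ref{final-2}) is handled by the same argument, now starting from the ``dual'' Gauss decomposition and normalizing at the far boundary $k=n$ rather than $k=0$. After the index change $k\mapsto n-k$, the hypergeometric structure is identical, with $a=q^{-n}z/\beta$ and the seven parameters as listed. The identity (\ref{final-3}) is a pure bookkeeping computation: using (\ref{gauge}), (\ref{parameters}), and the explicit forms of $d^R_0$, $d'^R_n$, $d^A_i$, $d'^A_j$, the ratio of boundary factors on the left-hand side unpacks to a product of $q$-shifted factorials; matching this to the right-hand side requires only the identity $(a;q)_{m-k}=(a;q)_m/(aq^{m-k};q)_k$ and collecting terms.

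The main technical obstacle will be in the second step: the simplification of the ratios $\mathcal{B}_{ik}/\mathcal{B}_{i0}$ and $\mathcal{B}_{jk}/\mathcal{B}_{j0}$ demands disciplined bookkeeping of signs, powers of $q$, and the grouping of each of the seven ``numerator vs.~denominator'' pairs that identify the ${}_{10}W_9$ parameters. Although mechanical, errors tend to propagate. Once the three identities are established, Theorem \ref{TBP-2} will follow immediately: Bailey's transformation formula for terminating very-well-poised balanced ${}_{10}\phi_9$ series equates the two ${}_{10}W_9$ expressions in (\ref{final-1}) and (\ref{final-2}), and (\ref{final-3}) accounts for the remaining boundary prefactors, yielding $RD_2A=ARD_2$.
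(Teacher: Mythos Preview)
Your proposal is correct and follows essentially the same route as the paper: normalize the sums by their boundary terms, reduce to ratios of Bressoud matrices that supply the very-well-poised factor $(1-aq^{2k})/(1-a)$, identify the resulting summand as a terminating balanced ${}_{10}W_9$, and handle (\ref{final-2}) by the same mechanism after the index shift $k\mapsto n-k$, with (\ref{final-3}) being a straight simplification of explicit factors. The paper packages the bookkeeping into auxiliary lemmas (ratios of $\mathcal{B}$, gauge-factor ratios, and the three pieces of the boundary quotient), but the content is exactly what you describe.
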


\begin{proof}
For (\ref{final-1}), we have from (\ref{lem-d-2}), (\ref{prp-LU-1}), (\ref{prp-LU-2}), and (\ref{lem-B-2}), (\ref{lem-gauge-2}) below that
\begin{align*}
&{1\over \mathsf{L}^1_{i0}\cdot d^R_{0}\cdot \mathsf{U}^1_{0j}}
\bigl(\mathsf{L}^1\cdot D_R\cdot \mathsf{U}^1\bigr)_{ij}
=\sum_{k=0}^{\min(i,j)}
{\mathsf{L}^1_{ik}\cdot d^R_{k}\cdot \mathsf{U}^1_{kj}\over \mathsf{L}^1_{i0}\cdot d^R_{0}\cdot \mathsf{U}^1_{0j}}\\
&\qquad{}=
\sum_{k=0}^{\min(i,j)}
{{\mathcal B}_{ik}(q^{-n} w/\alpha ,q^{-n} z/\alpha ) \over
{\mathcal B}_{i0}(q^{-n} w/\alpha ,q^{-n} z/\alpha )}
{ h^R_k\over q^{k^2}g^L_k} {d^R_{k} \over d^R_{0} }
{(q^{-n} z/\alpha)^{k} h^R_k \over
 g^U_k}
{{\mathcal B}_{jk}( q^{-n} \beta/w,q^{-n} z/\alpha ) \over
{\mathcal B}_{j0}( q^{-n} \beta/w,q^{-n} z/\alpha )}\\
&\qquad{}={}_{10}W_9
\bigl(q^{-n}z/\alpha;q^{-i},q^{-j},q^{-n+i}w/\alpha,q^{-n+j}\beta/w,z/\alpha\beta,qz/\alpha,q^{-n+1}z;q,q \bigr).
\end{align*}
For (\ref{final-2}), in a similar manner, we have from (\ref{lem-d-4}), (\ref{prp-LU-3}), (\ref{prp-LU-4}), and (\ref{lem-B-4}), (\ref{lem-gauge-4}) below that
\begin{align*}
&{1\over \mathsf{U}^2_{in} \cdot d'^R_n \cdot \mathsf{L}^2_{nj}}
\bigl(\mathsf{U}^2 \cdot D'_R \cdot \mathsf{L}^2\bigr)_{ij}
=\sum_{k=\max(i,j)}^n
{\mathsf{U}^2_{ik} \cdot d'^R_k \cdot \mathsf{L}^2_{kj}\over
\mathsf{U}^2_{in} \cdot d'^R_n \cdot \mathsf{L}^2_{nj}}
=
\sum_{l=0}^{\min(i,j)}
{\mathsf{U}^2_{i,n-l} \cdot d'^R_{n-l} \cdot \mathsf{L}^2_{n-l,j}\over
\mathsf{U}^2_{in} \cdot d'^R_n \cdot \mathsf{L}^2_{nj}}\\
&\qquad{}=
\sum_{l=0}^{\min(i,j)}
{\mathcal{B}_{n-l,i}(q^{-n} \beta/z,q^{-n}w/\alpha) \over
\mathcal{B}_{ni}(q^{-n} \beta/z,q^{-n}w/\alpha)}
{g'^U_{n-l}\over g'^U_{n}}
{(q^{-n}\beta/z)^n h'^R_n \over (q^{-n}\beta/z)^{n-l} h'^R_{n-l} }
{d'^R_{n-l}\over d'^R_{n}}\\
&\qquad\qquad\qquad{}\times
{q^{(n-l)^2}g'^L_{n-l} \over q^{n^2}g'^L_{n} }
{h'^R_n \over h'^R_{n-l} }
{\mathcal{B}_{n-l,j}(q^{-n} \beta/z,q^{-n}\beta/w) \over
\mathcal{B}_{nj}(q^{-n} \beta/z,q^{-n}\beta/w)}\\
&\qquad{}={}_{10}W_9
\bigl(q^{-n}z/\beta;q^{-n+i},q^{-n+j},q^{-i}\alpha/w,q^{-j}w/\beta,z/\alpha\beta,qz/\beta,q^{-n+1}z;q,q \bigr).
\end{align*}
Concerning (\ref{final-3}), one can proceed as
\begin{align*}
&{ \mathsf{U}^2_{in} \cdot d'^R_n \cdot \mathsf{L}^2_{nj} \over
\mathsf{L}^1_{i0}\cdot d^R_{0}\cdot \mathsf{U}^1_{0j}}
\cdot {d^A_i \over d'^A_0}\\&\qquad{}=
\mathcal{B}_{n0}(q^{-n}\beta/z,q^{-n}w/\alpha)\mathcal{B}_{n0}(q^{-n}\beta/z,q^{-n}\beta/w) {d^A_0\over d'^A_0}
{d'^R_n \over d^R_0}
{g'^U_n\over (q^{-n}\beta/z)^n h'^R_n}
{q^{n^2} g'^L_n\over h'^R_n}
\\
&\qquad\quad{}\times
{\mathcal{B}_{ni}(q^{-n}\beta/z,q^{-n}w/\alpha)\over \mathcal{B}_{n0} (q^{-n}\beta/z,q^{-n}w/\alpha)}
{1\over
\mathcal{B}_{i0} (q^{-n}w/\alpha,q^{-n}z/\alpha)}
{(q^{-n}w/\alpha)^i h^A_i h^A_i \over q^{i^2}g^L_i w^i g^U_i}
{d^A_i\over d^A_0}
\\
&\qquad\quad{}\times
{\mathcal{B}_{nj}(q^{-n}\beta/z,q^{-n}\beta/w)\over\mathcal{B}_{n0}(q^{-n}\beta/z,q^{-n}\beta/w)}
{1\over
\mathcal{B}_{j0} (q^{-n}\beta/w,q^{-n}z/\alpha)}
{(q^{-n}\beta/w)^j h'^A_j w^j h'^A_j \over
g'^U_j q^{j^2}g'^L_j} {d'^A_0 \over d'^A_j}.
\end{align*}
Then from (\ref{lem-n-term}), (\ref{lem-i-term}), and (\ref{lem-j-term}) below, we obtain (\ref{final-3}).
\end{proof}

\begin{Lemma}
We have
\begin{align}
&
{\mathcal{B}_{ik}(a,b) \over \mathcal{B}_{i0}(a,b) }
=\bigl(a^{-1}q\bigr)^k {\bigl(a q^i;q\bigr)_k \bigl(q^{-i};q\bigr)_k\over \bigl(b q^{i+1};q\bigr)_k \bigl(a^{-1} b q^{1-i};q\bigr)_k},\label{lem-B-2}\\
&
{\mathcal{B}_{n-l,i}(a,b) \over \mathcal{B}_{n,i}(a,b) }
=\bigl(a^{-2}b\bigr)^l
{\bigl(a^{-1} q^{1-2n};q\bigr)_{2l}\over \bigl(a^{-1} q^{-2n};q\bigr)_{2l}}
 {\bigl(b^{-1} q^{-n-i};q\bigr)_l \bigl(q^{-n+i};q\bigr)_l\over \bigl(a^{-1} q^{1-n-i};q\bigr)_l \bigl(a^{-1} b q^{1-n+i};q\bigr)_l},\label{lem-B-4}\\
 &\mathcal{B}_{i0}(a,b)=b^i
{(aq;q)_{2i}\over (a;q)_{2i}}
{(a;q)_i(a/b;q)_i\over (bq;q)_i(q;q)_i},\label{lem-B-1}\\
&
{\mathcal{B}_{n,i}(a,b) \over \mathcal{B}_{n,0}(a,b) }=
 \bigl(a^{-1}q\bigr)^i
{(a q^n;q)_i (q^{-n};q)_i\over \bigl(b q^{1+n};q\bigr)_i \bigl(a^{-1} b q^{1-n};q\bigr)_i}.\label{lem-B-3}
\end{align}
\end{Lemma}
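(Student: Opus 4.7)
The plan is to establish the four displayed identities by direct substitution of the explicit form
\begin{equation*}
\mathcal{B}_{ij}(a,b)=\frac{(1-aq^{2i})(a;q)_{i+j}(a/b;q)_{i-j}}{(1-a)(bq;q)_{i+j}(q;q)_{i-j}}\,b^{i-j}
\end{equation*}
recorded in (\ref{Bressoud:dfn}), followed by routine $q$-Pochhammer manipulations using the two workhorse identities
\begin{equation*}
(c;q)_{m+k}=(c;q)_{m}\bigl(cq^{m};q\bigr)_{k}, \qquad (a;q)_{k}=(-a)^{k}q^{k(k-1)/2}\bigl(a^{-1}q^{1-k};q\bigr)_{k}.
\end{equation*}

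First I would dispose of (\ref{lem-B-1}). Setting $j=0$ in the definition, the prefactor $(1-aq^{2i})/(1-a)$ combines with the occurrences of $(a;q)_i$ and $(bq;q)_i$ to give $(aq;q)_{2i}/(a;q)_{2i}\cdot (a;q)_i(a/b;q)_i/((bq;q)_i(q;q)_i)\cdot b^{i}$, which is exactly (\ref{lem-B-1}). Identity (\ref{lem-B-3}) is then the ratio $\mathcal{B}_{n,i}/\mathcal{B}_{n,0}$: fixing the first index at $n$, the factor $(1-aq^{2n})/(1-a)$ cancels from numerator and denominator, and the remaining four Pochhammer ratios factor by the first identity above into expressions of the form $(cq^{n};q)_i$ or $(cq^{n-i+1};q)_i$, which after an inversion give precisely the right-hand side. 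Identity (\ref{lem-B-2}) is obtained in the same fashion by fixing the first index at $i$.

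For (\ref{lem-B-4}), the only new ingredient is the shift of the first index by $-l$, which produces the ``very-well-poised'' prefactor $(1-aq^{2(n-l)})/(1-aq^{2n})$ together with four genuine Pochhammer ratios. Each of these four ratios has the form $(c;q)_{N-l}/(c;q)_{N}=1/(cq^{N-l};q)_l$ (with $N=n\pm i$ and $c\in\{a,a/b,bq,q\}$), and after an inversion of each factor they assemble into the four $l$-Pochhammer symbols written in (\ref{lem-B-4}). Finally, the scalar prefactor $(1-aq^{2n-2l})/(1-aq^{2n})$ is rewritten in the ``doubled'' $2l$-form by noting that
\begin{equation*}
\frac{1-aq^{2n-2l}}{1-aq^{2n}}=\frac{(aq^{2n-2l};q)_{2l+1}}{(aq^{2n-2l+1};q)_{2l}\cdot(1-aq^{2n})}\cdot\frac{1}{(aq^{2n-2l};q)_{2l}},
\end{equation*}
which after inversion via the second identity above yields the advertised ratio $(a^{-1}q^{1-2n};q)_{2l}/(a^{-1}q^{-2n};q)_{2l}$ up to a compensating monomial in $a$ that is absorbed into the overall factor $(a^{-2}b)^{l}$.

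I do not anticipate any conceptual obstacle: every step reduces to elementary rearrangement of $q$-shifted factorials, and the formulas (\ref{lem-B-1})--(\ref{lem-B-4}) are essentially a ``dictionary'' that isolates useful reparametrisations needed elsewhere (in particular in Proposition \ref{prp-LDU-UDL-1} to identify the sums as terminating very-well-poised balanced ${}_{10}\phi_9$ series). The only genuine nuisance is bookkeeping, namely keeping careful track of the signs, the half-integer powers of $q$, and the direction of each Pochhammer; I expect the whole calculation to fit comfortably on a single page if the inversion identity is invoked uniformly.
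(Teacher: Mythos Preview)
Your approach is correct and is exactly what the paper implicitly has in mind: this lemma is stated there without proof, being a direct consequence of the definition (\ref{Bressoud:dfn}) together with the two standard identities in (\ref{a_n-inversion}). One small quibble: your displayed intermediate formula for the very-well-poised prefactor in (\ref{lem-B-4}) is garbled (the right-hand side carries an extra factor $1/(aq^{2n-2l};q)_{2l}$ that should not be there); the clean way is simply to write $1-aq^{2n-2l}=-aq^{2n-2l}(1-a^{-1}q^{2l-2n})$ and likewise for the denominator, giving $(1-aq^{2n-2l})/(1-aq^{2n})=q^{-2l}(a^{-1}q^{1-2n};q)_{2l}/(a^{-1}q^{-2n};q)_{2l}$ directly, with the $q^{-2l}$ absorbed into the overall monomial as you say.
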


\begin{Lemma}
We have
\begin{align}
&
{h^R_k \over q^{k^2} g^L_k}
{( q^{-n} z/\alpha)^k h^R_k \over g^U_k}=
q^{-k^2} \bigl( q^{-n-1}\beta \bigr)^{k}
{
\bigl( q^{-n+1}z/\alpha ;q\bigr)_{2k} \bigl(q^{-n+1}z/\alpha;q\bigr)_{2k} \over
(1/\alpha;q)_k (q^{-n};q)_k \bigl(q^{-n+1} \beta ;q\bigr)_k(q;q)_k},\label{lem-gauge-2}\\
&
{g'^U_{n-l}\over g'^U_{n}}
{(q^{-n}\beta/z)^n h'^R_n \over (q^{-n}\beta/z)^{n-l} h'^R_{n-l} }
{q^{(n-l)^2}g'^L_{n-l} \over q^{n^2}g'^L_{n} }
{h'^R_n \over h'^R_{n-l} }\nonumber\\
&\qquad{}=
q^{-l^2} (q^{-n+1} \beta^3/z^4 )^l
{
(q^{-n}z/\beta;q)_{2l} (q^{-n}z/\beta;q)_{2l} \over
(1/\beta;q)_l (q^{-n};q)_l \bigl(q^{-n+1}\alpha ;q\bigr)_l(q;q)_l}.\label{lem-gauge-4}
\end{align}
\end{Lemma}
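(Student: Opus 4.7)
The plan is to verify both identities by direct substitution of the definitions in \eqref{gauge} and by collecting the resulting $q$-Pochhammer symbols and monomial factors. Both are purely computational, in the same style as the earlier Lemma that proves \eqref{gauge-R-1}--\eqref{gauge-R-2}, and no new ideas are required.

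For \eqref{lem-gauge-2}, I would plug in $h^R_k = (q^{-n+1}z/\alpha;q)_{2k}$, $g^L_k = q^k(1/\alpha;q)_k(q^{-n};q)_k$, and $g^U_k = (z/\alpha\beta)^k(q^{-n+1}\beta;q)_k(q;q)_k$ directly into the left-hand side. The Pochhammer symbols $(1/\alpha;q)_k(q^{-n};q)_k(q^{-n+1}\beta;q)_k(q;q)_k$ in the denominator and the squared factor $(q^{-n+1}z/\alpha;q)_{2k}^2$ in the numerator appear immediately. The only nontrivial step is the monomial prefactor: combining $1/q^{k^2+k}$ with $(q^{-n}z/\alpha)^k \cdot (\alpha\beta/z)^k = q^{-nk}\beta^k$ produces $q^{-k^2}(q^{-n-1}\beta)^k$, which is exactly the claimed factor.

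For \eqref{lem-gauge-4}, the same strategy applies, but one must also simplify the ratios $g'^U_{n-l}/g'^U_n$, $g'^L_{n-l}/g'^L_n$, and $h'^R_n/h'^R_{n-l}$ (the last of these appears twice). These are handled by the elementary identities $(a;q)_{n-l}/(a;q)_n = 1/(aq^{n-l};q)_l$ and $(a;q)_{2n}/(a;q)_{2n-2l} = (aq^{2n-2l};q)_{2l}$, producing Pochhammer factors with shifted parameters. A second round of reflections via the inversion formula \eqref{a_n-inversion}, $(a;q)_l = (-a)^l q^{l(l-1)/2}(q^{1-l}/a;q)_l$, converts the denominator $(q^{1-l}\beta;q)_l(q^{n-l+1};q)_l(q^{n-l}/\alpha;q)_l(q^{-l};q)_l$ into the desired $(1/\beta;q)_l(q^{-n};q)_l(q^{-n+1}\alpha;q)_l(q;q)_l$, and similarly turns $(q^{n-2l+1}\beta/z;q)_{2l}^2$ into $(q^{-n}z/\beta;q)_{2l}^2$. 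The four $(-1)^l$ signs from the reflections cancel in pairs, and after collecting the accumulated powers of $q$, $\alpha$, $\beta$, $z$ one finds the monomial $q^{-l^2-nl+l}\beta^{3l}z^{-4l} = q^{-l^2}(q^{-n+1}\beta^3/z^4)^l$.

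The main obstacle is not conceptual but arithmetic: it is easy to miscount the several contributions of the form $nl$, $l^2/2$ and $l/2$ that arise from the reflection identities, so a careful ledger of $q$-exponents is required. A useful sanity check is evaluation at $k=0$ or $l=0$, where both identities collapse to $1=1$, followed by verifying the ratio of successive terms (in $k$ or $l$) matches on the two sides.
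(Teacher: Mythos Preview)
Your proposal is correct and is exactly the kind of direct verification the paper has in mind: the Lemma is stated in the paper without proof, as a routine consequence of substituting the definitions in \eqref{gauge} and simplifying with the inversion formula \eqref{a_n-inversion}. One minor remark: your parenthetical reference to ``the earlier Lemma that proves \eqref{gauge-R-1}--\eqref{gauge-R-2}'' is circular, since in the paper's labeling those are precisely the two identities you are proving here; there is no separate earlier Lemma establishing them.
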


\begin{Lemma}
We have
\begin{align}
&\mathcal{B}_{n0}(q^{-n}\beta/z,q^{-n}w/\alpha)\mathcal{B}_{n0}(q^{-n}\beta/z,q^{-n}\beta/w) {d^A_0\over d'^A_0}
{d'^R_n \over d^R_0}
{g'^U_n\over (q^{-n}\beta/z)^n h'^R_n}
{q^{n^2} g'^L_n\over h'^R_n}\nonumber\\
&\qquad{}={
(1/\alpha,1/\beta,w/z,\alpha\beta/zw;q)_n
\over
(\alpha/z,\beta/z,1/w,w/\alpha\beta;q)_n},\label{lem-n-term}\\
&
{\mathcal{B}_{ni}(q^{-n}\beta/z,q^{-n}w/\alpha)\over \mathcal{B}_{n0} (q^{-n}\beta/z,q^{-n}w/\alpha)}
{1\over
\mathcal{B}_{i0} (q^{-n}w/\alpha,q^{-n}z/\alpha)}
{(q^{-n}w/\alpha)^i h^A_i h^A_i \over q^{i^2}g^L_i w^i g^U_i}
{d^A_i\over d^A_0}\nonumber\\
&\qquad{}=
{
\bigl(\beta/z,w/\alpha\beta,q^{-n+1}z/\alpha,q^{-n+1}w;q\bigr)_i
\over
\bigl(1/\alpha,w/z,q^{-n+1}\beta,q^{-n+1}zw/\alpha\beta;q\bigr)_i},\label{lem-i-term}\\
&
{\mathcal{B}_{nj}(q^{-n}\beta/z,q^{-n}\beta/w)\over\mathcal{B}_{n0}(q^{-n}\beta/z,q^{-n}\beta/w)}
{1\over
\mathcal{B}_{j0} (q^{-n}\beta/w,q^{-n}z/\alpha)}
{(q^{-n}\beta/w)^j h'^A_j w^j h'^A_j \over
g'^U_j q^{j^2}g'^L_j} {d'^A_0 \over d'^A_j}\nonumber\\
&\qquad{}=
{
\bigl(\beta/z,1/w,q^{-n+1}z/\alpha,q^{-n+1}\alpha\beta/w;q\bigr)_j
\over
\bigl(1/\alpha,\alpha \beta/zw,q^{-n+1}\beta,q^{-n+1}z/w;q\bigr)_j}.\label{lem-j-term}
\end{align}
\end{Lemma}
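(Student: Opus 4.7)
The plan is to prove the three identities \eqref{lem-n-term}--\eqref{lem-j-term} by direct substitution of the explicit $q$-Pochhammer expressions that define every factor on the left-hand sides: the gauge factors from \eqref{gauge}, the diagonal entries $d^R_j,\,d'^R_j,\,d^A_j,\,d'^A_j$ from \eqref{prp-d-1}--\eqref{prp-d-4}, and the Bressoud matrix entries from \eqref{Bressoud:dfn}, specialised through \eqref{lem-B-1}--\eqref{lem-B-3}. No nontrivial summation theorem is required; the only tools are the elementary identities in \eqref{a_n-inversion} and the splitting $(a;q)_{2k}=(a;q)_k (aq^k;q)_k$.

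For \eqref{lem-n-term}, the first step is to write out the two Bressoud factors $\mathcal{B}_{n0}(q^{-n}\beta/z,\,q^{-n}w/\alpha)$ and $\mathcal{B}_{n0}(q^{-n}\beta/z,\,q^{-n}\beta/w)$ from \eqref{lem-B-1}. They share the first argument $a=q^{-n}\beta/z$, so the doubled factor $(aq;q)_{2n}/(a;q)_{2n}=(q^{-n+1}\beta/z;q)_{2n}/(q^{-n}\beta/z;q)_{2n}$ appears squared in the product. This squared doubled factor combines with the doubled numerator $(q^{-n}\beta/z;q)_{2n}(q^{-n+1}\beta/z;q)_{2n}$ coming from $d'^R_n$ in \eqref{prp-d-2} and with $(h'^R_n)^2=(q^{-n+1}\beta/z;q)_{2n}^2$ in the denominator of the left-hand side, leaving no $2n$-length factors at all. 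Next, the monomial prefactors $(q^{-n}w/\alpha)^n (q^{-n}\beta/w)^n$ from the Bressoud entries, the isolated $q^{n^2}$, the factor $(q^{-n}\beta/z)^{-n}$, and all power factors inside $d'^R_n/d^R_0$ and $d^A_0/d'^A_0$ multiply to a pure monomial which a short check shows equals $1$. What remains is a product of length-$n$ Pochhammer symbols in $(\alpha,\beta,z,w)$; repeated use of \eqref{a_n-inversion} on factors of the form $(q^{-n+1}\,\cdot\,;q)_n$ then brings each term to the standard form of the right-hand side.

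Identities \eqref{lem-i-term} and \eqref{lem-j-term} follow by the same strategy, now localised at index $i$ (respectively $j$). The ratio $\mathcal{B}_{ni}/\mathcal{B}_{n0}$, given by \eqref{lem-B-3}, contributes four length-$i$ Pochhammers together with a power $(a^{-1}q)^i$; the reciprocal $1/\mathcal{B}_{i0}$ from \eqref{lem-B-1} contributes another four length-$i$ Pochhammers plus a $(aq;q)_{2i}^{-1}(a;q)_{2i}$ doubled factor; and the squared gauge $h^A_i\cdot h^A_i=(q^{-n+1}w/\alpha;q)_{2i}^2$, paired with the doubled factor in $d^A_i/d^A_0$ coming from \eqref{prp-d-3}, exactly cancels the surviving $2i$-length factors against those in $1/\mathcal{B}_{i0}$. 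The $q^{i^2}$ in the denominator of the left-hand side is supplied by $d^A_i/d^A_0$, and the residual monomials in $z,w,\alpha,\beta$ reorganise into the declared Pochhammer ratios via \eqref{a_n-inversion}. Identity \eqref{lem-j-term} is obtained by the parallel computation with $d'^A_0/d'^A_j$ in place of $d^A_i/d^A_0$, using \eqref{prp-d-4} instead of \eqref{prp-d-3}, and with the parameter pair $(w/\alpha,\alpha)$ replaced by $(\beta/w,\beta)$ throughout.

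The main obstacle is purely organisational. Each identity has on the order of a dozen Pochhammer factors on the left-hand side, and the verification amounts to disciplined accounting: tracking which doubled $(\cdot;q)_{2k}$ factor cancels which, and how the powers of $q^{k^2}$ and of the base variables $(\alpha,\beta,z,w)$ balance. My plan is to split each identity into three subgroups — the pure length-$n$ factors in $(\alpha,\beta,z,w)$, the length-$i$ (or length-$j$) factors, and the monomial factors in $q$ and the base variables — and to check matching within each subgroup separately. Once this bookkeeping is set up, no computational step is harder than the repeated application of the elementary shift identities in \eqref{a_n-inversion}.
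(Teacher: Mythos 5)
Your plan coincides with the paper's own (implicit) proof: this lemma is stated in Appendix~\ref{App:matrix-inversion} without proof precisely because it is a direct substitution of the definitions \eqref{gauge}, \eqref{prp-d-1}--\eqref{prp-d-4}, \eqref{lem-B-1} and \eqref{lem-B-3} followed by elementary Pochhammer bookkeeping, which is exactly what you propose. One small correction to your accounting for \eqref{lem-n-term}: the doubled factors do not cancel completely but leave the ratio $(q^{-n+1}\beta/z;q)_{2n}/(q^{-n}\beta/z;q)_{2n}=\bigl(1-q^{n}\beta/z\bigr)/\bigl(1-q^{-n}\beta/z\bigr)$, a degree-one factor that must still be absorbed into the length-$n$ Pochhammers; this does not affect the soundness of the method.
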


\subsection[Final step of proof with Bailey's transformation formula for terminating very-well-poised balanced \_\{10\}phi\_9 series]{Final step of proof with Bailey's transformation formula \\ for terminating very-well-poised balanced $\boldsymbol{{}_{10}\phi_9}$ series}
Recall that Bailey's transformation formula for
terminating very-well-poised balanced ${}_{10}\phi_9$ series \cite[formulas~(2.9.4) and (2.9.5)]{hypergeometric} reads
\begin{align}
&{}_{10}W_9(a;b,c,d,e,f,g,h;q,q)\nonumber\\
&\qquad{}=
{(aq,aq/ef,aq/eg,aq/eh,aq/fg,aq/fh,aq/gh,aq/efgh;q)_\infty \over
(aq/e,aq/f,aq/g,aq/h,aq/efg,aq/efh,aq/egh,aq/fgh;q)_\infty}\nonumber\\
&\qquad\quad{}\times
{}_{10}W_9\bigl(qa^2/bcd;aq/bc,aq/bd,aq/cd,e,f,g,h;q,q\bigr),
\end{align}
where at least one of the parameters $e$, $f$, $g$, $h$ is of the form $q^{-n}$, $n=0,1,2,\dots$,
and the balancing condition is satisfied, namely
\begin{align}
q^2 a^3=bcdefgh. \label{cond}
\end{align}
In particular, by setting $h=q^{-n}$, we have
\begin{align}
&{}_{10}W_9(a;b,c,d,e,f,g,q^{-n};q,q)\label{Bailey}\\
&\qquad{}=
{(aq,aq/ef,aq/eg,aq/fg;q)_n\over
(aq/e,aq/f,aq/g,aq/efg;q)_n}
{}_{10}W_9\bigl(qa^2/bcd;aq/bc,aq/bd,aq/cd,e,f,g,q^{-n};q,q\bigr).\nonumber
\end{align}

\begin{Proposition}\label{10W9}
Let $n,i,j\in\mathbb{Z}_{\geq 0}$ and $i,j\leq n$.
Applying Bailey's transformation twice,
we have
\begin{align}
&
{}_{10}W_9
\bigl(q^{-n}z/\alpha;q^{-i},q^{-j},q^{-n+i}w/\alpha,q^{-n+j}\beta/w,z/\alpha\beta,qz/\alpha,q^{-n+1}z;q,q \bigr)\nonumber\\
&\qquad{}=
{
(1/\alpha,1/\beta,w/z,\alpha\beta/zw;q)_n
\over
(\alpha/z,\beta/z,1/w,w/\alpha\beta;q)_n}
{
\bigl(\beta/z,w/\alpha\beta,q^{-n+1}z/\alpha,q^{-n+1}w;q\bigr)_i
\over
\bigl(1/\alpha,w/z,q^{-n+1}\beta,q^{-n+1}zw/\alpha\beta;q\bigr)_i
}\nonumber\\
&\qquad\quad{}\times
{
\bigl(\beta/z,1/w,q^{-n+1}z/\alpha,q^{-n+1}\alpha\beta/w;q\bigr)_j
\over
\bigl(1/\alpha,\alpha \beta/zw,q^{-n+1}\beta,q^{-n+1}z/w;q\bigr)_j
}\nonumber\\
&\qquad\quad{}\times {}_{10}W_9
\bigl(q^{-n}z/\beta;q^{-n+i},q^{-n+j},q^{-i}\alpha/w,q^{-j}w/\beta,z/\alpha\beta,qz/\beta,q^{-n+1}z;q,q \bigr).\label{Bailey-2}
\end{align}
\end{Proposition}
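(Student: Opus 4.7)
The plan is to apply Bailey's transformation \eqref{Bailey} twice in succession, passing through an intermediate terminating very-well-poised balanced $_{10}\phi_9$ series. As a preliminary step, I would verify that both $_{10}W_9$ series in \eqref{Bailey-2} are indeed terminating, very-well-poised, and balanced, so that \eqref{Bailey} is legitimately applicable. For the LHS with $a=q^{-n}z/\alpha$, multiplying the seven non-$a$ parameters gives $q^{-3n+2}z^3/\alpha^3 = q^2a^3$; for the RHS with $a'=q^{-n}z/\beta$, the analogous computation gives $q^{-3n+2}z^3/\beta^3 = q^2(a')^3$. Termination on either side is evident from the factor $q^{-i}$ (or $q^{-n+i}$).

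Since no single application of \eqref{Bailey} can send the base $q^{-n}z/\alpha$ directly to $q^{-n}z/\beta$ — this would require the transformed triple to satisfy $bcd=q^{1-n}z\beta/\alpha^2$, a product that cannot be assembled from the seven available parameters of the LHS — I would proceed in two stages. First, apply \eqref{Bailey} to the LHS with a triple $(b,c,d)$ chosen to preserve the two ``shared'' parameters $z/\alpha\beta$ and $q^{-n+1}z$ that appear on both sides of \eqref{Bailey-2} among the spectator set $e,f,g,h$, with $h = q^{-n+j}\beta/w$ (or similar) providing termination. A natural candidate for the transformed triple involves the two $w$-bearing parameters $q^{-n+i}w/\alpha$, $q^{-n+j}\beta/w$ together with $qz/\alpha$; this produces an intermediate $_{10}W_9$ whose base is a monomial in $q,w,z,\alpha,\beta$ and whose parameter set still contains a negative integer power of $q$. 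A second application of \eqref{Bailey}, with the surviving non-shared parameters (including the new base-shifting triple) playing the role of $(b,c,d)$, then lands on the target $_{10}W_9$ series with base $q^{-n}z/\beta$. At each step the balancing condition must be re-verified by multiplying out the seven new parameters.

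The main obstacle will be the bookkeeping of the prefactors. Each application of \eqref{Bailey} contributes a ratio of four $q$-Pochhammer products of length equal to the termination index (one of $n$, $i$, or $j$ depending on the chosen triple). Combining the two steps produces a product of eight such ratios, which must be collapsed to the three-fold product in Pochhammers of lengths $n$, $i$, and $j$ displayed on the right-hand side of \eqref{Bailey-2}. This step requires repeated use of the elementary identities $(a;q)_{k+\ell}=(a;q)_k(aq^k;q)_\ell$ and $(a;q)_{n-k}=(a;q)_n/(aq^{n-k};q)_k$, together with the inversion formula \eqref{a_n-inversion} applied to Pochhammers whose argument is a negative power of $q$. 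The tactical point on which the proof hinges is the choice of triples in the two Bailey steps: a poor choice yields a tangled intermediate prefactor, whereas the optimal choice is the one for which cancellations force the intermediate prefactor to factor cleanly into $n$-, $i$-, and $j$-length Pochhammer blocks, after which the collapse to the stated right-hand side is mechanical.
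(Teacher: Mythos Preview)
Your two-step Bailey strategy is exactly what the paper does, and your heuristic of preserving the shared spectators $z/\alpha\beta$ and $q^{-n+1}z$ through both steps is the right one. However, the specific choices you suggest are internally inconsistent and lead to a genuine obstruction at the second step.

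First, the terminating parameter $h$ in \eqref{Bailey} must be a genuine $q^{-m}$ with $m\in\mathbb{Z}_{\geq 0}$; the factor $q^{-n+j}\beta/w$ you nominate as ``$h$'' terminates nothing. The only terminating parameters available on the LHS are $q^{-i}$ and $q^{-j}$, so one of these must sit among the spectators $e,f,g,h$ at each application.

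Second---and this is the real gap---your suggested first triple $(b,c,d)=(q^{-n+i}w/\alpha,\,q^{-n+j}\beta/w,\,qz/\alpha)$ does yield a valid intermediate ${}_{10}W_9$ with base $q^{-i-j}z/\beta$ (using $q^{-i}$ or $q^{-j}$ as the terminating spectator). But to reach the target base $q^{-n}z/\beta$ from there, the second triple is forced to be $(q^{n+1-i-j}z/\beta,\,q^{-i},\,q^{-j})$, which absorbs \emph{both} terminating parameters into $(b',c',d')$. The remaining spectators $q^{-i}\alpha/w,\,q^{-j}w/\beta,\,z/\alpha\beta,\,q^{-n+1}z$ contain no $q^{-m}$, so \eqref{Bailey} as stated no longer applies.

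The paper's choice sidesteps this by splitting the terminating parameters across the two steps: step~1 takes $(b,c,d)=(q^{-j},\,q^{-n+j}\beta/w,\,qz/\alpha)$ with terminating spectator $h=q^{-i}$, giving intermediate base $q^{-n}zw/\alpha\beta$ and a new terminating parameter $aq/bd=q^{-n+j}$; step~2 then takes $(b,c,d)=(q^{-i},\,q^{-n+i}w/\alpha,\,qzw/\alpha\beta)$ with terminating spectator $h=q^{-n+j}$. The two prefactors come out as Pochhammers of length $i$ and $n-j$ respectively, and the collapse to the displayed $n$-, $i$-, $j$-blocks is then mechanical, exactly as you anticipate.
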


\begin{proof}
Observe that the balancing condition as in (\ref{cond}) is satisfied
\begin{align*}
q^2 \biggl( {q^{-n}z\over\alpha} \biggr)^3=
q^{-i}q^{-j}
{ q^{-n+i}w\over \alpha}{q^{-n+j}\beta\over w}{z\over \alpha\beta}{qz\over \alpha}{q^{-n+1}z}.
\end{align*}
Hence, Bailey's transformation formula (\ref{Bailey}) applies and we have
\begin{align*}
&
{}_{10}W_9
\bigl(q^{-n}z/\alpha;q^{-i},q^{-j},q^{-n+i}w/\alpha,q^{-n+j}\beta/w,z/\alpha\beta,qz/\alpha,q^{-n+1}z;q,q \bigr)\\
&\qquad{}=
{}_{10}W_9
\bigl(q^{-n}z/\alpha;q^{-j},q^{-n+j}\beta/w,qz/\alpha,q^{-n+i}w/\alpha,z/\alpha\beta,q^{-n+1}z,q^{-i};q,q \bigr)\\
&\qquad{}=
{\bigl(q^{-n+1}z/\alpha, q^{-i+1}\alpha\beta/w, q^{-i+n}/w,\beta/z;q\bigr)_i
\over
\bigl(q^{-i+1}z/w,q^{-n+1}\beta,1/\alpha,q^{-i+n}\alpha\beta/zw;q\bigr)_i } \\
&\qquad\quad{}\times
{}_{10}W_9
\bigl(q^{-n}zw/\alpha\beta;qzw/\alpha\beta,q^{-n+j}, q^{-j}w/\beta, q^{-n+i}w/\alpha, z/\alpha\beta, q^{-n+1}z, q^{-i};q,q \bigr).
\end{align*}
Applying Bailey's transformation once again, we have
\begin{align*}
&{}_{10}W_9
\bigl(q^{-n}zw/\alpha\beta; qzw/\alpha\beta,q^{-n+j}, q^{-j}w/\beta, q^{-n+i}w/\alpha, z/\alpha\beta, q^{-n+1}z, q^{-i};q,q \bigr)\\
&\qquad{}={}_{10}W_9
\bigl(q^{-n}zw/\alpha\beta; q^{-i}, q^{-n+i}w/\alpha, qzw/\alpha\beta, q^{-j}w/\beta, z/\alpha\beta, q^{-n+1}z,q^{-n+j} ;q,q \bigr)\\
&\qquad{}=
{\bigl(q^{-n+1}zw/\alpha\beta,q^{-n+j+1}\beta,q^{j}/\alpha,w/z;q\bigr)_{n-j}
\over \bigl(q^{-n+j+1}z/\alpha,q^{-n+1} w,w/\alpha\beta,q^j \beta/z;q\bigr)_{n-j}
} \\
&\qquad\quad{}\times
{}_{10}W_9
\bigl(q^{-n}z/\beta; qz/\beta, q^{-n+i}, q^{-i}\alpha/w, q^{-j}w/\beta, z/\alpha\beta, q^{-n+1}z,q^{-n+j} ;q,q \bigr).
\end{align*}
Simplifying the prefactor we have (\ref{Bailey-2}).
\end{proof}

Now we are ready to state our proof of Theorem \ref{thm-TBP}, therewith its reformulation Theorem~\ref{TBP-2}.

\begin{proof}[Proof of Theorem \ref{thm-TBP}]
{}In view of (\ref{final-1})--(\ref{final-3}) in Proposition \ref{prp-LDU-UDL-1}, and
(\ref{Bailey-2}) in Proposition \ref{10W9}, we have the equality of matrices
\begin{align*}
{1\over \mathsf{L}^1_{i0}\cdot d^R_{0}\cdot \mathsf{U}^1_{0j}}
\mathsf{L}^1\cdot D_R\cdot \mathsf{U}^1 =
{1\over \mathsf{L}^1_{i0}\cdot d^R_{0}\cdot \mathsf{U}^1_{0j}}
D_A\cdot \mathsf{U}^2 \cdot D'_R \cdot \mathsf{L}^2\cdot {D'_A}^{\!\!-1},
\end{align*}
which is equivalent to the commutativity $RD_2A=A R D_2$.
\end{proof}

\section{Truncation by tuning mass parameters }
\label{App:Tuning}

In \cite{Awata:2022idl}, we employed the following orbifolded Nekrasov factor for the affine Laumon space:
\begin{align}\label{Macdonaldform}
&\Nk^{(k|N)}_{\lambda,\mu}(u|q,\kappa)=
\Nk^{(k)}_{\lambda,\mu}(u|q,\kappa) \\
&=
 \prod_{j\geq i\geq 1 \atop j-i \equiv k \,\,({\rm mod}\,N)}
\bigl[u q^{-\mu_i+\lambda_{j+1}} \kappa^{-i+j};q\bigr]_{\lambda_j-\lambda_{j+1}}
\cdot
\prod_{\beta\geq \alpha \geq 1 \atop \beta-\alpha \equiv -k-1 \,\,({\rm mod}\,N)}
\bigl[u q^{\lambda_{\alpha}-\mu_\beta} \kappa^{\alpha-\beta-1};q\bigr]_{\mu_{\beta}-\mu_{\beta+1}}, \nonumber
\end{align}
with $[u;q]_n:=u^{-n/2}q^{-n(n-1)/4} (u;q)_n$.
To work out the tuning condition of mass parameters required for the truncation of the Young diagrams,
it is convenient to recast \eqref{Macdonaldform} to the ``Nakajima'' form,
which is expressed as a product over the boxes of the Young diagrams $(\lambda, \mu)$.
In the following, we temporally replace $[u;q]_n$ by $(u;q)_n$, since we can neglect the monomial factors for our purpose.

By taking the product over $\Nk^{(0)}$ to $\Nk^{(N-1)}$, we can remove the selection rule,
\begin{align}
\Nk_{\lambda,\mu}(u|q,\kappa):={}& \prod_{k=0}^{N-1} \Nk^{(k)}_{\lambda,\mu}(u|q,\kappa) \nonumber \\
={}& \!\prod_{j\geq i\geq 1}\!\bigl(u q^{-\mu_i+\lambda_{j+1}} \kappa^{-i+j};q\bigr)_{\lambda_j-\lambda_{j+1}}
\cdot\! \prod_{\beta\geq \alpha \geq 1} \!\bigl(u q^{\lambda_{\alpha}-\mu_\beta} \kappa^{\alpha-\beta-1};q\bigr)_{\mu_{\beta}-\mu_{\beta+1}}.\!
\end{align}
Namely, we can obtain $\Nk^{(k|N)}_{\lambda,\mu}(u|q,\kappa)$ by factorizing
the total Nekrasov factor $\Nk_{\lambda,\mu}(u|q,\kappa)$ according to the power of $\kappa$.
Now we can transform the total Nekrasov factor as follows:
\begin{align}
\Nk_{\lambda,\mu}(u|q,\kappa)
={}& \prod_{j\geq i\geq 1} \frac{\bigl(u q^{-\mu_i+\lambda_{j+1}} \kappa^{-i+j};q\bigr)_\infty}
{\bigl(u q^{-\mu_i+\lambda_{j}} \kappa^{-i+j};q\bigr)_\infty}
\cdot \prod_{i\geq j \geq 1}\frac{\bigl(u q^{\lambda_{j}-\mu_i} \kappa^{j-i-1};q\bigr)_\infty}
{\bigl(u q^{\lambda_{j}-\mu_{i+1}} \kappa^{j-i-1};q\bigr)_\infty} \nonumber \\
={}& \prod_{i,j =1}^\infty \frac{\bigl(u q^{\lambda_j - \mu_i} \kappa^{j-i-1};q\bigr)_\infty}
{\bigl(u q^{\lambda_j - \mu_i} \kappa^{j-i};q\bigr)_\infty}
\cdot \frac{\bigl(u \kappa^{j-i};q\bigr)_\infty}{\bigl(u \kappa^{j-i-1};q\bigr)_\infty} \nonumber \\
={}& \prod_{(i,j) \in \lambda} \bigl(1- u q^{\lambda_i-j} \kappa^{-\mu^{\vee}_j + i-1}\bigr) \cdot
\prod_{(i,j) \in \mu} \bigl(1- u q^{-\mu_i+j-1} \kappa^{\lambda^{\vee}_j -i}\bigr).
\end{align}
where we have used the combinatorial identity \big(cf.\ proposition in \cite{Awata:2008ed}, $(2.12)=(2.9)$ with $\kappa=t^{-1}$\big)
for the last equality.
When one of the partitions is empty as in the case of fundamental matter contributions, the Nekrasov factor is simplified
considerably,
\begin{gather}
\Nk_{\lambda,\varnothing}(u|q,\kappa) = \prod_{(i,j) \in \lambda} \bigl(1- u q^{\lambda_i-j} \kappa^{i-1}\bigr),
\nonumber\\
\Nk_{\varnothing,\mu}(u|q,\kappa)= \prod_{(i,j) \in \mu} \bigl(1- u q^{-\mu_i+j-1} \kappa^{-i}\bigr).
\end{gather}
When $N=2$, the factors from the first rows of $\lambda$ and $\mu$ are contained in \smash{$\Nk_{\lambda,\varnothing}^{(0)}(u|q,\kappa)$}
and \smash{$ \Nk_{\varnothing,\mu}^{(1)}(u|q,\kappa)$}, respectively, which are explicitly
\begin{equation}
\prod_{k=0}^{\lambda_1-1}\bigl(1-uq^k\bigr), \qquad \prod_{k=1}^{\mu_1} \bigl(1- u q^{-k} \kappa^{-1}\bigr).
\end{equation}
Hence, we can see that if $u=q^{-m}$, \smash{$\Nk_{\lambda,\varnothing}^{(0)}(u|q,\kappa)$} vanishes for $m \leq \lambda_1-1$.
Similarly, if~${u \kappa^{-1}\! =q^{n+1}}$, \smash{$\Nk_{\varnothing,\mu}^{(1)}(u|q,\kappa)$} vanishes for $n+1 \leq \mu_1$.

In the localization formula of the partition function,
the numerator of the contribution from a fixed point \smash{$\bigl(\lambda^{(1)},\lambda^{(2)}\bigr)$}, which is identified
with the matter contribution is explicitly given by%
\begin{align}
&\prod_{i,j=1}^2 \mathsf{N}^{(j-i|2)}_{\varnothing,\lambda^{(j)}} (u_i/v_j |q,\kappa)
\mathsf{N}^{(j-i|2)}_{\lambda^{(i)},\varnothing} (v_i/w_j |q,\kappa) \nonumber \\
&\quad{}=\mathsf{N}^{(0)}_{\varnothing,\lambda^{(1)}} (u_1/v_1 |q,\kappa)
\mathsf{N}^{(1)}_{\varnothing,\lambda^{(2)}} (u_1/v_2 |q,\kappa)
\mathsf{N}^{(1)}_{\varnothing,\lambda^{(1)}} (u_2/v_1 |q,\kappa)
\mathsf{N}^{(0)}_{\varnothing,\lambda^{(2)}} (u_2/v_2 |q,\kappa) \nonumber \\
&\qquad{}\times
\mathsf{N}^{(0)}_{\lambda^{(1)},\varnothing} (v_1/w_1 |q,\kappa)
\mathsf{N}^{(1)}_{\lambda^{(1)},\varnothing} (v_1/w_2|q,\kappa)
\mathsf{N}^{(1)}_{\lambda^{(2)},\varnothing} (v_2/w_1|q,\kappa)
\mathsf{N}^{(0)}_{\lambda^{(2)},\varnothing} (v_2/w_2|q,\kappa).
\end{align}
Let us look at the mass parameter tuning conditions such that
the columns of the Young diagrams of $\lambda^{(1)}$ and
$\lambda^{(2)}$ are at most $m$ and $n$, respectively.
We need a zero coming from
\smash{$\mathsf{N}^{(0)}_{\lambda^{(1)},\varnothing} (v_1/w_1|q,\kappa)$}
or \smash{$\mathsf{N}^{(1)}_{\varnothing,\lambda^{(1)}} (u_2/v_1 |q,\kappa)$}, namely,
\beq\label{pit1}
(v_1/w_1)= q^{-m}, \qquad (u_2/v_1)\kappa^{-1} = q^{m+1}.
\eeq
Similarly, we need a zero coming from
\smash{$\mathsf{N}^{(0)}_{\lambda^{(2)},\varnothing} (v_2/w_2|q,\kappa)$}
or \smash{$\mathsf{N}^{(1)}_{\varnothing,\lambda^{(2)}} (u_1/v_2 |q,\kappa)$}, namely,
\beq\label{pit2}
(v_2/w_2) = q^{-n}, \qquad (u_1/v_2) \kappa^{-1} = q^{n+1}.
\eeq

Recall that we use the following specialization (see \eqref{Yamada}) in this paper:
\beq
u_1 = \frac{qQ}{d_3}, \quad u_2 = \frac{\kappa q}{d_1}, \qquad
v_1 = 1, \quad v_2 = \frac{Q}{\kappa}, \qquad
w_1 = \frac{1}{d_2}, \quad w_2 = \frac{Q}{d_4 \kappa}.
\eeq
Hence, the above conditions \eqref{pit1} and \eqref{pit2} are translated to
\beq
d_2 = q^{-m}, \qquad d_1 = q^{-m},
\eeq
and
\beq
d_4 = q^{-n}, \qquad d_3 = q^{-n}.
\eeq
The conditions for $d_1$ and $d_2$ are the same and those for $d_3$ and $d_4$ are also the same.
Thus, there are four equivalent choices for the truncation of the Young diagrams.
In the main text of the paper, we choose $d_2=q^{-m}$ and $d_3=q^{-n}$.

%%%%%%%%%%%%%%%%%%%%%%%%%%%%%%%%%%%%%%%%%%%%%%%%%%%%%%%%%%%%%%%%%%%%%%%%%%%%%%%%%%%%%%%%%%%%%%%%%%%%%%%%%%%%%%%%%%%%%%%%%%

\section{Affine Laumon space and orbifolded Nekrasov factor}
\label{App:Laumon}

The affine Laumon space is the moduli space of parabolic torsion free sheaves
on $\mathbb{P}^1 \times \mathbb{P}^1$. Originally in \cite{Laumon1,Laumon2},
a partial compactification of the space of quasi-maps from $\mathbb{P}^1$ to the flag
variety of ${\rm GL}_N$ was defined. The geometric representation theory of the compactification is
controlled by $\mathfrak{gl}_N$. There is an action of the $\mathfrak{gl}_N$-Yangian
on the equivariant cohomology of the Laumon space~\cite{FFNR}. The affine Laumon space is an affine analogue,
where the base manifold is replaced by $\mathbb{P}^1 \times \mathbb{P}^1$
and we have an action of the affine Yangian of $\mathfrak{gl}_N$.
In \cite{Negut2}, it was shown that~$\qg\big(\widehat{\mathfrak{gl}}_n\big)$ acts on the equivariant $K$-theory of the affine Laumon space,
which cannot be irrelevant to the present work.
In algebraic geometry such ``affine'' quasi-maps are described by torsion free sheaves on $\mathbb{P}^1 \times \mathbb{P}^1$
with a canonical framing at $\{\infty\} \times \{\infty\}$ and a parabolic structure on $\mathbb{P}^1 \times \{ 0 \}$.

In the context of four-dimensional $\mathcal{N}=2$ supersymmetric gauge theories
the affine Laumon space was first featured in \cite{Braverman:2004vv,Braverman:2004cr},
where the conjecture on the relation of the Nekrasov partition function and the Seiberg--Witten
prepotential was proved. Then as a generalization of the AGT correspondence for Virasoro and $\mathcal{W}_N$ conformal blocks,
Alday--Tachikawa \cite{Alday:2010vg} pointed out that we could employ the affine Laumon space to describe
the theory with a surface defect whose classification by the embedding of $\mathfrak{sl}_2$ subalgebra into $\mathfrak{sl}_N$
is exactly the same as that of the parabolic structure on $\mathbb{P}^1 \times \{ 0 \}$.
In~\cite{Alday:2010vg}, it is conjectured when the defect is of the full type the instanton partition function
gives a conformal block for the $\mathfrak{sl}_N$ current algebra.\footnote{For the codimension two surface defect
of general type the conjecture is that the corresponding chiral algebra is the $\mathcal{W}$ algebra obtained
from Drinfeld--Sokolov reduction of $\widehat{\mathfrak{sl}}_N$ current algebra, see, for example,~\cite{Kanno:2011fw}.}

It is known that the parabolic sheaves are equally described by the sheaves with the $\mathbb{Z}_M$ orbifold action
$\mathbb{P}^1 \times \mathbb{P}^1 \ni (z,w) \to (z,\omega w)$ with $\omega^M =1$.
Moreover a natural toric action on $\mathbb{P}^1 \times \mathbb{P}^1$ induces that on the affine Laumon space.
The fixed points of the toric action are labelled by $N$-tuples of Young diagrams, which allows us
to use the technique of the equivariant localization. In practice the relevant quiver is called
the chain-saw quiver \cite{FR}. In particular, the equivariant character of the Yangian action on the equivariant cohomology
of the affine Laumon space was computed in \cite{FFNR}. It is straightforward to up-lift the formula in \cite{FFNR}
to the $K$ theory version, which we use in the present paper.
On the method of orbifolding for the computation of the instanton partition function with a surface defect,
see, for example, \cite{Bullimore:2014awa,Kanno:2011fw,Nekrasov:2017rqy,Nekrasov:2017gzb,Nekrasov:2021tik}.

In \cite{BFS}, the relation of the asymptotically free Macdonald polynomials and the geometric representation theory of the Laumon space was clarified.
In an attempt at generalizing this result to an affine version, a formula of the non-stationary Ruijsenaars function
is obtained in~\cite{Shi} based on the affine screening operators.
The non-stationary Ruijsenaars function proposed in~\cite{Shi}
agrees with the instanton partition function obtained from the equivariant Euler character of % of delete
the affine Laumon space.\footnote{Physically this corresponds to the gauge theory with adjoint hypermultiplet,
sometimes called $\mathcal{N}=2^{*}$ theory.} For the relation of the formula in \cite{Shi}, which we employ in the present paper,
and the formula of the equivariant character in \cite{FFNR}, see, for example, \cite[Appendix~B]{Awata:2019isq}.
See also \cite{Negut:2011aa} on the relations of the geometry of affine Laumon space, intertwines of the quantum toroidal algebra
and associated integrable systems.

As we have seen in Appendix \ref{App:Tuning}, the total Nekrasov factor
\begin{align}
\Nk_{\lambda,\mu}(u|q,\kappa)
= & \prod_{(i,j) \in \lambda} \bigl(1- u q^{\lambda_i-j} \kappa^{-\mu^{\vee}_j + i-1}\bigr) \cdot
\prod_{(i,j) \in \mu} \bigl(1- u q^{-\mu_i+j-1} \kappa^{\lambda^{\vee}_j -i}\bigr), \nonumber \\
= & \prod_{i,j =1}^\infty \frac{\bigl(u q^{\lambda_j - \mu_i} \kappa^{j-i-1};q\bigr)_\infty}
{\bigl(u q^{\lambda_j - \mu_i} \kappa^{j-i};q\bigr)_\infty}
\cdot \frac{\bigl(u \kappa^{j-i};q\bigr)_\infty}{\bigl(u \kappa^{j-i-1};q\bigr)_\infty}\label{q-base}
\end{align}
can be factorized into $\Nk^{(k\vert n)} _{\lambda,\mu}(u|q,\kappa)$, $0 \leq k \leq n-1$,
according to the power of $\kappa$. The combinatorial identities in \cite[(2.8) and (2.13)]{Awata:2008ed} also show
the following ``transposed'' formula for the total
Nekrasov factor:
\begin{align}\label{t-base}
\Nk_{\lambda,\mu}(u|q,t=\kappa^{-1})
= & \prod_{(i,j) \in \mu} \bigl(1- u q^{\lambda_i-j} t^{\mu^{\vee}_j - i+1}\bigr) \cdot
\prod_{(i,j) \in \lambda} \bigl(1- u q^{-\mu_i+j-1} t^{-\lambda^{\vee}_j +i}\bigr) \nonumber \\
= & \prod_{i,j =1}^\infty \frac{\bigl(u t^{\mu_i^{\vee} - \lambda_j^{\vee} +1} q^{j-i};t\bigr)_\infty}
{\bigl(u t^{\mu_i^{\vee} - \lambda_j^{\vee} +1} q^{j-i-1};t\bigr)_\infty}
\cdot \frac{\bigl(u t q^{j-i-1};t\bigr)_\infty}{\bigl(u t q^{j-i};t\bigr)_\infty}.
\end{align}
Compared with \eqref{q-base}, the partitions $\lambda$ and $\mu$ in the formula \eqref{t-base} are
exchanged in the product over the boxes of Young diagram, while the form of each factor is kept intact.
Factorization of~\eqref{t-base} according to the power of $\kappa$ gives the following formula
of the orbifolded Nekrasov factor:
\begin{gather}
\mathsf{N}_{\lambda \mu}^{(k\vert n)}(u|q,\kappa) =
\prod_{i \leq j} \prod_{\ell=0 \atop \lambda_{j+1}^\vee - \mu_i^\vee + \ell \equiv k}^{\lambda_{j}^{\vee} - \lambda_{j+1}^\vee -1}
\bigl[u q^{j-i} \kappa^{\lambda_{j+1}^\vee - \mu_i^\vee + \ell}\bigr] \nonumber\\ \hphantom{\mathsf{N}_{\lambda \mu}^{(k\vert n)}(u|q,\kappa) =}{}
\times
\prod_{i \leq j} \prod_{\ell=0 \atop \lambda_{i}^\vee - \mu_j^\vee + \ell \equiv k}^{\mu_{j}^{\vee} - \mu_{j+1}^\vee -1}
\bigl[u q^{i-j-1} \kappa^{\lambda_{i}^\vee - \mu_{j}^\vee + \ell}\bigr],\label{awata}
\end{gather}
where $[u]=u^{-1/2}-u^{1/2}$.
%\beq\label{awata}
%\mathsf{N}_{\lambda \mu}^{(k\vert n)} =
%\prod_{i \leq j} \prod_{\ell=0 \atop \lambda_{j+1}^\vee - \mu_i^\vee + \ell \equiv k}^{\lambda_{j}^{\vee} - \lambda_{j+1}^\vee -1}
%(1- q^{j-i} \kappa^{\lambda_{j+1}^\vee - \mu_i^\vee + \ell}) \cdot
%\prod_{i \leq j} \prod_{\ell=0 \atop \lambda_{i}^\vee - \mu_j^\vee + \ell \equiv k}^{\mu_{j}^{\vee} - \mu_{j+1}^\vee -1}
%(1- q^{i-j-1} \kappa^{\lambda_{i}^\vee - \mu_{j}^\vee + \ell}),
%\eeq
%where we have substituted $u=1$ for simplicity.
%
Remark that usually the Nekrasov factor
(such as $\Nk_{\lambda,\mu}(u|q,\kappa)$ as above) is expressed in terms of $(u;q)_n$. But
we find in the case of the affine Laumon space it is more appropriate to use
\begin{equation}
[u;q]_n=u^{-n/2}q^{-n(n-1)/4} (u;q)_n
= [u][q u]\cdots \bigl[q^{n-1}u\bigr].
\end{equation}

When $n \to \infty$, we have to regularize the monomial factor which connects $[u;q]_n$ and $(u;q)_n$.
One of the ways of the regularization is to define $[u;q]_\infty$ by
\beq\label{sinh-reg}
[u;q]_\infty := \frac{(u;q)_\infty}{\vartheta_{q^{1/2}}\bigl(- u^{1/2}\bigr)}
= \frac{\bigl(u^{1/2}; q^{1/2}\bigr)_\infty}{\bigl(-q^{1/2}u^{-1/2}; q^{1/2}\bigr)_\infty},
\eeq
where $\vartheta_p(z):= (z;p)_\infty \bigl(pz^{-1};p\bigr)_\infty$.
One can check
\begin{align}\label{sinh-ratio}
\frac{[u;q]_\infty}{[q^n u ; q]_\infty} &=
\frac{\bigl(u^{1/2} ; q^{1/2}\bigr)_\infty}{\bigl(-q^{1/2} u^{-1/2}; q^{1/2}\bigr)_\infty}
\frac{\bigl(-q^{(1-n)/2} u^{-1/2}; q^{1/2}\bigr)_\infty}{\bigl(q^{n/2} u^{1/2} ; q^{1/2}\bigr)_\infty}
\nonumber \\
&= \bigl(u^{1/2}; q^{1/2}\bigr)_n \bigl(-q^{\bigl(1-n\bigr)/2} u^{-1/2}; q^{1/2}\bigr)_n = [u; q]_n.
\end{align}
In particular, we have $[u;q]_\infty = \bigl(u^{-1/2} - u^{1/2}\bigr) [qu;q]_\infty$.
We also find
\begin{equation}
\frac{[u;q]_\infty}{[q/u;q]_\infty}
= \frac{\bigl(u^{1/2};q^{1/2}\bigr)_\infty}{\bigl(-q^{1/2}u^{-1/2};q^{1/2}\bigr)_\infty}
\frac{\bigl(-u^{1/2};q^{1/2}\bigr)_\infty}{\bigl(q^{1/2}u^{-1/2};q^{1/2}\bigr)_\infty}
= \frac{(u;q)_\infty}{(q/u;q)_\infty}.
\end{equation}

It is possible to express the result of the selection rule in the orbifolded Nekrasov factor \eqref{awata}
in terms of the floor function $\floor{\bullet}$.
In the following manipulations, we use the formulas for the floor function,
\beq\label{floor-formula}
\biggl\lfloor\frac{\ell}{n}\biggr\rfloor + 1 = - \biggl\lfloor\frac{-\ell-1}{n}\biggr\rfloor, \qquad
\biggl\lfloor\frac{\ell+m}{n}\biggr\rfloor = \biggl\lfloor\frac{\ell}{n}\biggr\rfloor + \biggl\lfloor\frac{(\ell)+m}{n}\biggr\rfloor, \qquad \ell, m \in \bbZ,
\eeq
where $(\ell)$ denotes the residue of an integer $\ell$ modulo $n$.

\begin{Proposition}\label{Shiraishi}
The orbifolded Nekrasov factor \eqref{awata} is given by the following formula:
\begin{align}\label{eq:Shiraishi}
\mathsf{N}_{\lambda \mu}^{(k\vert n)} &{}=
\prod_{i \leq j} \Bigl[q^{j-i} \kappa^{k + n\floor{\frac{\lambda_{j+1}^\vee + n -1 -k -\bigl(\mu_i^\vee\bigr)}{n}} -n \floor{\frac{\mu_i^\vee}{n}}}
; \kappa^n\Bigr]_{\floor{\frac{\lambda_{j}^\vee + n -1 -k - \bigl(\mu_i^\vee\bigr)}{n}} - \floor{\frac{\lambda_{j+1}^\vee + n -1 -k - \bigl(\mu_i^\vee\bigr)}{n}}} \nonumber \\
&\quad{}\times
\prod_{i \leq j} \Bigl[q^{i-j-1} \kappa^{k + n\floor{\frac{\lambda_{i}^\vee + n -1 -k-(\mu_j^\vee)}{n}} -n \floor{\frac{\mu_j^\vee}{n}}}
; \kappa^n\Bigr]_{\floor{\frac{\mu_{j}^\vee + k + (-\lambda_i^\vee)}{n}} - \floor{\frac{\mu_{j+1}^\vee + k + (-\lambda_i^\vee)}{n}}} \nonumber \\
&{}=
\prod_{i \leq j} \Bigl[q^{j-i} \kappa^{k + n\floor{\frac{\lambda_{j+1}^\vee + n -1 -k -\bigl(\mu_i^\vee\bigr)}{n}} -n \floor{\frac{\mu_i^\vee}{n}}}
; \kappa^n\Bigr]_{\floor{\frac{\lambda_{j}^\vee -1 -k - \bigl(\mu_i^\vee\bigr)}{n}} - \floor{\frac{\lambda_{j+1}^\vee -1 -k - \bigl(\mu_i^\vee\bigr)}{n}}} \nonumber \\
&\quad{}\times
\prod_{i \leq j} \Bigl[q^{i-j-1} \kappa^{k + n\floor{\frac{\lambda_{i}^\vee + n -1 -k-(\mu_j^\vee)}{n}} -n \floor{\frac{\mu_j^\vee}{n}}}
; \kappa^n\Bigr]_{\floor{\frac{\mu_{j}^\vee + k + (-\lambda_i^\vee)}{n}} - \floor{\frac{\mu_{j+1}^\vee + k + (-\lambda_i^\vee)}{n}}}.
\end{align}
\end{Proposition}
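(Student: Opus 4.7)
\bigskip

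\noindent\textbf{Proof proposal for Proposition \ref{Shiraishi}.}

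The plan is to rewrite each of the two $\ell$-products in \eqref{awata} into the form $[\,\cdot\,;\kappa^n]_N$ by (i) identifying the smallest value of $\ell$ satisfying the congruence constraint and recognising it as a floor function, and (ii) expressing the number of admissible $\ell$ as a difference of floors. Concretely, fix $i\leq j$ and consider the first product
\[
\prod_{\substack{\ell=0 \\ \lambda_{j+1}^\vee-\mu_i^\vee+\ell\equiv k}}^{\lambda_j^\vee-\lambda_{j+1}^\vee-1}
\bigl[u q^{j-i}\kappa^{\lambda_{j+1}^\vee-\mu_i^\vee+\ell}\bigr].
\]
Admissible $\ell$ form an arithmetic progression with common difference $n$, starting from
$\ell_0=(k-\lambda_{j+1}^\vee+\mu_i^\vee)\bmod n$. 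Hence, if $N_{ij}$ denotes the number of admissible $\ell$, the product equals $\bigl[u q^{j-i}\kappa^{\lambda_{j+1}^\vee-\mu_i^\vee+\ell_0};\kappa^n\bigr]_{N_{ij}}$ by the very definition of $[u;\kappa^n]_N$.

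Next I would convert $\ell_0$ and $N_{ij}$ into floor-function expressions using the two identities in \eqref{floor-formula}. For the base, write $\ell_0=(k-\lambda_{j+1}^\vee+\mu_i^\vee)-n\bigl\lfloor(k-\lambda_{j+1}^\vee+\mu_i^\vee)/n\bigr\rfloor$, so the exponent of $\kappa$ becomes $k - n\lfloor(k-\lambda_{j+1}^\vee+\mu_i^\vee)/n\rfloor$. Applying $\lfloor\ell/n\rfloor+1 = -\lfloor(-\ell-1)/n\rfloor$ turns this into $k + n\lfloor(\lambda_{j+1}^\vee-\mu_i^\vee-k+n-1)/n\rfloor$, and then $\lfloor(\ell+m)/n\rfloor=\lfloor\ell/n\rfloor+\lfloor((\ell)+m)/n\rfloor$ applied to $\ell=-\mu_i^\vee$, $m=\lambda_{j+1}^\vee+n-1-k$ gives exactly
$k + n\bigl\lfloor(\lambda_{j+1}^\vee+n-1-k-(\mu_i^\vee))/n\bigr\rfloor - n\lfloor\mu_i^\vee/n\rfloor$,
which matches the base in \eqref{eq:Shiraishi}.

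For the count $N_{ij}$, substitute $m=\ell+\lambda_{j+1}^\vee$ so that $m$ ranges in $[\lambda_{j+1}^\vee,\lambda_j^\vee-1]$ with residue $(k+\mu_i^\vee)\bmod n$. A direct arithmetic count gives $N_{ij} = \lfloor(\lambda_j^\vee+n-1-c)/n\rfloor - \lfloor(\lambda_{j+1}^\vee+n-1-c)/n\rfloor$ with $c = (k+\mu_i^\vee)\bmod n$. Writing $c = (k+(\mu_i^\vee)) - n\lfloor(k+(\mu_i^\vee))/n\rfloor$ and again applying the second identity of \eqref{floor-formula}, the $\lfloor(k+(\mu_i^\vee))/n\rfloor$ pieces cancel in the difference, yielding
\[
N_{ij}=\bigl\lfloor(\lambda_j^\vee+n-1-k-(\mu_i^\vee))/n\bigr\rfloor-\bigl\lfloor(\lambda_{j+1}^\vee+n-1-k-(\mu_i^\vee))/n\bigr\rfloor,
\]
which is the subscript in the first form of \eqref{eq:Shiraishi}. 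The second form then follows at once from $\lfloor(x+n)/n\rfloor=\lfloor x/n\rfloor+1$ applied to both terms. The very same argument, with the roles of $\lambda$ and $\mu$ swapped and indices shifted appropriately, handles the second product.

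The routine but delicate part is the bookkeeping of the floor-function manipulations: one must verify that the boundary case where no $\ell$ is admissible (so the product is empty) is correctly encoded by $N_{ij}=0$ in the floor-difference formula, which needs that $\lfloor(M+n-1-c)/n\rfloor$ does compute the correct count uniformly in $M$. I expect this to be the main (though mild) obstacle; beyond it, the proof is entirely mechanical once the two identities in \eqref{floor-formula} are exploited systematically.
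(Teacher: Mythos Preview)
Your proposal is correct and follows essentially the same approach as the paper: both arguments identify the admissible $\ell$ as an arithmetic progression of step $n$, compute its starting point $\ell_0$ and length, and then rewrite these in floor-function form using the identities \eqref{floor-formula}. The only cosmetic difference is that the paper introduces explicit quotient--remainder notation $\mu_i^\vee=q_1n+r_1$, etc., and verifies the base exponent by a short case analysis on the sign of $r_2-r_1-1-k$, whereas you invoke the floor identities directly; the subtlety you flag about empty products and the boundary behaviour of $(-\lambda_i^\vee)$ is exactly the point the paper also pauses on when handling the second factorial.
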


\begin{proof}
Since the selection rule on $\ell$ is imposed by $\equiv$ (the equality mod $n$),
$\mathsf{N}_{\lambda \mu}^{(k\vert n)}$ can be rewritten in terms of $\kappa^n$-shifted factorial $(z_0 ; \kappa^n)_m$.
Let us check that the initial value $z_0$ and the number $m$ of factors for each shifted factorial agree with those in the formula
\eqref{eq:Shiraishi}. To compute them for the first shifted factorial, let us assume
\beq
\mu_i^\vee = q_1 n + r_1, \qquad \lambda_{j+1}^\vee = q_2 n + r_2, \qquad \lambda_i^\vee = q_3 n + r_3, \qquad 0 \leq r_i\leq n-1.
\eeq
Namely, we may write, for example, $q_1 = \big\lfloor\frac{\mu_i^{\vee}}{n}\bigr\rfloor$ and $r_1 = \bigl(\mu_i^{\vee}\bigr)$. But in the following
we will use~$q_i$ and~$r_i$ for simplicity of notations.
The selection rule in \eqref{awata} tells
\beq
\ell \equiv k + r_1 - r_2.
\eeq
Since $0 \leq k, r_1, r_2 \leq n-1$, we have $1-n \leq k+r_1 - r_2 \leq 2n-2$.
Accordingly, we can write the initial value of $\ell$ uniformly as
\beq\label{initial}
\ell_0 = k + r_1 - r_2 - n \biggl\lfloor\frac{k + r_1 - r_2}{n}\biggr\rfloor.
\eeq
Hence, we see
\beq
z_0 = q^{j-i} \kappa^{\lambda_{j+1}^\vee - \mu_i^\vee + \ell_0}
= q^{j-i} \kappa^{k + n\floor{\frac{\lambda_{j+1}^\vee - \mu_i^\vee}{n}}- n \floor{\frac{k + r_1 - r_2}{n}}}.
\eeq
Now, let us count the number of factors in the first shifted factorial.
Since the initial value of $\ell$ is given by \eqref{initial} and the upper bound is $\lambda_{j}^{\vee} - \lambda_{j+1}^\vee -1$,
we see
\begin{align}
m &{}= \biggl\lfloor\frac{\lambda_{j}^{\vee} - \lambda_{j+1}^\vee -1 -\ell_0}{n}\biggr\rfloor +1 \nonumber \\
&{}= (q_3 - q_2) + 1 + \biggl\lfloor\frac{r_3 - r_1 -k -1}{n}\biggr\rfloor + \biggl\lfloor\frac{k + r_1 - r_2}{n}\biggr\rfloor \nonumber \\
&{}= \biggl\lfloor\frac{\lambda_j^\vee - r_1 -k -1}{n}\biggr\rfloor + \biggl\lfloor\frac{k + r_1 - \lambda_{j+1}^\vee}{n}\biggr\rfloor +1
\end{align}
Since
\beq
\biggl\lfloor\frac{\mu_i^\vee}{n}\biggr\rfloor = q_1,
\eeq
and
\beq
\biggl\lfloor\frac{\lambda_{j+1}^\vee + n -1 -k -\bigl(\mu_i^\vee\bigr)}{n}\biggr\rfloor
=
\begin{cases}
q_2 +1, & 0 \leq r_2 - r_1 -1 -k,
\\
q_2, & -n \leq r_2 - r_1 -1 -k < 0,
\\
q_2 -1, & r_2 - r_1 -1 -k < -n,
\end{cases}
\eeq
the power of $\kappa$ is
\beq
\begin{cases}
\lambda_{j+1}^\vee - \mu_{i}^\vee + k + r_1 - r_2 +n , & 1 \leq r_2 - r_1 -k,
\\
\lambda_{j+1}^\vee - \mu_{i}^\vee + k + r_1 - r_2, & 1-n \leq r_2 - r_1 -k < 1,
\\
\lambda_{j+1}^\vee - \mu_{i}^\vee + k + r_1 - r_2 -n , & r_2 - r_1 -k < 1-n.
\end{cases}
\eeq
Comparing this with \eqref{initial}, we can confirm an agreement.
For the number of factors, what we have to check is
\beq \label{final}
\biggl\lfloor\frac{k + r_1 - \lambda_{j+1}^\vee}{n}\biggr\rfloor +1 = - \biggl\lfloor\frac{\lambda_{j+1}^\vee -1 -k - \bigl(\mu_i^\vee\bigr)}{n}\biggr\rfloor.
\eeq
But this is a special case of the first formula of \eqref{floor-formula}. We have checked a complete agreement of the first factorial.

Let us proceed to the second factorial. Concerning the initial condition, we can see the formula for the second is obtained from the first
simply by the replacement,
\beq
\bigl(\mu_i^\vee, \lambda_{j+1}^\vee\bigr) \longrightarrow \bigl(\mu_j^\vee, \lambda_{i}^\vee\bigr).
\eeq
Thus an agreement is proved by appropriate changes of variables. However, counting the number of factors is more involved.
Let us assume
\beq
\lambda_i^\vee = q_4 n + r_4, \qquad \mu_{j}^\vee = q_5 n + r_5, \qquad \mu_{j+1}^\vee = q_6 n + r_6, \qquad 0 \leq r_i\leq n-1.
\eeq
Then a similar computation as before gives the number of factors in the formula \eqref{awata}
\beq
m' = \biggl\lfloor\frac{\mu_{j}^{\vee} - \mu_{j+1}^\vee -1 -\ell_0'}{n}\biggr\rfloor +1,
\eeq
where
\beq
\ell_0' = k + r_5 - r_4 - n \biggl\lfloor\frac{k + r_5 - r_4}{n}\biggr\rfloor.
\eeq
Hence, \begin{align}
m' & = (q_5 - q_6) + 1 + \biggl\lfloor\frac{r_4 - r_6 -k -1}{n}\biggr\rfloor + \biggl\lfloor\frac{k + r_5 - r_4}{n}\biggr\rfloor \nonumber \\
& = \biggr\lfloor\frac{r_4 - \mu_{j+1}^\vee -k -1}{n}\biggr\rfloor + \biggl\lfloor\frac{\mu_{j}^\vee + k - r_4 }{n}\biggr\rfloor +1.
\end{align}
We note $\bigl(-\lambda_i^\vee\bigr) = n-r_4$ for $r_4 \neq 0$, but $\bigl(-\lambda_i^\vee\bigr) = -r_4 = 0$ for $r_4 \neq 0$.
But such a difference does not matter, since we take a difference of the floor function.
Hence, we can safely use $(-\lambda_i^\vee) = -r_4$ and what we have to check is
\beq
\biggl\lfloor\frac{r_4 - \mu_{j+1}^\vee -k -1}{n}\biggr\rfloor + 1 = - \biggl\lfloor\frac{\mu_{j+1}^\vee + k + (-\lambda_i^\vee)}{n}\biggr\rfloor,
\eeq
which follows from the first formula of \eqref{floor-formula}.
\end{proof}

We can also write the formula of Proposition \ref{Shiraishi}
in terms of the ratio of the regularized infinite products $[u;q]_\infty$ defined by \eqref{sinh-reg}.
Using the formulas \eqref{sinh-ratio} and \eqref{floor-formula}, we find
\begin{align}
\mathsf{N}_{\lambda, \mu}^{(k\vert n)} (u \vert q, \kappa)
&{}=
\frac{\displaystyle{\prod_{j > i \geq 1}} \Bigl[uq^{j-i-1}\kappa^{k -n \floor{\frac{\mu_i^\vee}{n}}
+n\floor{\frac{\lambda_{j}^\vee}{n}} -n\floor{\frac{-(\lambda_{j}^\vee) +k + (\mu_i^\vee )}{n}}}; \kappa^n\Bigr]_\infty}
{\displaystyle{\prod_{j \geq i \geq 1}} \Bigl[uq^{j-i}\kappa^{k -n \floor{\frac{\mu_i^\vee}{n}}
+n\floor{\frac{\lambda_{j}^\vee}{n}} - n\floor{\frac{-(\lambda_{j}^\vee) +k + (\mu_i^\vee )}{n}}}; \kappa^n\Bigr]_\infty}
\nonumber \\
&\quad{}\times\frac{\displaystyle{\prod_{i \geq j \geq 1}} \Bigl[uq^{j-i-1}\kappa^{k + n \floor{\frac{\lambda_j^\vee}{n}}
-n\floor{\frac{\mu_{i}^\vee}{n}}
-n\floor{\frac{(\mu_{i}^\vee) + k -(\lambda_j^\vee)}{n}}} ; \kappa^n\Bigr]_\infty}
{\displaystyle{\prod_{i > j \geq 1}}\Bigl[uq^{j-i}\kappa^{k + n \floor{\frac{\lambda_j^\vee}{n}}
-n\floor{\frac{\mu_{i}^\vee}{n}}
-n\floor{\frac{(\mu_{i}^\vee) + k -(\lambda_j^\vee)}{n}}} ; \kappa^n\Bigr]_\infty} \nonumber \\
&{}= \prod_{i,j=1}^\infty
\frac{\Bigl[uq^{j-i-1}\kappa^{k -n\floor{\frac{\mu_{i}^\vee + k - \lambda_j^\vee}{n}}} ; \kappa^n\Bigr]_\infty}
{\Bigl[uq^{j-i}\kappa^{k -n\floor{\frac{\mu_{i}^\vee + k -\lambda_j^\vee}{n}}} ; \kappa^n\Bigr]_\infty}
\frac{\bigl[uq^{j-i}\kappa^{k}; \kappa^n\bigr]_\infty}{\bigl[uq^{j-i-1}\kappa^{k}; \kappa^n\bigr]_\infty}.\label{kappa-form}
\end{align}

For later convenience, let us express the orbifolded Nekrasov factor in terms of $t=\kappa^{-n}$-shifted
factorials. We have
\begin{align}
&{}\mathsf{N}_{\lambda, \mu}^{(k\vert n)} (u \vert q, t)\nonumber\\
&\qquad{}=
\prod_{j \geq i \geq 1} \Bigl[uq^{j-i} t^{-\frac{k}{n} +\floor{\frac{\mu_i^\vee}{n}}
-\floor{\frac{\lambda_{j+1}^\vee + n-1 -k - (\mu_i^\vee )}{n}}} ; t^{-1}\Bigr]
_{\floor{\frac{\lambda_{j}^\vee + n-1 -k - (\mu_i^\vee )}{n}}- \floor{\frac{\lambda_{j+1}^\vee + n-1 -k - (\mu_i^\vee )}{n}}} \nonumber \\
&\qquad\quad{}\times \prod_{j \geq i \geq 1} \Bigl[uq^{i-j-1} t^{-\frac{k}{n} - \floor{\frac{\lambda_i^\vee}{n}}
+ \floor{\frac{\mu_j^\vee + k -(\lambda_i^\vee)}{n}}} ; t^{-1}\Bigr]
_{\floor{\frac{\mu_j^\vee + k -(\lambda_i^\vee)}{n}}-\floor{\frac{\mu_{j+1}^\vee + k -(\lambda_i^\vee)}{n}}} \nonumber \\
&\qquad{}=
\prod_{j \geq i \geq 1} \Bigl[uq^{j-i} t^{1 -\frac{k}{n} +\floor{\frac{\mu_i^\vee}{n}}
-\floor{\frac{\lambda_{j}^\vee + n-1 -k - (\mu_i^\vee )}{n}}} ; t\Bigr]
_{\floor{\frac{\lambda_{j}^\vee + n-1 -k - (\mu_i^\vee )}{n}}- \floor{\frac{\lambda_{j+1}^\vee + n-1 -k - (\mu_i^\vee )}{n}}} \nonumber \\
&\qquad\quad{}\times \prod_{j \geq i \geq 1} \Bigl[uq^{i-j-1} t^{1 -\frac{k}{n} - \floor{\frac{\lambda_i^\vee}{n}}
+ \floor{\frac{\mu_{j+1}^\vee + k -(\lambda_i^\vee)}{n}}} ; t\Bigr]
_{\floor{\frac{\mu_j^\vee + k -(\lambda_i^\vee)}{n}}-\floor{\frac{\mu_{j+1}^\vee + k -(\lambda_i^\vee)}{n}}} \nonumber \\
&\qquad{}=
\prod_{j \geq i \geq 1} \frac{\Bigl[uq^{j-i} t^{1 -\frac{k}{n} +\floor{\frac{\mu_i^\vee}{n}}
-\floor{\frac{\lambda_{j}^\vee + n-1 -k - (\mu_i^\vee )}{n}}} ; t\Bigr]_\infty}
{\Bigl[uq^{j-i} t^{1 -\frac{k}{n} +\floor{\frac{\mu_i^\vee}{n}}
-\floor{\frac{\lambda_{j+1}^\vee + n-1 -k - (\mu_i^\vee )}{n}}} ; t\Bigr]_\infty} \nonumber \\
&\qquad\quad{}\times \prod_{j \geq i \geq 1} \frac{\Bigl[uq^{i-j-1} t^{1 -\frac{k}{n} - \floor{\frac{\lambda_i^\vee}{n}}
+ \floor{\frac{\mu_{j+1}^\vee + k -(\lambda_i^\vee)}{n}}} ; t\Bigr]_\infty}
{\Bigl[uq^{i-j-1} t^{1 -\frac{k}{n} - \floor{\frac{\lambda_i^\vee}{n}}
+ \floor{\frac{\mu_{j}^\vee + k -(\lambda_i^\vee)}{n}}} ; t\Bigr]_\infty},
\end{align}
where we have used
\beq\label{q-shifted-inversion}
\bigl[u;t^{-1}\bigr]_n = \bigl[u t^{-n+1} ; t\bigr]_n, \qquad n>0.
\eeq
Finally, the same manipulation as the last equality of \eqref{kappa-form} gives
\begin{gather}\label{t-formula}
\mathsf{N}_{\lambda, \mu}^{(k\vert n)} (u \vert q,t)
=
\prod_{i,j=1}^\infty
\frac{\Bigl[uq^{j-i} t^{1 -\frac{k}{n} + \floor{\frac{\mu_{i}^\vee + k -\lambda_j^\vee}{n}}} ; t\Bigr]_\infty}
{\Bigl[uq^{j-i-1} t^{1 -\frac{k}{n} + \floor{\frac{\mu_{i}^\vee + k -\lambda_j^\vee}{n}}} ; t\Bigr]_\infty}
\frac{\bigl[uq^{j-i-1} t^{1 -\frac{k}{n}}; t]_\infty} {[uq^{j-i} t^{1 -\frac{k}{n}}; t\bigr]_\infty}.
\end{gather}

When one of the partitions is empty, $\mathsf{N}_{\lambda, \mu}^{(k\vert n)}$ simplifies as follows:
\begin{gather}
\mathsf{N}_{\lambda, \varnothing}^{(k\vert n)} (u \vert q, \kappa)
=
\frac{\displaystyle{\prod_{j \geq i \geq 1}} \bigl[uq^{j-i}\kappa^{k}; \kappa^n\bigr]
_{\floor{\frac{\lambda_{j}^\vee + n-1 -k}{n}}}}
{\displaystyle{\prod_{j \geq i \geq 1}} \bigl[uq^{(j+1)-(i+1)}\kappa^{k}; \kappa^n\bigr]
_{\floor{\frac{\lambda_{j+1}^\vee + n-1 -k}{n}}}} \nonumber\\ \hphantom{\mathsf{N}_{\lambda, \varnothing}^{(k\vert n)} (u \vert q, \kappa)}{}
= \prod_{i \geq 1} \bigl[uq^{i-1}\kappa^{k}; \kappa^n\bigr]
_{\floor{\frac{\lambda_{i}^\vee + n-1 -k}{n}}},
\\
\mathsf{N}_{\varnothing, \mu}^{(k\vert n)} (u \vert q, \kappa)
=
\frac{\displaystyle{\prod_{j \geq i \geq 1}} \Bigl[uq^{i-j -1}\kappa^{k -n\floor{\frac{\mu_j^\vee + k }{n}}} ; \kappa^n\Bigr]
_{\floor{\frac{\mu_j^\vee + k }{n}}}}
{\displaystyle{\prod_{j \geq i \geq 1}} \Bigl[uq^{i-(j+1)}\kappa^{k -n\floor{\frac{\mu_{j+1}^\vee + k}{n}}} ; \kappa^n\Bigr]
_{\floor{\frac{\mu_{j+1}^\vee + k}{n}}}} \nonumber\\ \hphantom{\mathsf{N}_{\varnothing, \mu}^{(k\vert n)} (u \vert q, \kappa)}{}
= \prod_{i \geq 1}\Bigl[uq^{-i}\kappa^{k -n\floor{\frac{\mu_i^\vee + k }{n}}} ; \kappa^n\Bigr]
_{\floor{\frac{\mu_i^\vee + k }{n}}}.
\end{gather}

%%%%%%%%%%%%%%%%%%%%%%%%%%%%%%%%%%%%%%%%%%%%%%%%%%%%%%%%%%%%%%%%%%%%%%%%%%%%%%%%%%%%%%%%%%%%%%%%%

\section{(Anti-)symmetrization in a factorized form}
\label{App:symmetrization}

A basis of the space of cohomology classes for the $N$-tuple Jackson integral is defined by using
the anti-symmetrization of the variables $z=(z_1, \dots, z_N)$ with the weight function $\Delta(q,z)$ (see~\eqref{Delta-q}).
In general, the result of (anti-)symmetrization is expanded in terms of symmetric polynomials (divided by the Vandermonde determinant
for anti-symmetrization). However, from the viewpoint of the correspondence of the Jackson integral and the Bethe vector for quantum integrable system
\cite{Reshe-Bethe}, what we need is the (anti-)symmetrization that keeps the factorization.
One can rephrase it as a summation over partitions of the index set $[N] := \{ 1,2, \dots, N \}$.
We give a proof for the following well-known formulas for completeness.

\begin{Proposition}\label{Shuffle}
Let $I_1 \sqcup \cdots \sqcup I_s$ be a partition of $\{1,2,\dots, N\}$ with $|I_a|=k_a \geq 1$
and ${\mathcal A}$ be the anti-symmetrization operator of $z=(z_1, \dots, z_N)$.
For any functions $f_a(x)$, $1 \leq a \leq s$, we have
\begin{gather}
\dfrac{1}{\Delta(1,z)}{\mathcal A}\Biggl(\prod_{i_1=1}^{k_1}f_1(z_{i_1}) \prod_{i_2=1}^{k_2}f_2(z_{k_1+i_2})
\cdots
\prod_{i_s=1}^{k_s}f_s(z_{k_1+\cdots+k_{s-1}+i_s}) \Delta(q,z) \Biggr)\nonumber\\
\qquad{}=\prod_{a=1}^s [k_a]_{q^{-1}}! \sum_{I_1 \sqcup \cdots \sqcup I_s}\Biggl\{
\prod_{a=1}^s \prod_{i_a \in I_a}f_a(z_{i_a})
\prod_{1\leq a<b\leq s}\prod_{i \in I_a}\prod_{j \in I_b} \dfrac{z_i-q^{-1}z_j}{z_i-z_j} \Biggr\},\label{eq:Asym}
\end{gather}
where the sum $\sum_{I_1 \sqcup \cdots \sqcup I_s}$ on the right-hand side
is taken over the partitions $I_1 \sqcup \cdots \sqcup I_s$.
\end{Proposition}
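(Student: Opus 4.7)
The plan is to organize the sum of permutations in the anti-symmetrization $\mathcal{A} = \sum_{\sigma\in\mathfrak{S}_N}\operatorname{sgn}(\sigma)\sigma$ by grouping permutations according to the image of each block. Let $H = \mathfrak{S}_{k_1}\times\cdots\times\mathfrak{S}_{k_s} \subset \mathfrak{S}_N$ be the Young subgroup permuting indices within each block, and for each ordered partition $I_1\sqcup\cdots\sqcup I_s=[N]$ with $|I_a|=k_a$ let $\sigma^0_{I_\bullet}\in\mathfrak{S}_N$ denote the unique order-preserving permutation sending block $a$ onto $I_a$. Since the function $G(z)=\prod_a\prod_{i_a}f_a(z_{k_1+\cdots+k_{a-1}+i_a})$ is $H$-invariant, every $\sigma\in\mathfrak{S}_N$ can be written as $\sigma=\sigma^0_{I_\bullet}\tau$ with $\tau\in H$, and $\sigma(G)$ depends only on the partition $I_\bullet$, giving the value $G_{I_\bullet}(z):=\prod_a\prod_{i\in I_a}f_a(z_i)$. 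Thus
\[
\mathcal{A}\bigl(G\cdot\Delta(q,z)\bigr)=\sum_{I_\bullet}G_{I_\bullet}(z)\operatorname{sgn}(\sigma^0_{I_\bullet})\sum_{\tau\in H}\operatorname{sgn}(\tau)\,\Delta\bigl(q,z_{\sigma^0_{I_\bullet}\tau}\bigr).
\]

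The next step is to analyze the inner $H$-sum by factorizing $\Delta(q,z_{\sigma^0_{I_\bullet}\tau})$. Setting $w=z_{\sigma^0_{I_\bullet}}$ and grouping the positions according to their block, the order-preserving property of $\sigma^0_{I_\bullet}$ guarantees that the intra-block factors assemble into $\prod_a\Delta_{I_a}(q,z)$, where $\Delta_{I_a}(q,z)=\prod_{i<j;\,i,j\in I_a}(z_i-q^{-1}z_j)$, while the inter-block factors assemble into $\prod_{a<b}\prod_{x\in I_a,y\in I_b}(z_x-q^{-1}z_y)$, which depends only on the sets $I_a,I_b$ and is therefore $H$-invariant. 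The $H$-antisymmetrization then acts only on the intra-block factors, producing a product of single-block anti-symmetrizations $\mathcal{A}_{I_a}(\Delta_{I_a}(q,z))$. To each of these I would apply the elementary $q$-shuffle identity $\sum_{\tau\in\mathfrak{S}_k}\operatorname{sgn}(\tau)\,\tau\cdot\Delta(q,z)=[k]_{q^{-1}}!\,\Delta(1,z)$, which follows from the fact that the left-hand side is anti-symmetric of the same total degree as the Vandermonde, so a single coefficient comparison (or the two-line check at $k=2$ followed by an easy induction) fixes the proportionality constant.

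Combining these steps gives
\[
\mathcal{A}\bigl(G\cdot\Delta(q,z)\bigr)=\prod_a[k_a]_{q^{-1}}!\,\sum_{I_\bullet}\operatorname{sgn}(\sigma^0_{I_\bullet})\,G_{I_\bullet}(z)\prod_a\Delta_{I_a}(1,z)\prod_{a<b}\prod_{x\in I_a,y\in I_b}(z_x-q^{-1}z_y),
\]
and it remains to divide by $\Delta(1,z)$ and identify the result with the right-hand side of \eqref{eq:Asym}. The key identity here is $\Delta(1,z)=\operatorname{sgn}(\sigma^0_{I_\bullet})\prod_a\Delta_{I_a}(1,z)\prod_{a<b}\prod_{i\in I_a,j\in I_b}(z_i-z_j)$, obtained by splitting the Vandermonde into intra- and inter-block factors and noting that converting the global ordering $i<j$ into the block ordering $a<b$ flips exactly $\operatorname{inv}(\sigma^0_{I_\bullet})$ signs. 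This is the only delicate point of the proof: one must verify that the sign produced by the coset decomposition cancels exactly against the sign produced by re-ordering the factors of the Vandermonde according to the blocks. Once this sign bookkeeping is done, the ratio collapses to $\prod_{a<b}\prod_{i\in I_a,j\in I_b}(z_i-q^{-1}z_j)/(z_i-z_j)$ and the identity \eqref{eq:Asym} is established.
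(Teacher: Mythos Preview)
Your proof is correct and follows essentially the same strategy as the paper's: the coset decomposition of $\mathfrak{S}_N$ over the Young subgroup $\mathfrak{S}_{k_1}\times\cdots\times\mathfrak{S}_{k_s}$, followed by Macdonald's identity $\sum_{w}w\bigl(\Delta(q,z)/\Delta(1,z)\bigr)=[k]_{q^{-1}}!$ on each block. The one organizational difference is that the paper divides by $\Delta(1,z)$ at the outset, using $\tfrac{1}{\Delta(1,z)}\mathcal{A}\bigl(F\,\Delta(q,z)\bigr)=\sum_{\sigma}\sigma\bigl(F\,\Delta(q,z)/\Delta(1,z)\bigr)$ to convert the anti-symmetrization into a symmetrization; this makes the sign $\operatorname{sgn}(\sigma^0_{I_\bullet})$ disappear automatically (both numerator and denominator pick up the same sign under $\sigma$), so the paper never needs your final ``delicate point'' about matching $\operatorname{sgn}(\sigma^0_{I_\bullet})$ with the inversion count in the Vandermonde. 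Your bookkeeping of that sign is correct, but the paper's device sidesteps it entirely.
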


\begin{proof}
For a permutation $\sigma \in S_N$, we define $I_a=\{\sigma(k_1+\cdots+k_{a-1}+i) \mid 1\leq i\leq k_a\}$.
Since
\begin{equation*}
\dfrac{1}{\Delta(1,z)}{\mathcal A}(F(z) \Delta(q,z) )
=\sum_{\sigma \in S_N} \sigma\biggl(F(z) \dfrac{\Delta(q,z)}{\Delta(1,z)} \biggr),
\end{equation*}
for any $F(z)=F(z_1, \dots, z_N)$, we can recast the left-hand side of \eqref{eq:Asym} to
\begin{equation*}
\sum_{\sigma \in S_N}\Biggl(\prod_{a=1}^s \prod_{i \in I_a}f_a(z_i)\dfrac{\Delta(q, z_{I_a})}{\Delta(z_{I_a})}
\prod_{1\leq a<b\leq s}\prod_{i \in I_a}\prod_{j \in I_b} \dfrac{z_i-q^{-1}z_j}{z_i-z_j}\Biggr).
\end{equation*}
Let $S_I$ be the symmetric group of a set $I$.
Then according to the coset decomposition $S_{N}=
\mathop{\sqcup }_{I_1 \sqcup \cdots \sqcup I_s} S_{I_1} \times \cdots \times S_{I_s}$,
we take the sum in the following manner:
\[
\sum_{\sigma \in S_N}=\sum_{I_1 \sqcup \cdots \sqcup I_s}
\sum_{w_1 \in S_{I_1}}\cdots \sum_{w_s \in S_{I_s}}.
\]
By applying the formula \cite[Section~III, equation~(1.4)]{MacD}
\[
\sum_{w_a \in S_{I_a}} w_a\biggl(\dfrac{\Delta(q,z_{I_a})}{\Delta(1,z_{I_a})}\biggr)=[k_a]_{q^{-1}}!
\]
for each sum over $S_{I_a}$, we obtain the desired result \eqref{eq:Asym}.
\end{proof}

\section{Shakirov's equation as a coupled system}
\label{App:coupled}

Consider Shakirov's equation in the form
\begin{align}
&\psi=\SS T_{t,\Lambda}^{-1}T_{qtQ,x}^{-1} \psi, \label{eq:pHp}\\
&\SS = \dfrac{1}{\varphi(qx)\varphi(\frac{\Lambda}{x})} \Bor
\dfrac{\varphi(\Lambda)\varphi\bigl(q^{-1} d_1d_2d_3d_4\Lambda\bigr)}{\varphi(-d_1x)\varphi(-d_2x)\varphi\bigl(-d_3 \frac{\Lambda}{x}\bigr)\varphi\bigl(-d_4 \frac{\Lambda}{x}\bigr)}
\Bor \dfrac{1}{\varphi\bigl(\frac{d_1d_2}{q}x\bigr)\varphi\bigl(d_3d_4 \frac{\Lambda}{x}\bigr)}.\nonumber
\end{align}
It has a unique formal series solution normalized as
$\psi=1+{\mathcal O}\bigl(x, \frac{\Lambda}{x}\bigr)$.
In terms of the affine Laumon partition function $\mathcal{Z}_{\mathrm{AL}}$, the solution is given by
\begin{align}\label{eq:Fuvw}
\psi(\Lambda,x) ={}&\mathcal{Z}_{\mathrm{AL}}
\left( \left.\left.\begin{array}{c}u_1,u_2 \\v_1,v_2\\w_1,w_2\end{array} \right| x_1, \frac{\Lambda_1}{x_1} \right|q,t\right) \nonumber \\
%(\{u_1,u_2,v_1,v_2,w_1,w_2\}, \{x_1, \frac{\Lambda_1}{x_1}\}; q,t)
={}&\mathcal{Z}_{\mathrm{AL}}
\left( \left.\left.\begin{array}{c} \frac{qQ}{d_3},\frac{q\kappa}{d_1} \\ 1,\frac{Q}{\kappa} \\ \frac{1}{d_2},\frac{Q}{\kappa d_4} \end{array} \right|
-\frac{\sqrt{Qd_1d_2}}{\kappa}x, - \sqrt{\frac{d_3d_4}{q^2Q}}\frac{\Lambda}{x}\right| q,\kappa^{-2} \right).
%%(\{\frac{qQ}{d_3},\frac{q\kappa}{d_1},1,\frac{Q}{\kappa},\frac{1}{d_2},\frac{Q}{\kappa d_4}\},
%%\{-\frac{\sqrt{Qd_1d_2}}{\kappa}x, - \sqrt{\frac{d_3d_4}{q^2Q}}\frac{\Lambda}{x}\}; q,\kappa^{-2})
\end{align}
%{\color{blue} By using the relations \eqref{d-variables1} and \eqref{d-variables2}, we can check that the specialization of
%the expansion parameters agrees with $x_1 = - t^{1/2} T_1^{1/2} T_2^{1/2}x$ and $x_2= - t^{1/2} T_3^{1/2} T_4^{1/2} \frac{\Lambda}{x}$. }
%

We will give a coupled form of Shakirov's equation and its solution.
To do this, we define operators $T$ and $K$ by\footnote{One may use $d_1$ (or $d_3$) instead of $d_2$ (or $d_4$).}
\begin{align}\label{eq:TKdef}
T={}&\biggl\{d_2 \mapsto \dfrac{q}{t Q d_2}, \,
d_4 \mapsto \dfrac{qQ}{d_4}, \,
\Lambda \mapsto \dfrac{d_2 d_4 \Lambda}{q},\,
x \mapsto -\dfrac{d_2 x}{q} \biggr\},\nonumber \\
K={}&\dfrac{1}{\varphi(qx)\varphi\bigl(\frac{\Lambda}{x}\bigr)} \Bor
\dfrac{1}{\varphi(-d_1x)\varphi\bigl(-d_3 \frac{\Lambda}{x}\bigr)}.
\end{align}
On the parameters $u_i$, $v_i$, $w_i$, $x_1$, $\Lambda_1$, the operator $T$ acts as
\begin{equation}
T= \bigg\{ w_i \mapsto \frac{t Q}{q w_i}, \,
x_1 \mapsto -\frac{x_1}{\sqrt{qt Q}}, \,
\Lambda_1 \mapsto \frac{\Lambda_1}{\sqrt{t}} \bigg\}.
\end{equation}
In our previous paper \cite{Awata:2022idl}, we used the renormalized $q$-Borel transformation
\smash{\raisebox{0.5pt}{$\widetilde{\Bor}:= T^{-1}_{(qt^{1/2}Q)^{1/2}, x} \Bor$}}
in writing down the coupled system in order to make the correspondence with the $qq$-Pain\-lev\'e~VI equation clear.
If this is taken into account, the action of $T$ on the expansion parameters becomes $x_1=x \mapsto - t^{-1/4} x_1$
and hence $x_2 = \Lambda_1/x_1 \mapsto - t^{-1/4} x_2$. The $T$ action is the same for~$x_1$ and~$x_2$ and
coincides with the action of the shift operator
$\widetilde{T}_{\mathsf{p}, b}$ with $\mathsf{p}=t^{1/4}$, which is regarded as a square root of the time evolution
of the parameters $b_i$ in the $qq$-Painlev\'e VI equation (see~\mbox{\cite[equation~(6.3)]{Awata:2022idl}}).

\begin{Proposition}
The normalized solution $\psi$ of \eqref{eq:pHp} and its parameter transform $\chi:=T \psi$ satisfy the following coupled system of equations:
\begin{equation}\label{coupled:new}
\psi=g K \chi, \qquad
\chi=T g K T \psi,
%\chi&=T(g K) T^{-1} \cdot T_{t,\Lambda}^{-1} T_{q Qt,x}^{-1}\psi,
\end{equation}
where
\begin{equation}
g=\dfrac{\bigl(\frac{td_2d_4}{q} \Lambda,d_1d_3 \Lambda;q,t\bigr)_{\infty}}{\bigl(t\Lambda,\frac{td_1d_2d_3d_4}{q}\Lambda;q,t\bigr)_{\infty}}.
\end{equation}
Conversely the equation \eqref{eq:pHp} follows from the coupled system \eqref{coupled:new}.
\end{Proposition}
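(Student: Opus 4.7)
The plan is to establish the operator identity
\begin{equation*}
\SS\,T^2 \;=\; (gKT)^2,
\end{equation*}
after which both directions of the proposition follow at once. I would prove this identity in two pieces. First, direct substitution of $T:d_2\mapsto q/(tQd_2),\ d_4\mapsto qQ/d_4,\ \Lambda\mapsto d_2d_4\Lambda/q$ into $g$ gives
\begin{equation*}
Tg \;=\; \frac{(\Lambda,\,d_1d_2d_3d_4\Lambda/q;\,q,t)_\infty}{(td_2d_4\Lambda/q,\,d_1d_3\Lambda;\,q,t)_\infty},
\end{equation*}
and two factors cancel in the product $g\cdot Tg$, leaving
\begin{equation*}
g\cdot(Tg) \;=\; \frac{(\Lambda,\,q^{-1}d_1d_2d_3d_4\Lambda;\,q,t)_\infty}{(t\Lambda,\,tq^{-1}d_1d_2d_3d_4\Lambda;\,q,t)_\infty} \;=\; \varphi(\Lambda)\,\varphi(q^{-1}d_1d_2d_3d_4\Lambda)
\end{equation*}
by the identity $(u;q,t)_\infty/(tu;q,t)_\infty=(u;q)_\infty$. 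Second, since $\vartheta_x=x\partial_x$ is invariant under the scaling $x\mapsto -d_2x/q$, the operator $\Bor$ is unchanged by $T$, while the four multipliers in $K$ transform to $\varphi(-d_2x)$, $\varphi(-d_4\Lambda/x)$, $\varphi(d_1d_2x/q)$, $\varphi(d_3d_4\Lambda/x)$. Together with the original multipliers of $K$, these account for precisely the outer $\varphi$-factors of $\SS$. Since $Tg$ depends only on $\Lambda$, it commutes with the $x$-operator $K$, giving $\SS=g(Tg)\cdot K\cdot(TKT^{-1})=gK\cdot T(gK)T^{-1}$; right-multiplication by $T^2$ (and using $T^2=T_{t,\Lambda}^{-1}T_{qtQ,x}^{-1}$) then yields $\SS\,T^2=gK\cdot TgKT=(gKT)^2$.

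With this identity in hand, the converse direction is immediate. Given a pair $(\psi,\chi)$ satisfying the coupled system, substituting the second equation into the first yields
\begin{equation*}
\psi=gK\chi=gK\bigl(TgKT\psi\bigr)=(gKT)^2\psi=\SS\,T^2\psi,
\end{equation*}
which is Shakirov's equation~\eqref{eq:pHp}.

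For the forward direction, with $\psi$ the unique normalized solution of Shakirov's equation and $\chi:=T\psi$, I would observe that applying $T^{-1}$ to the second equation of \eqref{coupled:new} reduces the whole system to the single equation $\psi=gKT\psi$, i.e.\ $U\psi=\psi$ with $U:=gKT$. To verify this, I would set $\psi':=U\psi$: from $U^2\psi=\psi$ one has $U^2\psi'=U^3\psi=U\psi=\psi'$, so $\psi'$ again satisfies Shakirov's equation. Its constant term is
\begin{equation*}
\psi'\big|_{x=\Lambda=0}=g\big|_{\Lambda=0}\cdot K(1)\big|_{x=\Lambda=0}\cdot (T\psi)\big|_{x=\Lambda=0}=1\cdot 1\cdot 1=1,
\end{equation*}
since every $\varphi(\cdot)$ factor in $K$ specializes to $1$ at $x=\Lambda=0$, $\Bor$ fixes constants, $g|_{\Lambda=0}=1$, and $T$ preserves the coefficient $c_{0,0}=1$ (it shifts only parameters). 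Hence $\psi'$ is also a normalized formal series solution of Shakirov's equation, and by uniqueness (cf.\ Proposition~2.7 of \cite{Awata:2022idl}) $\psi'=\psi$, i.e.\ $U\psi=\psi$, as required.

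The substantive work lies entirely in the operator identity; once secured, both directions reduce to short manipulations, with the normalization/uniqueness argument carrying the forward direction. The main obstacle I anticipate is carefully justifying that $T$, viewed as a simultaneous substitution on parameters \emph{and} on $x$, commutes past $\Bor$ and through $K$ in the expected way---specifically, the scale invariance of $\vartheta_x$ and the intertwining $T\circ M_f = M_{Tf}\circ T$ for multiplication operators---so that the formal rearrangement culminating in $\SS\,T^2=(gKT)^2$ is rigorous as an identity of operators on formal power series in $x$ and $\Lambda/x$.
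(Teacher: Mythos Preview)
Your proposal is correct and follows essentially the same route as the paper. The paper also computes $TKT^{-1}$ and $g\cdot(Tg)=\varphi(\Lambda)\varphi(q^{-1}d_1d_2d_3d_4\Lambda)$, arriving at $KTKT=\varphi(\Lambda)^{-1}\varphi(q^{-1}d_1d_2d_3d_4\Lambda)^{-1}\SS T^2$ and then invoking uniqueness of the normalized solution; the only organizational difference is that you package everything into the single identity $\SS T^2=(gKT)^2$ and deduce both directions from it, whereas the paper proves the forward direction via an additional intertwining relation $KT\,\SS T^2=\dfrac{\varphi(d_2d_4\Lambda/q)\varphi(d_1d_3\Lambda/t)}{\varphi(\Lambda)\varphi(d_1d_2d_3d_4\Lambda/q)}\,\SS T^2\,KT$ obtained from $(KT)^3$.
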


\begin{proof}
From \eqref{eq:pHp} and \eqref{eq:TKdef},
one can verify the following relations:
\begin{align}
&T^2=T_{t,\Lambda}^{-1} T_{q Qt,x}^{-1}, \label{eq:T2TT}\\
&TK =
\dfrac{1}{\varphi(-d_2 x)\varphi\bigl(-d_4\frac{\Lambda}{x}\bigr)} \Bor
\dfrac{1}{\varphi\bigl(\frac{d_1d_2}{q}x\bigr)\varphi\bigl(d_3 d_4 \frac{\Lambda}{x}\bigr)}T,\\
& KTKT =
K\dfrac{1}{\varphi(-d_2 x)\varphi\bigl(-d_4\frac{\Lambda}{x}\bigr)} \Bor
\dfrac{1}{\varphi\bigl(\frac{d_1d_2}{q}x\bigr)\varphi\bigl(d_3 d_4 \frac{\Lambda}{x}\bigr)}T^2
=\dfrac{1}{\varphi(\Lambda)\varphi\bigl(\frac{d_1d_2d_3d_4}{q}\Lambda\bigr)}\SS T^2, \\
& K T \SS T^2
= \varphi\biggl(\frac{d_2 d_4}{q} \Lambda\biggr) \varphi\biggl(\frac{d_1 d_3}{t} \Lambda\biggr) (KT)^3
=\dfrac{\varphi\bigl(\frac{d_2d_4}{q}\Lambda\bigr)\varphi\bigl(\frac{d_1d_3}{t}\Lambda\bigr)}{\varphi(\Lambda)\varphi\bigl(\frac{d_1d_2d_3d_4}{q}\Lambda\bigr)}\SS T^2 KT.
\label{eq:KTH}
\end{align}
From \eqref{eq:T2TT}, \eqref{eq:KTH} and \eqref{eq:pHp}, we have
\begin{align}
KT \psi=K T \SS T^2 \psi
=\dfrac{\varphi\bigl(\frac{d_2d_4}{q}\Lambda\bigr)\varphi\bigl(\frac{d_1d_3}{t}\Lambda\bigr)}{\varphi(\Lambda)\varphi\bigl(\frac{d_1d_2d_3d_4}{q}\Lambda\bigr)}\SS T^2 K T \psi.
\end{align}
Noting that
\[
T_{t,\Lambda}^{-1}(g)=\frac{\varphi\bigl(\frac{d_2d_4}{q}\Lambda\bigr)\varphi\bigl(\frac{d_1d_3}{t}\Lambda\bigr)}{\varphi(\Lambda)\varphi\bigl(\frac{d_1d_2d_3d_4}{q}\Lambda\bigr)}g,
\]
we see the function $g KT \psi$ satisfies the same equation as \eqref{eq:pHp} with the same initial condition as $\psi$.
By the uniqueness of the solution, we obtain the first relation $\psi=g KT \psi=g K \chi$.
The second relation is the $T$-transform of the first one.
To check the converse, we note that
\beq
g \cdot T\cdot g = \frac{\bigl(\Lambda, \frac{d_1 d_2 d_3 d_4}{q} \Lambda ; q,t\bigr)_\infty}
{\bigl(t \Lambda, \frac{t d_1 d_2 d_3 d_4}{q} \Lambda ; q,t\bigr)_\infty} \cdot T
= \varphi(\Lambda) \varphi\biggl(\frac{d_1 d_2 d_3 d_4}{q}\Lambda\biggr) \cdot T.
\eeq
Hence,
\beq
\psi = g \cdot KT \cdot g \cdot KT \psi = \varphi(\Lambda) \varphi\biggl(\frac{d_1 d_2 d_3 d_4}{q}\Lambda\biggr) \cdot KTKT \psi
= \SS T^2 \psi.
\eeq
This completes the proof.
\end{proof}

\subsection*{Acknowledgements}
We would like to thank S.~Arthamonov, M.~Bershtein, P.~Gavrylenko, M.~Ito,
M.~Noumi, M.~Schlosser and G.~Shibukawa for useful discussions.
Our work is supported in part by Grants-in-Aid for Scientific Research (Kakenhi):
18K03274 (H.K.), 23K03087 (H.K.), 21K03180 (R.O.), 19K03512 (J.S.), 19K03530 (J.S.) and 22H01116 (Y.Y.).
The work of R.O. was partly supported by Osaka Central Advanced Mathematical
Institute: MEXT Joint Usage/Research Center on Mathematics and
Theoretical Physics JPMXP0619217849, and the Research Institute for Mathematical Sciences,
an International Joint Usage/Research Center located in Kyoto University.

%\bibliographystyle{sigma}
%\bibliography{example}

%\cite{Awata:2009ur,Shiraishi:1995rp}

\pdfbookmark[1]{References}{ref}
\LastPageEnding

\end{document}